 \newtheorem{thm}{Theorem}[section]
 \newtheorem{cor}[thm]{Corollary}
 \newtheorem{lem}[thm]{Lemma}
 \newtheorem{prop}[thm]{Proposition}
 \theoremstyle{definition}
 \newtheorem{defn}[thm]{Definition}
 \theoremstyle{remark}
 \newtheorem{rem}[thm]{Remark}
 \newtheorem*{ex}{Example}
 \numberwithin{equation}{section}
\newcommand{\wt}[1]{\widetilde{#1}}
\newcommand{\Cinf}{\ensuremath{\mathcal{C}^\infty}}
\newcommand{\Cinfc}{\ensuremath{\mathcal{C}^\infty_{\text{c}}}}
\newcommand{\D}{\ensuremath{{\mathcal D}}}
\renewcommand{\S}{\mathscr{S}}
\newcommand{\E}{\ensuremath{{\mathcal E}}}
\newcommand{\LL}{\mathcal{L}}
\newcommand{\Lb}{\mathcal{L}_{\rm{b}}}
\newcommand{\mO}{\mathcal{O}}
\newcommand{\mM}{\mathcal{M}}
\newcommand{\mF}{\mathcal{F}}
\newcommand{\mP}{\mathcal{P}}
\newcommand{\MPhi}{\mathcal{M}_\Phi}
\newcommand{\rfunc}{{\rm{basic}}}
\newcommand{\dslash}{d\hspace{-0.4em}{ }^-\hspace{-0.2em}}
\newcommand{\mb}[1]{\ensuremath{\mathbb{#1}}}
\newcommand{\N}{\mb{N}}
\newcommand{\R}{\mb{R}}
\newcommand{\C}{\mb{C}}
\newcommand{\G}{\ensuremath{{\mathcal G}}}
\newcommand{\Gt}{\ensuremath{{\mathcal G}_\tau}}
\newcommand{\Gc}{\ensuremath{{\mathcal G}_\mathrm{c}}}
\newcommand{\Gcinf}{\ensuremath{{\mathcal G}^\infty_\mathrm{c}}}
\newcommand{\GS}{\G_{{\, }\atop{\hskip-4pt\scriptstyle\S}}\!}
\newcommand{\EM}{\ensuremath{{\mathcal E}_{M}}}
\newcommand{\Et}{\ensuremath{{\mathcal E}_{\tau}}}
\newcommand{\Nt}{\ensuremath{{\mathcal N}_{\tau}}}
\newcommand{\Neg}{\mathcal{N}}
\newcommand{\Ginf}{\ensuremath{\G^\infty}}
\newcommand{\lara}[1]{\langle #1 \rangle}
\newcommand{\WF}{\mathrm{WF}}
\newcommand{\singsupp}{\mathrm{sing\, supp}}
\newcommand{\supp}{\mathrm{supp}}
\newcommand{\Char}{\ensuremath{\text{Char}}}
\newcommand{\zs}{\setminus 0}
\newcommand{\CO}[1]{\ensuremath{T^*(#1) \zs}}
\newcommand{\ssc}{\mathrm{sc}}
\newfont{\bigmath}{cmr12 at 13pt}
\newcommand{\val}{\mathrm{v}} 
\newcommand{\esp}{\mathrm{e}}
\newcommand{\eps}{\varepsilon}
\newcommand{\om}{\omega}
\newcommand{\Om}{\Omega}
\newcommand{\Syscu}{{\wt{\underline{\mathcal{S}}}}_{\,\ssc}}
\renewcommand{\d}{\ensuremath{\partial}}
\newcommand{\diff}[1]{\frac{d}{d#1}}
\begin{document}
%---------------------------------------------------------------------------
%Insert here the title, affiliations and abstract:
%
\title[Generalized Fourier Integral Operators]
 {Generalized Fourier Integral Operators\\ on spaces of Colombeau type}
%----------Author 1
\author[C. Garetto]{Claudia Garetto}

\address{%
Institut f\"ur Grundlagen der Bauingenieurwissenschaften\\
Leopold-Franzens-Uni\-ver\-si\-t\"at Innsbruck\\
Technikerstr. 13\\
A 6020 Innsbruck\\
Austria}

\email{claudia@mat1.uibk.ac.at}

\thanks{This work was completed with the support of FWF (Austria), grants T305-N13 and  Y237-N13.}
%----------Author 2
%\author{A Second Author}
%\address{The address of\\
%the second author\\
%sitting somewhere\\
%in the world}
%\email{dont@know.who.knows}
%----------classification, keywords, date
\subjclass{Primary 35S30; Secondary 46F30}
\keywords{Fourier integral operators, Colombeau algebras}

\date{December 31, 2007}
%----------additions
%\dedicatory{To my parents}
%%% ----------------------------------------------------------------------

\begin{abstract}
Generalized Fourier integral operators (FIOs) acting on Colombeau algebras are defined.
This is based on a theory of generalized oscillatory integrals (OIs) whose phase functions
as well as amplitudes may be generalized functions of Colombeau type. 
The mapping properties of these FIOs are studied as the composition with a generalized pseudodifferential operator. Finally, the microlocal Colombeau regularity for OIs and the influence of the FIO action on generalized wave front sets are investigated. This theory of generalized FIOs is motivated by the
need of a general framework for partial differential operators with non-smooth coefficients and
distributional data.  
\end{abstract}

%%% ----------------------------------------------------------------------
\maketitle
%%% ----------------------------------------------------------------------

\section{Introduction}
 
This work is part of a program that aims to solve linear partial differential equations with non-smooth
coefficients and highly singular data and investigate the qualitative properties of the solutions.
A well established theory with powerful analytic methods is available in the case of operators with
(relatively) smooth coefficients \cite{Hoermander:V1-4}, but cannot be applied to many models from physics which involve non-smooth variations of the physical parameters. These models require indeed partial differential operators where the smoothness assumption on the coefficients is dropped. Furthermore, in case of nonlinear operations (cf. \cite{HdH:01, HS:68, O:92}), the theory of distribution does not provide a general framework in which solutions exist.

An alternative framework is provided by the theory of Colombeau algebras of generalized functions
\cite{Colombeau:85, GKOS:01, O:92}. We recall that the space of distributions $\D'(\Om)$ is embedded via convolution with a mollifier in the Colombeau algebra $\G(\Om)$ of generalized functions on $\Om$ and interpreting the non-smooth coefficients and data as elements of the Colombeau algebra, existence and uniqueness has been established
for many classes of equations by now
\cite{Biagioni:90, BO:92, BO:92b, CO:90, GH:03, HO:03, LO:91, O:88, O:92, OR:98a, OR:98b, Ste:98}.
In order to study the regularity of solutions,
microlocal techniques have to be introduced into this setting, in particular, pseudodifferential
operators with generalized amplitudes and generalized wave front sets. This has been done in the papers
\cite{GGO:03, GH:05, GH:05b, Hoermann:99, GH:04, HK:01, HOP:05, NPS:98, Pilipovic:94},
with a special attention for elliptic equations and hypoellipticity.

The interest for hyperbolic equations, regularity of solutions and inverse problems (determining the non-smooth coefficients from the data  is an important problem in geophysics \cite{dHSto:02}), leads in the case of differential operators with Colombeau coefficients, to a theory of Fourier integral operators with
generalized amplitudes {\em and} generalized phase functions. This has been initiated in \cite{GHO:06} and has provided some first results on propagation of singularities in the dual $\LL(\Gc(\Om),\wt{\C})$ of the Colombeau algebra $\Gc(\Om)$. We recall that within the Colombeau algebra ${\mathcal G}(\Omega)$, regularity theory is based on
the subalgebra ${\mathcal G}^\infty(\Omega)$ of regular generalized functions, whose intersection with ${\mathcal D}'(\Omega)$ coincides with $\Cinf(\Omega)$. Since $\Ginf(\Om)\subseteq \G(\Om)\subseteq \LL(\Gc(\Om),\wt{\C})$, two different regularity theories coexist in the dual: one based on $\G(\Om)$ and one based on $\Ginf(\Om)$.  

This work can be considered as a compendium of \cite{GHO:06}, in the sense that collects (without proof) the main results achieved in \cite{GHO:06} and studies the composition between a generalized Fourier integral operator and a generalized pseudodifferential operator in addition. 

We can now describe the contents in more detail. Section 2 provides the needed background of Colombeau theory. In particular, topological concepts, generalized symbols and the definition of $\G$- and $\Ginf$-wave front set are recalled. In Subsection \ref{asymp_new} we elaborate and state in full generality the notion of asymptotic expansion of a generalized symbol introduced for the first time in \cite{GGO:03} and we prove a new and technically useful characterization. Section 3 develops the foundations for generalized Fourier integral operators: oscillatory integrals with generalized phase functions and amplitudes. They are then supplemented by an additional parameter in Section 4, leading to the notion of a Fourier integral operator with generalized amplitude and phase function. We study the mapping properties of such operators on Colombeau algebras, the extension to the dual $\LL(\Gc(\Om),\wt{\C})$ and we present suitable assumptions on phase function and amplitude which lead to $\Ginf$-mapping properties. The core of the work is Section 5, where, by making use of some technical preliminaries, we study in Theorem \ref{theo_comp} the composition $a(x,D)F_\omega(b)$ of a generalized pseudodifferential operator $a(x,D)$ with a generalized Fourier integral operator of the form 
\[
F_\omega(b)(u)(x)=\int_{\R^n}\esp^{i\omega(x,\eta)}b(x,\eta)\widehat{u}(\eta)\, \dslash\eta.
\]
The final Section 6 collects the first results of microlocal analysis for generalized Fourier integral operators obtained in \cite[Section 4]{GHO:06}. A deeper investigation of the microlocal properties of generalized Fourier integral operators is current topic of research.

\section{Basic notions: Colombeau and duality theory}
\label{section_basic}
This section gives some background of Colombeau and duality theory for the techniques used in the sequel of the current work. As main sources we refer to \cite{Garetto:05b, Garetto:05a, GGO:03, GH:05, GKOS:01}.

\subsection{Nets of complex numbers}
Before dealing with the major points of the Colombeau construction we begin by recalling some definitions concerning elements of $\mathbb{C}^{(0,1]}$.

A net $(u_\eps)_\eps$ in $\C^{(0,1]}$ is said to be \emph{strictly nonzero} if there exist $r>0$ and $\eta\in(0,1]$ such that $|u_\eps|\ge \eps^r$ for all $\eps\in(0,\eta]$.\\
The regularity issues discussed in Sections 3 and 4 will make use of the following concept of \emph{slow scale net (s.s.n)}. A slow scale net is a net $(r_\eps)_\eps\in\C^{(0,1]}$ such that 
\[
\forall q\ge 0\, \exists c_q>0\, \forall\eps\in(0,1]\qquad\qquad|r_\eps|^q\le c_q\eps^{-1}.
\]
Throughout this paper we will always consider slow scale nets $(r_\eps)_\eps$ of positive real numbers with $\inf_{\eps\in(0,1]} r_\eps\neq 0$.
%Let 
%\begin{equation}\label{slow_scale}\begin{split}
  %\Pi_\ssc := \{ (\omega_\eps)_\eps\in\R^{(0,1]}\, :\quad &\exists c > 0\, \forall \eps : c \le \omega_\eps,\\ 
                      %&\forall p \ge 0\, \exists c_p > 0\, \forall\eps: \omega_\eps^p \le c_p\, \eps^{-1} \}.
%\end{split}\end{equation}           
A net $(u_\eps)_\eps$ in $\C^{(0,1]}$ is said to be \emph{slow scale-strictly nonzero} is there exist a slow scale net $(s_\eps)_\eps$ and $\eta\in(0,1]$ such that $|u_\eps|\ge 1/s_\eps$ for all $\eps\in(0,\eta]$.

\subsection{$\wt{\C}$-modules of generalized functions based on a locally convex topological vector space $E$}
\label{subsection_G_E}
The most common algebras of generalized functions of Colombeau type as well as the spaces of generalized symbols we deal with are introduced and investigated under a topological point of view by referring to the following models.

Let $E$ be a locally convex topological vector space topologized through the family of seminorms $\{p_i\}_{i\in I}$. The elements of 
\[
\begin{split} 
\mM_E &:= \{(u_\eps)_\eps\in E^{(0,1]}:\, \forall i\in I\,\, \exists N\in\N\quad p_i(u_\eps)=O(\eps^{-N})\, \text{as}\, \eps\to 0\},\\
\mM^\ssc_E &:=\{(u_\eps)_\eps\in E^{(0,1]}:\, \forall i\in I\,\, \exists (\omega_\eps)_\eps\, \text{s.s.n.}\quad p_i(u_\eps)=O(\omega_\eps)\, \text{as}\, \eps\to 0\},\\
\mM^\infty_E &:=\{(u_\eps)_\eps\in E^{(0,1]}:\, \exists N\in\N\,\, \forall i\in I\quad p_i(u_\eps)=O(\eps^{-N})\, \text{as}\, \eps\to 0\},\\
\Neg_E &:= \{(u_\eps)_\eps\in E^{(0,1]}:\, \forall i\in I\,\, \forall q\in\N\quad p_i(u_\eps)=O(\eps^{q})\, \text{as}\, \eps\to 0\},
\end{split}
\]
 
are called $E$-moderate, $E$-moderate of slow scale type, $E$-regular and $E$-negligible, respectively. We define the space of \emph{generalized functions based on $E$} as the factor space $\G_E := \mM_E / \Neg_E$. 

The ring of \emph{complex generalized numbers}, denoted by $\wt{\C}:=\EM/\Neg$, is obtained by taking $E=\C$. $\wt{\C}$ is not a field since by Theorem 1.2.38 in \cite{GKOS:01} only the elements which are strictly nonzero (i.e. the elements which have a representative strictly nonzero) are invertible and vice versa. Note that all the representatives of $u\in\wt{\C}$ are strictly nonzero once we know that there exists at least one which is strictly nonzero. When $u$ has a representative which is slow scale-strictly nonzero we say that it is \emph{slow scale-invertible}.

For any locally convex topological vector space $E$ the space $\G_E$ has the structure of a $\wt{\C}$-module. The ${\C}$-module $\G^\ssc_E:=\mM^\ssc_E/\Neg_E$ of \emph{generalized functions of slow scale type} and the $\wt{\C}$-module $\Ginf_E:=\mM^\infty_E/\Neg_E$ of \emph{regular generalized functions} are subrings of $\G_E$ with more refined assumptions of moderateness at the level of representatives. We use the notation $u=[(u_\eps)_\eps]$ for the class $u$ of $(u_\eps)_\eps$ in $\G_E$. This is the usual way we adopt to denote an equivalence class.

The family of seminorms $\{p_i\}_{i\in I}$ on $E$ determines a \emph{locally convex $\wt{\C}$-linear} topology on $\G_E$ (see \cite[Definition 1.6]{Garetto:05b}) by means of the \emph{valuations}
\[
\val_{p_i}([(u_\eps)_\eps]):=\val_{p_i}((u_\eps)_\eps):=\sup\{b\in\R:\qquad p_i(u_\eps)=O(\eps^b)\, \text{as $\eps\to 0$}\}
\] 
and the corresponding \emph{ultra-pseudo-seminorms} $\{\mP_i\}_{i\in I}$, where $\mP_i(u)=\esp^{-\val_{p_i}(u)}$. For the sake of brevity we omit to report definitions and properties of valuations and ultra-pseudo-seminorms in the abstract context of $\wt{\C}$-modules. Such a theoretical presentation can be found in \cite[Subsections 1.1, 1.2]{Garetto:05b}. We recall that on $\wt{\C}$ the valuation and the ultra-pseudo-norm obtained through the absolute value in $\C$ are denoted by $\val_{\wt{\C}}$ and $|\cdot|_{\esp}$ respectively. Concerning the space $\Ginf_E$ of regular generalized functions based on $E$ the moderateness properties of $\mM_E^\infty$ allows to define the valuation 
\[
\val^\infty_E ((u_\eps)_\eps):=\sup\{b\in\R:\, \forall i\in I\qquad p_i(u_\eps)=O(\eps^b)\, \text{as $\eps\to 0$}\}
\]
which extends to $\Ginf_E$ and leads to the ultra-pseudo-norm $\mP^\infty_E(u):=\esp^{-\val_E^\infty(u)}$.

The Colombeau algebra $\G(\Om)=\EM(\Om)/\Neg(\Om)$ can be obtained as a ${\wt{\C}}$-module of $\G_E$-type by choosing $E=\E(\Om)$. Topologized through the family of seminorms $p_{K,i}(f)=\sup_{x\in K, |\alpha|\le i}|\partial^\alpha f(x)|$ where $K\Subset\Om$, the space $\E(\Om)$ induces on $\G(\Om)$ a metrizable and complete locally convex $\wt{\C}$-linear topology  which is determined by the ultra-pseudo-seminorms $\mP_{K,i}(u)=\esp^{-\val_{p_{K,i}}(u)}$.  
From a structural point of view $\Om\to\G(\Om)$ is a fine sheaf of differential algebras on $\R^n$.

The Colombeau algebra $\Gc(\Om)$ of generalized functions with compact support is topologized by means of a strict inductive limit procedure. More precisely, setting $\G_K(\Om):=\{u\in\Gc(\Om):\, \supp\, u\subseteq K\}$ for $K\Subset\Om$, $\Gc(\Om)$ is the strict inductive limit of the sequence of locally convex topological $\wt{\C}$-modules $(\G_{K_n}(\Om))_{n\in\N}$, where $(K_n)_{n\in\N}$ is an exhausting sequence of compact subsets of $\Om$ such that $K_n\subseteq K_{n+1}$. We endow $\G_K(\Om)$ with the topology induced by $\G_{\mathcal{D}_{K'}(\Om)}$ where $K'$ is a compact subset containing $K$ in its interior. For more details concerning the topological structure of $\Gc(\Om)$ see \cite[Example 3.7]{Garetto:05a}.  

Regularity theory in the Colombeau context as initiated in \cite{O:92} is based on the subalgebra $\Ginf(\Om)$ of all elements $u$ of $\G(\Om)$ having a representative $(u_\eps)_\eps$ belonging to the set
\begin{multline*}
\EM^\infty(\Om):=\{(u_\eps)_\eps\in\E[\Om]:\ \forall K\Subset\Om\, \exists N\in\N\, \forall\alpha\in\N^n\\ \sup_{x\in K}|\partial^\alpha u_\eps(x)|=O(\eps^{-N})\, \text{as $\eps\to 0$}\}.
\end{multline*}
$\Ginf(\Om)$ can be seen as the intersection $\cap_{K\Subset\Om}\Ginf(K)$, where $\Ginf(K)$ is the space of all $u\in\G(\Om)$ having a representative $(u_\eps)_\eps$ satisfying the condition: $\exists N\in\N$ $\forall\alpha\in\N^n$,\ $\sup_{x\in K}|\partial^\alpha u_\eps(x)|=O(\eps^{-N})$. The ultra-pseudo-seminorms $\mP_{\Ginf(K)}(u):=\esp^{-\val_{\Ginf(K)}}$, where 
\[
\val_{\Ginf(K)}:=\sup\{b\in\R:\, \forall\alpha\in\N^n\quad \sup_{x\in K}|\partial^\alpha u_\eps(x)|=O(\eps^b)\}
\]
equip $\Ginf(\Om)$ with the topological structure of a \emph{Fr\'echet $\wt{\C}$-module}.\\
Finally, let us consider the algebra $\Gcinf(\Om):=\Ginf(\Om)\cap\Gc(\Om)$. On $\Ginf_K(\Om):=\{u\in\Ginf(\Om):\, \supp\, u\subseteq K\}$ with $K\Subset\Om$, we define the ultra-pseudo-norm $\mP_{\Ginf_K(\Om)}(u)=\esp^{-\val^\infty_K(u)}$ where $\val^\infty_K(u):=\val^\infty_{\mathcal{D}_{K'}(\Om)}(u)$ and $K'$ is any compact set containing $K$ in its interior. At this point, given an exhausting sequence $(K_n)_n$ of compact subsets of $\Om$, the strict inductive limit procedure equips $\Gcinf(\Om)=\cup_n \Ginf_{K_n}(\Om)$ with a complete and separated locally convex $\wt{\C}$-linear topology (see \cite[Example 3.13]{Garetto:05a}.  

\subsection{Topological dual of a Colombeau algebra}
A duality theory for $\wt{\C}$-modules had been developed in \cite{Garetto:05b} in the framework of topological and locally convex topological $\wt{\C}$-modules. Starting from an investigation of $\LL(\G,\wt{\C})$, the $\wt{\C}$-module of all $\wt{\C}$-linear and continuous functionals on $\G$, it provides the theoretical tools for dealing with the topological duals of the Colombeau algebras $\Gc(\Om)$ and $\G(\Om)$. In the paper $\LL(\G(\Om),\wt{\C}$ and $\LL(\Gc(\Om),\wt{\C})$ are endowed with the \emph{topology of uniform convergence on bounded subsets}. This is determined by the ultra-pseudo-seminorms 
\[
\mP_{B^\circ}(T)=\sup_{u\in B}|T(u)|_\esp,
\]
where $B$ is varying in the family of all bounded subsets of $\G(\Om)$ and $\Gc(\Om)$ respectively. For general results concerning the relation between boundedness and ultra-pseudo-seminorms in the context of locally convex topological $\wt{\C}$-modules we refer to \cite[Section 1]{Garetto:05a}. For the choice of topologies illustrated in this section Theorem 3.1 in \cite{Garetto:05a} shows the following chains of continuous embeddings:
\begin{equation}
\label{chain_1}
\Ginf(\Om)\subseteq\G(\Om)\subseteq\LL(\Gc(\Om),\wt{\C}),
\end{equation}
\begin{equation}
\label{chain_2}
\Gcinf(\Om)\subseteq\Gc(\Om)\subseteq\LL(\G(\Om),\wt{\C}),
\end{equation}  
\begin{equation}
\label{chain_3}
\LL(\G(\Om),\wt{\C})\subseteq\LL(\Gc(\Om),\wt{\C}).
\end{equation}  
In \eqref{chain_1} and \eqref{chain_2} the inclusion in the dual is given via integration $\big(u\to\big( v\to\int_\Om u(x)v(x)dx\big)\big)$ (for definitions and properties of the integral of a Colombeau generalized functions see \cite{GKOS:01}) while the embedding in \eqref{chain_3} is determined by the inclusion $\Gc(\Om)\subseteq\G(\Om)$. Since $\Om\to\LL(\Gc(\Om),\wt{\C})$ is a sheaf we can define the \emph{support of a functional $T$} (denoted by $\supp\, T$). In analogy with distribution theory, from Theorem 1.2 in \cite{Garetto:05a} we have that $\LL(\G(\Om),\wt{\C})$ can be identified with the set of functionals in $\LL(\Gc(\Om),\wt{\C})$ having compact support. 

By \eqref{chain_1} it is meaningful to measure the regularity of a functional in the dual $\LL(\Gc(\Om),\wt{\C})$ with respect to the algebras $\G(\Om)$ and $\Ginf(\Om)$. We define the \emph{$\G$-singular support} of $T$ (${\rm{singsupp}}_\G\, T$) as the complement of the set of all points $x\in\Om$ such that the restriction of $T$ to some open neighborhood $V$ of $x$ belongs to $\G(V)$. Analogously replacing $\G$ with $\Ginf$ we introduce the notion of \emph{$\Ginf$-singular support} of $T$ denoted by ${\rm{singsupp}}_{\Ginf} T$. This investigation of regularity is connected with the notions of generalized wave front sets considered in Subsection \ref{sub_sec_micro} and will be focused on the functionals in $\LL(\Gc(\Om),\wt{\C})$ and $\LL(\G(\Om),\wt{\C})$ which have a ``basic'' structure. In detail, we say that $T\in\LL(\Gc(\Om),\wt{\C})$ is ${\rfunc}$ if there exists a net $(T_\eps)_\eps\in\D'(\Om)^{(0,1]}$ fulfilling the following condition: for all $K\Subset\Om$ there exist $j\in\N$, $c>0$, $N\in\N$ and $\eta\in(0,1]$ such that
\[
\forall f\in\D_K(\Om)\, \forall\eps\in(0,\eta]\qquad\quad
|T_\eps(f)|\le c\eps^{-N}\sup_{x\in K,|\alpha|\le j}|\partial^\alpha f(x)|
\]
and $Tu=[(T_\eps u_\eps)_\eps]$ for all $u\in\Gc(\Om)$.\\
In the same way a functional $T\in\LL(\G(\Om),\wt{\C})$ is said to be $\rfunc$ if there exists a net  $(T_\eps)_\eps\in\E'(\Om)^{(0,1]}$ such that there exist $K\Subset\Om$, $j\in\N$, $c>0$, $N\in\N$ and $\eta\in(0,1]$ with the property 
\[
\forall f\in\Cinf(\Om)\, \forall\eps\in(0,\eta]\qquad\quad
|T_\eps(f)|\le c\eps^{-N}\sup_{x\in K,|\alpha|\le j}|\partial^\alpha f(x)|
\]
and $Tu=[(T_\eps u_\eps)_\eps]$ for all $u\in\G(\Om)$.\\
Clearly the sets $\Lb(\Gc(\Om),\wt{\C})$ and $\Lb(\G(\Om),\wt{\C})$ of $\rfunc$ functionals are $\wt{\C}$-linear subspaces of $\LL(\Gc(\Om),\wt{\C})$ and $\LL(\G(\Om),\wt{\C})$ respectively. In addition if $T$ is a $\rfunc$ functional in $\LL(\Gc(\Om),\wt{\C})$ and $u\in\Gc(\Om)$ then $uT\in\LL(\G(\Om),\wt{\C})$ is $\rfunc$. We recall that nets $(T_\eps)_\eps$ which define basic maps as above were already considered in \cite{Delcroix:05,DelSca:00} with slightly more general notions of moderateness and different choices of notations and language.

%More in general the language of $\rfunc$ functionals can be used when $(E,\{p_i\}_{i\in I})$ and $(F,\{q_j\}_{j\in J})$ are locally convex topological vector spaces for those maps $T\in\L(\G_E,\G_F)$ with the property that there exists a net $(T_\eps)_\eps$ of continuous linear maps from $E$ to $F$ which fulfills the condition    
%\[
%\forall j\in J\, \exists I_0\subseteq I\, \text{finite}\ \exists N\in\N\, \exists c>0\, \exists\eta\in(0,1]\, \forall u\in E\, \forall\eps\in(0,\eta]\qquad\quad
%q_j(T_\eps u)\le c\eps^{-N}\max_{i\in I_0}p_i(u)
%\]
%and such that $Tu=[(T_\eps(u_\eps))_\eps]$ for all $u\in\G_E$.

\subsection{Generalized symbols}
\label{subsec_gen_symb}
For the convenience of the reader we recall a few basic notions concerning the sets of symbols employed in the course of this work. More details can be found in \cite{GGO:03, GH:05} where a theory of generalized pseudodifferential operators acting on Colombeau algebras is developed.
\subsubsection*{Definitions.}
Let $\Om$ be an open subset of $\R^n$, $m\in\R$ and $\rho,\delta\in[0,1]$. $S^m_{\rho,\delta}(\Om\times\R^p)$ denotes the set of symbols of order $m$ and type $(\rho,\delta)$ as introduced by H\"ormander in \cite{Hoermander:71}. The subscript $(\rho,\delta)$ is omitted when $\rho=1$ and $\delta=0$. If $V$ is an open conic set of $\Om\times\R^{p}$ we define $S^m_{\rho,\delta}(V)$ as the set of all $a\in\Cinf(V)$ such that for all $K\Subset V$, 
\[
\sup_{(x,\xi)\in K^{c}}\lara{\xi}^{-m+\rho|\alpha|-\delta|\beta|}|\partial^\alpha_\xi\partial^\beta_x a(x,\xi)|<\infty,
\]
where $K^{c}:=\{(x,t\xi):\, (x,\xi)\in K\ t\ge 1\}$.
We also make use of the space $S^1_{\rm{hg}}(\Om\times\R^p\setminus 0)$ of all $a\in S^1(\Om\times\R^p\setminus 0)$ homogeneous of degree $1$ in $\xi$. Note that the assumption of homogeneity allows to state the defining conditions above in terms of the seminorms
\[
\sup_{x\in K,\xi\in\R^p\setminus 0}|\xi|^{-1+\alpha}|\partial^\alpha_\xi\partial^\beta_x a(x,\xi)|
\]
where $K$ is any compact subset of $\Om$.

The space of \emph{generalized symbols} $\wt{\mathcal{S}}^m_{\rho,\delta}(\Om\times\R^p)$ is the $\wt{\C}$-module of $\G_E$-type obtained by taking $E=S^{m}_{\rho,\delta}(\Om\times\R^p)$ equipped with the family of seminorms
\[
|a|^{(m)}_{\rho,\delta,K,j}=\sup_{x\in K,\xi\in\R^n}\sup_{|\alpha+\beta|\le j}|\partial^\alpha_\xi\partial^\beta_x a(x,\xi)|\lara{\xi}^{-m+\rho|\alpha|-\delta|\beta|},\qquad\quad K\Subset\Om,\, j\in\N.
\]
The valuation corresponding to $|\cdot|^{(m)}_{\rho,\delta,K,j}$ gives the ultra-pseudo-seminorm $\mP^{(m)}_{\rho,\delta,K,j}$. $\wt{\mathcal{S}}^m_{\rho,\delta}(\Om\times\R^p)$ topologized through the family $\{\mP^{(m)}_{\rho,\delta,K,j}\}_{K\Subset\Om,j\in\N}$ of ultra-pseudo-seminorms is a Fr\'echet $\wt{\C}$-module. In analogy with $\wt{\mathcal{S}}^m_{\rho,\delta}(\Om\times\R^p)$ we use the notation $\wt{\mathcal{S}}^m_{\rho,\delta}(V)$ for the $\wt{\C}$-module $\G_{S^m_{\rho,\delta}(V)}$.

$\wt{\mathcal{S}}^m_{\rho,\delta}(\Om_x\times\R^p_\xi)$ has the structure of a sheaf with respect to $\Om$. So it is meaningful to talk of the support with respect to $x$ of a generalized symbol $a$ ($\supp_x\, a$).\\  
We define the \emph{conic support} of $a\in\wt{\mathcal{S}}^m_{\rho,\delta}(\Om\times\R^p)$ (${\rm{cone\, supp}}\, a$) as the complement of the set of points $(x_0,\xi_0)\in\Om\times\R^p$ such that there exists a relatively compact open neighborhood $U$ of $x_0$, a conic open neighborhood $\Gamma$ of $\xi_0$ and a representative $(a_\eps)_\eps$ of $a$ satisfying the condition
\begin{equation}
\label{cond_conic_supp}
\forall\alpha\in\N^p\, \forall\beta\in\N^n\, \forall q\in\N\quad \sup_{x\in U,\xi\in\Gamma}\lara{\xi}^{-m+\rho|\alpha|-\delta|\beta|}|\partial^\alpha_\xi\partial^\beta_x a_\eps(x,\xi)|=O(\eps^q)\, \text{as $\eps\to 0$}.
\end{equation}
By definition ${\rm{cone\, supp}}\, a$ is a closed conic subset of $\Om\times\R^p$. The generalized symbol $a$ is $0$ on $\Om\setminus\pi_x(\rm{cone\, supp}\, a)$.
\subsubsection*{Slow scale symbols.}
In the paper the classes of the factor space $\G^{\,\ssc}_{S^m_{\rho,\delta}(\Om\times\R^p)}$ are called \emph{generalized symbols of slow scale type}. For simplicity we introduce the notation $\wt{S}^{\,m,\ssc}_{\rho,\delta}(\Om\times\R^p)$. Substituting $S^m_{\rho,\delta}(\Om\times\R^p)$ with $S^m_{\rho,\delta}(V)$ we obtain the set $\wt{S}^{m,\ssc}_{\rho,\delta}(V):=\G^{\,\ssc}_{S^m_{\rho,\delta}(V)}$ of slow scale symbols on the open set $V\subseteq\Om\times(\R^p\setminus 0)$. 

\subsubsection*{Generalized symbols of order $-\infty$.}
Different notions of regularity are related to the sets $\wt{\mathcal{S}}^{-\infty}(\Om\times\R^p)$ and $\wt{\mathcal{S}}^{-\infty,\ssc}(\Om\times\R^p)$ of generalized symbols of order $-\infty$.\\ The space $\wt{\mathcal{S}}^{-\infty}(\Om\times\R^p)$ of generalized symbols of order $-\infty$ is defined as the $\wt{\C}$-module $\G_{S^{-\infty}(\Om\times\R^p)}$. Its elements are equivalence classes $a$ whose representatives $(a_\eps)_\eps$ have the property $|a_\eps|^{(m)}_{K,j}=O(\eps^{-N})$ as $\eps\to 0$, where $N$ depends on the order $m$ of the symbol, on the order $j$ of the derivatives and on the compact set $K\subseteq\Om$.\\ $\wt{\mathcal{S}}^{-\infty,\ssc}(\Om\times\R^p)$ is defined by substituting $O(\eps^{-N})$ with $O(\lambda_\eps)$ in the previous estimate, where $(\lambda_\eps)_\eps$ is a slow scale net depending as above on the order $m$ of the symbol, on the order $j$ of the derivatives and on the compact set $K\subseteq\Om$. It follows that $(a_\eps)_\eps$ is $\Ginf$-regular, in the sense that 
\[
|a_\eps|^{(m)}_{K,j}=O(\eps^{-1})
\]
as $\eps\to 0$ for all $m,j$ and $K\Subset\Om$.
 
\subsubsection*{Generalized microsupports.} The $\G$- and $\Ginf$-regularity of generalized symbols on $\Om\times\R^n$ is measured in conical neighborhoods by means of the following notions of microsupports.

Let $a\in\wt{\mathcal{S}}^l_{\rho,\delta}(\Om\times\R^n)$ and $(x_0,\xi_0)\in\CO{\Om}$. The symbol $a$ is $\G$-smoothing at $(x_0,\xi_0)$ if there exist a representative $(a_\eps)_\eps$ of $a$, a relatively compact open neighborhood $U$ of $x_0$ and a conic neighborhood $\Gamma\subseteq\R^n\setminus 0$ of $\xi_0$ such that
\begin{multline}
\label{est_micro_G}
\forall m\in\R\, \forall\alpha,\beta\in\N^n\, \exists N\in\N\, \exists c>0\, \exists\eta\in(0,1]\, \forall(x,\xi)\in U\times\Gamma\, \forall\eps\in(0,\eta]\\
|\partial^\alpha_\xi\partial^\beta_x a_\eps(x,\xi)|\le c\lara{\xi}^m\eps^{-N}.
\end{multline}
The symbol $a$ is $\Ginf$-smoothing at $(x_0,\xi_0)$ if there exist a representative $(a_\eps)_\eps$ of $a$, a relatively compact open neighborhood $U$ of $x_0$, a conic neighborhood $\Gamma\subseteq\R^n\setminus 0$ of $\xi_0$ and a natural number $N\in\N$ such that  
\begin{multline}
\label{est_micro_Ginf}
\forall m\in\R\, \forall\alpha,\beta\in\N^n\, \exists c>0\, \exists\eta\in(0,1]\, \forall(x,\xi)\in U\times\Gamma\, \forall\eps\in(0,\eta]\\
|\partial^\alpha_\xi\partial^\beta_x a_\eps(x,\xi)|\le c\lara{\xi}^m\eps^{-N}.
\end{multline}
We define the \emph{$\G$-microsupport} of $a$, denoted by $\mu\, \supp_\G(a)$, as the complement in $\CO{\Om}$ of the set of points $(x_0,\xi_0)$ where $a$ is $\G$-smoothing and the \emph{$\Ginf$-microsupport} of $a$, denoted by $\mu\, \supp_{\Ginf}(a)$, as the complement in $\CO{\Om}$ of the set of points $(x_0,\xi_0)$ where $a$ is $\Ginf$-smoothing. 
\subsubsection*{Continuity results.}
By simple reasoning at the level of representatives one proves that the usual operations between generalized symbols, as product and derivation, are continuous. In particular the $\wt{\C}$-bilinear map
\begin{equation}
\label{bil_product}
\Gc(\Om)\times\wt{\mathcal{S}}^m_{\rho,\delta}(\Om\times\R^p)\to\wt{\mathcal{S}}^m_{\rho,\delta}(\Om\times\R^p):(u,a)\to a(y,\xi)u(y)
\end{equation}
is continuous. If $l<-p$ each $b\in\wt{\mathcal{S}}^l_{\rho,\delta}(\Om\times\R^p)$ can be integrated on $K\times\R^p$, $K\Subset\Om$, by setting
\[
\int_{K\times\R^p}b(y,\xi)\, dy\, d\xi :=\biggl[\biggl(\int_{K\times\R^p}b_\eps(y,\xi)\, dy\, d\xi\biggr)_\eps\biggr].
\]
Moreover if $\supp_y b\Subset\Om$ we define the integral of $b$ on $\Om\times\R^p$ as
\[
\int_{\Om\times\R^p}b(y,\xi)\, dy\, d\xi :=\int_{K\times\R^p}b(y,\xi)\, dy\, d\xi,
\]
where $K$ is any compact set containing $\supp_y b$ in its interior. Integration defines a continuous $\wt{\C}$-linear functional on this space of generalized symbols with compact support in $y$ as it is proven in \cite[Proposition 1.1, Remark 1.2]{GHO:06}.
 
\subsection{Asymptotic expansions in $\wt{\mathcal{S}}^m_{\rho,\delta}(\Om\times\R^p)$ and $\wt{\mathcal{S}}^{m,\ssc}_{\rho,\delta}(\Om\times\R^p)$}
\label{asymp_new}
In this subsection we elaborate and state in full generality the notion of asymptotic expansion of a generalized symbol introduced for the first time in \cite{GGO:03}. We also provide a technical result which will be useful in Section \ref{section_comp}. We begin by working on moderate nets of symbols and we recall that a net $(C_\eps)_\eps\in\C^{(0,1]}$ is said to be of slow scale type if there exists a slow scale net $(\omega_\eps)_\eps$ such that $|C_\eps|=O(\omega_\eps)$.
\begin{defn}
\label{asymp_mod}
Let $\{m_j\}_{j\in\mathbb{N}}$ be sequences of real numbers with
$m_j\searrow -\infty$, $m_0=m$. 
\begin{itemize}
\item[(i)] Let $\{(a_{j,\epsilon})_\epsilon\}_{j\in\mathbb{N}}$
be a sequence of elements $(a_{j,\epsilon})_\epsilon\in\mM_{S^{m_j}_{\rho,\delta}(\Om\times\R^p)}$. 
We say that the formal series $\sum_{j=0}^\infty(a_{j,\epsilon})_\epsilon$ is the asymptotic expansion of
$(a_\epsilon)_\epsilon\in\mathcal{E}[\Omega\times\mathbb{R}^n]$,
$(a_\epsilon)_\epsilon\sim\sum_j(a_{j,\epsilon})_\epsilon$ for short, iff for all $r\ge 1$
\[
\biggl(a_\epsilon-\sum_{j=0}^{r-1}a_{j,\epsilon}\biggr)_\epsilon\in \mM_{S^{m_r}_{\rho,\delta}(\Omega\times\mathbb{R}^p)}. 
\]
\item[(ii)] Let $\{(a_{j,\epsilon})_\epsilon\}_{j\in\mathbb{N}}$
be a sequence of elements $(a_{j,\epsilon})_\epsilon\in\mM^\ssc_{S^{m_j}_{\rho,\delta}(\Om\times\R^p)}$. 
We say that the formal series $\sum_{j=0}^\infty(a_{j,\epsilon})_\epsilon$ is the asymptotic expansion of
$(a_\epsilon)_\epsilon\in\mathcal{E}[\Omega\times\mathbb{R}^n]$,
$(a_\epsilon)_\epsilon\sim_\ssc\sum_j(a_{j,\epsilon})_\epsilon$ for short, iff for all $r\ge 1$
\[
\biggl(a_\epsilon-\sum_{j=0}^{r-1}a_{j,\epsilon}\biggr)_\epsilon\in \mM^\ssc_{S^{m_r}_{\rho,\delta}(\Omega\times\mathbb{R}^p)}. 
\]
\end{itemize}
\end{defn}
\begin{thm}
\label{theo_asymp_expan}
\leavevmode
\begin{itemize}
\item[(i)] Let $\{(a_{j,\epsilon})_\epsilon\}_{j\in\mathbb{N}}$
be a sequence of elements $(a_{j,\epsilon})_\epsilon\in\mM_{S^{m_j}_{\rho,\delta}(\Om\times\R^p)}$ with $m_j\searrow -\infty$ and $m_0=m$. Then, there exists $(a_\eps)_\eps\in\mM_{S^{m}_{\rho,\delta}(\Om\times\R^p)}$ such that $(a_\epsilon)_\epsilon\sim\sum_j(a_{j,\epsilon})_\epsilon$. Moreover, if $(a'_\epsilon)_\epsilon\sim\sum_j(a_{j,\epsilon})_\epsilon$ then $(a_\eps-a'_\eps)_\eps\in\mM_{S^{-\infty}(\Om\times\R^p)}$.
\item[(ii)] Let $\{(a_{j,\epsilon})_\epsilon\}_{j\in\mathbb{N}}$
be a sequence of elements $(a_{j,\epsilon})_\epsilon\in\mM^{\ssc}_{S^{m_j}_{\rho,\delta}(\Om\times\R^p)}$ with $m_j\searrow -\infty$ and $m_0=m$. Then, there exists $(a_\eps)_\eps\in\mM^\ssc_{S^{m}_{\rho,\delta}(\Om\times\R^p)}$ such that $(a_\epsilon)_\epsilon\sim_\ssc\sum_j(a_{j,\epsilon})_\epsilon$. Moreover, if $(a'_\epsilon)_\epsilon\sim_\ssc\sum_j(a_{j,\epsilon})_\epsilon$ then $(a_\eps-a'_\eps)_\eps\in\mM^\ssc_{S^{-\infty}(\Om\times\R^p)}$.
\end{itemize}
\end{thm}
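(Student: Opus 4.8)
The plan is to run the classical Borel--H\"ormander construction of an asymptotic sum of symbols (see \cite{Hoermander:V1-4}) at the level of representatives, the only genuinely new ingredient being that the cut-off parameters have to be chosen as suitable \emph{nets}, moderate in case (i) and of slow scale type in case (ii), so as to absorb the $\eps$-dependence of the moderateness estimates. First I fix an excision function $\chi\in\Cinf(\R^p)$ with $\chi(\xi)=0$ for $|\xi|\le 1$ and $\chi(\xi)=1$ for $|\xi|\ge 2$, and an exhausting sequence $(K_l)_l$ of compact subsets of $\Om$. Grouping the symbols of equal order (each value of the sequence being attained only finitely often, as $m_j\to-\infty$) we may and do assume $m_j<m_{j-1}$ for every $j\ge 1$, and we put $\theta_j:=1/(m_{j-1}-m_j)>0$. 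The computational core is the following single-net estimate, proved by Leibniz' rule together with the elementary facts that $\lara{\xi}\ge t$ on $\supp\chi(\cdot/t)$ while $t\le|\xi|\le 2t$ on $\supp(\partial^\gamma\chi)(\cdot/t)$ for $\gamma\neq 0$ (so that the factor $t^{-|\gamma|}$ produced by differentiating the cut-off beats the loss $\lara{\xi}^{\rho|\gamma|}$ coming from the $\xi$-derivatives): there is a constant $C_j=C_j(p,\rho,\delta,\chi)>0$ such that for all $t\ge1$, all $K\Subset\Om$ and all $c\in S^{m_j}_{\rho,\delta}(\Om\times\R^p)$,
\[
|\chi(\cdot/t)\,c|^{(m_{j-1})}_{\rho,\delta,K,j}\le C_j\,t^{\,m_j-m_{j-1}}\,|c|^{(m_j)}_{\rho,\delta,K,j}.
\]

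Using this, I define recursively $t_{0,\eps}:=1$ and, for $j\ge1$,
\[
t_{j,\eps}:=\max\Bigl(j,\ t_{j-1,\eps},\ \bigl(2^{\,j}C_j\,(1+|a_{j,\eps}|^{(m_j)}_{\rho,\delta,K_j,j})\bigr)^{\theta_j}\Bigr),
\]
and set $a_\eps(x,\xi):=\sum_{j\ge0}\chi(\xi/t_{j,\eps})\,a_{j,\eps}(x,\xi)$. Since $t_{j,\eps}\ge j\to\infty$ in $j$, for each fixed $\eps$ this is a locally finite sum of smooth functions, so $(a_\eps)_\eps\in\E[\Om\times\R^p]$. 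Moreover, because the nets $(|a_{j,\eps}|^{(m_j)}_{\rho,\delta,K_j,j})_\eps$ are moderate in case (i) and slow scale in case (ii) by Definition \ref{asymp_mod}, and a fixed positive power of a moderate (resp. slow scale) net is again of the same type, each $(t_{j,\eps})_\eps$ is moderate (resp. slow scale).

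To verify $(a_\eps)_\eps\sim\sum_j(a_{j,\eps})_\eps$, respectively $\sim_\ssc$, I fix $r\ge1$ and decompose
\[
a_\eps-\sum_{j=0}^{r-1}a_{j,\eps}=\sum_{j=0}^{r-1}\bigl(\chi(\cdot/t_{j,\eps})-1\bigr)a_{j,\eps}+\sum_{j\ge r}\chi(\cdot/t_{j,\eps})a_{j,\eps}.
\]
The first sum is finite; each of its terms is, for fixed $\eps$, a symbol of order $-\infty$ whose seminorms are controlled by a fixed positive power of $t_{j,\eps}$ times a seminorm of $a_{j,\eps}$, hence by a moderate (resp. slow scale) net. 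In the second sum I fix $K\Subset\Om$ and $k\in\N$, choose $l_1$ with $K\subseteq K_{l_1}$, put $J_0:=\max(r,k,l_1)$, and discard the finitely many terms with $r\le j\le J_0$ (again moderate, resp. slow scale, by the same elementary estimate for $\chi(\cdot/t)c$). For $j>J_0$ one has $j>k$, $K\subseteq K_j$ and $m_{j-1}\le m_r$, so monotonicity of the seminorm in its order, compact set and differentiation index, together with the core estimate and the definition of $t_{j,\eps}$, gives
\[
|\chi(\cdot/t_{j,\eps})a_{j,\eps}|^{(m_r)}_{\rho,\delta,K,k}\le|\chi(\cdot/t_{j,\eps})a_{j,\eps}|^{(m_{j-1})}_{\rho,\delta,K_j,j}\le C_j\,t_{j,\eps}^{\,m_j-m_{j-1}}\,|a_{j,\eps}|^{(m_j)}_{\rho,\delta,K_j,j}\le 2^{-j},
\]
valid for \emph{all} $\eps\in(0,1]$; summing over $j>J_0$ yields a bound $\le1$. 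Hence $\bigl(a_\eps-\sum_{j=0}^{r-1}a_{j,\eps}\bigr)_\eps\in\mM_{S^{m_r}_{\rho,\delta}(\Om\times\R^p)}$, respectively $\mM^\ssc_{S^{m_r}_{\rho,\delta}(\Om\times\R^p)}$; taking $r=1$ and adding back $(a_{0,\eps})_\eps$ shows $(a_\eps)_\eps\in\mM_{S^m_{\rho,\delta}(\Om\times\R^p)}$, resp. $\mM^\ssc_{S^m_{\rho,\delta}(\Om\times\R^p)}$. For uniqueness, if $(a'_\eps)_\eps$ has the same expansion then $(a_\eps-a'_\eps)_\eps\in\mM_{S^{m_r}_{\rho,\delta}}$ for every $r$; since $m_r\searrow-\infty$ and $|\cdot|^{(m)}_{\rho,\delta,K,j}\le|\cdot|^{(m_r)}_{\rho,\delta,K,j}$ whenever $m\ge m_r$, this forces $(a_\eps-a'_\eps)_\eps\in\mM_{S^{-\infty}(\Om\times\R^p)}$, resp. $\mM^\ssc_{S^{-\infty}(\Om\times\R^p)}$.

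The only point that departs from the classical proof, and the one I expect to be the main obstacle, is exactly the self-referential choice of the nets $t_{j,\eps}$: making $t_{j,\eps}$ depend on $|a_{j,\eps}|^{(m_j)}_{\rho,\delta,K_j,j}$ itself is what forces the decisive per-term bound $\le 2^{-j}$ to hold for every $\eps\in(0,1]$ rather than only for $\eps$ small (where the moderateness estimate a priori lives), while stability of the classes $\mM$ and $\mM^\ssc$ under finite sums, products and fixed positive powers guarantees that each $(t_{j,\eps})_\eps$ is admissible and that all the finite-sum remainders stay in the correct class. Case (ii) is then a verbatim transcription of case (i), using throughout that slow scale nets are closed under these operations.
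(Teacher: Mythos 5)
Your proof is correct and follows essentially the same route as the paper's: the classical Borel--H\"ormander summation with excision functions whose dilation parameters are chosen as moderate (resp.\ slow scale) nets calibrated against the seminorm nets $|a_{j,\eps}|^{(m_j)}_{\rho,\delta,K_j,j}$, so that each tail term is bounded by $2^{-j}$ uniformly in $\eps$ and the finite remainders stay in the right class. The only cosmetic differences are that you define $t_{j,\eps}$ self-referentially in terms of the seminorm net (the paper takes $\lambda_{j,\eps}=2^{-j}\eps^{N_j}$ with $\eps^{-N_j}$ a bound for that net) and that you extract the full order drop $m_{j-1}-m_j$ per term --- which is why you need the preliminary grouping to make the orders strictly decreasing --- whereas the paper only extracts a single power of $\lara{\xi}$.
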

\begin{proof}
The proof follows the classical line of arguments, but we will have to keep track of the
$\epsilon$-dependence carefully.
We consider a sequence of relatively compact open sets $\{V_l\}$ contained in $\Omega$, such that for all $l\in\mathbb{N}$, $V_l\subset K_l=\overline{V_l}\subset V_{l+1}$ and $\bigcup_{l\in\mathbb{N}}V_l=\Omega$. Let $\psi\in\mathcal{C}^\infty(\mathbb{R}^p)$, $0\le\psi(\xi)\le 1$, such that $\psi(\xi)=0$ for $|\xi|\le 1$ and $\psi(\xi)=1$ for $|\xi|\ge 2$.\\
$(i)$ We introduce
\[
b_{j,\epsilon}(x,\xi)=\psi(\lambda_{j,\eps}\xi)a_{j,\epsilon}(x,\xi),
\]
where $\lambda_{j,\eps}$ will be positive constants with $\lambda_{j+1,\eps}<\lambda_{j,\eps}<1$, $\lambda_{j,\eps}\to 0$ if $j\to\infty$. We can define
\begin{equation}
\label{a_symb}
a_\epsilon(x,\xi)=\sum_{j\in\mathbb{N}}b_{j,\epsilon}(x,\xi).
\end{equation}
This sum is locally finite and therefore $(a_\eps)_\eps\in \E[\Om\times\R^p]$. We observe that
$\partial^\alpha(\psi(\lambda_{j,\eps}\xi))=\partial^\alpha\psi(\lambda_{j,\eps}\xi)\lambda_{j,\eps}^{|\alpha|}$,
${\rm supp}\,(\partial^\alpha\psi(\lambda_{j,\eps}\xi))\subseteq\{\xi:\ 1/\lambda_{j,\eps}\le|\xi|\le 2/\lambda_{j,\eps}\}$, and that $1/\lambda_{j,\eps}\le|\xi|\le 2/\lambda_{j,\eps}$ implies $\lambda_{j,\eps}\le 2/|\xi|\le 4/(1+|\xi|)$. We first estimate $b_{j,\eps}$. Fixing $K\Subset\Omega$ and
$\alpha\in\N^p$, $\beta\in\mathbb{N}^n$, we obtain for $j\in\mathbb{N}$, $\epsilon\in(0,1]$, $x\in K$, $\xi\in\mathbb{R}^p$,
\begin{multline*}
|\partial^\alpha_\xi\partial^\beta_x b_{j,\epsilon}(x,\xi)|\le\sum_{\gamma\le\alpha}\binom{\alpha}{\gamma}\lambda_{j,\eps}^{|\alpha-\gamma|}|\partial^{\alpha-\gamma}\psi(\lambda_{j,\eps}\xi)||a_{j,\eps}|^{(m_j)}_{\rho,\delta,K,\gamma,\beta}\langle\xi\rangle^{m_j-\rho|\gamma|+\delta|\beta|}\\
\le\sum_{\gamma\le\alpha}c(\psi,\gamma)4^{|\alpha-\gamma|}\langle\xi\rangle^{-|\alpha-\gamma|}|a_{j,\eps}|^{(m_j)}_{\rho,\delta,K,\gamma,\beta}\langle\xi\rangle^{m_j-\rho|\gamma|+\delta|\beta|}\\
\le C_{j,\alpha,\beta,K,\eps}\langle\xi\rangle^{m_j-\rho|\alpha|+\delta|\beta|},
\end{multline*}
where
\[
C_{j,\alpha,\beta,K,\eps}:=\sum_{\gamma\le\alpha}c(\psi,\gamma)4^{|\alpha-\gamma|}|a_{j,\eps}|^{(m_j)}_{\rho,\delta,K,\gamma,\beta}.
\]
Since $(C_{j,\alpha,\beta,K,\eps})_\eps$ is a moderate net of positive numbers, we have that $(b_{j,\eps})_\eps\in\mM_{S^{m_j}_{\rho,\delta}(\Om\times\R^p)}$. At this point we choose $\lambda_{j,\eps}$ such that for $|\alpha+\beta|\le j$, $l\le j$
\begin{equation}
\label{cj}
C_{j,\alpha,\beta,K_l,\eps}\lambda_{j,\eps}\le 2^{-j}.
\end{equation}
Our aim is to prove that $a_\epsilon(x,\xi)$ defined in \eqref{a_symb} belongs to $\mM_{S^{m}_{\rho,\delta}(\Om\times\R^p)}$. Since there exists $N_j\in N$ and $\eta_j\in(0,1]$ such that 
\[
C_{j,\alpha,\beta,K_l,\eps}\le \eps^{-N_j}
\]
for $l\le j$ and $|\alpha+\beta|\le j$, we take $\lambda_{j,\eps}=2^{-j}\eps^{N_j}$ on the interval $(0,\eta_j]$.
We observe that
\begin{equation}
\label{lo}
\begin{array}{cc}
\forall K\Subset\Omega,\ \exists l\in\mathbb{N}:\ K\subset V_l\subset K_l,\\[0.2cm]
\forall\alpha_0\in\N^p,\ \forall\beta_0\in\mathbb{N}^n,\ \exists j_0\in\mathbb{N},\ j_0\ge l:\ |\alpha_0|+|\beta_0|\le j_0,\quad m_{j_0}+1\le m,
\end{array}
\end{equation}
and we write $(a_\epsilon)_\epsilon$ as the sum of the following two terms:
\[
\sum_{j=0}^{j_0-1}b_{j,\epsilon}(x,\xi)+\sum_{j=j_0}^{+\infty}b_{j,\epsilon}(x,\xi)=f_\epsilon(x,\xi)+s_\epsilon(x,\xi).
\]
For $x\in K$, we have that
\begin{multline*}
|\partial^{\alpha_0}_\xi\partial^{\beta_0}_x f_\epsilon(x,\xi)|\le \sum_{j=0}^{j_0-1}|b_{j,\eps}|^{(m_j)}_{\rho,\delta,K,\alpha_0,\beta_0}\langle\xi\rangle^{m_j-\rho|\alpha_0|+\delta|\beta_0|}\\
\le \biggl(\sum_{j=0}^{j_0-1}|b_{j,\eps}|^{(m_j)}_{\rho,\delta,K,\alpha_0,\beta_0}\biggr)\lara{\xi}^{m-\rho|\alpha_0|+\delta|\beta_0|}, 
\end{multline*}
where 
\[
\biggl(\sum_{j=0}^{j_0-1}|b_{j,\eps}|^{(m_j)}_{\rho,\delta,K,\alpha_0,\beta_0}\biggr)_\eps\in\EM.
\]
We now turn to $s_\epsilon(x,\xi)$. From the estimates on $b_{j,\eps}$ and \eqref{cj}, we get for $x\in K$ and $\epsilon\in(0,1]$,
\begin{multline*}
|\partial^{\alpha_0}_\xi\partial^{\beta_0}_x s_\epsilon(x,\xi)|\le\sum_{j=j_0}^{+\infty}C_{j,\alpha_0,\beta_0,K_l}\langle\xi\rangle^{m_j-\rho|\alpha_0|+\delta|\beta_0|}\\
\le \sum_{j=j_0}^{+\infty}2^{-j}\lambda_{j,\eps}^{-1}\langle\xi\rangle^{-1}\langle\xi\rangle^{ m_j+1-\rho|\alpha_0|+\delta|\beta_0|}\le \sum_{j=j_0}^{+\infty}2^{-j}\lambda_{j,\eps}^{-1}\langle\xi\rangle^{-1}\langle\xi\rangle^{ m-\rho|\alpha_0|+\delta|\beta_0|}.
\end{multline*}
Since $\psi(\xi)$ is identically equal to $0$ for $|\xi|\le 1$, we can assume in our estimates
that $\langle\xi\rangle^{-1}\le\lambda_{j,\eps}$, and therefore from \eqref{lo}, we conclude that
\[
|\partial^{\alpha_0}_\xi\partial^{\beta_0}_x s_\epsilon(x,\xi)|\le 2\langle\xi\rangle^{m-\rho|\alpha_0|+\delta|\beta_0|}, 
\]
for all $x\in K$, $\xi\in\R^p$ and $\eps\in(0,1]$.

In order to prove that $(a_\epsilon)_\epsilon\sim\sum_j(a_{j,\epsilon})_\epsilon$ we fix $r\ge 1$ and we write
\begin{multline*}
a_\epsilon(x,\xi)-\sum_{j=0}^{r-1}a_{j,\epsilon}(x,\xi)
=\sum_{j=0}^{r-1}(\psi(\lambda_{j,\eps}\xi)-1)a_{j,\epsilon}(x,\xi)+\sum_{j=r}^{+\infty}\psi(\lambda_{j,\eps}\xi)a_{j,\epsilon}(x,\xi)\\
=g_\epsilon(x,\xi)+t_\epsilon(x,\xi).
\end{multline*}
Recall that $\psi\in\mathcal{C}^\infty(\mathbb{R}^p)$ was chosen such that
$\psi-1\in\mathcal{C}^\infty_c(\mathbb{R}^p)$ and
${\rm{supp}}(\psi-1)\subseteq\{\xi:\ |\xi|\le 2\}$. Thus, for $0\le j\le r-1$,
\[
\text{supp}(\psi(\lambda_{j,\eps}\xi)-1)\subseteq\{\xi:\ |\lambda_{j,\eps}\xi|\le 2\}\subseteq \{\xi:\ |\xi|\le 2\lambda_{r-1,\eps}^{-1}\}.
\]
As a consequence, for fixed $K\Subset\Omega$ and for all $\epsilon\in(0,1]$,
\begin{multline*}
|\partial^\alpha_\xi\partial^\beta_x g_\epsilon(x,\xi)|\le\sum_{j=0}^{r-1}\sum_{\alpha'\le\alpha}\binom{\alpha}{\alpha'}\lambda_{j,\eps}^{|\alpha'|}c(\psi,\alpha')\lara{2\lambda_{r-1,\eps}^{-1}}^{m_j-m_r+\rho|\alpha'|}|a_{j,\eps}|^{(m_j)}_{\rho,\delta,K,\alpha-\alpha',\beta}\\
\cdot\lara{\xi}^{m_r-\rho|\alpha|+\delta|\beta|},\\
\end{multline*}
where, from our assumptions on $(a_{j,\eps})_\eps$ and $(\lambda_{j,\eps})_\eps$, the nets $(|a_{j,\eps}|^{(m_j)}_{\rho,\delta,K,\alpha-\alpha',\beta})_\eps$ and $(\lara{2\lambda_{r-1,\eps}^{-1}}^{m_j-m_r+\rho|\alpha'|})_\eps$ are both moderate. Repeating the same arguments used in the construction
of $(a_\epsilon)_\epsilon$ we have that $(t_\epsilon)_\epsilon$ belongs to $\mM_{S^{m_r}_{\rho,\delta}(\Om\times\R^p)}$. It is clear that $(a_\eps)_\eps$ is uniquely determined by $\sum_j (a_{j,\eps})_\eps$ modulo $\mM_{S^{-\infty}(\Om\times\R^p)}$.\\
$(ii)$ In the slow scale case one easily sees that $(b_{j,\eps})_\eps\in \mM^\ssc_{S^{m_j}_{\rho,\delta}(\Om\times\R^p)}$. Moreover, since there exists a slow scale net $\omega_j(\eps)$ and $\eta_j\in(0,1]$ such that 
\[
C_{j,\alpha,\beta,K_l,\eps}\le \omega_j(\eps)
\]
for $l\le j$ and $|\alpha+\beta|\le j$, we can take $\lambda_{j,\eps}=2^{-j}\omega_j^{-1}(\eps)$ on the interval $(0,\eta_j]$. It follows that $(a_\eps)_\eps\in\mM^\ssc_{S^m_{\rho,\delta}(\Om\times\R^p)}$ and that both the nets $(g_\eps)_\eps$ and $(t_\eps)_\eps$ belong to $\mM^\ssc_{S^{m_r}_{\rho,\delta}(\Om\times\R^p)}$. 
\end{proof}
\begin{prop}
\label{prop_asym_Shubin}
\leavevmode
\begin{itemize}
\item[(i)] Let $\{(a_{j,\epsilon})_\epsilon\}_{j\in\mathbb{N}}$
be a sequence of elements $(a_{j,\epsilon})_\epsilon\in\mM_{S^{m_j}_{\rho,\delta}(\Om\times\R^p)}$ with $m_j\searrow -\infty$ and $m_0=m$. Let $(a_\eps)_\eps\in\E[\Om\times\R^p]$ such that for all $K\Subset\Om$, for all $\alpha,\beta$ there exists $\mu\in\R$ and $(C_\eps)_\eps\in \EM$ such that 
\begin{equation}
\label{est_1_Sh}
|\partial^\alpha_\xi\partial^\beta_x a_\eps(x,\xi)|\le C_\eps\lara{\xi}^\mu,
\end{equation}
for all $x\in K$, $\xi\in\R^p$, $\eps\in(0,1]$. Furthermore, assume that for any $r\ge 1$ and $K\Subset\Om$ there exists $\mu_r=\mu_r(K)$ and $(C_{r,\eps})_\eps=(C_{r,\eps}(K))_\eps\in\EM$ such that $\mu_r\to-\infty$ as $r\to +\infty$ and
\begin{equation}
\label{est_2_Sh}
\biggl|a_\eps(x,\xi)-\sum_{j=0}^{r-1}a_{j,\eps}(x,\xi)\biggr|\le C_{r,\eps}\lara{\xi}^{\mu_r}
\end{equation}
for all $x\in K$, $\xi\in\R^p$, $\eps\in(0,1]$. Then, $(a_\eps)_\eps\sim\sum_j(a_{j,\epsilon})_\epsilon$. 
\item[(ii)] $(i)$ holds with $(a_{j,\epsilon})_\epsilon\in\mM^\ssc_{S^{m_j}_{\rho,\delta}(\Om\times\R^p)}$, the nets $(C_\eps)_\eps$ and $(C_{r,\eps})_\eps$ of slow scale type and $(a_\eps)_\eps\sim_\ssc\sum_j(a_{j,\epsilon})_\epsilon$ in the sense of Definition \ref{asymp_mod}$(ii)$.
\end{itemize}
\end{prop}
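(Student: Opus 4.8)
The plan is to prove $(i)$ in full; part $(ii)$ then follows verbatim, reading ``net of slow scale type'' for ``moderate net'' throughout (finite sums, products and rational powers of slow scale nets are again of slow scale type) and using Theorem~\ref{theo_asymp_expan}$(ii)$ in place of $(i)$. First I would apply Theorem~\ref{theo_asymp_expan}$(i)$ to obtain a net $(a'_\eps)_\eps\in\mM_{S^{m}_{\rho,\delta}(\Om\times\R^p)}$ with $(a'_\eps)_\eps\sim\sum_j(a_{j,\eps})_\eps$. A net in $\mM_{S^{-\infty}(\Om\times\R^p)}$ belongs to every $\mM_{S^{m_r}_{\rho,\delta}(\Om\times\R^p)}$, so adding such a net to $(a'_\eps)_\eps$ does not affect the asymptotic expansion; hence it suffices to prove that $(g_\eps)_\eps:=(a_\eps-a'_\eps)_\eps$ lies in $\mM_{S^{-\infty}(\Om\times\R^p)}$.

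To that end I would record two estimates for $g_\eps$. Combining \eqref{est_1_Sh} with the symbol estimates satisfied by $(a'_\eps)_\eps$, one sees that for every $K\Subset\Om$ and all $\ga\in\N^p$, $\tau\in\N^n$ there are $\kappa=\kappa(K,\ga,\tau)\in\R$ and a moderate net $(A_\eps)_\eps$ with $|\partial^\ga_\xi\partial^\tau_x g_\eps(x,\xi)|\le A_\eps\lara{\xi}^{\kappa}$ on $K\times\R^p$; that is, $g_\eps$ has moderate, polynomially bounded derivatives of all orders. Subtracting the remainder estimate for $(a'_\eps)_\eps$ from \eqref{est_2_Sh} and using $\mu_r,m_r\to-\infty$, one obtains the second estimate: for every $M\ge 0$ and $K\Subset\Om$ there is a moderate net $(B_\eps)_\eps=(B_\eps(M,K))_\eps$ with $|g_\eps(x,\xi)|\le B_\eps\lara{\xi}^{-M}$ on $K\times\R^p$; that is, $g_\eps$ itself is rapidly decreasing in $\xi$, uniformly on compacta.

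The core of the proof is to interpolate these two estimates, and I expect this to be the main obstacle, since one must track the powers of $\lara{\xi}$ correctly under the $(\rho,\delta)$-anisotropic scaling while keeping all constants moderate. Fix $K\Subset\Om$, $\al\in\N^p$, $\be\in\N^n$ and a target order $M'\ge 0$, put $d=|\al|+|\be|$, $L=d+1$, and let $K'$ be a fixed compact neighbourhood of $K$ in $\Om$. Localising around a point $(x_0,\xi_0)$ with $x_0\in K$ and $|\xi_0|=t$ large, I would rescale by $v_\eps(z,y):=g_\eps(x_0+t^{-\delta}z,\,\xi_0+t^{\rho}y)$ for $z,y$ in a fixed small ball; then $x_0+t^{-\delta}z$ stays in $K'$ and $\lara{\xi_0+t^{\rho}y}\asymp t$, so the two estimates yield $|v_\eps|\le B_\eps t^{-M}$ and $|\partial^\ga_z\partial^\tau_y v_\eps|\le A_\eps t^{\kappa_L}$ for $|\ga|+|\tau|\le L$, where $\kappa_L=\kappa_L(K',L)$ is independent of $\eps$, $x_0$, $\xi_0$ and $M$. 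A standard interpolation inequality for intermediate derivatives on the unit ball of $\R^{n+p}$ (of Kolmogorov--Landau type) then bounds $|\partial^\be_z\partial^\al_y v_\eps(0,0)|$ by a constant times $B_\eps t^{-M}+(A_\eps t^{\kappa_L})^{d/L}(B_\eps t^{-M})^{1-d/L}$, and undoing the rescaling introduces a factor $t^{\delta|\be|-\rho|\al|}$. Since $1-d/L=1/(d+1)>0$, taking $M$ sufficiently large (depending on $M'$, $d$, $\kappa_L$, $\al$, $\be$) pushes the exponent of $t$ below $-M'$; together with the trivial estimate on $\{|\xi|\le t_0\}$ coming from the first bound, and because products and rational powers of moderate nets are moderate, this produces a moderate net $(E_\eps)_\eps$ with $\sup_{x\in K}|\partial^\al_\xi\partial^\be_x g_\eps(x,\xi)|\le E_\eps\lara{\xi}^{-M'}$ for all $\xi$ and $\eps$. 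As $K$, $\al$, $\be$ and $M'$ were arbitrary, $(g_\eps)_\eps\in\mM_{S^{-\infty}(\Om\times\R^p)}$, completing the proof. (Alternatively one can avoid Theorem~\ref{theo_asymp_expan}: for each fixed $r$, split the remainder as $(a_\eps-\sum_{j<r'}a_{j,\eps})+\sum_{r\le j<r'}a_{j,\eps}$ with $r'$ chosen large, estimate the finite sum by its symbol seminorms, and apply the same interpolation to the first term.)
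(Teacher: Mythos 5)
Your proposal is correct and its overall architecture coincides with the paper's: both reduce the problem, via Theorem~\ref{theo_asymp_expan}$(i)$, to showing that the difference $d_\eps$ between $(a_\eps)_\eps$ and a net realizing the expansion lies in $\mM_{S^{-\infty}(\Om\times\R^p)}$, record exactly your two estimates (rapid decay of $d_\eps$ itself from \eqref{est_2_Sh}, moderate polynomial bounds on all derivatives from \eqref{est_1_Sh}), and then interpolate. The difference is in how the interpolation is executed. The paper uses the order-two Landau inequality of Lemma~\ref{lemma_Shubin}, applied to $d_\eps(x,\xi+\theta)$ on $K'\times\{|\theta|\le 1\}$ (a unit translation in $\xi$, no rescaling), which yields rapid decay of the first-order derivatives; higher derivatives are then handled by induction on the order. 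You instead invoke a general intermediate-derivative Kolmogorov--Landau inequality in one shot, combined with an anisotropic $(t^{-\delta},t^{\rho})$-rescaling. Your exponent bookkeeping is right ($1-d/L=1/(d+1)>0$, so large $M$ wins), and the rational powers of moderate (resp.\ slow scale) nets remain moderate (resp.\ slow scale), so part $(ii)$ goes through as you say. The anisotropic rescaling is harmless but actually unnecessary: since the target class $S^{-\infty}$ absorbs any fixed power of $\lara{\xi}$, the isotropic unit-translation argument of the paper suffices and is a bit lighter; on the other hand, your one-shot version avoids the induction and would adapt more directly if one wanted remainder estimates that do keep track of the $(\rho,\delta)$ weights.
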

The proof of Proposition \ref{prop_asym_Shubin} requires the following lemma.
\begin{lem}
\label{lemma_Shubin}
Let $K_1$ and $K_2$ be two compact sets in $\R^p$ such that $K_1\subset{\rm{Int}}\, K_2$. Then there exists a constant $C>0$ such that for any smooth function $f$ on a neighborhood of $K_2$, the following estimate holds:
\[
\biggl(\sup_{x\in K_1}\sum_{|\alpha|= 1}|D^\alpha f(x)|\biggr)^2\le C\sup_{x\in K_2}|f(x)|\biggl(\sup_{x\in K_2}|f(x)|+\sup_{x\in K_2}\sum_{|\alpha|=2}|D^\alpha f(x)|\biggr).
\]
\end{lem}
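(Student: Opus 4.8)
The plan is to reduce the multivariate estimate to the classical one–dimensional Landau--Kolmogorov inequality taken along coordinate segments, and then optimize over the length of the segments.

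First I would fix, once and for all, a number $\delta_0>0$ small enough that for every $x\in K_1$, every coordinate direction $e_k$ ($k=1,\dots,p$) and every $t$ with $|t|\le\delta_0$ one has $x+te_k\in K_2$ (hence $x+te_k$ lies in the neighbourhood of $K_2$ on which $f$ is smooth). Such a $\delta_0$ exists because $K_1\Subset\mathrm{Int}\,K_2$, so the distance from the compact set $K_1$ to the (closed) complement of $\mathrm{Int}\,K_2$ is strictly positive; if $K_2=\R^p$ any $\delta_0>0$ works.

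Next, fix $x\in K_1$, an index $k\in\{1,\dots,p\}$ and $\delta\in(0,\delta_0]$, and apply Taylor's formula with Lagrange remainder to the $\Cinf$ function $g(t):=f(x+te_k)$ at $t=\pm\delta$:
\[
g(\pm\delta)=g(0)\pm\delta\,g'(0)+\tfrac{\delta^2}{2}\,g''(\theta_\pm),\qquad \theta_\pm\ \text{between }0\text{ and }\pm\delta .
\]
Subtracting the two identities and solving for $g'(0)=\partial_{x_k}f(x)$, and using $|g(\pm\delta)|\le\sup_{K_2}|f|$ together with $|g''(\theta_\pm)|=|\partial_{x_k}^2 f(x+\theta_\pm e_k)|\le\sup_{K_2}\sum_{|\alpha|=2}|D^\alpha f|$ (legitimate since $x+\theta_\pm e_k\in K_2$), I obtain
\[
|\partial_{x_k}f(x)|\le\frac{1}{\delta}\sup_{y\in K_2}|f(y)|+\frac{\delta}{2}\sup_{y\in K_2}\sum_{|\alpha|=2}|D^\alpha f(y)| .
\]
Summing over $k=1,\dots,p$ and taking the supremum over $x\in K_1$ gives, with the abbreviations $A:=\sup_{K_2}|f|$ and $B:=\sup_{K_2}\sum_{|\alpha|=2}|D^\alpha f|$,
\[
\sup_{x\in K_1}\sum_{|\alpha|=1}|D^\alpha f(x)|\ \le\ p\Bigl(\frac{A}{\delta}+\frac{\delta}{2}\,B\Bigr)\qquad\text{for every }\delta\in(0,\delta_0].
\]

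Finally I would minimize the right-hand side over $\delta\in(0,\delta_0]$. If $\sqrt{2A/B}\le\delta_0$ (vacuously excluded when $B=0$), the choice $\delta=\sqrt{2A/B}$ yields the bound $p\sqrt{2AB}$, so the square of the left-hand side is $\le 2p^2AB\le 2p^2A(A+B)$. Otherwise $B\le 2A/\delta_0^2$, and taking $\delta=\delta_0$ yields the bound $\le 2pA/\delta_0$, whose square is $\le (4p^2/\delta_0^2)A^2\le(4p^2/\delta_0^2)A(A+B)$. In either case the asserted inequality holds with $C=\max\{2p^2,\,4p^2/\delta_0^2\}$, a constant depending only on $p$, $K_1$ and $K_2$. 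There is no genuine obstacle in this argument; the only points that need a little care are the uniform choice of $\delta_0$ (so that all the segments stay inside $K_2$ simultaneously) and keeping the final optimization valid in the degenerate case $B=0$, both of which are handled by the case distinction above.
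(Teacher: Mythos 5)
Your argument is correct. Note that the paper itself gives no proof of Lemma \ref{lemma_Shubin} at all --- it is stated as a known auxiliary fact (it is the classical Landau--Hadamard type interpolation inequality, essentially Lemma 3.2 in Shubin's book on pseudodifferential operators) and then used directly in the proof of Proposition \ref{prop_asym_Shubin}; so there is no proof in the text to compare against. Your route --- second-order Taylor expansion of $t\mapsto f(x+te_k)$ at $\pm\delta$, the resulting bound $|\partial_{x_k}f(x)|\le A/\delta+\delta B/2$ uniformly for $\delta\in(0,\delta_0]$, and then optimization in $\delta$ with the case split according to whether the unconstrained minimizer $\sqrt{2A/B}$ lies in $(0,\delta_0]$ --- is exactly the standard proof of this inequality, and all the steps check out: the uniform $\delta_0$ exists because $K_1$ is compact and disjoint from the closed set $\R^p\setminus\mathrm{Int}\,K_2$, the Lagrange remainder points $x+\theta_\pm e_k$ stay in $K_2$, and both branches of the optimization yield a bound of the form $C\,A(A+B)$ after squaring. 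The only cosmetic remark is the degenerate case $A=0$, $B>0$, where the ``optimal'' $\delta=\sqrt{2A/B}$ equals $0$ and is not admissible; but then letting $\delta\to0^+$ in your uniform estimate (or simply observing that $f\equiv0$ on $K_2$ forces $Df\equiv0$ on the open set $\mathrm{Int}\,K_2\supset K_1$) gives the claim trivially, so this does not affect the validity of the proof.
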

\begin{proof}[Proof of Proposition \ref{prop_asym_Shubin}]
$(i)$ By Theorem \ref{theo_asymp_expan} we know that there exists $(b_\eps)_\eps\in\mM_{S^{m}_{\rho,\delta}(\Om\times\R^p)}$ such that $(b_\eps)_\eps\sim\sum_j(a_{j,\eps})_\eps$.We consider the difference $d_\eps=a_\eps-b_\eps$. From \eqref{est_1_Sh} and the moderateness of $(b_\eps)_\eps$ we have that for all $\alpha,\beta$ and $K\Subset\Om$ there exist $(C'_\eps)_\eps$ and $\mu'$ such that 
\begin{equation}
\label{formula_raf_1}
|\partial^\alpha_\xi\partial^\beta_x d_\eps(x,\xi)|\le C'_\eps\lara{\xi}^{\mu'},
\end{equation}
for all $x\in K$, $\xi\in\R^p$ and $\eps\in(0,1]$. Combining $(b_\eps)_\eps\sim\sum_j(a_{j,\eps})_\eps$ with \eqref{est_2_Sh} we obtain that for all $r>0$ and for all $K\Subset\Om$ there exists $(C_{r,\eps}(K))_\eps\in\EM$ such that 
\[
|d_\eps(x,\xi)|\le C_{r,\eps}(K)\lara{\xi}^{-r},\qquad\qquad x\in K,\ \xi\in\R^p,\, \eps\in(0,1].
\]
Set $d_{\xi,\eps}(x,\theta)=d_\eps(x,\xi+\theta)$. Then, $\partial^\alpha_\theta \partial^\beta_x d_{\xi,\eps}(x,\theta)|_{\theta=0}=\partial^\alpha_\xi\partial^\beta_x d(x,\xi)$, and applying Lemma \ref{lemma_Shubin} with $K_1=K\times 0$ and $K_2=K'\times\{|\theta|\le 1\}$, where $K\subset{\rm{Int}}K'\subset K'\Subset\Om$, we obtain
\begin{multline}
\label{formula_raf_2}
\biggl(\sup_{x\in K}\sum_{|\alpha+\beta|= 1}|\partial^\alpha_\xi\partial^\beta_x d_\eps(x,\xi)|\biggr)^2\le C\sup_{x\in K', |\theta|\le 1}|d_\eps(x,\xi+\theta)|\cdot\\
\cdot\biggl(\sup_{x\in K', |\theta|\le 1}|d_\eps(x,\xi+\theta)|+\sup_{x\in K', |\theta|\le 1}\sum_{|\alpha+\beta|=2}|\partial^\alpha_\xi\partial^\beta_x d_\eps(x,\xi+\theta)|\biggr)\\
\le CC_{r,\eps}(K')\sup_{|\theta|\le 1}\lara{\xi+\theta}^{-r}\biggl(C_{r,\eps}(K')\sup_{|\theta|\le 1}\lara{\xi+\theta}^{-r}+C'_\eps(K')\sup_{|\theta|\le 1}\lara{\xi+\theta}^{\mu'(K,2)}\biggr)\\
\le C''_{r,\eps}(K)\lara{\xi}^{-r},
\end{multline}
where $C''_{r,\eps}(K)\in\EM$. By induction one can prove that for all $r>0$, for all $K\Subset\Om$ and for all $\alpha\in\N^p$, $\beta\in\N^n$, there exists a moderate net $(c_\eps)_\eps$ such that the estimate
\[
|\partial^\alpha_\xi\partial^\beta_x d_\eps(x,\xi)|\le c_\eps\lara{\xi}^{-r}
\]
is valid for all $x\in K$, $\xi\in\R^p$ and $\eps\in(0,1]$. This means that $(d_\eps)_\eps\in\mM_{S^{-\infty}(\Om\times\R^p)}$ and therefore $(a_\eps)_\eps\sim \sum_j(a_{j,\eps})_\eps$.\\
$(ii)$ It is clear that when we work with nets of slow scale type then $(d_\eps)_\eps\in\mM^\ssc_{S^{-\infty}(\Om\times\R^p)}$ and $(a_\eps)_\eps\sim_\ssc\sum_j(a_{j,\epsilon})_\epsilon$. 
\end{proof}
\begin{rem}
Proposition \ref{prop_asym_Shubin} can be stated for nets of symbols in $\mM_{S^m_{\rho,\delta}(\Om\times\R^p\setminus 0)}$ and $\mM^\ssc_{S^m_{\rho,\delta}(\Om\times\R^p\setminus 0)}$. The proof make use of \eqref{formula_raf_1} when $|\xi|\le 1$ and \eqref{formula_raf_2} when $|\xi|>1$.
\end{rem}
\begin{defn}
\label{def_asymp_gen}
Let $\{m_j\}_{j\in\mathbb{N}}$ with $m_j\searrow -\infty$ and $m_0=m$. 
\begin{itemize}
\item[(i)]
Let $\{a_j\}_{j\in\mathbb{N}}$ be a sequence
of symbols $a_j\in{\widetilde{\mathcal{S}}}^{\, m_j}_{\rho,\delta}
(\Omega\times\mathbb{R}^p)$. We say that the formal series $\sum_j a_j$ is the asymptotic expansion of $a\in\wt{\mathcal{S}}^m_{\rho,\delta}(\Om\times\R^p)$, $a\sim\sum_j a_j$ for short, iff there exist a representative $(a_\epsilon)_\epsilon$ of $a$ and, for every $j$, representatives $(a_{j,\epsilon})_\epsilon$ of $a_j$, such that $(a_\epsilon)_\epsilon\sim\sum_j(a_{j,\epsilon})_\epsilon$.
\item[(ii)] Let $\{a_j\}_{j\in\mathbb{N}}$ be a sequence
of symbols $a_j\in{\widetilde{\mathcal{S}}}^{\, m_j,\ssc}_{\rho,\delta}
(\Omega\times\mathbb{R}^p)$. We say that the formal series $\sum_j a_j$ is the asymptotic expansion of $a\in\wt{\mathcal{S}}^{m,\ssc}_{\rho,\delta}(\Om\times\R^p)$, $a\sim\sum_j a_j$ for short, iff there exist a representative $(a_\epsilon)_\epsilon$ of $a$ and, for every $j$, representatives $(a_{j,\epsilon})_\epsilon$ of $a_j$, such that $(a_\epsilon)_\epsilon\sim_\ssc\sum_j(a_{j,\epsilon})_\epsilon$.
\end{itemize}
\end{defn}
\subsection{Generalized pseudodifferential operators}
Let $\Om$ be an open subset of $\R^n$ and $a\in \widetilde{\mathcal{S}}^m_{\rho,\delta}(\Om\times\R^n)$. The generalized oscillatory integral (see \cite{GGO:03})
\[
\int_{\Om\times\R^n}\esp^{i(x-y)\xi}a(x,\xi){u}(y)\, dy\, \dslash\xi :=\biggl(\int_{\Om\times\R^n}\esp^{i(x-y)\xi}a_\eps(x,\xi){u_\eps}(y)\, dy\, \dslash\xi\biggr)_\eps+\Neg(\Om),
\] 
defines the action of the pseudodifferential operator $a(x,D)$ with generalized symbol $a\in \widetilde{\mathcal{S}}^m_{\rho,\delta}(\Om\times\R^n)$ on $u\in\Gc(\Om)$. The operator $a(x,D)$ maps $\Gc(\Om)$ continuously into $\G(\Om)$ and can be extended to a continuous $\wt{\C}$-linear map from $\LL(\G(\Om),\wt{\C})$ to $\LL(\Gc(\Om),\wt{\C})$. If $a$ is of slow scale type then $a(x,D)$ maps $\Gcinf(\Om)$ continuously into $\Ginf(\Om)$.
Pseudodifferential operators with generalized symbol of order $-\infty$ are regularizing, in the sense that $a(x,D)$ maps $\Lb(\G(\Om),\wt{\C})$ to $\G(\Om)$ if $a\in\wt{\mathcal{S}}^{-\infty}(\Om\times\R^n)$ and 
$\Lb(\G(\Om),\wt{\C})$ to $\Ginf(\Om)$ if $a\in\wt{\mathcal{S}}^{-\infty,\ssc}(\Om\times\R^n)$. Clearly, all the previous results can be stated for pseudodifferential operators given by a generalized amplitude $a(x,y,\xi)\in\widetilde{\mathcal{S}}^m_{\rho,\delta}(\Om\times\Om\times\R^n)$. For a complete overview on generalized pseudodifferential operators acting on spaces of Colombeau type we advise the reader to refer to \cite{Garetto:06a, GGO:03, GH:05}

\subsection{Generalized elliptic symbols}
One of the main issues in developing a theory of generalized symbols has been the search for a notion of generalized elliptic symbol. This is obviously related to the construction of a generalized pseudodifferential parametrix by means of which to investigate problems of $\G$- and $\Ginf$-regularity. In the sequel we recall some of the results obtain in this direction in \cite{GGO:03, GH:05}, which will be employed in Section \ref{section_comp}. We work at the level of representatives and we set $\rho=1$, $\delta=0$. We leave to the reader the proof of the next proposition which is based on \cite[Section 6]{GGO:03}.
\begin{prop}
\label{prop_ellip}
Let $(a_\eps)_\eps\in\mM_{S^m(\Om\times\R^n\setminus 0)}$ such that 
\begin{itemize}
\item[(e1)] for all $K\Subset\Om$ there exists $s\in\R$, $(R_\eps)_\eps\in\EM$ strictly nonzero and $\eta\in(0,1]$ such that
\[
|a_\eps(x,\xi)|\ge \eps^s\lara{\xi}^m,
\]
for all $x\in K$, $|\xi|\ge R_\eps$ and $\eps\in(0,\eta]$.
\end{itemize}
Then,
\begin{itemize}
\item[(i)] for all $K\Subset\Om$, for all $\alpha,\beta\in\N^n$ there exist $N\in\N$, $(R_\eps)_\eps\in\EM$ strictly nonzero and $\eta\in(0,1]$ such that 
\[
|\partial^\alpha_\xi\partial^\beta_x a_\eps(x,\xi)|\le \eps^{-N}\lara{\xi}^{-|\alpha|}|a_\eps(x,\xi)|
\]
for all $x\in K$, $|\xi|\ge R_\eps$ and $\eps\in(0,\eta]$;
\item[(ii)] $(i)$ holds for the net $(a_\eps^{-1})_\eps$;
\item[(iii)] if $(a'_\eps)_\eps\in\mM_{S^{m'}(\Om\times\R^n\setminus 0)}$ with $m'<m$ then $(e1)$ holds for the net $(a_\eps+a'_\eps)_\eps$.
\end{itemize}
Let $(a_\eps)_\eps\in\mM^\ssc_{S^m(\Om\times\R^n\setminus 0)}$ such that 
\begin{itemize}
\item[(e2)] for all $K\Subset\Om$ there exists $(s_\eps)_\eps$ with $(s^{-1}_\eps)_\eps$ s.s.n.,  $(R_\eps)_\eps$ s.s.n. and $\eta\in(0,1]$ such that
\[
|a_\eps(x,\xi)|\ge s_\eps\lara{\xi}^m,
\]
for all $x\in K$, $|\xi|\ge R_\eps$ and $\eps\in(0,\eta]$.
\end{itemize}
Then,
\begin{itemize}
\item[(iv)] for all $K\Subset\Om$, for all $\alpha,\beta\in\N^n$ there exist $(c_\eps)_\eps$, $(R_\eps)_\eps$ s.s.n and $\eta\in(0,1]$ such that 
\[
|\partial^\alpha_\xi\partial^\beta_x a_\eps(x,\xi)|\le c_\eps\lara{\xi}^{-|\alpha|}|a_\eps(x,\xi)|
\]
for all $x\in K$, $|\xi|\ge R_\eps$ and $\eps\in(0,\eta]$;
\item[(v)] $(i)$ holds for the net $(a_\eps^{-1})_\eps$;
\item[(vi)] if $(a'_\eps)_\eps\in\mM^\ssc_{S^{m'}(\Om\times\R^n\setminus 0)}$ with $m'<m$ then $(e2)$ holds for the net $(a_\eps+a'_\eps)_\eps$.
\end{itemize}
\end{prop}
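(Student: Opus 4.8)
The plan is to argue entirely at the level of representatives, reproducing the classical symbol calculus for elliptic symbols \cite[Section 6]{GGO:03} while keeping careful track of the $\eps$-powers in the moderate case (parts (i)--(iii)) and of the slow scale nets in the slow scale case (parts (iv)--(vi)). Two preliminary reductions fix the bookkeeping: all the lower bounds at our disposal live only on sets $\{(x,\xi):\, x\in K,\ |\xi|\ge R_\eps\}$, whereas the moderateness estimates for a symbol in $\mM_{S^{m}(\Om\times\R^n\setminus 0)}$ hold for $|\xi|\ge 1$, so I would systematically replace $R_\eps$ by $\max(1,R_\eps)$, still strictly nonzero and moderate (resp.\ slow scale); and I would use repeatedly that $\EM$ is a ring and that the slow scale nets are closed under sums, finite products, positive real powers and $\max$. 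For (i): given $K\Subset\Om$ and $\alpha,\beta$, moderateness yields $M\in\N$ with $|\partial^\alpha_\xi\partial^\beta_x a_\eps(x,\xi)|\le\eps^{-M}\lara{\xi}^{m-|\alpha|}$ for $x\in K$, $|\xi|\ge 1$ and $\eps$ small, while (e1) gives $s$, a strictly nonzero $(R_\eps)_\eps\in\EM$ and $\eta$ with $|a_\eps(x,\xi)|\ge\eps^{s}\lara{\xi}^{m}$ for $x\in K$, $|\xi|\ge R_\eps$, $\eps\in(0,\eta]$; since $\lara{\xi}^{m-|\alpha|}=\lara{\xi}^{-|\alpha|}\lara{\xi}^{m}\le\eps^{-s}\lara{\xi}^{-|\alpha|}|a_\eps(x,\xi)|$ on the common range, (i) holds for any integer $N\ge M+|s|$.

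For (ii) the first step is to show that $(a_\eps^{-1})_\eps$, well defined on $\{|\xi|\ge R_\eps\}$ by (e1), obeys symbol estimates of order $-m$ with moderate constants. Differentiating $a_\eps a_\eps^{-1}=1$ by the Leibniz rule gives, for $(\alpha,\beta)\neq 0$,
\[
a_\eps\,\partial^\alpha_\xi\partial^\beta_x(a_\eps^{-1})=-\sum_{\substack{\gamma\le\alpha,\ \delta\le\beta\\ (\gamma,\delta)\neq 0}}\binom{\alpha}{\gamma}\binom{\beta}{\delta}\,\partial^{\gamma}_\xi\partial^{\delta}_x(a_\eps)\,\partial^{\alpha-\gamma}_\xi\partial^{\beta-\delta}_x(a_\eps^{-1}),
\]
so an induction on $|\alpha+\beta|$ (base case $\partial(a_\eps^{-1})=-a_\eps^{-2}\partial a_\eps$), using the moderate bounds on the derivatives of $a_\eps$, the inductive hypothesis $|\partial^{\alpha-\gamma}_\xi\partial^{\beta-\delta}_x(a_\eps^{-1})|\le C_\eps\lara{\xi}^{-m-|\alpha-\gamma|}$ with $(C_\eps)_\eps\in\EM$, and $|a_\eps^{-1}|\le\eps^{-s}\lara{\xi}^{-m}$ from (e1), yields $|\partial^\alpha_\xi\partial^\beta_x(a_\eps^{-1})|\le C'_\eps\lara{\xi}^{-m-|\alpha|}$ with $(C'_\eps)_\eps\in\EM$ (the $\lara{\xi}$-exponents collapse correctly because $|\gamma|+|\alpha-\gamma|=|\alpha|$). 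Finally, the $\alpha=\beta=0$ part of moderateness of $(a_\eps)_\eps$ gives $|a_\eps^{-1}(x,\xi)|\ge\eps^{M}\lara{\xi}^{-m}$, whence $\lara{\xi}^{-m-|\alpha|}\le\eps^{-M}\lara{\xi}^{-|\alpha|}|a_\eps^{-1}(x,\xi)|$ and (ii) follows.

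For (iii): if $(a'_\eps)_\eps\in\mM_{S^{m'}(\Om\times\R^n\setminus 0)}$ with $m'<m$, moderateness gives $|a'_\eps(x,\xi)|\le\eps^{-M'}\lara{\xi}^{m'}$ for $x\in K$, $|\xi|\ge 1$, so on $\{|\xi|\ge R_\eps\}$
\[
|a_\eps(x,\xi)+a'_\eps(x,\xi)|\ge\eps^{s}\lara{\xi}^{m}\bigl(1-\eps^{-s-M'}\lara{\xi}^{m'-m}\bigr),
\]
and since $m'-m<0$ the bracket exceeds $\tfrac12$ once $\lara{\xi}\ge\bigl(2\eps^{-s-M'}\bigr)^{1/(m-m')}$, a strictly nonzero moderate net; after shrinking $\eta$ so that $\tfrac12\eps^{s}\ge\eps^{s+1}$, this is exactly (e1) for $(a_\eps+a'_\eps)_\eps$ with $s+1$ in place of $s$. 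The slow scale statements (iv)--(vi) follow by running these three computations verbatim with every power $\eps^{\pm N}$, $\eps^{\pm s}$ replaced by the corresponding slow scale net coming from (e2) and from slow scale moderateness; the closure properties above ensure, for instance, that the constant $c_\eps=\omega_\eps s_\eps^{-1}$ in (iv) and the threshold $\bigl(2\,\omega'_\eps s_\eps^{-1}\bigr)^{1/(m-m')}$ in (vi) are again slow scale.

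The routine parts are (i), (iii) and the slow scale reductions; I expect the only step requiring genuine care to be the induction in (ii), where one must verify that the finitely many products produced by the Leibniz rule keep the constants moderate (resp.\ slow scale) at every stage and that the homogeneity bookkeeping really reduces the $\lara{\xi}$-exponents to $-m-|\alpha|$. That, together with the ever-present restriction to the region $\{|\xi|\ge\max(1,R_\eps)\}$ where the ellipticity bound lives, is the only non-mechanical ingredient; the rest is the classical elliptic calculus unchanged.
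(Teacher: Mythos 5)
Your argument is correct and is precisely the classical elliptic symbol calculus with careful $\eps$-tracking that the paper has in mind: the paper explicitly leaves this proof to the reader, referring to \cite[Section 6]{GGO:03}, and your reductions (restricting to $|\xi|\ge\max(1,R_\eps)$, the Leibniz--induction for $a_\eps^{-1}$, the $\tfrac12$-absorption for the lower-order perturbation, and the closure properties of slow scale nets) are exactly the intended ingredients. No gaps.
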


\begin{prop}
\label{prop_ellip_2}
Let $(a_\eps)_\eps$ be a net of elliptic symbols of $S^m(\Om\times\R^n\setminus 0)$.
\begin{itemize}
\item[(i)] If $(a_\eps)_\eps\in\mM_{S^m(\Om\times\R^n\setminus 0)}$ fulfills condition $(e1)$ then there exists $(p_\eps)_\eps\in\mM_{S^{-m}(\Om\times\R^n\setminus 0)}$ such that for all $\eps\in(0,1]$ 
\[
p_\eps a_\eps =1+r_\eps,
\]
where $(r_\eps)_\eps\in\mM_{S^{-\infty}(\Om\times\R^n\setminus 0)}$.
\item[(ii)] If $(a_\eps)_\eps\in\mM^\ssc_{S^m(\Om\times\R^n\setminus 0)}$ fulfills condition $(e2)$ then there exists $(p_\eps)_\eps\in\mM^\ssc_{S^{-m}(\Om\times\R^n\setminus 0)}$ such that for all $\eps\in(0,1]$
\[
p_\eps a_\eps =1+r_\eps,
\]
where $(r_\eps)_\eps\in\mM^\ssc_{S^{-\infty}(\Om\times\R^n\setminus 0)}$.
\end{itemize}
\end{prop}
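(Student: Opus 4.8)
The plan is to take for $p_\eps$ a suitably cut-off copy of the pointwise inverse $a_\eps^{-1}$ and to read the remainder $r_\eps$ directly off the cut-off; the only input needed beyond the construction of a cut-off is Proposition \ref{prop_ellip}, which guarantees that $a_\eps^{-1}$, restricted to the ellipticity region $\{|\xi|\ge R_\eps\}$, is again a moderate (resp.\ slow scale) symbol, now of order $-m$.

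$(i)$ Since (e1) supplies the radius $R_\eps$, the exponent $s$ and $\eta\in(0,1]$ only locally in $x$, I would first fix a locally finite partition of unity $\{\phi_l\}_l$ on $\Om$ with $\supp\phi_l\subseteq V_l$, $K_l:=\overline{V_l}\Subset\Om$, apply (e1) on each $K_l$ to obtain a strictly nonzero moderate net $(R_{l,\eps})_\eps$ and an exponent $s_l$, and — enlarging each $(R_{l,\eps})_\eps$, which leaves it moderate — arrange that $R_{l,\eps}\ge1$ and that Proposition \ref{prop_ellip}(i)--(ii) applies on $K_l\times\{|\xi|\ge R_{l,\eps}\}$. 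Fixing $\chi\in\Cinf(\R^n)$ with $0\le\chi\le1$, $\chi\equiv0$ on $\{|\xi|\le1\}$ and $\chi\equiv1$ on $\{|\xi|\ge2\}$, I set
\[
p_\eps(x,\xi):=\sum_l\phi_l(x)\,\chi(\xi/R_{l,\eps})\,a_\eps(x,\xi)^{-1},\qquad r_\eps(x,\xi):=-\sum_l\phi_l(x)\big(1-\chi(\xi/R_{l,\eps})\big).
\]
The sum is locally finite, and each summand is smooth on $\Om\times(\R^n\setminus0)$ — it is supported where $a_\eps$ does not vanish, and $\chi(\xi/R_{l,\eps})$ vanishes to infinite order at $|\xi|=R_{l,\eps}$ — so $(p_\eps)_\eps\in\E[\Om\times\R^n\setminus0]$, and $p_\eps a_\eps=\sum_l\phi_l(x)\chi(\xi/R_{l,\eps})=1+r_\eps$ by construction.

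The two membership claims are then verified at the level of representatives. For $(r_\eps)_\eps$: on any $K\Subset\Om$ only finitely many $\phi_l$ contribute, so $r_\eps$ is supported in $\{|\xi|\le 2\max_l R_{l,\eps}\}$ with the maximum over a finite set, a moderate net; since a smooth net whose $\xi$-support has moderate radius satisfies every $S^{-\infty}$ estimate with moderate constants (using $R_{l,\eps}\ge1$ to bound $\partial^\gamma(\chi(\cdot/R_{l,\eps}))$), we get $(r_\eps)_\eps\in\mM_{S^{-\infty}(\Om\times\R^n\setminus0)}$. For $(p_\eps)_\eps$: fix $K\Subset\Om$ and $j\in\N$ and expand $\partial^\alpha_\xi\partial^\beta_x[\phi_l\,\chi(\xi/R_{l,\eps})\,a_\eps^{-1}]$ by Leibniz; each term is supported in $\{|\xi|\ge R_{l,\eps}\}$, where, on the one hand, $R_{l,\eps}^{-|\gamma|}|(\partial^\gamma\chi)(\xi/R_{l,\eps})|\le C_\gamma\lara{\xi}^{-|\gamma|}$ on the support of $\partial^\gamma(\chi(\cdot/R_{l,\eps}))$ for $\gamma\ne0$ (there $\lara{\xi}\le\lara{2R_{l,\eps}}\le CR_{l,\eps}$) and, on the other, Proposition \ref{prop_ellip}(ii) combined with $|a_\eps^{-1}|\le\eps^{-s_l}\lara{\xi}^{-m}$ from (e1) gives $|\partial^{\alpha'}_\xi\partial^{\beta'}_x a_\eps^{-1}|\le\eps^{-N}\lara{\xi}^{-m-|\alpha'|}$ for $|\alpha'+\beta'|\le j$, $|\xi|\ge R_{l,\eps}$, with $N=N(K,j)$. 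Multiplying these bounds gives $|\partial^\alpha_\xi\partial^\beta_x p_\eps(x,\xi)|\le c_\eps\lara{\xi}^{-m-|\alpha|}$ for $|\alpha+\beta|\le j$, $x\in K$, with $(c_\eps)_\eps\in\EM$; hence $(p_\eps)_\eps\in\mM_{S^{-m}(\Om\times\R^n\setminus0)}$.

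$(ii)$ The construction is carried out verbatim, now with $(R_{l,\eps})_\eps$ the slow scale nets furnished by (e2) (enlarged, still slow scale) and every moderate bound $\eps^{-N}$ replaced by the slow scale bounds of Proposition \ref{prop_ellip}(iv)--(v) and of (e2); since finite sums and products of slow scale nets are again slow scale, the same estimates yield $(p_\eps)_\eps\in\mM^\ssc_{S^{-m}(\Om\times\R^n\setminus0)}$ and $(r_\eps)_\eps\in\mM^\ssc_{S^{-\infty}(\Om\times\R^n\setminus0)}$. The genuine difficulty is purely organizational: because the ellipticity data of (e1)/(e2) is local in $x$, one must globalize through the partition of unity while keeping the $\eps$-dependence carried by the radii $R_{l,\eps}$ — which is precisely what produces the remainder $r_\eps$ — uniformly in the moderate (resp.\ slow scale) class over the finitely many $l$ meeting each compact set, and one must observe that a single radius $R_{l,\eps}$ controls the estimates for $a_\eps^{-1}$ of every order, since only a lower bound on $|a_\eps|$ enters them.
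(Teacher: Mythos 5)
Your construction $p_\eps=\sum_l\phi_l(x)\chi(\xi/R_{l,\eps})a_\eps^{-1}$ with remainder $r_\eps=\sum_l\phi_l(x)(\chi(\xi/R_{l,\eps})-1)$ is exactly the paper's parametrix (partition of unity in $x$, $\xi$-cutoff at the local radius $R_{l,\eps}$, moderateness of $p_\eps$ from Proposition \ref{prop_ellip}, and the $S^{-\infty}$ bound for $r_\eps$ from its support in $\{|\xi|\le 2R_{l,\eps}\}$ with $R_{l,\eps}$ moderate resp.\ slow scale). The argument is correct and follows essentially the same route as the paper's proof.
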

\begin{proof}
As in \cite[Proposition 6.4]{GGO:03} we define $p_\eps$ as
\[
\sum_ja_\epsilon^{-1}(x,\xi)\varphi\big(\frac{\xi}{R_{j,\epsilon}}\big)\psi_j(x),
\]
where $\psi_j$ is a partition of unity subordinated to a covering of relatively compact subsets $\Om_j$ of $\Om$, $(R_{j,\eps})_\eps$ is the radius corresponding to $\overline{\Om_j}$ and $\varphi$ is a smooth function on $\R^n$ such that $\varphi(\xi)=0$ for $|\xi|\le 1$ and $\varphi(\xi)=1$ for $|\xi|\ge 2$. From Proposition \ref{prop_ellip} we have that $(e1)$ yields $(p_\eps)_\eps\in\mM_{S^{-m}(\Om\times\R^n\setminus 0)}$ and $(e2)$ yields $(p_\eps)_\eps\in\mM^\ssc_{S^{-m}(\Om\times\R^n\setminus 0)}$. Let $K\Subset\Om$. By construction, for all $x\in K$,
\[
p_\eps(x,\xi)a_\eps(x,\xi)=1+\biggl(\sum_{j=0}^{j_0}\varphi\big(\frac{\xi}{R_{j,\epsilon}})\psi_j(x)-1\biggr)= 1+\sum_{j=0}^{j_0}\biggl(\varphi\big(\frac{\xi}{R_{j,\epsilon}})-1\biggr)\psi_j(x),
\]
and the following estimates hold for all $l>0$ and $\alpha\in\N^n\setminus 0$:
\begin{multline*}
\sup_{\xi\neq 0}\lara{\xi}^l|\varphi\big(\frac{\xi}{R_{j,\epsilon}})-1|\le\sup_{|\xi|\le 2R_{j,\epsilon}}\lara{\xi}^l|\varphi\big(\frac{\xi}{R_{j,\epsilon}})-1|\le c_\varphi\lara{2R_{j,\eps}}^l,\\
\sup_{\xi\neq 0}\lara{\xi}^l|\partial^\alpha_\xi\varphi\big(\frac{\xi}{R_{j,\epsilon}})|(R_{j,\eps})^{-|\alpha|}\le\sup_{R_{j,\eps}\le|\xi|\le 2R_{j,\epsilon}}\lara{\xi}^l|\partial^\alpha_\xi\varphi\big(\frac{\xi}{R_{j,\epsilon}})|(R_{j,\eps})^{-|\alpha|}\\
\le c_\varphi\lara{2R_{j,\eps}}^l(R_{j,\eps})^{-|\alpha|}. 
\end{multline*}
We deduce that $(p_\eps a_\eps-1)_\eps$ belongs to $\mM_{S^{-\infty}(\Om\times\R^n\setminus 0)}$ under the hypothesis $(e1)$ on $(a_\eps)_\eps$ and that $(p_\eps a_\eps-1)_\eps$ belongs to $\mM^\ssc_{S^{-\infty}(\Om\times\R^n\setminus 0)}$ under the hypothesis $(e2)$ on $(a_\eps)_\eps$.
\end{proof}

\subsection{Microlocal analysis in the Colombeau context: generalized wave front sets in $\LL(\Gc(\Om),\wt{\C})$}
\label{sub_sec_micro}
In this subsection we recall the basic notions of microlocal analysis which involve the duals of the Colombeau algebras $\Gc(\Om)$ and $\G(\Om)$ and have been developed in \cite{Garetto:06a}. In this generalized context the role which is classically played by $\S(\R^n)$ is given to the Colombeau algebra $\GS(\R^n):=\G_{\S(\R^n)}$. $\GS(\R^n)$ is topologized as in Subsection \ref{subsection_G_E} and its dual $\LL(\GS(\R^n),\wt{\C})$ is endowed with the topology of uniform convergence on bounded subsets. In the sequel $\Gt(\R^n)$ denotes the Colombeau algebra of tempered generalized functions defined as the quotient $\Et(\R^n)/\Nt(\R^n)$, where $\Et(\R^n)$ is the algebra of all  \emph{$\tau$-moderate} nets $(u_\eps)_\eps\in\Et[\R^n]:=\mO_M(\R^n)^{(0,1]}$ such that 
\[
\forall \alpha\in\N^n\, \exists N\in\N\qquad \sup_{x\in\R^n}(1+|x|)^{-N}|\partial^\alpha u_\eps(x)|=O(\eps^{-N})\qquad \text{as}\ \eps\to 0
\]
and $\Nt(\R^n)$ is the ideal of all \emph{$\tau$-negligible} nets $(u_\eps)_\eps\in\Et[\R^n]$ such that
\[
\forall \alpha\in\N^n\, \exists N\in\N\, \forall q\in\N\quad \sup_{x\in\R^n}(1+|x|)^{-N}|\partial^\alpha u_\eps(x)|=O(\eps^{q})\ \text{as}\ \eps\to 0.
\]
Theorem 3.8 in \cite{Garetto:05b} shows that we have the chain of continuous embeddings
\[
\GS(\R^n)\subseteq\Gt(\R^n)\subseteq\LL(\GS(\R^n),\wt{\C}).
\]
Moreover, since for any $u\in\Gc(\Om)$ with $\supp\, u\subseteq K\Subset\Om$ and any $K'\Subset\Om$ with $K\subset{\rm{Int}}\,K'$ one can find a representative $(u_\eps)_\eps$ with $\supp\, u_\eps\subseteq K'$ for all $\eps\in(0,1]$, we have that $\Gc(\Om)$ is continuously embedded into $\GS(\R^n)$.
 
\subsubsection*{The Fourier transform on $\GS(\R^n)$, $\LL(\GS(\R^n),\wt{\C})$ and $\LL(\G(\Om),\wt{\C})$.}
The Fourier transform on $\GS(\R^n)$ is defined by the corresponding transformation at the level of representatives, as follows:
\[
\mF:\GS(\R^n)\to\GS(\R^n):u\to [(\widehat{u_\eps})_\eps].
\] 
$\mF$ is a $\wt{\C}$-linear continuous map from $\GS(\R^n)$ into itself which extends to the dual in a natural way. In detail, we define the Fourier transform of $T\in\LL(\GS(\R^n),\wt{\C})$ as the functional in $\LL(\GS(\R^n),\wt{\C})$ given by
\[
\mF(T)(u)=T(\mF u).
\]
As shown in \cite[Remark 1.5]{Garetto:06a} $\LL(\G(\Om),\wt{\C})$ is embedded in $\LL(\GS(\R^n),\wt{\C})$ by means of the map
\[
\LL(\G(\Om),\wt{\C})\to\LL(\GS(\R^n),\wt{\C}):T\to \big(u\to T(({u_\eps}_{\vert_\Om})_\eps+\Neg(\Om))\big).
\]
In particular, when $T$ is a $\rfunc$ functional in $\LL(\G(\Om),\wt{\C})$ we have from \cite[Proposition 1.6, Remark 1.7]{Garetto:06a} that the Fourier transform of $T$ is the tempered generalized function obtained as the action of $T(y)$ on $\esp^{-iy\xi}$, i.e., $\mF(T)=T(\esp^{-i\cdot\xi})=(T_\eps(\esp^{-i\cdot\xi}))_\eps+\Nt(\R^n)$. 

\subsubsection*{Generalized wave front sets of a functional in $\LL(\Gc(\Om),\wt{\C})$.}
The notions of $\G$-wave front set and $\Ginf$-wave front set of a functional in $\LL(\Gc(\Om),\wt{\C})$ have been introduced in \cite{Garetto:06a} as direct analogues of the distributional wave front set in \cite{Hoermander:71}. They employ a subset of the space $\G^\ssc_{S^m(\Om\times\R^n)}$ of generalized symbols of slow scale type denoted by $\Syscu^m(\Om\times\R^n)$ (see \cite[Definition 1.1]{GH:05}) and a suitable notion of slow scale micro-ellipticity \cite[Definition 1.2]{GH:05}. In detail, $(x_0,\xi_0)\not\in\WF_\G\, T$ (resp. $(x_0,\xi_0)\not\in\WF_{\Ginf}\, T$) if there exists $a(x,D)$ properly supported with $a\in\Syscu^0(\Om\times\R^n)$ such that $a$ is slow scale micro-elliptic at $(x_0,\xi_0)$ and $a(x,D)T\in\G(\Om)$ (resp. $a(x,D)T\in\Ginf(\Om)$).\\
When $T$ is a basic functional of $\LL(\Gc(\Om),\wt{\C})$, Proposition 3.14 in \cite{Garetto:06a} proves that one can limit to classical properly supported pseudodifferential operators in the definition of $\WF_\G\, T$ and $\WF_{\Ginf}\, T$. More precisely,
\begin{equation}
\label{WGcl}
{\rm{W}}_{{\rm{cl}},\G}(T):=\bigcap_{AT\in\G(\Om)}\Char(A)
\end{equation}
and
\begin{equation}
\label{WGinfcl}
{\rm{W}}_{{\rm{cl}},\Ginf}(T):=\bigcap_{AT\in\Ginf(\Om)}\Char(A)
\end{equation}
where the intersections are taken over all the classical properly supported operators $A\in\Psi^0(\Om)$ such that $AT\in\G(\Om)$ in \eqref{WGcl} and $AT\in\Ginf(\Om)$ in \eqref{WGinfcl}. $\WF_\G T$ and $\WF_{\Ginf}T$ are both closed conic subsets of $\CO{\Om}$ and, as proved in \cite[Proposition 3.5]{Garetto:06a},  
\[
\pi_\Om(\WF_\G T)=\singsupp_\G T
\]
and
\[
\pi_{\Om}(\WF_{\Ginf}T)=\singsupp_{\Ginf} T.
\]
\subsubsection*{Characterization of $\WF_\G T$ and $\WF_{\Ginf} T$ when $T$ is a basic functional.} 
We will employ a useful characterization of the $\G$-wave front set and the $\Ginf$-wave front set valid for functionals which are basic. It involves the sets of generalized functions $\G_{\S,0}(\Gamma)$ and $\Ginf_{\S\hskip-2pt,0}(\Gamma)$, defined on the conic subset $\Gamma$ of $\R^n\setminus 0$, as follows:
\begin{multline*}
\G_{\S,0}(\Gamma):=\{u\in\Gt(\R^n):\ \exists (u_\eps)_\eps\in u\ \forall l\in\R\, \exists N\in\N\\ \sup_{\xi\in\Gamma}\lara{\xi}^l|u_\eps(\xi)|=O(\eps^{-N})\, \text{as $\eps\to 0$}\},
\end{multline*}
\begin{multline*}
\Ginf_{\S\hskip-2pt,0}(\Gamma):=\{u\in\Gt(\R^n):\ \exists (u_\eps)_\eps\in u\ \exists N\in\N\, \forall l\in\R\\ \sup_{\xi\in\Gamma}\lara{\xi}^l|u_\eps(\xi)|=O(\eps^{-N})\, \text{as $\eps\to 0$}\}.
\end{multline*}
Let $T\in\LL(\Gc(\Om),\wt{\C})$. Theorem 3.13 in \cite{Garetto:06a} shows that:
\begin{itemize}
\item[(i)] $(x_0,\xi_0)\not\in\WF_\G T$ if and only if there exists a conic neighborhood $\Gamma$ of $\xi_0$ and a cut-off function $\varphi\in\Cinfc(\Om)$ with $\varphi(x_0)=1$ such that $\mF(\varphi T)\in\G_{\S,0}(\Gamma)$.
\item[(ii)] $(x_0,\xi_0)\not\in\WF_{\Ginf} T$ if and only if there exists a conic neighborhood $\Gamma$ of $\xi_0$ and a cut-off function $\varphi\in\Cinfc(\Om)$ with $\varphi(x_0)=1$ such that $\mF(\varphi T)\in\Ginf_{\S\hskip-2pt,0}(\Gamma)$.
\end{itemize}

\section{Generalized oscillatory integrals: definition}
This section is devoted to a notion of oscillatory integral where both the amplitude and the phase function are generalized objects of Colombeau type. 

In the sequel $\Om$ is an arbitrary open subset of $\R^n$.
We recall that $\phi(y,\xi)$ is a \emph{phase function} on $\Om\times\R^p$ if it is a smooth function on $\Om\times\R^p\setminus 0$, real valued, positively homogeneous of degree $1$ in $\xi$ with $\nabla_{y,\xi}\phi(y,\xi)\neq 0$ for all $y\in\Om$ and $\xi\in\R^p\setminus 0$. We denote the set of all phase functions on $\Om\times\R^p$ by $\Phi(\Om\times\R^p)$ and the set of all nets in $\Phi(\Om\times\R^p)^{(0,1]}$ by $\Phi[\Om\times\R^p]$. The notations concerning classes of symbols have been introduced in Subsection \ref{subsec_gen_symb}. The proofs of the statements collected in this section can be found in \cite{GHO:06}. In the paper \cite{GHO:06} the authors deal with generalized symbols in $\wt{S}^m_{\rho,\delta}(\Om\times\R^p)$ as well as  with regular generalized symbols. This last class of symbols is modelled on the subalgebra $\Ginf(\Om)$ of regular generalized functions and contains the generalized symbols of slow scale type as a submodule. Even though many statements of Section 3, 4 and 6 hold for regular symbols as well, for the sake of simplicity and in order to have uniformity of assumptions between phase functions and symbols, we limit in this work to consider $\wt{\mathcal{S}}^m_{\rho,\delta}(\Om\times\R^p)$ and the smaller class $\wt{S}^{m,\ssc}_{\rho,\delta}(\Om\times\R^p)$ of generalized symbols of slow scale type. 
\begin{defn}
\label{def_phase_moderate}
An element of $\mathcal{M}_\Phi(\Om\times\R^p)$ is a net $(\phi_\eps)_\eps\in\Phi[\Om\times\R^p]$ satisfying the conditions:
\begin{itemize}
\item[(i)] $(\phi_\eps)_\eps\in\mathcal{M}_{S^1_{\rm{hg}}(\Om\times\R^p\setminus 0)}$,
\item[(ii)] for all $K\Subset\Om$ the net $$\biggl(\inf_{y\in K,\xi\in\R^p\setminus 0}\biggl|\nabla \phi_\eps\biggl(y,\frac{\xi}{|\xi|}\biggr)\biggr|^2\biggr)_\eps$$ is strictly nonzero. 
\end{itemize}
On $\MPhi(\Om\times\R^p)$ we introduce the equivalence relation $\sim$ as follows: $(\phi_\eps)_\eps\sim(\omega_\eps)_\eps$ if and only if $(\phi_\eps-\omega_\eps)\in\Neg_{S^1_{\rm{hg}}(\Om\times\R^p\setminus 0)}$. The elements of the factor space  $$\wt{\Phi}(\Om\times\R^p):={\mathcal{M}_\Phi(\Om\times\R^p)}/{\sim}.$$
will be called \emph{generalized phase functions}. 
\end{defn}
We shall employ the equivalence class notation $[(\phi_\eps)_\eps]$ for $\phi\in\wt{\Phi}(\Om\times\R^p)$. When $(\phi_\eps)_\eps$ is a net of phase functions, i.e. $(\phi_\eps)_\eps\in\Phi[\Om\times\R^p]$, Lemma 1.2.1 in \cite{Hoermander:71} shows that there exists a family of partial differential operators $(L_{\phi_\eps})_\eps$ such that ${\ }^tL_{\phi_\eps}\esp^{i\phi_\eps}=\esp^{i\phi_\eps}$ for all $\eps\in(0,1]$. $L_{\phi_\eps}$ is of the form 
\begin{equation}
\label{def_L_phi_cl}
\sum_{j=1}^p a_{j,\eps}(y,\xi)\frac{\partial}{\partial_{\xi_j}} +\sum_{k=1}^n b_{k,\eps}(y,\xi)\frac{\partial}{\partial_{y_k}}+ c_\eps(y,\xi),
\end{equation}
where the coefficients $(a_{j,\eps})_\eps$ belong to $S^0[\Om\times\R^p]$ and $(b_{k,\eps})_\eps$, $(c_\eps)_\eps$ are elements of $S^{-1}[\Om\times\R^p]$. The following technical lemma is crucial in proving Proposition \ref{prop_operator}.   
\begin{lem}
\label{lemma_1}
\leavevmode
\begin{trivlist}
\item[(i)] Let $\varphi_{\phi_\eps}(y,\xi):=|\nabla\phi_\eps(y,\xi/|\xi|)|^{-2}$. If $(\phi_\eps)_\eps\in\MPhi(\Om\times\R^p)$ then
\[
(\varphi_{\phi_\eps})_\eps\in\mM_{S^0_{\rm{hg}}(\Om\times\R^p\setminus 0)}.
\]
\item[(ii)] If $(\phi_\eps)_\eps, (\omega_\eps)_\eps\in\MPhi(\Om\times\R^p)$ and $(\phi_\eps)_\eps\sim(\omega_\eps)_\eps$ then
\[
\big(({\partial_{\xi_j}\phi_\eps})\varphi_{\phi_\eps}-({\partial_{\xi_j}\omega_\eps})\varphi_{\omega_\eps}\big)_\eps\in\Neg_{S^0_{\rm{hg}}(\Om\times\R^p\setminus 0)}
\]
for all $j=1,...,p$ and
\[
\big(({\partial_{y_k}\phi_\eps}){|\xi|^{-2}\varphi_{\phi_\eps}}-({\partial_{y_k}\omega_\eps}){|\xi|^{-2}\varphi_{\omega_\eps}}\big)_\eps\in\Neg_{S^{-1}_{\rm{hg}}(\Om\times\R^p\setminus 0)}
\]
for all $k=1,...,n$.
\end{trivlist}
\end{lem}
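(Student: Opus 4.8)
The plan is to reduce everything to estimates on moderate/negligible nets of symbols and then apply the chain rule and product rule carefully, keeping track of the $\eps$-dependence. For part (i), I would start from the definition $\varphi_{\phi_\eps}(y,\xi)=|\nabla\phi_\eps(y,\xi/|\xi|)|^{-2}$. Writing $g_\eps(y,\xi):=|\nabla\phi_\eps(y,\xi/|\xi|)|^2$, condition (ii) in Definition \ref{def_phase_moderate} says exactly that for each $K\Subset\Om$ the net $(\inf_{y\in K,\xi\neq 0}g_\eps(y,\xi))_\eps$ is strictly nonzero, so $g_\eps\ge\eps^r$ on $K\times(\R^p\setminus 0)$ for suitable $r$ and small $\eps$; hence $\varphi_{\phi_\eps}=1/g_\eps$ is bounded above by $\eps^{-r}$ there. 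Next I would observe that $g_\eps$ is homogeneous of degree $0$ in $\xi$ and, by condition (i), $(\phi_\eps)_\eps\in\mM_{S^1_{\rm hg}}$ implies $(\nabla_y\phi_\eps)_\eps\in\mM_{S^1_{\rm hg}}$ and $(\nabla_\xi\phi_\eps)_\eps\in\mM_{S^0_{\rm hg}}$; after composing with the degree-$0$ homogeneous map $\xi\mapsto\xi/|\xi|$ and squaring, $(g_\eps)_\eps\in\mM_{S^0_{\rm hg}(\Om\times\R^p\setminus 0)}$. Then $\varphi_{\phi_\eps}=1/g_\eps$: the derivatives of $1/g_\eps$ are sums of terms of the form $g_\eps^{-k}$ times products of derivatives of $g_\eps$, and each such term is estimated by combining the moderate bounds on the derivatives of $g_\eps$ with the lower bound $g_\eps\ge\eps^r$ (raised to a power depending on the number of derivatives). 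This yields $(\varphi_{\phi_\eps})_\eps\in\mM_{S^0_{\rm hg}(\Om\times\R^p\setminus 0)}$.

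For part (ii) the strategy is the same but now one of the factors in each product is negligible. Set $h_\eps:=\phi_\eps-\omega_\eps$, which by hypothesis lies in $\Neg_{S^1_{\rm hg}(\Om\times\R^p\setminus 0)}$. Write the difference
\[
(\partial_{\xi_j}\phi_\eps)\varphi_{\phi_\eps}-(\partial_{\xi_j}\omega_\eps)\varphi_{\omega_\eps}
=(\partial_{\xi_j}h_\eps)\varphi_{\phi_\eps}+(\partial_{\xi_j}\omega_\eps)(\varphi_{\phi_\eps}-\varphi_{\omega_\eps}),
\]
and similarly for the $y_k$-terms. The first summand is a product of a negligible net $(\partial_{\xi_j}h_\eps)_\eps\in\Neg_{S^{-1}_{\rm hg}}\subseteq\Neg_{S^0_{\rm hg}}$ with a moderate net $(\varphi_{\phi_\eps})_\eps$ from part (i), hence negligible in $S^0_{\rm hg}$; here I would invoke that the product of a moderate and a negligible net of symbols is negligible, which is the continuity of multiplication at the level of representatives already used in the paper. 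For the second summand one needs $(\varphi_{\phi_\eps}-\varphi_{\omega_\eps})_\eps\in\Neg_{S^0_{\rm hg}}$. I would get this from the identity
\[
\varphi_{\phi_\eps}-\varphi_{\omega_\eps}=\frac{1}{g_{\phi_\eps}}-\frac{1}{g_{\omega_\eps}}=\frac{g_{\omega_\eps}-g_{\phi_\eps}}{g_{\phi_\eps}g_{\omega_\eps}},
\]
where $g_{\omega_\eps}-g_{\phi_\eps}=\langle\nabla\omega_\eps+\nabla\phi_\eps,\nabla\omega_\eps-\nabla\phi_\eps\rangle$ evaluated at $\xi/|\xi|$ is negligible (a sum of products of moderate nets $\nabla\omega_\eps+\nabla\phi_\eps$ with negligible nets $\nabla(\omega_\eps-\phi_\eps)=\nabla h_\eps$), while $g_{\phi_\eps},g_{\omega_\eps}$ are both moderate and bounded below by a positive power of $\eps$ on each $K\Subset\Om$. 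Differentiating this quotient and using the same product/quotient bookkeeping as in part (i) gives the claim. The extra factor $|\xi|^{-2}$ in the $y_k$-statement just shifts the order by $-2$: $(\partial_{y_k}\phi_\eps)_\eps\in\mM_{S^1_{\rm hg}}$ and $(|\xi|^{-2})$ contributes $S^{-2}_{\rm hg}$, matched against $\varphi_{\phi_\eps}\in\mM_{S^0_{\rm hg}}$, landing in $S^{-1}_{\rm hg}$ as required.

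The main obstacle is the bookkeeping for the lower bound: condition (ii) in Definition \ref{def_phase_moderate} only gives strict-nonzeroness of $g_\eps$ (an $\eps$-power lower bound), not a uniform positive lower bound, so every time one differentiates $1/g_\eps$ one picks up an additional negative power of $\eps$ from each extra factor of $g_\eps^{-1}$. One must check that these accumulated powers remain bounded as a function of the order of differentiation, i.e.\ still only $O(\eps^{-N})$ with $N$ depending on the multi-index — this is fine because each fixed derivative involves only finitely many factors $g_\eps^{-1}$, but it is the place where the argument genuinely uses "moderate" rather than "bounded". For the negligibility claims in (ii), the analogous point is that a negligible net is $O(\eps^q)$ for \emph{every} $q$, so multiplying by any fixed power $\eps^{-N}$ still leaves something negligible; combined with the Leibniz rule (finitely many terms, each containing at least one derivative of $h_\eps$), every term in a given derivative of the difference is $O(\eps^q)$ for all $q$, uniformly on $K$, which is exactly membership in $\Neg_{S^0_{\rm hg}}$ (resp.\ $\Neg_{S^{-1}_{\rm hg}}$). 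Once these two observations are in place, the rest is a routine induction on the order of the derivatives, exactly parallel to the classical computation behind Lemma 1.2.1 in \cite{Hoermander:71}.
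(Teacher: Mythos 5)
Your argument is correct and is the standard one: the paper itself only states this lemma and defers its proof to \cite{GHO:06}, so there is no in-paper proof to compare against, but the decomposition $(\partial_{\xi_j}h_\eps)\varphi_{\phi_\eps}+(\partial_{\xi_j}\omega_\eps)(\varphi_{\phi_\eps}-\varphi_{\omega_\eps})$ together with the quotient identity for $\varphi_{\phi_\eps}-\varphi_{\omega_\eps}$ and the $\eps$-power lower bound supplied by Definition~\ref{def_phase_moderate}(ii) is exactly what is needed, and your bookkeeping of the accumulated negative powers of $\eps$ under differentiation is the genuinely relevant point. One cosmetic slip: since $\phi_\eps-\omega_\eps$ is homogeneous of degree $1$, the derivative $\partial_{\xi_j}(\phi_\eps-\omega_\eps)$ is homogeneous of degree $0$ and lies in $\Neg_{S^0_{\rm{hg}}(\Om\times\R^p\setminus 0)}$ directly rather than in $\Neg_{S^{-1}_{\rm{hg}}(\Om\times\R^p\setminus 0)}$ (whose elements are homogeneous of degree $-1$); this does not affect anything, as you only use the $S^0_{\rm{hg}}$ membership.
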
 
\begin{prop}
\label{prop_operator}
\leavevmode
\begin{trivlist}
\item[(i)] If $(\phi_\eps)_\eps\in\MPhi(\Om\times\R^p)$ then $(a_{j,\eps})_\eps\in\mM_{S^0(\Om\times\R^p)}$ for all $j=1,...,p$, $(b_{k,\eps})_\eps\in\mM_{S^{-1}(\Om\times\R^p)}$ for all $k=1,...,n$, and $(c_\eps)_\eps\in\mM_{S^{-1}(\Om\times\R^p)}$.
\item[(ii)] If $(\phi_\eps)_\eps, (\omega_\eps)_\eps\in\MPhi(\Om\times\R^p)$ and $(\phi_\eps)_\eps\sim(\omega_\eps)_\eps$ then
\[
L_{\phi_\eps}-L_{\omega_\eps}=\sum_{j=1}^p a'_{j,\eps}(y,\xi)\frac{\partial}{\partial_{\xi_j}} +\sum_{k=1}^n b'_{k,\eps}(y,\xi)\frac{\partial}{\partial_{y_k}}+ c'_\eps(y,\xi),
\]
where $(a'_{j,\eps})_\eps\in\Neg_{S^{0}(\Om\times\R^p)}$, $(b'_{k,\eps})_\eps\in\Neg_{S^{-1}(\Om\times\R^p)}$ and $(c'_\eps)_\eps\in\Neg_{S^{-1}(\Om\times\R^p)}$ for all $j=1,...,p$ and $k=1,...,n$.
\end{trivlist}
\end{prop}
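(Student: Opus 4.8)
\emph{Approach.} The plan is to write $L_{\phi_\eps}$ in the explicit form produced by Lemma 1.2.1 of \cite{Hoermander:71}, to read off its coefficients in terms of the three basic nets $(\partial_{\xi_j}\phi_\eps)_\eps$, $(\partial_{y_k}\phi_\eps)_\eps$ and $(\varphi_{\phi_\eps})_\eps$ appearing in Lemma \ref{lemma_1}, and then to conclude by Lemma \ref{lemma_1} together with two elementary facts (checked on representatives): $\bigcup_m\mM_{S^m(\Om\times\R^p)}$ is an algebra, stable under $\partial_{y_k}$ (order preserving) and under $\partial_{\xi_j}$ (lowering the order by one), and $\bigcup_m\Neg_{S^m(\Om\times\R^p)}$ is an ideal of it with the same stability. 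Explicitly, put $\psi_\eps(y,\xi)=|\xi|^2|\nabla_\xi\phi_\eps(y,\xi)|^2+|\nabla_y\phi_\eps(y,\xi)|^2$; homogeneity of $\phi_\eps$ gives $\psi_\eps(y,\xi)=|\xi|^2|\nabla\phi_\eps(y,\xi/|\xi|)|^2$, whence $\varphi_{\phi_\eps}=|\xi|^2\psi_\eps^{-1}$. Fix once and for all a cut-off $\chi\in\Cinf(\R^p)$, the same for every $\eps$ and every phase, with $\chi(\xi)=0$ for $|\xi|\le 1$ and $\chi(\xi)=1$ for $|\xi|\ge 2$. One may then take for $L_{\phi_\eps}$ the transpose of
\[
\frac{\chi(\xi)}{i}\biggl(\sum_{j=1}^p (\partial_{\xi_j}\phi_\eps)\,\varphi_{\phi_\eps}\,\frac{\partial}{\partial\xi_j}+\sum_{k=1}^n(\partial_{y_k}\phi_\eps)\,|\xi|^{-2}\varphi_{\phi_\eps}\,\frac{\partial}{\partial y_k}\biggr)+1-\chi(\xi),
\]
which, after transposition, has first-order coefficients
\[
a_{j,\eps}=-\tfrac1i\,\chi(\xi)(\partial_{\xi_j}\phi_\eps)\varphi_{\phi_\eps},\qquad b_{k,\eps}=-\tfrac1i\,\chi(\xi)(\partial_{y_k}\phi_\eps)|\xi|^{-2}\varphi_{\phi_\eps},
\]
while $c_\eps=\sum_{j}\partial_{\xi_j}a_{j,\eps}+\sum_k\partial_{y_k}b_{k,\eps}+\bigl(1-\chi(\xi)\bigr)$, the last summand being smooth and supported in $\{|\xi|\le 2\}$, hence of order $-\infty$.

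\emph{Part (i).} By Definition \ref{def_phase_moderate}(i), $(\partial_{\xi_j}\phi_\eps)_\eps$ is a moderate symbol net of order $0$ and $(\partial_{y_k}\phi_\eps)_\eps$ one of order $1$; by Lemma \ref{lemma_1}(i), $(\varphi_{\phi_\eps})_\eps\in\mM_{S^0_{\rm{hg}}(\Om\times\R^p\setminus 0)}$. Multiplication by the fixed cut-off $\chi$ turns $\chi(\partial_{\xi_j}\phi_\eps)\varphi_{\phi_\eps}$ and $\chi(\partial_{y_k}\phi_\eps)|\xi|^{-2}\varphi_{\phi_\eps}$ into genuine symbol nets on all of $\Om\times\R^p$, since they vanish for $|\xi|\le 1$ and no singularity at $\xi=0$ survives. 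Counting orders: $a_{j,\eps}$ is a product of order-$0$ factors, so $(a_{j,\eps})_\eps\in\mM_{S^0(\Om\times\R^p)}$; $b_{k,\eps}$ is, up to the cut-off, a product of an order-$1$ and an order-$(-2)$ factor, so $(b_{k,\eps})_\eps\in\mM_{S^{-1}(\Om\times\R^p)}$; and each term of $c_\eps$ is either a $\xi$-derivative of some $a_{j,\eps}$ (order $-1$), a $y$-derivative of some $b_{k,\eps}$ (order $-1$), or the $S^{-\infty}$ term $1-\chi$, so $(c_\eps)_\eps\in\mM_{S^{-1}(\Om\times\R^p)}$.

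\emph{Part (ii).} Assume $(\phi_\eps)_\eps\sim(\omega_\eps)_\eps$ and use the same cut-off $\chi$ for both phases. Then
\[
a'_{j,\eps}=-\tfrac1i\,\chi(\xi)\bigl((\partial_{\xi_j}\phi_\eps)\varphi_{\phi_\eps}-(\partial_{\xi_j}\omega_\eps)\varphi_{\omega_\eps}\bigr),
\]
which belongs to $\Neg_{S^0(\Om\times\R^p)}$ by the first assertion of Lemma \ref{lemma_1}(ii) (the multiplication by $\chi$ being harmless as before); likewise $(b'_{k,\eps})_\eps\in\Neg_{S^{-1}(\Om\times\R^p)}$ by the second assertion of Lemma \ref{lemma_1}(ii). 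The $1-\chi$ terms cancel, so $c'_\eps=\sum_j\partial_{\xi_j}a'_{j,\eps}+\sum_k\partial_{y_k}b'_{k,\eps}$, which lies in $\Neg_{S^{-1}(\Om\times\R^p)}$ since negligible symbol nets are stable under $\partial_{\xi_j}$ (dropping the order to $-1$) and under $\partial_{y_k}$.

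\emph{Where the work is.} All the real content sits in Lemma \ref{lemma_1}; what remains to watch is the order bookkeeping — in particular that $c_\eps$ genuinely lands in $S^{-1}$ and not merely in $S^0$, which is precisely why one must present it as a $\xi$-divergence of the first-order coefficients — and the verification that the fixed cut-off near $\xi=0$ is compatible both with the symbol estimates and with the defining identity ${}^tL_{\phi_\eps}\esp^{i\phi_\eps}=\esp^{i\phi_\eps}$. Beyond that the argument is pure symbol-class arithmetic.
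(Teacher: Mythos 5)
Your proof is correct and follows exactly the route the paper intends: you reconstruct Hörmander's operator $L_{\phi_\eps}$ explicitly (with a fixed, $\eps$- and phase-independent cut-off $\chi$), identify its coefficients as $\chi$ times the nets $(\partial_{\xi_j}\phi_\eps)\varphi_{\phi_\eps}$ and $(\partial_{y_k}\phi_\eps)|\xi|^{-2}\varphi_{\phi_\eps}$ handled by Lemma \ref{lemma_1}, and reduce everything to order bookkeeping and the stability of moderate/negligible symbol nets under products and derivatives. The attention to writing $c_\eps$ as a divergence of the first-order coefficients plus the $S^{-\infty}$ term $1-\chi$ (so that it lands in $S^{-1}$, and so that the $1-\chi$ terms cancel in part (ii)) is precisely the point that needs care, and you have it right.
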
 
As a consequence of Propositions \ref{prop_operator} we can claim that any generalized phase function $\phi\in\wt{\Phi}(\Om\times\R^p)$ defines a generalized partial differential operator
\[
L_\phi(y,\xi,\partial_y,\partial_\xi)=\sum_{j=1}^p a_{j}(y,\xi)\frac{\partial}{\partial_{\xi_j}} +\sum_{k=1}^n b_{k}(y,\xi)\frac{\partial}{\partial_{y_k}}+ c(y,\xi)
\]
whose coefficients $\{a_j\}_{j=1}^p$ and $\{b_k\}_{k=1}^n$, $c$ are generalized symbols in $\wt{\mathcal{S}}^{0}(\Om\times\R^p)$ and $\wt{\mathcal{S}}^{-1}(\Om\times\R^p)$, respectively. By construction, $L_\phi$ maps $\wt{\mathcal{S}}^m_{\rho,\delta}(\Om\times\R^p)$ continuously into $\wt{\mathcal{S}}^{m-s}_{\rho,\delta}(\Om\times\R^p)$, where $s=\min\{\rho,1-\delta\}$. Hence $L^k_\phi$ is continuous from $\wt{\mathcal{S}}^m_{\rho,\delta}(\Om\times\R^p)$ to $\wt{\mathcal{S}}^{m-ks}_{\rho,\delta}(\Om\times\R^p)$.

Before stating the next proposition we recall a classical lemma valid any symbol $\phi\in S^1(\Om\times\R^p\setminus 0)$. 
\begin{lem}
\label{lemma_esp_classic}
For all $\alpha\in\N^p$ and $\beta\in\N^n$,
\[
\partial^\alpha_\xi\partial^\beta_y\esp^{i\phi(y,\xi)}=\sum_{\substack{k\le|\alpha+\beta|,\\ \alpha_1+\alpha_2+...+\alpha_k=\alpha\\ \beta_1+\beta_2+...+\beta_k=\beta}}c_{\alpha_1,...,\alpha_k,\beta_1,...,\beta_k}\,\partial^{\alpha_1}_\xi\partial^{\beta_1}_y\phi(y,\xi)...\partial^{\alpha_k}_\xi\partial^{\beta_k}_y\phi(y,\xi).
\]
It follows that 
\[
\partial^\alpha_\xi\partial^\beta_y\esp^{i\phi(y,\xi)}= \esp^{i\phi(y,\xi)}a_{\alpha,\beta}(y,\xi),
\]
where $a_{\alpha,\beta}\in S^{|\beta|}(\Om\times\R^p\setminus 0)$ and 
\begin{equation}
\label{coeff_a}
|a_{\alpha,\beta}|^{(|\beta|)}_{K,j}\le c\sup_{y\in K,\xi\neq 0}\sup_{|\gamma+\delta|\le|\alpha+\beta|+j}\lara{\xi}^{-1+|\gamma|}|\partial^\gamma_\xi\partial^\delta_y\phi(y,\xi)|,
\end{equation}
where the constant $c$ depends only on $\alpha$, $\beta$, and $j$. 
\end{lem}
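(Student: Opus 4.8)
\emph{Proof proposal.} The plan is to establish the first identity by induction on $N=|\alpha|+|\beta|$ (a Fa\`a di Bruno type expansion, as in \cite{Hoermander:71}) and then to read off the symbol membership and the estimate \eqref{coeff_a} from the symbol calculus. For $N=0$ both sides equal $\esp^{i\phi}$. For the inductive step, assume $\partial^\alpha_\xi\partial^\beta_y\esp^{i\phi}=\esp^{i\phi}\sum c_{\cdots}\,\partial^{\alpha_1}_\xi\partial^{\beta_1}_y\phi\cdots\partial^{\alpha_k}_\xi\partial^{\beta_k}_y\phi$ with $k\le N$, every $(\alpha_l,\beta_l)\neq 0$ and $\sum_l\alpha_l=\alpha$, $\sum_l\beta_l=\beta$, and apply one further derivative $\partial_{\xi_r}$ (the $\partial_{y_s}$ case being identical). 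By Leibniz, the extra derivative either falls on one of the $k$ factors $\partial^{\alpha_l}_\xi\partial^{\beta_l}_y\phi$ --- turning it into $\partial^{\alpha_l+e_r}_\xi\partial^{\beta_l}_y\phi$, leaving $k$ and all the constraints intact --- or on the prefactor $\esp^{i\phi}$, producing one new factor $i\,\partial_{\xi_r}\phi$ and raising $k$ by one. In every case $k\le N+1$, and the new numerical coefficients are finite combinatorial quantities depending only on $\alpha,\beta$ and the order of differentiation. This gives the displayed formula, and setting $a_{\alpha,\beta}:=\esp^{-i\phi}\partial^\alpha_\xi\partial^\beta_y\esp^{i\phi}$ exhibits it as exactly that finite sum.

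For the symbol membership and \eqref{coeff_a}, use that $\phi\in S^1(\Om\times\R^p\setminus 0)$ forces $\partial^{\alpha_l}_\xi\partial^{\beta_l}_y\phi\in S^{\,1-|\alpha_l|}(\Om\times\R^p\setminus 0)$: each $\xi$-derivative drops the order by one, each $y$-derivative leaves it unchanged. A monomial with $k$ such factors therefore lies in $S^{\sum_l(1-|\alpha_l|)}=S^{\,k-|\alpha|}$, and since $k\le|\alpha|+|\beta|$ and $S^{m'}\subseteq S^m$ whenever $m'\le m$, each monomial --- hence $a_{\alpha,\beta}$ --- lies in $S^{|\beta|}(\Om\times\R^p\setminus 0)$. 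For the quantitative bound, apply Leibniz to $\partial^\mu_\xi\partial^\nu_y$ of a monomial with $|\mu+\nu|\le j$: it becomes a finite sum of products $\prod_l\partial^{\mu_l+\alpha_l}_\xi\partial^{\nu_l+\beta_l}_y\phi$ with $\sum_l\mu_l=\mu$, $\sum_l\nu_l=\nu$. Each factor has differentiation order $|\mu_l+\alpha_l+\nu_l+\beta_l|\le|\alpha+\beta|+j$, hence is bounded, via the $S^1$-estimate for $\phi$, by $\big(\sup_{y\in K,\xi\neq 0}\sup_{|\gamma+\delta|\le|\alpha+\beta|+j}\lara{\xi}^{-1+|\gamma|}|\partial^\gamma_\xi\partial^\delta_y\phi|\big)\lara{\xi}^{\,1-|\mu_l|-|\alpha_l|}$. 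Multiplying the $k$ factors together with the seminorm weight $\lara{\xi}^{-|\beta|+|\mu|}$ leaves the power $\lara{\xi}^{\,k-|\alpha|-|\beta|}\le 1$ (using $\lara{\xi}\ge 1$ and $k\le|\alpha+\beta|$), and one is left with a constant depending only on $\alpha,\beta,j$ times the supremum on the right of \eqref{coeff_a} (raised to the number of factors, i.e.\ to a power $\le|\alpha+\beta|$).

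The step that needs care is precisely this last weight count: one must check that the $\lara{\xi}^{-|\mu_l|}$ gained from the redistributed $\xi$-derivatives exactly cancels the weight $\lara{\xi}^{|\mu|}$ carried by the seminorm $|\cdot|^{(|\beta|)}_{K,j}$, and that the residual exponent $k-|\alpha|-|\beta|$ is nonpositive, which is exactly where the bound $k\le|\alpha+\beta|$ coming from the expansion is used. A minor technical point is that everything takes place on the conic open set $\Om\times(\R^p\setminus 0)$, so the symbol estimates should be phrased over $K\Subset\Om$ and $\xi\neq 0$ as in the definition of $S^m(\cdot)$ recalled in Subsection \ref{subsec_gen_symb}, with no difficulty at $\xi=0$ since that point is excluded. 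The induction, the Leibniz expansions and the inclusions between symbol classes are otherwise routine.
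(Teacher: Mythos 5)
The paper states this lemma without proof --- it is ``recalled'' as a classical fact from the theory of oscillatory integrals --- so there is no argument of the paper's to compare yours against. Your proof is the standard one and is correct: the induction on $|\alpha+\beta|$ with the Leibniz dichotomy (derivative hits the exponential and creates a new factor, raising $k$ by one; or hits an existing factor, preserving $k$) yields exactly the displayed expansion, with each $(\alpha_l,\beta_l)\neq 0$ and $k\le|\alpha+\beta|$, and your order count $\prod_l S^{1-|\alpha_l|}\subseteq S^{k-|\alpha|}\subseteq S^{|\beta|}$ is the right one. Two remarks. First, you correctly reinstate the factor $\esp^{i\phi}$ on the right-hand side of the first display, which is missing in the paper's statement as printed. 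Second, your observation that the bound comes out as the supremum on the right of \eqref{coeff_a} raised to powers up to $|\alpha+\beta|$, rather than to the first power as literally written, is not a defect of your argument but an imprecision in the statement: replacing $\phi$ by $\lambda\phi$ scales $a_{\alpha,\beta}$ like $\lambda^{|\alpha+\beta|}$ while the right-hand side of \eqref{coeff_a} scales like $\lambda$, so a linear bound cannot hold for $|\alpha+\beta|\ge 2$. The polynomial bound you obtain is exactly what the subsequent applications need, since powers of moderate (resp.\ slow scale) nets are moderate (resp.\ slow scale). Your handling of the weight bookkeeping ($\lara{\xi}^{k-|\alpha|-|\beta|}\le 1$ via $k\le|\alpha+\beta|$) and of the exclusion of $\xi=0$ on the conic set is fine.
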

From \eqref{coeff_a} we have that  
\[
(\phi_\eps)_\eps\in\mM_{S^1(\Om\times\R^p\setminus 0)}\quad \Rightarrow \quad (a_{\alpha,\beta,\eps})_\eps\in \mM_{S^{|\beta|}(\Om\times\R^p\setminus 0)}
\]
or more in general that the net $(a_{\alpha,\beta,\eps})_\eps$ has the ``$\eps$-scale properties'' of $(\phi_\eps)_\eps$.
\begin{prop}
\label{prop_exp}
Let $\phi\in\wt{\Phi}(\Om\times\R^p)$. The exponential $$\esp^{i\phi(y,\xi)}$$ is a well-defined element of $\wt{\mathcal{S}}^1_{0,1}(\Om\times\R^p\setminus 0)$.
\end{prop}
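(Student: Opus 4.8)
The plan is to realize $\esp^{i\phi}$ via the net $(\esp^{i\phi_\eps})_\eps$, where $(\phi_\eps)_\eps\in\MPhi(\Om\times\R^p)$ is any representative of $\phi$: first I would check that this net is $S^1_{0,1}(\Om\times\R^p\setminus 0)$-moderate, and then that the resulting class in $\wt{\mathcal{S}}^1_{0,1}(\Om\times\R^p\setminus 0)$ does not depend on the chosen representative. The single elementary fact that drives all the estimates is that a phase function is real valued, so $|\esp^{i\phi_\eps}(y,\xi)|\equiv 1$; hence every derivative of $\esp^{i\phi_\eps}$ is controlled by the amplitude that differentiation of the exponential produces, and only condition $(i)$ of Definition \ref{def_phase_moderate} (not the non-degeneracy $(ii)$) is needed.

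For moderateness I would fix $\alpha\in\N^p$, $\beta\in\N^n$ and invoke Lemma \ref{lemma_esp_classic}: $\partial^\alpha_\xi\partial^\beta_y\esp^{i\phi_\eps}=\esp^{i\phi_\eps}\,a_{\alpha,\beta,\eps}$ with $a_{\alpha,\beta,\eps}\in S^{|\beta|}(\Om\times\R^p\setminus 0)$, and by \eqref{coeff_a} the net $(a_{\alpha,\beta,\eps})_\eps$ carries the $\eps$-growth of $(\phi_\eps)_\eps\in\mM_{S^1_{\rm{hg}}(\Om\times\R^p\setminus 0)}$, so $(a_{\alpha,\beta,\eps})_\eps\in\mM_{S^{|\beta|}(\Om\times\R^p\setminus 0)}$. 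Using $|\esp^{i\phi_\eps}|\equiv 1$ this gives, for each $K\Subset\Om\times\R^p\setminus 0$, an $N=N(K,\alpha,\beta)$ with $|\partial^\alpha_\xi\partial^\beta_y\esp^{i\phi_\eps}(y,\xi)|=|a_{\alpha,\beta,\eps}(y,\xi)|\le O(\eps^{-N})\lara{\xi}^{|\beta|}\le O(\eps^{-N})\lara{\xi}^{1+|\beta|}$ on $K^c$, which is exactly $S^1_{0,1}(\Om\times\R^p\setminus 0)$-moderateness (in fact the sharper order $0$ estimate holds, but membership in $\wt{\mathcal{S}}^1_{0,1}$ is all we need).

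For independence of the representative, suppose $(\phi_\eps)_\eps\sim(\omega_\eps)_\eps$, i.e.\ $(\psi_\eps)_\eps:=(\phi_\eps-\omega_\eps)_\eps\in\Neg_{S^1_{\rm{hg}}(\Om\times\R^p\setminus 0)}$. I would start from the interpolation identity
\[
\esp^{i\phi_\eps}-\esp^{i\omega_\eps}=i\,\psi_\eps\int_0^1\esp^{i(\omega_\eps+t\psi_\eps)}\,dt,
\]
differentiate under the integral, and expand by the Leibniz rule, obtaining a finite sum of terms $\bigl(\partial^{\alpha-\gamma}_\xi\partial^{\beta-\delta}_y\psi_\eps\bigr)\int_0^1\partial^\gamma_\xi\partial^\delta_y\esp^{i(\omega_\eps+t\psi_\eps)}\,dt$. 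For each $t\in[0,1]$ the function $\omega_\eps+t\psi_\eps$ is a real-valued symbol in $S^1(\Om\times\R^p\setminus 0)$ whose $S^1_{\rm{hg}}$-seminorms are dominated, uniformly in $t$, by those of $\omega_\eps$ plus those of $\psi_\eps$, hence are $\mM$-moderate; so Lemma \ref{lemma_esp_classic} and \eqref{coeff_a} give $\partial^\gamma_\xi\partial^\delta_y\esp^{i(\omega_\eps+t\psi_\eps)}=\esp^{i(\omega_\eps+t\psi_\eps)}\tilde a_{\gamma,\delta,\eps,t}$ with $|\tilde a_{\gamma,\delta,\eps,t}(y,\xi)|\le O(\eps^{-N})\lara{\xi}^{|\delta|}$ and $N$ independent of $t$. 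Since $\partial^{\alpha-\gamma}_\xi\partial^{\beta-\delta}_y\psi_\eps$ is homogeneous of degree $1-|\alpha-\gamma|$ in $\xi$ and negligible, on $K^c$ one has $|\partial^{\alpha-\gamma}_\xi\partial^{\beta-\delta}_y\psi_\eps(y,\xi)|\le O(\eps^q)\lara{\xi}$ for every $q$; multiplying the two bounds (together with $|\esp^{i(\omega_\eps+t\psi_\eps)}|\equiv 1$) and integrating in $t$ yields $|\partial^\alpha_\xi\partial^\beta_y(\esp^{i\phi_\eps}-\esp^{i\omega_\eps})(y,\xi)|\le O(\eps^q)\lara{\xi}^{1+|\beta|}$ on $K^c$ for every $q$. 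Hence $(\esp^{i\phi_\eps}-\esp^{i\omega_\eps})_\eps\in\Neg_{S^1_{0,1}(\Om\times\R^p\setminus 0)}$ and $\esp^{i\phi}:=[(\esp^{i\phi_\eps})_\eps]\in\wt{\mathcal{S}}^1_{0,1}(\Om\times\R^p\setminus 0)$ is well defined.

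The routine ingredients are the two invocations of Lemma \ref{lemma_esp_classic} and the bookkeeping of powers of $\lara{\xi}$. The delicate step is the representative independence: one must produce the intermediate estimate through the family $\omega_\eps+t\psi_\eps$ and verify that the amplitudes and the seminorm bounds supplied by Lemma \ref{lemma_esp_classic} along this family are uniform in $t\in[0,1]$, so that after integration the product of the negligible $\psi_\eps$-derivatives with those moderate amplitudes is genuinely negligible in the order-$1$, $(\rho,\delta)=(0,1)$ symbol class.
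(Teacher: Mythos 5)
Your proof is correct, and the moderateness half is exactly the paper's argument: invoke Lemma \ref{lemma_esp_classic} together with \eqref{coeff_a} so that the amplitude $a_{\alpha,\beta,\eps}$ inherits the $\eps$-growth of $(\phi_\eps)_\eps$, and use $|\esp^{i\phi_\eps}|\equiv 1$. Where you genuinely diverge is the representative-independence step. The paper factors $\esp^{i\omega_\eps}-\esp^{i\phi_\eps}=\esp^{i\omega_\eps}\bigl(1-\esp^{i(\phi_\eps-\omega_\eps)}\bigr)$ and then applies a mean-value expansion \emph{in the $\xi$-variable} along the segment from $0$ to $\xi$; this exploits the degree-$1$ homogeneity of $\psi_\eps=\phi_\eps-\omega_\eps$ (so that $\psi_\eps(y,0)=0$) to produce the zeroth-order estimate \eqref{neg_esp} with its explicit gain $|\xi|^{-1}$, and only afterwards handles higher derivatives by Leibniz on that product. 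You instead interpolate in an auxiliary parameter $t$ between the two phases, writing $\esp^{i\phi_\eps}-\esp^{i\omega_\eps}=i\psi_\eps\int_0^1\esp^{i(\omega_\eps+t\psi_\eps)}\,dt$, differentiate under the integral, and extract the negligible factor directly from the $S^1_{\rm{hg}}$-negligibility of $\psi_\eps$ rather than from homogeneity at $\xi=0$. Both are instances of the fundamental theorem of calculus applied to the exponential; yours has the advantage of treating all derivative orders in one uniform Leibniz expansion (at the modest cost of checking, as you do, that the constants supplied by Lemma \ref{lemma_esp_classic} along the family $\omega_\eps+t\psi_\eps$ are uniform in $t\in[0,1]$), while the paper's route isolates the scalar estimate \eqref{neg_esp}, which it then reuses verbatim in the Leibniz expansion. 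Your final bound $O(\eps^q)\lara{\xi}^{1+|\beta|}$ is precisely negligibility in $S^1_{0,1}(\Om\times\R^p\setminus 0)$, so the class $[(\esp^{i\phi_\eps})_\eps]$ is well defined as claimed.
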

\begin{proof}
From Lemma \ref{lemma_esp_classic} we have that if $(\phi_\eps)_\eps\in\mM_\Phi(\Om\times\R^p)$ then $(\esp^{i\phi_\eps(y,\xi)})_\eps\in\mM_{S^0_{0,1}(\Om\times\R^p\setminus 0)}$. When $(\phi_\eps)_\eps\sim(\omega_\eps)_\eps$, the equality
\begin{multline*}
\esp^{i\omega_\eps(y,\xi)}-\esp^{i\phi_\eps(y,\xi)}=\esp^{i\omega_\eps(y,\xi)}\big(1-\esp^{i(\phi_\eps-\omega_\eps)(y,\xi)}\big)\\
=\esp^{i\omega_\eps(y,\xi)}\sum_{j=1}^p  \esp^{i(\phi_\eps-\omega_\eps)(y,\theta\xi)}\partial_{\xi_j}(\phi_\eps-\omega_\eps)(y,\theta\xi)i\xi_j,
\end{multline*}
with $\theta\in(0,1)$, implies that
\begin{equation}
\label{neg_esp}
\sup_{y\in K,\xi\in\R^p\setminus 0}|\xi|^{-1}\big|\esp^{i\omega_\eps(y,\xi)}-\esp^{i\phi_\eps(y,\xi)}\big|=O(\eps^q)
\end{equation}
for all $q\in\N$. At this point writing $\partial^\alpha_\xi\partial^\beta_y( \esp^{i\omega_\eps(y,\xi)}-\esp^{i\phi_\eps(y,\xi)})$ as
\begin{multline*}
\partial^\alpha_\xi\partial^\beta_y \esp^{i\om_\eps(y,\xi)}\big(1-\esp^{i(\phi_\eps-\omega_\eps)(y,\xi)}\big)+\\
+\sum_{\alpha'<\alpha,\beta'<\beta}\binom{\alpha}{\alpha'}\binom{\beta}{\beta'}\partial^{\alpha'}_\xi\partial^{\beta'}_y \esp^{i\omega_\eps(y,\xi)}\big(-\partial^{\alpha-\alpha'}_\xi\partial^{\beta-\beta'}_y \esp^{i(\phi_\eps-\omega_\eps)(y,\xi)}\big)
\end{multline*}
we obtain the characterizing estimate of a net in $\Neg_{S^1_{0,1}(\Om\times\R^p\setminus 0)}$, using \eqref{neg_esp} the moderateness of $(\esp^{i\om_\eps(y,\xi)})_\eps$ and Lemma \ref{lemma_esp_classic}.
\end{proof}
By construction of the operator $L_\phi$ the equality ${\ }^t L_\phi \esp^{i\phi}=\esp^{i\phi}$ holds in $\wt{\mathcal{S}}^1_{0,1}(\Om\times\R^p\setminus 0)$. In addition, Proposition \ref{prop_exp} and the properties of $L^k_\phi$ allow to conclude that $$\esp^{i\phi(y,\xi)}L^k_\phi(a(y,\xi)u(y))$$ is a generalized symbol in $\wt{\mathcal{S}}^{m-ks+1}_{0,1}(\Om\times\R^p)$ which is integrable on $\Om\times\R^p$ in the sense of Section \ref{section_basic} when $m-ks+1<-p$. From now on we assume that $\rho>0$ and $\delta<1$.
\begin{defn}
\label{def_gen_osc}
Let $\phi\in\wt{\Phi}(\Om\times\R^p)$, $a\in\wt{\mathcal{S}}^m_{\rho,\delta}(\Om\times\R^p)$ and $u\in\Gc(\Om)$. The \emph{generalized oscillatory integral}
\[
\int_{\Om\times\R^p}\esp^{i\phi(y,\xi)}a(y,\xi)u(y)\, dy\,\dslash\xi
\]
is defined as
\[
\int_{\Om\times\R^p}\esp^{i\phi(y,\xi)}L^k_\phi(a(y,\xi)u(y))\, dy\,\dslash\xi
\]
where $k$ is chosen such that $m-ks+1<-p$.  
\end{defn}
%A combination of \eqref{sem_g_gamma} with the considerations after the proof of Proposition \ref{prop_operator} yields the following implications:
%\[
%\begin{tabular}{ccccc}
%$-\ $ & $(\phi_\eps)_\eps\in\MPhi(\Om\times\R^p),$ & $(a_\eps)_\eps\in\mathcal{S}^m_{M,\rho,\delta}(\Om\times\R^p)$ & $\Rightarrow$ & $(g_{\gamma,\eps})_\eps\in\mathcal{S}^{m-ks}_{M,\rho,\delta}(\Om\times\R^p),$\\
%$-\ $ & $(\phi_\eps)_\eps\in\MPhi(\Om\times\R^p),$ & $(a_\eps)_\eps\in\mathcal{N}^m_{\rho,\delta}(\Om\times\R^p)$ & $\Rightarrow$ & $(g_{\gamma,\eps})_\eps\in\mathcal{N}^{m-ks}_{\rho,\delta}(\Om\times\R^p),$\\
%$-\ $ & $(\phi_\eps)_\eps\in\Neg_\Phi(\Om\times\R^p),$ & $(a_\eps)_\eps\in\mathcal{S}^m_{M,\rho,\delta}(\Om\times\R^p)$ & $\Rightarrow$ & $(g_{\gamma,\eps})_\eps\in\mathcal{N}^{m-ks}_{\rho,\delta}(\Om\times\R^p).$
%\end{tabular}
%\]
%Note that the oscillatory integral $\int_{\Om\times\R^p}\esp^{i\phi(y,\xi)}a(y,\xi)u(y)\, dy\, \dslash\xi$ has the net
%\[
%\biggl(\int_{\Om\times\R^p}\esp^{i\phi_\eps(y,\xi)}L^k_{\phi_\eps}(a_\eps(y,\xi)u_\eps(y))\, dy\, \dslash\xi\biggr)_\eps
%\]
%as a representative, where $m-ks<-p$. 
The functional 
\[
I_\phi(a):\Gc(\Om)\to\wt{\C}:u\to\int_{\Om\times\R^p}\esp^{i\phi(y,\xi)}a(y,\xi)u(y)\, dy\, \dslash\xi
\]
belongs to the dual $\LL(\Gc(\Om),\wt{\C})$. Indeed, by \eqref{bil_product}, the continuity of $L^k_\phi$ and of the product between generalized symbols we have that the map
\[
\Gc(\Om)\to\wt{\mathcal{S}}^{m-ks+1}_{0,1}(\Om\times\R^p):u\to \esp^{i\phi(y,\xi)}L^k_\phi(a(y,\xi)u(y))
\]
is continuous and thus, by an application of the integral on $\Om\times\R^p$, the resulting functional $I_\phi(a)$ is continuous.
  
\section{Generalized Fourier integral operators}
\label{gen_sec}
\subsection*{Definition and mapping properties}
We now study oscillatory integrals where an additional parameter $x$, varying in an open subset $\Om'$ of $\R^{n'}$, appears in the phase function $\phi$ and in the symbol $a$. The dependence on $x$ is investigated in the Colombeau context. We denote by $\Phi[\Om';\Om\times\R^p]$ the set of all nets $(\phi_\eps)_{\eps\in(0,1]}$ of continuous functions on $\Om'\times\Om\times\R^p$ which are smooth on $\Om'\times\Om\times\R^p\setminus\{0\}$ and such that $(\phi_\eps(x,\cdot,\cdot))_\eps\in\Phi[\Om\times\R^p]$ for all $x\in\Om'$.
\begin{defn}
\label{def_phase_x_moderate}
An element of $\mM_{\Phi}(\Om';\Om\times\R^p)$ is a net $(\phi_\eps)_\eps\in\Phi[\Om';\Om\times\R^p]$ satisfying the conditions:
\begin{itemize}
\item[(i)] $(\phi_\eps)_\eps\in\mM_{S^1_{\rm{hg}}(\Om'\times\Om\times\R^p\setminus 0)}$,
\item[(ii)] for all $K'\Subset\Om'$ and $K\Subset\Om$ the net
\begin{equation}
\label{net_FIO}
\biggl(\inf_{x\in K',y\in K,\xi\in\R^p\setminus 0}\biggl|\nabla_{y,\xi} \phi_\eps\biggl(x,y,\frac{\xi}{|\xi|}\biggr)\biggr|^2\biggr)_\eps
\end{equation}
is strictly nonzero.
\end{itemize}
On $\mM_{\Phi}(\Om';\Om\times\R^p)$ we introduce the equivalence relation $\sim$ as follows: $(\phi_\eps)_\eps\sim(\omega_\eps)_\eps$ if and only if $(\phi_\eps-\omega_\eps)_\eps\in\Neg_{S^1_{\rm{hg}}(\Om'\times\Om\times\R^p\setminus 0)}$. The elements of the factor space 
\[
\wt{\Phi}(\Om';\Om\times\R^p):=\mM_{\Phi}(\Om';\Om\times\R^p) / \sim.
\]
are called \emph{generalized phase functions with respect to the variables in $\Om\times\R^p$}.  
\end{defn}
Lemma \ref{lemma_1} as well as Proposition \ref{prop_operator} can be adapted to nets in $\mM_{\Phi}(\Om';\Om\times\R^p)$. More precisely, the operator
\begin{equation}
\label{smilla}
L_{\phi_\eps}(x;y,\xi,\partial_y,\partial_\xi)=\sum_{j=1}^p a_{j,\eps}(x,y,\xi)\frac{\partial}{\partial_{\xi_j}} +\sum_{k=1}^n b_{k,\eps}(x,y,\xi)\frac{\partial}{\partial_{y_k}}+ c_\eps(x,y,\xi)
\end{equation}
defined for any value of $x$ by \eqref{def_L_phi_cl}, has the property ${\ }^tL_{\phi_\eps(x,\cdot,\cdot)}\esp^{i\phi_\eps(x,\cdot,\cdot)}=\esp^{i\phi_\eps(x,\cdot,\cdot)}$ for all $x\in\Om'$ and $\eps\in(0,1]$ and its coefficients depend smoothly on $x\in\Om'$.
\begin{lem}
\label{lemma_1_x}
\leavevmode
\begin{trivlist}
\item[(i)] Let 
\begin{equation}
\label{def_varphi}
\varphi_{\phi_\eps}(x,y,\xi):=|\nabla_{y,\xi}\phi_\eps(x,y,\xi/|\xi|)|^{-2}.
\end{equation}
If $(\phi_\eps)_\eps\in\mM_{\Phi}(\Om';\Om\times\R^p)$ then $(\varphi_{\phi_\eps})_\eps\in\mM_{S^0_{\rm{hg}}(\Om'\times\Om\times\R^p\setminus 0)}$.
\item[(ii)] If $(\phi_\eps)_\eps, (\omega_\eps)_\eps\in\MPhi(\Om';\Om\times\R^p)$ and $(\phi_\eps)_\eps\sim(\omega_\eps)_\eps$ then
\[
\big(({\partial_{\xi_j}\phi_\eps})\varphi_{\phi_\eps}-({\partial_{\xi_j}\omega_\eps})\varphi_{\omega_\eps}\big)_\eps\in\Neg_{S^0_{\rm{hg}}(\Om'\times\Om\times\R^p\setminus 0)}
\]
for all $j=1,...,p$ and
\[
\big(({\partial_{y_k}\phi_\eps}){|\xi|^{-2}\varphi_{\phi_\eps}}-({\partial_{y_k}\omega_\eps}){|\xi|^{-2}\varphi_{\omega_\eps}}\big)_\eps\in\Neg_{S^{-1}_{\rm{hg}}(\Om'\times\Om\times\R^p\setminus 0)}
\]
for all $k=1,...,n$.
\end{trivlist}
\end{lem}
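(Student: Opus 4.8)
The plan is to run the proof of Lemma \ref{lemma_1} with the variable $x$ carried along as a parameter ranging over an arbitrary $K'\Subset\Om'$, checking that every estimate is uniform in $x\in K'$. Throughout I would abbreviate $g_{\phi_\eps}(x,y,\xi):=|\nabla_{y,\xi}\phi_\eps(x,y,\xi/|\xi|)|^{2}$, so that the net in \eqref{def_varphi} is $\varphi_{\phi_\eps}=g_{\phi_\eps}^{-1}$. Since $\partial_\xi\phi_\eps$ is positively homogeneous of degree $0$ and $\partial_y\phi_\eps$ of degree $1$ in $\xi$, the map $(x,y,\xi)\mapsto\nabla_{y,\xi}\phi_\eps(x,y,\xi/|\xi|)$ — hence also $g_{\phi_\eps}$ and $\varphi_{\phi_\eps}$ — is homogeneous of degree $0$ in $\xi$ on $\Om'\times\Om\times\R^p\setminus 0$, so it suffices to bound the relevant quantities uniformly on sets of the form $K'\times K\times\{|\xi|=1\}$ and to extend by homogeneity.

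For part (i) I would start from condition (i) of Definition \ref{def_phase_x_moderate}, which gives $(\phi_\eps)_\eps\in\mM_{S^1_{\rm{hg}}(\Om'\times\Om\times\R^p\setminus 0)}$ and hence makes $(g_{\phi_\eps})_\eps$ a moderate net of smooth, strictly positive functions all of whose $x,y,\xi$ derivatives are moderate uniformly on $K'\times K\times\{|\xi|=1\}$; condition (ii) says precisely that $\inf_{x\in K',\,y\in K,\,|\xi|=1}g_{\phi_\eps}\ge\eps^{r}$ for some $r=r(K',K)$ and all small $\eps$, whence $g_{\phi_\eps}^{-1}=O(\eps^{-r})$. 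I would then differentiate $g_{\phi_\eps}^{-1}$, by induction on the order of the derivative using the Leibniz and chain rules, to write each derivative of $\varphi_{\phi_\eps}$ as a finite sum of products of a power $g_{\phi_\eps}^{-k}$ — moderate, of order $\eps^{-kr}$ — with derivatives of $g_{\phi_\eps}$, which are moderate; this gives $(\varphi_{\phi_\eps})_\eps\in\mM_{S^0_{\rm{hg}}(\Om'\times\Om\times\R^p\setminus 0)}$.

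For part (ii) I would first observe that $(\phi_\eps-\omega_\eps)_\eps\in\Neg_{S^1_{\rm{hg}}}$ forces $(g_{\phi_\eps}-g_{\omega_\eps})_\eps\in\Neg_{S^0_{\rm{hg}}}$, since on $\{|\xi|=1\}$ the difference $g_{\phi_\eps}-g_{\omega_\eps}$ is a finite sum of products of a derivative of $\phi_\eps-\omega_\eps$ (negligible) with a derivative of $\phi_\eps$ or $\omega_\eps$ (moderate), and negligibility is preserved under the product rule and under multiplication by moderate nets. Then, writing $\varphi_{\phi_\eps}-\varphi_{\omega_\eps}=(g_{\omega_\eps}-g_{\phi_\eps})\,g_{\phi_\eps}^{-1}g_{\omega_\eps}^{-1}$ and using the moderateness of the two reciprocals from part (i), the same differentiation bookkeeping would give $(\varphi_{\phi_\eps}-\varphi_{\omega_\eps})_\eps\in\Neg_{S^0_{\rm{hg}}}$. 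The two claimed statements then follow from the identities
\[
(\partial_{\xi_j}\phi_\eps)\varphi_{\phi_\eps}-(\partial_{\xi_j}\omega_\eps)\varphi_{\omega_\eps}=\bigl(\partial_{\xi_j}(\phi_\eps-\omega_\eps)\bigr)\varphi_{\phi_\eps}+(\partial_{\xi_j}\omega_\eps)\bigl(\varphi_{\phi_\eps}-\varphi_{\omega_\eps}\bigr)
\]
and
\[
(\partial_{y_k}\phi_\eps)|\xi|^{-2}\varphi_{\phi_\eps}-(\partial_{y_k}\omega_\eps)|\xi|^{-2}\varphi_{\omega_\eps}=\bigl(\partial_{y_k}(\phi_\eps-\omega_\eps)\bigr)|\xi|^{-2}\varphi_{\phi_\eps}+(\partial_{y_k}\omega_\eps)|\xi|^{-2}\bigl(\varphi_{\phi_\eps}-\varphi_{\omega_\eps}\bigr),
\]
in which every summand is a product of a negligible net with a moderate one, the homogeneity degrees summing to $0$ in the first identity and to $1-2+0=-1$ in the second, matching the target classes $S^0_{\rm{hg}}$ and $S^{-1}_{\rm{hg}}$. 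The step I expect to require the most care is the one already present in Lemma \ref{lemma_1}(i): converting the lower bound $g_{\phi_\eps}\ge\eps^{r}$ into moderate estimates for \emph{all} derivatives of $g_{\phi_\eps}^{-1}$, now uniformly in the extra parameter $x\in K'$; the rest is routine manipulation of moderate and negligible nets.
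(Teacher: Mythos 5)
Your proposal is correct and is exactly the argument the paper intends: Lemma \ref{lemma_1_x} is stated without proof as the adaptation of Lemma \ref{lemma_1} to nets carrying the extra parameter $x$, uniformly on compact subsets $K'\Subset\Om'$, which is precisely what you carry out (degree-$0$ homogeneity of $\varphi_{\phi_\eps}$, the lower bound $g_{\phi_\eps}\ge\eps^{r}$ from Definition \ref{def_phase_x_moderate}(ii) to control $g_{\phi_\eps}^{-1}$ and its derivatives, and the add-and-subtract decompositions for part (ii)).
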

\begin{prop}
\label{prop_operator_x}
\leavevmode
\begin{trivlist}
\item[(i)] If $(\phi_\eps)_\eps\in\mM_{\Phi}(\Om';\Om\times\R^p)$ then the coefficients occurring in \eqref{smilla} satisfy the following: $(a_{j,\eps})_\eps\in\mM_{S^0(\Om'\times\Om\times\R^p)}$ for all $j=1,...,p$, $(b_{k,\eps})_\eps\in\mM_{S^{-1}(\Om'\times\Om\times\R^p)}$ for all $k=1,...,n$, and $(c_\eps)_\eps\in\mM_{S^{-1}(\Om'\times\Om\times\R^p)}$.
\item[(ii)] If $(\phi_\eps)_\eps, (\omega_\eps)_\eps\in\mM_{\Phi}(\Om';\Om\times\R^p)$ and $(\phi_\eps)_\eps\sim(\omega_\eps)_\eps$ then 
\begin{equation}
\label{L_phi_eps-L_omega_eps_x}
L_{\phi_\eps}-L_{\omega_\eps}=\sum_{j=1}^p a'_{j,\eps}(x,y,\xi)\frac{\partial}{\partial_{\xi_j}} +\sum_{k=1}^n b'_{k,\eps}(x,y,\xi)\frac{\partial}{\partial_{y_k}}+ c'_\eps(x,y,\xi),
\end{equation}
where $(a'_{j,\eps})_\eps\in\Neg_{S^{0}(\Om'\times\Om\times\R^p)}$, $(b'_{k,\eps})_\eps\in\Neg_{S^{-1}(\Om'\times\Om\times\R^p)}$ and $(c'_\eps)_\eps\hskip-2pt\in\Neg_{S^{-1}(\Om'\times\Om\times\R^p)}$ for all $j=1,...,p$ and $k=1,...,n$.
\end{trivlist}
\end{prop}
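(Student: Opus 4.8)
The plan is to run the argument of Proposition \ref{prop_operator} with the parameter $x\in\Om'$ treated as an inert variable, since the operator \eqref{smilla} contains no $\partial_x$ and therefore $x$ enters its coefficients only through the values $\partial_\xi\phi_\eps(x,y,\xi)$, $\partial_y\phi_\eps(x,y,\xi)$ and the factor $\varphi_{\phi_\eps}(x,y,\xi)$ of \eqref{def_varphi}. Following the construction of $L_{\phi_\eps}$ in Lemma 1.2.1 of \cite{Hoermander:71} and fixing once and for all an excision function $\chi\in\Cinf(\R^p)$ with $\chi(\xi)=0$ for $|\xi|\le 1/2$ and $\chi(\xi)=1$ for $|\xi|\ge 1$ (so that the coefficients are genuinely smooth at $\xi=0$, where $\varphi_{\phi_\eps}$ alone is only smooth because $\nabla_{y,\xi}\phi_\eps\neq 0$), one may take
\[
a_{j,\eps}=i\,\chi(\xi)\,(\partial_{\xi_j}\phi_\eps)\,\varphi_{\phi_\eps},\qquad
b_{k,\eps}=i\,\chi(\xi)\,|\xi|^{-2}(\partial_{y_k}\phi_\eps)\,\varphi_{\phi_\eps},
\]
and $c_\eps=\sum_{j=1}^p\partial_{\xi_j}a_{j,\eps}+\sum_{k=1}^n\partial_{y_k}b_{k,\eps}$, i.e. the first-order divergence term produced by the transposition.

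For part (i), I would first read off from condition (i) of Definition \ref{def_phase_x_moderate} that $(\partial_{\xi_j}\phi_\eps)_\eps\in\mM_{S^0_{\rm{hg}}(\Om'\times\Om\times\R^p\setminus 0)}$ and $(|\xi|^{-2}\partial_{y_k}\phi_\eps)_\eps\in\mM_{S^{-1}_{\rm{hg}}(\Om'\times\Om\times\R^p\setminus 0)}$, and then use Lemma \ref{lemma_1_x}(i) --- the only place where condition (ii) of Definition \ref{def_phase_x_moderate}, i.e. the strict nonzeroness of the net \eqref{net_FIO}, is needed --- to get $(\varphi_{\phi_\eps})_\eps\in\mM_{S^0_{\rm{hg}}(\Om'\times\Om\times\R^p\setminus 0)}$. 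Because the product of two moderate nets of homogeneous symbols is moderate of the summed order, the homogeneous parts of $a_{j,\eps}$ and $b_{k,\eps}$ are then moderate of orders $0$ and $-1$ on the cone, and multiplying by $\chi$ turns these into moderate symbols on the full space $\Om'\times\Om\times\R^p$: in the Leibniz expansion of $\partial^\alpha_\xi\partial^\beta_{x,y}(\chi\cdot(\cdots))$, the terms in which every derivative falls on the homogeneous factor are controlled for $|\xi|\ge 1/2$ by the homogeneous estimates, while the remaining terms are supported in the compact annulus $\{1/2\le|\xi|\le 1\}$, where the same estimates furnish moderate bounds. This gives $(a_{j,\eps})_\eps\in\mM_{S^0(\Om'\times\Om\times\R^p)}$ and $(b_{k,\eps})_\eps\in\mM_{S^{-1}(\Om'\times\Om\times\R^p)}$, and since $\partial_\xi$ maps $\mM_{S^0}$ into $\mM_{S^{-1}}$ and $\partial_y$ maps $\mM_{S^{-1}}$ into $\mM_{S^{-1}}$ (a routine representative-level estimate, of the kind recorded among the continuity results of Subsection \ref{subsec_gen_symb}), also $(c_\eps)_\eps\in\mM_{S^{-1}(\Om'\times\Om\times\R^p)}$.

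For part (ii), I would observe that when $(\phi_\eps)_\eps\sim(\omega_\eps)_\eps$ the coefficients of $L_{\phi_\eps}-L_{\omega_\eps}$ in \eqref{L_phi_eps-L_omega_eps_x} are
\[
\begin{gathered}
a'_{j,\eps}=i\,\chi(\xi)\bigl((\partial_{\xi_j}\phi_\eps)\varphi_{\phi_\eps}-(\partial_{\xi_j}\omega_\eps)\varphi_{\omega_\eps}\bigr),\\
b'_{k,\eps}=i\,\chi(\xi)\,|\xi|^{-2}\bigl((\partial_{y_k}\phi_\eps)\varphi_{\phi_\eps}-(\partial_{y_k}\omega_\eps)\varphi_{\omega_\eps}\bigr),
\end{gathered}
\]
and, since $c_\eps$ depends linearly on the $a_{j,\eps}$ and $b_{k,\eps}$, $c'_\eps=\sum_j\partial_{\xi_j}a'_{j,\eps}+\sum_k\partial_{y_k}b'_{k,\eps}$. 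Now Lemma \ref{lemma_1_x}(ii) states exactly that the bracketed nets belong to $\Neg_{S^0_{\rm{hg}}(\Om'\times\Om\times\R^p\setminus 0)}$ and $\Neg_{S^{-1}_{\rm{hg}}(\Om'\times\Om\times\R^p\setminus 0)}$ respectively, so the excision argument used in part (i) gives $(a'_{j,\eps})_\eps\in\Neg_{S^0(\Om'\times\Om\times\R^p)}$ and $(b'_{k,\eps})_\eps\in\Neg_{S^{-1}(\Om'\times\Om\times\R^p)}$, and differentiation --- which preserves negligibility while lowering the order by one --- then yields $(c'_\eps)_\eps\in\Neg_{S^{-1}(\Om'\times\Om\times\R^p)}$.

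I do not expect a real obstacle here: apart from Lemma \ref{lemma_1_x} (which is where the genuine work, namely exploiting hypothesis (ii) of Definition \ref{def_phase_x_moderate} to bound $\varphi_{\phi_\eps}$ and to make $\varphi_{\phi_\eps}-\varphi_{\omega_\eps}$ negligible, has already been done), the proof is the symbol bookkeeping of Proposition \ref{prop_operator} carried out with $x$ along for the ride. The only step demanding a little care is the transition from homogeneous symbols on the cone $\Om'\times\Om\times(\R^p\setminus 0)$, which is what Lemma \ref{lemma_1_x} delivers, to honest symbols on $\Om'\times\Om\times\R^p$ via the excision function $\chi$; once it is checked that products, quotients and $\xi$- and $y$-derivatives keep nets within the expected classes on the full space, both assertions follow.
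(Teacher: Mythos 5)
Your proposal is correct and follows essentially the same route as the paper, which gives no separate argument for this proposition but simply notes that Lemma \ref{lemma_1} and Proposition \ref{prop_operator} "can be adapted" with $x$ as a parameter, the real work being contained in Lemma \ref{lemma_1_x}. Your explicit formulas for the coefficients, the use of Lemma \ref{lemma_1_x}(i)--(ii), and the excision-function bookkeeping to pass from homogeneous symbols on the cone to symbols on $\Om'\times\Om\times\R^p$ are exactly the intended argument (modulo harmless normalization details such as the $\eps$-independent $(1-\chi)$ contribution to $c_\eps$, which drops out of the difference in part (ii)).
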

Proposition \ref{prop_operator_x} yields that any generalized phase function $\phi$ in $\wt{\Phi}(\Om';\Om\times\R^p)$ defines a partial differential operator
\begin{equation}
\label{def_L_phi_amp}
L_{\phi}(x;y,\xi,\partial_y,\partial_\xi)=\sum_{j=1}^p a_{j}(x,y,\xi)\frac{\partial}{\partial_{\xi_j}} +\sum_{k=1}^n b_{k}(x,y,\xi)\frac{\partial}{\partial_{y_k}}+ c(x,y,\xi)
\end{equation}
with coefficients $a_j\in\wt{\mathcal{S}}^0(\Om'\times\Om\times\R^p)$, $b_k,c\in\wt{\mathcal{S}}^{-1}(\Om'\times\Om\times\R^p)$ such that ${\ }^tL_\phi \esp^{i\phi}=\esp^{i\phi}$ holds in $\wt{\mathcal{S}}^1_{0,1}(\Om'\times\Om\times\R^p\setminus 0)$.
Arguing as in Proposition \ref{prop_exp} we obtain that $\esp^{i\phi(x,y,\xi)}$ is a well-defined element of $\wt{\mathcal{S}}^1_{0,1}(\Om'\times\Om\times\R^p\setminus 0)$. The usual composition argument implies that the map 
\[
\Gc(\Om)\to\wt{\mathcal{S}}^{m-ks+1}_{0,1}(\Om'\times\Om\times\R^p): u\to \esp^{i\phi(x,y,\xi)}L^k_{\phi}(a(x,y,\xi)u(y))
\]
is continuous.
 
The oscillatory integral  
\begin{multline*}
I_\phi(a)(u)(x)=\int_{\Om\times\R^p}\esp^{i\phi(x,y,\xi)}a(x,y,\xi)u(y)\, dy\, \dslash\xi\\
:=\int_{\Om\times\R^p}\esp^{i\phi(x,y,\xi)}L^k_{\phi}(a(x,y,\xi)u(y))\, dy\, \dslash\xi,
\end{multline*}
where $\phi\in\wt{\Phi}(\Om';\Om\times\R^p)$ and $a\in\wt{\mathcal{S}}^m_{\rho,\delta}(\Om'\times\Om\times\R^p)$ is an element of $\wt{\C}$ for fixed $x\in\Om'$. In particular, $I_\phi(a)(u)$ is the integral on $\Om\times\R^p$ of a generalized amplitude in $\wt{\mathcal{S}}^{l}_{0,1}(\Om'\times\Om\times\R^p)$ having compact support in $y$. The order $l$ can be chosen arbitrarily low. 
\begin{thm}
\label{theorem_map}
Let $\phi\in\wt{\Phi}(\Om';\Om\times\R^p)$, $a\in\wt{\mathcal{S}}^m_{\rho,\delta}(\Om'\times\Om\times\R^p)$ and $u\in\Gc(\Om)$. The generalized oscillatory integral
\begin{equation}
\label{oscillatory_x}
I_{\phi}(a)(u)(x)=\int_{\Om\times\R^p}\esp^{i\phi(x,y,\xi)}a(x,y,\xi)u(y)\, dy\,\dslash\xi  
\end{equation}
defines a generalized function in $\G(\Om')$ and the map
\begin{equation}
\label{def_A_fourier}
A:\Gc(\Om)\to\G(\Om'):u\to I_{\phi}(a)(u)
\end{equation}
is continuous.
\end{thm}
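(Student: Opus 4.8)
Since the paragraph preceding the theorem already shows that $I_\phi(a)(u)(x)\in\wt{\C}$ for every fixed $x\in\Om'$, what remains is to prove that the family $\{I_\phi(a)(u)(x)\}_{x\in\Om'}$ is a Colombeau function on $\Om'$, that it does not depend on the chosen representatives, and that $A$ is continuous. The plan is to argue at the level of representatives and to reduce everything to bounds for the ordinary integral over $\Om\times\R^p$ of a generalized amplitude which is compactly supported in $y$ and of arbitrarily negative order, keeping track at each step of the $\eps$-dependence and of the interplay between the order of $x$-differentiation and the regularising power $k$.

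Fix representatives $(\phi_\eps)_\eps\in\mM_\Phi(\Om';\Om\times\R^p)$, $(a_\eps)_\eps\in\mM_{S^m_{\rho,\delta}(\Om'\times\Om\times\R^p)}$, and $(u_\eps)_\eps$ of $u$ with $\supp u_\eps\subseteq K\Subset\Om$ for all $\eps$. Since $L_{\phi_\eps}$ is a differential operator in $(y,\xi)$ it does not enlarge the $y$-support, so $b^{(k)}_\eps:=\esp^{i\phi_\eps}L^k_{\phi_\eps}(a_\eps u_\eps)$ has $y$-support in $K$ for all $\eps$, and by the continuity statement recorded before the theorem $(b^{(k)}_\eps)_\eps$ represents an element $b^{(k)}\in\wt{\mathcal{S}}^{m-ks+1}_{0,1}(\Om'\times\Om\times\R^p)$, where $s=\min\{\rho,1-\delta\}>0$. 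The decisive structural point is that $\esp^{i\phi}$ has type $\delta=1$, so differentiating $b^{(k)}$ in $x$ raises its $\xi$-order by the number of derivatives; hence no single $k$ serves for all $x$-derivatives. Given $K'\Subset\Om'$ and $j\in\N$, I would therefore choose $k=k(j)$ so large that $m-ks+1+j<-p$. For such $k$ and $|\gamma|\le j$ one has, uniformly in $x\in K'$,
\[
|\partial^\gamma_x b^{(k)}_\eps(x,y,\xi)|\le |b^{(k)}_\eps|^{(m-ks+1)}_{0,1,K'\times K,j}\,\lara{\xi}^{m-ks+1+j},
\]
which is integrable over $K\times\R^p$, so differentiation under the integral sign is legitimate up to order $j$. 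Consequently $v_\eps(x):=\int_{\Om\times\R^p}b^{(k)}_\eps(x,y,\xi)\,dy\,d\xi$ is in $\Cinf(\Om')$ and
\[
\sup_{x\in K',\,|\gamma|\le j}|\partial^\gamma_x v_\eps(x)|\le C_{p,j}\,|b^{(k)}_\eps|^{(m-ks+1)}_{0,1,K'\times K,j},\qquad C_{p,j}:=|K|\int_{\R^p}\lara{\xi}^{m-ks+1+j}\,d\xi<\infty .
\]

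Before invoking this I would check that $v_\eps(x)$ is in fact independent of $k$ once $k$ is large enough for convergence: this is the standard integration-by-parts argument based on ${}^tL_{\phi_\eps}\esp^{i\phi_\eps}=\esp^{i\phi_\eps}$, the $y$-boundary terms vanishing by compact support and the $\xi$-boundary terms vanishing because for $k$ large $\esp^{i\phi_\eps}L^k_{\phi_\eps}(a_\eps u_\eps)$ and $\esp^{i\phi_\eps}L^{k+1}_{\phi_\eps}(a_\eps u_\eps)$ both decay faster than $\lara{\xi}^{-p}$. Hence $(v_\eps)_\eps$ is a well-defined net in $\Cinf(\Om')$, and the last displayed estimate together with the moderateness of $(b^{(k)}_\eps)_\eps$ in $\wt{\mathcal{S}}^{m-ks+1}_{0,1}$ yields $(v_\eps)_\eps\in\EM(\Om')$. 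To show that the class of $(v_\eps)_\eps$ in $\G(\Om')$ is independent of the data, I would perturb $(a_\eps)_\eps$ and $(u_\eps)_\eps$ by negligible nets and replace $(\phi_\eps)_\eps$ by an equivalent net $(\omega_\eps)_\eps$, using Lemma \ref{lemma_1_x}, Proposition \ref{prop_operator_x}(ii) and Proposition \ref{prop_exp} (in its $x$-parametrised form) to conclude that the resulting difference of amplitudes is negligible in $\wt{\mathcal{S}}^{m-ks+1}_{0,1}(\Om'\times\Om\times\R^p)$; feeding this into the same integral bound produces a net in $\Neg(\Om')$. Thus $I_\phi(a)(u)=[(v_\eps)_\eps]\in\G(\Om')$ is well defined.

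Finally, $A$ is visibly $\wt{\C}$-linear, so its continuity amounts to dominating each ultra-pseudo-seminorm $\mP_{K',j}(A(u))$ by a continuous ultra-pseudo-seminorm of $u$. Fixing $K'$ and $j$ and choosing $k=k(j)$ as above, the last displayed estimate says that $A(u)$ is controlled in $\G(\Om')$ (for the seminorm $p_{K',j}$) by $b^{(k)}$ in $\wt{\mathcal{S}}^{m-ks+1}_{0,1}(\Om'\times\Om\times\R^p)$ (for the seminorm $|\cdot|^{(m-ks+1)}_{0,1,K'\times K,j}$); since $u\mapsto b^{(k)}=\esp^{i\phi}L^k_\phi(au)$ is continuous from $\Gc(\Om)$ into that space — this being the composition of the continuous bilinear map \eqref{bil_product}, the continuity of $L^k_\phi$, and multiplication by the fixed symbol $\esp^{i\phi}$ — the composition of the two estimates bounds $\mP_{K',j}(A(u))$ by a continuous ultra-pseudo-seminorm of $u$, which is the desired continuity. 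The main obstacle is exactly the bookkeeping forced by the type $(0,1)$ of $\esp^{i\phi}$: one cannot regularise once and for all, so the $k$-independence of the oscillatory integral, together with the justification of differentiation under the integral sign and of the integration by parts at $\xi=\infty$, has to be settled before the moderateness and continuity estimates can even be formulated.
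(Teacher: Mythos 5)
Your proposal is correct and follows essentially the same route the paper takes (and outsources to \cite{GHO:06}): regularize with $L^k_\phi$ to land in $\wt{\mathcal{S}}^{m-ks+1}_{0,1}(\Om'\times\Om\times\R^p)$ with compact $y$-support and arbitrarily low order, then compose the continuity of $u\mapsto \esp^{i\phi}L^k_\phi(au)$ with the continuity of integration over $\Om\times\R^p$. Your explicit bookkeeping of the type-$(0,1)$ issue --- choosing $k=k(j)$ for each order of $x$-differentiation and verifying $k$-independence by integration by parts --- is exactly the point the paper's sketch leaves implicit, and it is handled correctly.
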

The operator $A$ defined in \eqref{def_A_fourier} is called \emph{generalized Fourier integral operator} with amplitude $a\in\wt{\mathcal{S}}^m_{\rho,\delta}(\Om'\times\Om\times\R^p)$ and phase function $\phi\in\wt{\Phi}(\Om';\Om\times\R^p)$.
 
\begin{ex}
Our outline of a basic theory of Fourier integral operators with Co\-lom\-be\-au generalized
amplitudes and phase functions is motivated to a large extent by potential applications
in regularity theory for generalized solutions to hyperbolic partial (or pseudo-)
differential equations with distributional or Colombeau-type coefficients (or symbols)
and data (cf.\ \cite{HdH:01,LO:91,O:89}). To illustrate the typical situation we
consider here the following simple model: let $u\in\G(\R^2)$ be the solution of the
generalized Cauchy-problem
\begin{align}
\label{G_example1}
\d_t u + c\,\d_x u  + b\,u &= 0\\
u \mid_{t=0} &= g,
\end{align}
where $g$ belongs to $\Gc(\R)$ and the coefficients $b$, $c\in\G(\R^2)$. % are (equal to a classical) constant for large $|x|$. 
Furthermore, $b$, $c$, as well as $\d_x c$ are
assumed to be of local $L^\infty$-log-type (concerning growth with respect to the
regularization parameter, cf.\ \cite{O:89}), $c$ being generalized real-valued and globally bounded in
addition. Let $\gamma \in \G(\R^3)$ be the unique (global) solution of the
corresponding generalized characteristic ordinary differential equation
\begin{align*}
\diff{s} \gamma(x,t;s) &= c(\gamma(x,t;s),s)\\
\gamma(x,t;t) &= x.
\end{align*}
Then $u$ is given in terms of $\gamma$ by $ u(x,t) = g(\gamma(x,t;0)) \exp(-\int_0^t
b(\gamma(x,t;r),r)\, dr)$. Writing $g$ as the inverse of its Fourier transform we
obtain the Fourier integral representation
\begin{equation}\label{hypsolu}
  u(x,t) = \iint \esp^{i(\gamma(x,t;0)-y) \xi}\; a(x,t,y,\xi)\, g(y)\, dy\, \dslash\xi,
\end{equation}
where $a(x,t,y,\xi) := \exp(-\int_0^t b(\gamma(x,t;r),r)\, dr)$ is a generalized
amplitude of order $0$. The phase function $\phi(x,t,y,\xi) :=
(\gamma(x,t;0)-y) \xi$ has (full) gradient
$$(\d_x\gamma(x,t;0),\d_t\gamma(x,t;0),-\xi,\gamma(x,t;0)-y)$$ and thus defines a generalized phase
function $\phi$. Therefore (\ref{hypsolu}) reads $u = A g$ where  $A : \Gc(\R) \to
\G(\R^2)$ is a generalized Fourier integral operator.
\end{ex}

\subsection*{Regularity properties}
We now investigate the regularity properties of the \emph{generalized Fourier integral operator} $A$. We will prove that for appropriate generalized phase functions and generalized amplitudes, $A$ maps $\Gcinf(\Om)$ into $\Ginf(\Om')$. The following example shows that a $\Ginf$-kind of regularity assumption for the net $(\phi_\eps)_\eps$ with respect to the parameter $\eps$ does not entail the desired mapping property.
\begin{ex}
Let $n=n'=p=1$ and $\Om=\Om'=\R$ and $\phi_\eps(x,y,\xi)=(x-\eps y)\xi$. Then $(\phi_\eps)_\eps\in\mM_{\Phi}(\R;\R\times\R)$ and in particular we have $N=0$ in all moderateness estimates (see Definition \ref{def_phase_x_moderate}$(i)$)) and $|\nabla_{y,\xi}\phi_\eps(x,y,\xi/|\xi|)|^2\ge\eps^2$. Choose the amplitude $a$ identically equal to $1$. The corresponding generalized operator $A$ does not map $\Gcinf(\R)$ into $\Ginf(\R)$. Indeed, for $0\neq f\in\Cinfc(\R)$ we have that
\[
A[(f)_\eps]=\biggl[\biggl(\int_{\R\times\R} \esp^{i(x-\eps y)\xi}f(y)\, dy\, \dslash\xi\biggr)_\eps\biggr]=[(\eps^{-1}f(x/\eps))_\eps]\in\G(\R)\setminus\Ginf(\R).
\]
\end{ex}
This example suggests that a stronger notion of regularity on generalized phase functions has to be designed. Such is provided by the concept of \emph{slow scale net}. 
\begin{defn}
\label{def_slow_phase}
We say that $\phi\in\wt{\Phi}(\Om';\Om\times\R^p)$ is a \emph{slow scale generalized phase function in the variables of $\Om\times\R^p$} if it has a representative $(\phi_\eps)_\eps$ fulfilling the conditions
\begin{itemize}
\item[(i)] $(\phi_\eps)_\eps\in\mM^\ssc_{S^1_{\rm{hg}}(\Om'\times\Om\times\R^p\setminus 0)}$, 
\item[(ii)] for all $K'\Subset\Om'$ and $K\Subset\Om$ the net \eqref{net_FIO} is slow scale-strictly nonzero.
\end{itemize}
\end{defn}
In the sequel the set of all $(\phi_\eps)_\eps\in\Phi[\Om';\Om\times\R^p]$ fulfilling $(i)$ and $(ii)$ in Definition \ref{def_slow_phase} will be denoted by $\mM^\ssc_{\Phi}(\Om';\Om\times\R^p)$ while we use $\wt{\Phi}^\ssc(\Om';\Om\times\R^p)$ for the set of slow scale generalized functions as above. Similarly, using $\nabla_{x,y,\xi}$ in place of $\nabla_{y,\xi}$ in $(ii)$ we define the space $\wt{\Phi}^{\ssc}(\Om'\times\Om\times\R^p)$ of slow scale generalized phase functions on $\Om'\times\Om\times\R^p$.  
We refer to \cite[Section 3]{GHO:06} for the proof of the following theorem.

\begin{thm}
\label{theorem_Ginf_map}
Let $\phi\in\wt{\Phi}^{\ssc}(\Om';\Om\times\R^p)$.
\begin{itemize}
\item[(i)] If $a\in\wt{\mathcal{S}}^{m,\ssc}_{\rho,\delta}(\Om'\times\Om\times\R^p)$ the corresponding generalized Fourier integral operator
\[
A:u\to\int_{\Om\times\R^p}\esp^{i\phi(x,y,\xi)}a(x,y,\xi)u(y)\, dy\, \dslash\xi
\]
maps $\Gcinf(\Om)$ continuously into $\Ginf(\Om')$.
\item[(ii)] If $a\in\wt{\mathcal{S}}^{-\infty,\ssc}(\Om'\times\Om\times\R^p)$ then $A$ maps $\Gc(\Om)$ continuously into $\Ginf(\Om')$.
\end{itemize}
\end{thm}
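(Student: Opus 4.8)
The plan is to argue at the level of representatives, exploiting that membership of a net in $\EM^\infty(\Om')$ requires only a moderateness bound $O(\eps^{-N})$ in which $N$ is \emph{independent of the order of the $x$-derivative}. So I would differentiate the defining oscillatory integral in the parameter $x$ arbitrarily often and check that all resulting bounds carry one and the same power of $\eps$; continuity is then read off the same estimates via the valuations.

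Concretely, fix representatives $(\phi_\eps)_\eps\in\mM^\ssc_{\Phi}(\Om';\Om\times\R^p)$ of $\phi$, $(a_\eps)_\eps$ of $a$, and $(u_\eps)_\eps$ of $u$ with $\supp u_\eps\subseteq K''\Subset\Om$. For part $(i)$, given $\gamma\in\N^{n'}$ I would choose $k=k(\gamma)$ with $m-ks+|\gamma|<-p$ and, using that the generalized oscillatory integral is independent of the regularizing power, write
\[
I_{\phi_\eps}(a_\eps)(u_\eps)(x)=\int_{\Om\times\R^p}\esp^{i\phi_\eps(x,y,\xi)}L^k_{\phi_\eps}\big(a_\eps(x,y,\xi)u_\eps(y)\big)\,dy\,\dslash\xi .
\]
Since $k$ is large, differentiation under the integral sign is legitimate, and $\partial^\gamma_x$ of the integrand is, by the Leibniz rule, a finite sum of terms $(\partial^{\gamma_1}_x\esp^{i\phi_\eps})\,\partial^{\gamma_2}_x\big(L^k_{\phi_\eps}(a_\eps u_\eps)\big)$ with $\gamma_1+\gamma_2=\gamma$. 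By Lemma \ref{lemma_esp_classic} and the $\eps$-scale transfer noted after it (compare the proof of Proposition \ref{prop_exp}) one has $\partial^{\gamma_1}_x\esp^{i\phi_\eps}=\esp^{i\phi_\eps}c_{\gamma_1,\eps}$, where $(c_{\gamma_1,\eps})_\eps$ is a net of symbols of order $|\gamma_1|$ in $\xi$ with slow scale seminorms; and the slow scale analogues of Lemma \ref{lemma_1_x} and Proposition \ref{prop_operator_x} --- for which Definition \ref{def_slow_phase}$(ii)$ is exactly what is needed --- show that the coefficients of $L_{\phi_\eps}$, together with all their derivatives, have slow scale seminorms, while the seminorms of $a_\eps$ are slow scale by hypothesis. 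Hence, for each fixed $x$, $\partial^{\gamma_2}_x(L^k_{\phi_\eps}(a_\eps u_\eps))$ is a symbol of order $m-ks$ in $\xi$ whose seminorms are finite products of slow scale nets with factors $\sup_y|\partial^\beta_y u_\eps(y)|$, $|\beta|\le k$. Because $u\in\Gcinf(\Om)$ there is a \emph{single} $N_0$ with $\sup_y|\partial^\beta_y u_\eps(y)|=O(\eps^{-N_0})$ for all $\beta$, and a finite product of slow scale nets is again slow scale, hence $O(\eps^{-1})$; so the integrand is bounded by $C_{\gamma,\eps}\lara{\xi}^{m-ks+|\gamma|}$ with $C_{\gamma,\eps}=O(\eps^{-N_0-1})$, uniformly for $x$ in a compact set. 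Integrating in $(y,\xi)$ (the $y$-support is compact) yields, for every $K'\Subset\Om'$,
\[
\sup_{x\in K'}\big|\partial^\gamma_x I_{\phi_\eps}(a_\eps)(u_\eps)(x)\big|=O(\eps^{-N_0-1}),
\]
with the exponent independent of $\gamma$, so $I_\phi(a)(u)\in\Ginf(\Om')$. Running the same estimate with $O(\eps^{b})$ for every $b<\val^\infty_{K''}(u)$ in place of $O(\eps^{-N_0})$ gives $\mP_{\Ginf(K')}(Au)\le\mP_{\Ginf_{K''}(\Om)}(u)$, i.e.\ continuity of $A\colon\Ginf_{K''}(\Om)\to\Ginf(\Om')$, and continuity on $\Gcinf(\Om)$ then follows by passing to the inductive limit.

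For part $(ii)$ I would drop the integration by parts: if $a\in\wt{\mathcal{S}}^{-\infty,\ssc}(\Om'\times\Om\times\R^p)$ then every representative satisfies $|a_\eps|^{(m')}_{K,j}=O(\eps^{-1})$ for \emph{every} order $m'$, every $j$ and every $K$, so the integral $\int\esp^{i\phi_\eps}a_\eps u_\eps\,dy\,\dslash\xi$ and all of its $x$-derivatives converge directly. Differentiating, $\partial^\gamma_x(\esp^{i\phi_\eps}a_\eps)$ is a finite sum of $\esp^{i\phi_\eps}c_{\gamma_1,\eps}\,\partial^{\gamma_2}_x a_\eps$; choosing the order of $\partial^{\gamma_2}_x a_\eps$ below $-p-|\gamma_1|$ makes the product integrable, and its $(y,\xi)$-integral is a slow scale net times $\sup_y|u_\eps(y)|=O(\eps^{-N_0})$ --- here $u\in\Gc(\Om)$ is used only at $\beta=0$, since $u_\eps$ is never differentiated in $y$. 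This again gives a moderateness bound with the single exponent $N_0+1$, so $Au\in\Ginf(\Om')$, and the valuation form of the estimate yields continuity of $A\colon\Gc(\Om)\to\Ginf(\Om')$.

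The point requiring real care --- the heart of the matter --- is the uniformity in $\gamma$ of the power of $\eps$: one must verify that every factor occurring besides the bounded exponential and the (uniformly-in-$\beta$ bounded) derivatives of $u_\eps$ is a slow scale net, namely the symbols $c_{\gamma_1,\eps}$, the coefficients of $L_{\phi_\eps}$ and all their derivatives, and the seminorms of $a_\eps$. This is exactly where the slow scale hypotheses enter --- Definition \ref{def_slow_phase}$(i)$ for the symbol estimates on $\phi$, Definition \ref{def_slow_phase}$(ii)$ for the coefficients of $L_\phi$, and the slow scale type of $a$ --- together with the closure of the slow scale nets under finite products. The number of such factors, and hence the constant $C_{\gamma,\eps}$, grows with $|\gamma|$ and with $k(\gamma)$, but the exponent $-1$ does not; the remaining estimates are routine adaptations of those already used for Theorem \ref{theorem_map} and Proposition \ref{prop_exp}.
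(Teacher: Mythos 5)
Your argument is correct, and it is essentially the proof this paper points to: Theorem \ref{theorem_Ginf_map} is stated here without proof, with a reference to \cite[Section 3]{GHO:06}, where the same computation (differentiation under the oscillatory integral after $L^k_\phi$-regularization, with slow scale bookkeeping) is carried out. You correctly isolate the two points on which everything hinges: closure of slow scale nets under finite products, so that the $\eps$-exponent contributed by $\phi$, the coefficients of $L^k_{\phi}$ and $a$ remains $-1$ no matter how many factors accumulate as $|\gamma|$ and $k(\gamma)$ grow; and the different use of the regularity of $u$ in the two parts --- in (i) the single exponent valid for all $y$-derivatives of $u\in\Gcinf(\Om)$ absorbs the $k(\gamma)$ integrations by parts, while in (ii) the order $-\infty$ of $a$ lets you avoid differentiating $u_\eps$ altogether, so only the zeroth-order moderateness of $u\in\Gc(\Om)$ enters.
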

 
\subsection*{Extension to the dual}
Finally, we prove that under suitable hypotheses on the generalized phase function $\phi\in \wt{\Phi}(\Om'\times\Om\times\R^p)$, the definition of the generalized Fourier integral operator $A$ can be extended to the dual $\LL(\G(\Om),\wt{\C})$.
\begin{defn}
\label{definition_phaseop}
We say that $\phi\in\wt{\Phi}(\Om'\times\Om\times\R^p)$ is a generalized operator phase function if it has a representative $(\phi_\eps)_\eps$ of operator phase functions satisfying the conditions (i) and (ii) of Definition \ref{def_phase_x_moderate} and such that 
\begin{itemize}
\item[(iii)] for all $K'\Subset\Om'$ and $K\Subset\Om$ the net
\[
\biggl(\inf_{x\in K',y\in K,\xi\in\R^p\setminus 0}\biggl|\nabla_{x,\xi} \phi_\eps\biggl(x,y,\frac{\xi}{|\xi|}\biggr)\biggr|^2\biggr)_\eps
\]
is strictly nonzero.
\end{itemize}
\end{defn}
It is clear that when $\phi$ is a generalized operator phase function then by Theorem \ref{theorem_map} the oscillatory integral 
\begin{equation}
\label{eq_transposed}
\int_{\Om'\times\R^p} \esp^{i\phi(x,y,\xi)}a(x,y,\xi)v(x)\, dx\, \dslash\xi,
\end{equation}
where $a\in\wt{\mathcal{S}}^m_{\rho,\delta}(\Om'\times\Om\times\R^p)$ and $v\in\Gc(\Om')$, defines a generalized function in $\G(\Om)$ and a continuous $\wt{\C}$-linear operator from $\Gc(\Om')$ to $\G(\Om)$. More precisely, we have the following result.
\begin{prop}
\label{prop_extension}
Let $\phi$ be a generalized operator phase function on $\Om'\times\Om\times\R^p$, $a\in\wt{\mathcal{S}}^m_{\rho,\delta}(\Om'\times\Om\times\R^p)$ and $A:\Gc(\Om)\to\G(\Om')$ the generalized Fourier integral operator given by \eqref{oscillatory_x}-\eqref{def_A_fourier}. Then, 
\begin{itemize}
\item[(i)] the transposed ${\,}^tA$ of $A$ is the generalized Fourier integral operator given by \eqref{eq_transposed};
\item[(ii)] the operator $A$ can be extended to a continuous $\wt{\C}$-linear map acting from $\LL(\G(\Om),\wt{\C})$ to $\LL(\Gc(\Om'),\wt{\C})$. 
\end{itemize}
\end{prop}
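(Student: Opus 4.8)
The plan is to treat (i) as the substantive part and to deduce (ii) from it by a soft duality argument. Write $A':\Gc(\Om')\to\G(\Om)$ for the generalized Fourier integral operator \eqref{eq_transposed}; as noted just before the statement it is well defined and continuous by Theorem \ref{theorem_map}, applied with the roles of $\Om$ and $\Om'$ exchanged, which is legitimate precisely because condition (iii) of Definition \ref{definition_phaseop} provides the nonvanishing of $\nabla_{x,\xi}\phi$ needed to regard $\phi$ as a generalized phase function in the variables of $\Om'\times\R^p$ with parameter in $\Om$. I would first prove the reciprocity identity
\[
\int_{\Om'}(Au)(x)\,v(x)\,dx=\int_{\Om}u(y)\,(A'v)(y)\,dy\qquad\text{in }\wt{\C},
\]
for all $u\in\Gc(\Om)$, $v\in\Gc(\Om')$. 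Since $A'v\in\G(\Om)$ and the embedding $\G(\Om)\hookrightarrow\LL(\Gc(\Om),\wt{\C})$ of \eqref{chain_1} is injective, this says exactly that the restriction of ${}^tA$ to $\Gc(\Om')\subseteq\LL(\G(\Om'),\wt{\C})$ is $A'$, which is (i); symmetrically it identifies the full transpose ${}^tA:\LL(\G(\Om'),\wt{\C})\to\LL(\Gc(\Om),\wt{\C})$ with the extension of $A'$ to the duals.

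To prove the identity I would pass to representatives. Recall $(Au)_\eps(x)=\int_{\Om\times\R^p}\esp^{i\phi_\eps(x,y,\xi)}L_{\phi_\eps}^{k}(a_\eps(x,y,\xi)u_\eps(y))\,dy\,\dslash\xi$, with $L_{\phi_\eps}$ the operator \eqref{smilla} in the variables $(y,\xi)$; the amplitude $a_\eps u_\eps$ lies by \eqref{bil_product} in a symbol class of order $m$ and is compactly supported in $y$ uniformly in $\eps$, so $L_{\phi_\eps}^{k}(a_\eps u_\eps)$ has order $m-ks$ with $s=\min\{\rho,1-\delta\}>0$ and the same support property, and we take $k$ with $m-ks+1<-p$. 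As $L_{\phi_\eps}$ leaves $v_\eps(x)$ untouched, $v_\eps(x)L_{\phi_\eps}^{k}(a_\eps u_\eps)=L_{\phi_\eps}^{k}(a_\eps u_\eps v_\eps)$; choosing a representative of $v$ supported in a fixed compact subset of $\Om'$, the integrand $\esp^{i\phi_\eps}L_{\phi_\eps}^{k}(a_\eps u_\eps v_\eps)$ is absolutely integrable on $\Om'\times\Om\times\R^p$ for each $\eps$, and Fubini gives
\[
\int_{\Om'}(Au)_\eps(x)\,v_\eps(x)\,dx=\int_{\Om'\times\Om\times\R^p}\esp^{i\phi_\eps}L_{\phi_\eps}^{k}(a_\eps u_\eps v_\eps)\,dx\,dy\,\dslash\xi.
\]
By the analogues of Lemma \ref{lemma_1_x} and Proposition \ref{prop_operator_x} with $x$ and $y$ exchanged, condition (iii) furnishes a second operator $M_{\phi_\eps}$ in the variables $(x,\xi)$, lowering the symbol order by $s$, with ${}^tM_{\phi_\eps}\esp^{i\phi_\eps}=\esp^{i\phi_\eps}$ --- this is the operator entering \eqref{eq_transposed}. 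Repeating the computation with $M_{\phi_\eps}^{k'}$ (which leaves $u_\eps(y)$ untouched) in place of $L_{\phi_\eps}^{k}$ gives
\[
\int_{\Om}u_\eps(y)\,(A'v)_\eps(y)\,dy=\int_{\Om'\times\Om\times\R^p}\esp^{i\phi_\eps}M_{\phi_\eps}^{k'}(a_\eps u_\eps v_\eps)\,dx\,dy\,\dslash\xi.
\]
Passing to classes, the reciprocity identity follows once these two triple integrals are shown to coincide.

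That coincidence is the main obstacle: the generalized oscillatory integral $\int_{\Om'\times\Om\times\R^p}\esp^{i\phi_\eps}(a_\eps u_\eps v_\eps)\,dx\,dy\,\dslash\xi$ --- an oscillatory integral over the product base $\Om'\times\Om$ for the phase $\phi_\eps$ --- must have a value independent of whether it is regularized by powers of $L_{\phi_\eps}$ (acting on $(y,\xi)$) or of $M_{\phi_\eps}$ (acting on $(x,\xi)$). This is the generalized version of the classical independence of an oscillatory integral of the chosen regularizer, established for nets of generalized phase functions in \cite{GHO:06} (cf.\ \cite{Hoermander:71}); the mechanism is that, starting from the already absolutely convergent integral $\int\esp^{i\phi_\eps}L_{\phi_\eps}^{k}(a_\eps u_\eps v_\eps)$, one inserts $M_{\phi_\eps}$ without changing the value by integrating by parts --- no boundary terms, since the amplitude is compactly supported in $(x,y)$; ${}^tM_{\phi_\eps}$ fixes $\esp^{i\phi_\eps}$; and the symbol order stays below $-p-1$ at each step --- and symmetrically inserts $L_{\phi_\eps}$ into $\int\esp^{i\phi_\eps}M_{\phi_\eps}^{k'}(a_\eps u_\eps v_\eps)$, after which the two regularizers are compared on a symbol of arbitrarily low order, where the integrals are genuine Lebesgue integrals, by a density and continuity argument in the symbol space. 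The remaining work is routine tracking of the $\eps$-dependent symbol seminorms, of the same nature as in Theorem \ref{theorem_map}.

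For (ii), set $\widetilde{A}:\LL(\G(\Om),\wt{\C})\to\LL(\Gc(\Om'),\wt{\C})$, $(\widetilde{A}T)(v):=T(A'v)$ for $v\in\Gc(\Om')$; this makes sense because $A'v\in\G(\Om)$, and $v\mapsto T(A'v)$ is $\wt{\C}$-linear and continuous as a composition of continuous $\wt{\C}$-linear maps, so $\widetilde{A}T\in\LL(\Gc(\Om'),\wt{\C})$. For continuity of $\widetilde{A}$ in the topologies of uniform convergence on bounded subsets, note that for bounded $B\subseteq\Gc(\Om')$ one has $\mP_{B^\circ}(\widetilde{A}T)=\sup_{v\in B}|T(A'v)|_{\esp}=\mP_{(A'B)^\circ}(T)$, with $A'B$ bounded in $\G(\Om)$ since $A'$ is continuous $\wt{\C}$-linear (cf.\ \cite[Section 1]{Garetto:05a}); thus $\mP_{B^\circ}\circ\widetilde{A}$ is a continuous ultra-pseudo-seminorm on $\LL(\G(\Om),\wt{\C})$. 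Finally $\widetilde{A}$ extends $A$: for $u\in\Gc(\Om)$ viewed in $\LL(\G(\Om),\wt{\C})$ via integration, $(\widetilde{A}u)(v)=\int_{\Om}u(y)(A'v)(y)\,dy=\int_{\Om'}(Au)(x)v(x)\,dx$ by the reciprocity identity, which is the value at $v$ of $Au\in\G(\Om')$ viewed in $\LL(\Gc(\Om'),\wt{\C})$. This establishes (ii).
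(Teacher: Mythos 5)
Your proposal is correct and follows essentially the same route as the paper: part (i) is obtained, at the level of representatives, from the classical fact that for an operator phase function the transpose of the FIO is the FIO with the roles of $x$ and $y$ (hence of $L_{\phi_\eps}$ and $M_{\phi_\eps}$) exchanged, and part (ii) is exactly the paper's argument, defining $\widetilde{A}T(v)=T({}^tAv)$ and checking continuity via $\sup_{v\in B}|T({}^tAv)|_\esp=\sup_{w\in{}^tA(B)}|T(w)|_\esp$ with ${}^tA(B)$ bounded. The only difference is one of detail: the paper compresses (i) into the phrase ``a simple application of the corresponding classical result,'' whereas you unfold that application (Fubini, the reciprocity identity, and the independence of the oscillatory integral from the choice of regularizing operator, which for each fixed $\eps$ is just the classical statement since $\phi_\eps$ is an honest operator phase function).
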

\begin{proof}
Working at the level of representatives, the proof of the first assertion is a simple application of the corresponding classical result. It follows that $A$ can be extended to a $\wt{\C}$-linear map from $\LL(\G(\Om),\wt{\C})$ to $\LL(\Gc(\Om'),\wt{\C})$ by setting
\[
A(T)(u)= T({\,}^tAu),
\] 
for all $T\in\LL(\G(\Om),\wt{\C})$ and $u\in\Gc(\Om')$. Finally, let $B$ a bounded subset of $\Gc(\Om')$. From the continuity of ${\,}^tA$ and $T$ we have that
\[
\sup_{u\in B}|A(T)(u)|_\esp=\sup_{u\in B}|T({\,}^tAu)|=\sup_{v\in {\,}^tA(B)}|T(v)|,
\]
where ${\,}^tA(B)$ is a bounded subset of $\G(\Om)$. This shows that $A:\LL(\G(\Om),\wt{\C})\to\LL(\Gc(\Om'),\wt{\C})$ is continuous.
\end{proof}

\section{Composition of a generalized Fourier integral operator with a generalized pseudodifferential operator}
\label{section_comp}
\subsection*{Generalized Fourier integral operators of the type $F_\omega(b)$}
Let $\Om$ and $\Om'$ be open subsets of $\R^n$ and $\R^{n'}$ respectively. We now focus on operators of the form
\begin{equation}
\label{def_F_om}
F_\omega(b)(u)(x)=\int_{\R^n}\esp^{i\omega(x,\eta)}b(x,\eta)\widehat{u}(\eta)\, \dslash\eta,
\end{equation}
where $\omega\in\wt{\mathcal{S}}^1_{\rm{hg}}(\Om'\times\R^n\setminus 0)$, $b\in\wt{\mathcal{S}}^m(\Om'\times\R^n)$ and $u\in\Gc(\Om)$.\\
Note that $\phi(x,y,\eta):=\omega(x,\eta)-y\eta$ is a well-defined generalized phase function belonging to $\wt{\Phi}(\Om';\Om\times\R^n)$. Indeed, for any $(\omega_\eps)_\eps$ representative of $\omega$ we have that  $(\omega_\eps(x,\eta)-y\eta)_\eps\in\mM_{S^1_{\rm{hg}}(\Om'\times\Om\times\R^n)}$, if $(\omega_\eps-\omega'_\eps)_\eps\in\Neg_{S^1_{\rm{hg}}(\Om'\times\R^n)}$ then $(\omega_\eps-y\eta-\omega'_\eps+y\eta)_\eps\in\Neg_{S^1_{\rm{hg}}(\Om'\times\Om\times\R^n)}$ and $|\nabla_{y,\eta}\phi(x,y,\eta)|=|(-\eta,\nabla_{\eta}\omega-y)|\ge |\eta|$. In particular it follows that for any representative $\phi_\eps:=\omega_\eps(x,\eta)-y\eta$ and any $K'\Subset\Om'$, $K\Subset\Om$, the net $\displaystyle\inf_{x\in K',y\in K,\eta\in\R^n\setminus 0}|\nabla_{y,\eta}\phi_\eps\big(x,y,\frac{\eta}{|\eta|}\big)|$ is slow scale-strictly non-zero.

We recall that by Lemma \ref{lemma_esp_classic}, the estimate \eqref{coeff_a} and Proposition \ref{prop_exp}
\begin{trivlist}
\item[-] if $\omega\in\wt{\mathcal{S}}^1_{\rm{hg}}(\Om'\times\R^n\setminus 0)$ then $\esp^{i\omega(x,\eta)}\in\wt{\mathcal{S}}^1_{0,1}(\Om'\times\R^n)$ and 
\[
\partial^\alpha_\eta\partial^\beta_x\esp^{i\omega(x,\eta)}=\esp^{i\omega(x,\eta)} a_{\alpha,\beta}(x,\eta),
\]
where $a_{\alpha,\beta}\in \wt{\mathcal{S}}^{\,|\beta|}(\Om'\times\R^n\setminus 0)$ and the equality is intended in the space $\wt{\mathcal{S}}^{\,1+|\beta|}_{0,1}(\Om'\times\R^n\setminus 0)$;
\item[-] if $\omega\in\wt{\mathcal{S}}^{\,1,\ssc}_{\rm{hg}}(\Om'\times\R^n\setminus 0)$ then $a_{\alpha,\beta}\in \wt{\mathcal{S}}^{\,|\beta|,\ssc}(\Om'\times\R^n\setminus 0)$. 
\end{trivlist}

An immediate application of Theorem \ref{theorem_map} and Proposition \ref{prop_extension} yields the following mapping properties.
\begin{prop}
\label{prop_F_map}
\leavevmode
\begin{itemize}
\item[(i)] If $\omega\in\wt{\mathcal{S}}^1_{\rm{hg}}(\Om'\times\R^n\setminus 0)$ and $b\in\wt{\mathcal{S}}^m(\Om'\times\R^n)$ then $F_\omega(b)$ maps $\Gc(\Om)$ continuously into $\G(\Om')$.
\item[(ii)] If $\omega\in\wt{\mathcal{S}}^1_{\rm{hg}}(\Om'\times\R^n\setminus 0)$ has a representative $(\omega_\eps)_\eps\in\Phi[\Om'\times\R^n]$ such that for all $K'\Subset\Om'$
\[
\biggl(\inf_{x\in K',\eta\in\R^n\setminus 0}|\nabla_{x}\omega_\eps\big(x,\frac{\eta}{|\eta|}\big)|\biggr)_\eps
\]
is strictly non-zero, then $F_\omega(b)$ can be extended to a continuous $\wt{\C}$-linear map from $\LL(\G(\Om),\wt{\C})$ to $\LL(\Gc(\Om'),\wt{\C})$. 
\item[(iii)] If $\omega\in\wt{\mathcal{S}}^{\,1,\ssc}_{\rm{hg}}(\Om'\times\R^n\setminus 0)$ and $b\in\wt{\mathcal{S}}^{m,\ssc}(\Om'\times\R^n)$ then $F_\omega(b)$ maps $\Gcinf(\Om)$ continuously into $\Ginf(\Om')$. 
\item[(iv)] If $\supp_x b\Subset\Om'$ then $F_\omega(b)$ maps $\Gc(\Om)$ into $\Gc(\Om')$ and under the assumptions of {\rm{(ii)}} maps $\LL(\G(\Om),\wt{\C})$ into $\LL(\G(\Om'),\wt{\C})$.
\end{itemize}
\end{prop}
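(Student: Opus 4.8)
The plan is to realize $F_\omega(b)$ as the generalized Fourier integral operator $A$ of Theorem \ref{theorem_map} associated with the phase $\phi(x,y,\eta):=\omega(x,\eta)-y\eta$ — shown in the discussion preceding the proposition to belong to $\wt{\Phi}(\Om';\Om\times\R^n)$ — and with the amplitude $a(x,y,\eta):=b(x,\eta)$, and then to extract (i)--(iv) from the results already proved for $A$. First I would check that $a$ is a legitimate amplitude: being independent of $y$ it is annihilated by every $\partial_y^\gamma$, $\gamma\neq 0$, so its symbol seminorms over $K'\times K$ reduce to those of $b$ over $K'$; thus $a\in\wt{\mathcal{S}}^m(\Om'\times\Om\times\R^n)$, with $b\mapsto a$ continuous, and likewise $a\in\wt{\mathcal{S}}^{m,\ssc}(\Om'\times\Om\times\R^n)$ when $b$ is of slow scale type. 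Next, at the level of representatives, inserting $\widehat{u_\eps}(\eta)=\int e^{-iy\eta}u_\eps(y)\,dy$ — legitimate since $u\in\Gc(\Om)$ has a representative with support in a fixed compact set, so $\widehat{u_\eps}$ is rapidly decreasing — identifies $F_{\omega_\eps}(b_\eps)(u_\eps)(x)$ with the oscillatory integral $I_{\phi_\eps}(a_\eps)(u_\eps)(x)$; this is the classical reduction of $\int e^{i\omega(x,\eta)}b(x,\eta)\widehat{u}(\eta)\,\dslash\eta$ to an oscillatory integral in the variables $(y,\eta)$. Hence $F_\omega(b)=A$ on $\Gc(\Om)$.

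Granting this identification, (i) is precisely Theorem \ref{theorem_map}. For (iii) I would observe that, when $\omega\in\wt{\mathcal{S}}^{1,\ssc}_{\rm{hg}}(\Om'\times\R^n\setminus 0)$, the phase $\phi$ lies in $\wt{\Phi}^{\ssc}(\Om';\Om\times\R^n)$: the extra summand $-y\eta$ is trivially of slow scale type in the $S^1_{\rm{hg}}$-seminorms, and since $|\nabla_{y,\eta}\phi_\eps(x,y,\eta/|\eta|)|^2\ge 1$ the net \eqref{net_FIO} is slow scale-strictly nonzero; together with $a\in\wt{\mathcal{S}}^{m,\ssc}(\Om'\times\Om\times\R^n)$, Theorem \ref{theorem_Ginf_map}(i) yields (iii). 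For (ii), I would verify that $\phi$ is a generalized operator phase function on $\Om'\times\Om\times\R^n$ in the sense of Definition \ref{definition_phaseop}: conditions (i) and (ii) of Definition \ref{def_phase_x_moderate} were already checked, and from $\nabla_{x,\eta}\phi_\eps=(\nabla_x\omega_\eps,\ \nabla_\eta\omega_\eps-y)$ one gets the pointwise bound $|\nabla_{x,\eta}\phi_\eps(x,y,\eta/|\eta|)|^2\ge|\nabla_x\omega_\eps(x,\eta/|\eta|)|^2$, whose infimum over $x\in K'$, $y\in K$, $\eta\neq 0$ is strictly nonzero exactly by the hypothesis imposed on $\omega$ (squaring a strictly nonzero net keeps it strictly nonzero); Proposition \ref{prop_extension}(ii) then produces the extension $\LL(\G(\Om),\wt{\C})\to\LL(\Gc(\Om'),\wt{\C})$.

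For (iv) I would pick a representative of $b$ with $\supp_x b_\eps\subseteq K'$ for a fixed $K'\Subset\Om'$ and all $\eps$; then $a_\eps(x,y,\eta)=b_\eps(x,\eta)$ vanishes for $x\notin K'$, so $F_{\omega_\eps}(b_\eps)(u_\eps)(x)=0$ there, giving $F_\omega(b)(u)\in\Gc(\Om')$ with support in $K'$ (the refinement of the target to $\Gc(\Om')$ being a routine inductive-limit argument). Under the hypotheses of (ii), the transpose ${}^tF_\omega(b)$ of Proposition \ref{prop_extension}(i) has amplitude $b(x,\eta)v(x)$, so ${}^tF_\omega(b)v$ depends only on $v|_{K'}$; hence $F_\omega(b)(T)(v)=T({}^tF_\omega(b)v)=0$ whenever $\supp v\cap K'=\emptyset$, so $\supp F_\omega(b)(T)\subseteq K'$ and, by the identification recalled in Section \ref{section_basic} of the compactly supported functionals in $\LL(\Gc(\Om'),\wt{\C})$ with $\LL(\G(\Om'),\wt{\C})$, we get $F_\omega(b)(T)\in\LL(\G(\Om'),\wt{\C})$. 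The only steps that are not purely automatic are the representative-level identification $F_\omega(b)=I_\phi(a)$, which is classical, and keeping the support bookkeeping clean in (iv); everything else is a direct citation of Theorems \ref{theorem_map} and \ref{theorem_Ginf_map} and Proposition \ref{prop_extension}.
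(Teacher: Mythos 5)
Your proposal is correct, and for parts (i), (ii) and the first half of (iv) it follows the paper's own route: identify $F_\omega(b)$ with $I_\phi(a)$ for $\phi(x,y,\eta)=\omega(x,\eta)-y\eta$ and $a(x,y,\eta)=b(x,\eta)$, then quote Theorem \ref{theorem_map} and Proposition \ref{prop_extension}(ii) after checking the operator-phase condition via $|\nabla_{x,\eta}\phi_\eps|^2\ge|\nabla_x\omega_\eps|^2$. Where you diverge is (iii): you verify that $\phi$ lies in $\wt{\Phi}^{\ssc}(\Om';\Om\times\R^n)$ (the lower bound $|\nabla_{y,\eta}\phi_\eps(x,y,\eta/|\eta|)|^2\ge 1$ makes condition (ii) of Definition \ref{def_slow_phase} automatic) and then cite Theorem \ref{theorem_Ginf_map}(i), whereas the paper gives a direct, self-contained argument: it differentiates under the integral using Lemma \ref{lemma_esp_classic}, writes $\partial^\beta_x F_\omega(b)(u)$ as a sum of integrals with slow-scale factors $a_{\beta'}\partial^{\beta-\beta'}b$, and produces an explicit continuity estimate in terms of the seminorms of $g\in\Cinf_K(\Om)$. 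Your route is shorter but leans on a theorem whose proof is only cited from \cite{GHO:06}; the paper's computation has the advantage of exhibiting the uniform-in-$\beta$ moderateness bound (hence $\Ginf$-regularity) and the continuity constant explicitly. In (iv) you also take a slightly different path for the dual statement: rather than extending ${}^tF_\omega(b)$ to a continuous map $\G(\Om')\to\G(\Om)$ (which is what the compactness of $\supp_x b$ buys directly, and is the paper's one-line argument), you show $F_\omega(b)(T)$ has support in $K'$ and invoke the identification of compactly supported functionals in $\LL(\Gc(\Om'),\wt{\C})$ with $\LL(\G(\Om'),\wt{\C})$; both are valid, and yours additionally records the support information. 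No gaps.
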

\begin{proof}
The first assertion is clear from Theorem \ref{theorem_map} and the second one from Proposition \ref{prop_extension}(ii).\\ 
(iii) Lemma \ref{lemma_esp_classic} and the considerations which precede this proposition entail 
\[
\partial^\beta_x F_{\omega}(b)(u)(x)=\sum_{\beta'\le\beta}\binom{\beta}{\beta'}\int_{\R^n}\esp^{i\omega(x,\eta)}a_{\beta'}(x,\eta)\partial^{\beta-\beta'}b(x,\eta)\widehat{u}(\eta),\ \dslash\eta,
\]
where $a_{\beta'}\in\wt{\mathcal{S}}^{|\beta'|,\ssc}(\Om'\times\R^n\setminus 0)$. Hence, if $b\in\wt{\mathcal{S}}^{\,m,\ssc}(\Om'\times\R^n)$ and $u\in\Gcinf(\Om)$ then $F_\omega(b)\in\Ginf(\Om')$. Moreover, since for all $\beta\in\N^{n'}$ and $K'\Subset\Om'$ there exists $h\in\N$ and $c>0$ such that for all $g\in\Cinf_{K}(\Om)$ and $\eps\in(0,1]$ the estimate
\[
\sup_{x\in K'}|\partial^\beta F_{\omega_\eps}(b_\eps)(g)(x)|\le c\max_{\beta'\le\beta}|a_{\beta',\eps}|^{(|\beta'|)}_{K',0} |b_\eps|^{(m)}_{K',|\beta|}\sup_{y\in K,|\gamma|\le h}|\partial^\gamma g(y)|,
\]
holds, we conclude that when $[(a_{\beta',\eps})_\eps]$ and $[(b_\eps)_\eps]$ are symbols of slow scale type then the map $F_\omega(b):\Gcinf(\Om)\to\Ginf(\Om')$ is continuous.\\
(iv) If $\supp_x b\Subset\Om'$ from the first assertion we have that $F_\omega(b)\in\Gc(\Om')$. Under the assumptions of $(ii)$ for the phase function $\omega$ we have that ${\,}^tF_\omega(b)$ maps $\G(\Om')$ continuously into $\G(\Om)$ and therefore $F_\omega(b)$ can be extended to a map from $\LL(\G(\Om),\wt{\C})$ to $\LL(\G(\Om'),\wt{\C})$.

\end{proof}
\begin{rem}
Taking $\Om=\R^n$ and noting that $\Gc(\Om')\subseteq\Gc(\R^n)$, it is clear that $F_\omega(b)$ maps $\Gc(\R^n)$ into $\Gc(\R^n)$ when $\supp_x b\Subset\Om'$. In addition, ${\,}^tF_\omega(b):\G(\R^n)\to\G(\R^n)$ and $F_\omega(b):\LL(\G(\R^n),\wt{\C})\to\LL(\G(\R^n),\wt{\C})$.
\end{rem}
In the sequel we assume $\Om=\Om'\subseteq\R^n$. Our main purpose is to investigate the composition $a(x,D)\circ F_\omega(b)$, where $a(x,D)$ is a generalized pseudodifferential operator and $F_\omega(b)$ a generalized Fourier integral operator as in \eqref{def_F_om}. This requires some technical preliminaries.

\subsection*{Technical preliminaries}
The proof of the following lemma can be found in \cite[Lemmas A.11, A.12]{Coriasco:98}.
\begin{lem}
\label{lemma_sandro_1}
Let $a\in\Cinf(\Om\times\R^n\setminus 0)$ and $\omega\in\Cinf(\Om\times\R^n\setminus 0)$. Then,
\begin{multline*}
\partial^\alpha_x\partial^\sigma_\eta( a(x,\nabla_x\omega(x,\eta))=\sum_{\sigma'\le\sigma}\binom{\sigma}{\sigma'}\sum_{|\beta+\gamma|\le|\alpha|}\sum_{|\sigma''|\le|\sigma'|}\partial^\beta_x\partial^{\gamma+\sigma''}_\eta a(x,\nabla_x\omega(x,\eta))\, \cdot\\
\cdot P^{\sigma'}_{\eta,\sigma''}(x,\eta)\partial^{\sigma-\sigma'}_\eta P^\alpha_{x\beta\gamma}(x,\eta),
\end{multline*}
where
\[
\begin{split}
P^{\sigma'}_{\eta,\sigma''}(x,\eta)&=1\quad\qquad\qquad\qquad\qquad\qquad\qquad\qquad\qquad\qquad\quad \text{if $\sigma'=0$},\\
P^{\sigma'}_{\eta,\sigma''} &=\sum_{\substack{{\delta_1,...,\delta_q}\\ s_1,...,s_q}}c^{s_1,...,s_q}_{\delta_1,...,\delta_q}\,\partial^{\delta_1}_\eta \partial_{x_{s_1}}\omega(x,\eta)...\partial^{\delta_q}_\eta\partial_{x_{s_q}}\omega(x,\eta)\quad \text{otherwise},
\end{split}
\]
with $q=|\sigma''|$, $\sum_{j=1}^q|\delta_j|=|\sigma'|$ and
\[
\begin{split}
P^\alpha_{x\beta\gamma}&=1\quad\qquad\qquad\qquad\qquad\qquad\qquad\qquad\qquad\qquad\quad \text{if $\gamma=0$},\\
P^\alpha_{x\beta\gamma} &=\sum_{\substack{{\delta_1,...,\delta_r}\\ s_1,...,s_r}}d^{s_1,...,s_r}_{\delta_1,...,\delta_r}\,\partial^{\delta_1}_x\partial_{x_{s_1}}\omega(x,\eta)...\partial^{\delta_r}_x\partial_{x_{s_r}}\omega(x,\eta)\quad \text{otherwise},
\end{split}
\]
with $|\gamma|=r$ and $\sum_{j=1}^r|\delta_j|+|\beta|=|\alpha|$.
\end{lem}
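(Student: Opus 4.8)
The plan is purely combinatorial: the identity follows by composing two applications of the multivariate Fa\`a di Bruno formula with one application of the Leibniz rule, the only real work being the bookkeeping of multi-indices, exactly as in \cite{Coriasco:98}. Throughout, the equality is understood pointwise on the open subset of $\Om\times(\R^n\setminus 0)$ where $\nabla_x\omega(x,\eta)$ remains in the domain of $a$. Write $g(x,\eta):=a(x,\nabla_x\omega(x,\eta))$ and split $\partial^\alpha_x\partial^\sigma_\eta g=\partial^\sigma_\eta(\partial^\alpha_x g)$.

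First I would compute $\partial^\alpha_x g$ by induction on $|\alpha|$. Here $x$ enters $g$ both explicitly, through the first slot of $a$, and implicitly, through $\nabla_x\omega$ in the second slot, so each $x$-differentiation either falls on the first slot of $a$, or creates a new factor $\partial_{x_s}\partial_{x_i}\omega$ via the chain rule on the second slot, or enlarges an $\omega$-factor already created. Collecting terms yields
\[
\partial^\alpha_x g=\sum_{|\beta+\gamma|\le|\alpha|}\big(\partial^\beta_x\partial^\gamma_\eta a\big)(x,\nabla_x\omega(x,\eta))\,P^\alpha_{x\beta\gamma}(x,\eta),
\]
with $P^\alpha_{x\beta\gamma}$ the universal polynomial in the $x$-derivatives of $\omega$ described in the statement: the number of its factors is $r=|\gamma|$, and since each of the $|\alpha|$ differentiations is either recorded in $\beta$ or absorbed into exactly one factor $\partial^{\delta_j}_x\partial_{x_{s_j}}\omega$, one obtains $\sum_{j=1}^r|\delta_j|+|\beta|=|\alpha|$; in particular $P^\alpha_{x\beta\gamma}=1$ precisely when $\gamma=0$, which forces $\beta=\alpha$.

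Next I would apply the Leibniz rule in $\eta$ to each summand $\big(\partial^\beta_x\partial^\gamma_\eta a\big)(x,\nabla_x\omega)\cdot P^\alpha_{x\beta\gamma}$: this produces the binomial coefficient $\binom{\sigma}{\sigma'}$ together with the factor $\partial^{\sigma-\sigma'}_\eta P^\alpha_{x\beta\gamma}$ of the statement, and leaves one to differentiate $\big(\partial^\beta_x\partial^\gamma_\eta a\big)(x,\nabla_x\omega(x,\eta))$ in $\eta$ to order $\sigma'$. Since $\eta$ now enters \emph{only} through $\nabla_x\omega$, a second application of Fa\`a di Bruno --- again by induction, this time on $|\sigma'|$ --- gives
\[
\partial^{\sigma'}_\eta\big[\big(\partial^\beta_x\partial^\gamma_\eta a\big)(x,\nabla_x\omega(x,\eta))\big]=\sum_{|\sigma''|\le|\sigma'|}\big(\partial^\beta_x\partial^{\gamma+\sigma''}_\eta a\big)(x,\nabla_x\omega(x,\eta))\,P^{\sigma'}_{\eta,\sigma''}(x,\eta),
\]
where $P^{\sigma'}_{\eta,\sigma''}$ is a universal polynomial in the $\eta$-derivatives of $\nabla_x\omega$; as there is no explicit $\eta$-dependence to absorb derivatives, each of the $|\sigma'|$ differentiations enlarges one of the $q=|\sigma''|$ factors $\partial^{\delta_j}_\eta\partial_{x_{s_j}}\omega$, whence $\sum_{j=1}^q|\delta_j|=|\sigma'|$, and $P^{\sigma'}_{\eta,\sigma''}=1$ when $\sigma'=0$. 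Substituting this expansion into the Leibniz sum and relabelling the indices produces exactly the asserted formula.

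The step I expect to be the main obstacle is the combinatorial bookkeeping in the two Fa\`a di Bruno steps: checking that the index constraints $|\beta+\gamma|\le|\alpha|$, $\sum_j|\delta_j|+|\beta|=|\alpha|$ on the $x$-side and $q=|\sigma''|$, $\sum_j|\delta_j|=|\sigma'|$ on the $\eta$-side are the \emph{exact} ones that come out --- there is no analytic subtlety here, but these relations are precisely what will later let one read off the orders of the generalized symbols built from $\omega$ and $a$, so they must be pinned down carefully. A clean way to organize the argument is to define $P^\alpha_{x\beta\gamma}$ and $P^{\sigma'}_{\eta,\sigma''}$ recursively through the Leibniz/chain-rule steps and to verify that these degree relations are preserved under each single differentiation; this is carried out in \cite[Lemmas A.11, A.12]{Coriasco:98}.
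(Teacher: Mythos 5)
Your argument is correct and is exactly the intended one: the paper gives no proof of this lemma, deferring entirely to \cite[Lemmas A.11, A.12]{Coriasco:98}, and your two Fa\`a di Bruno expansions (first in $x$, then in $\eta$) glued by the Leibniz rule, with the degree bookkeeping $r=|\gamma|$, $\sum_j|\delta_j|+|\beta|=|\alpha|$ on the $x$-side and $q=|\sigma''|$, $\sum_j|\delta_j|=|\sigma'|$ on the $\eta$-side, is precisely that argument. Nothing essential is missing.
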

\begin{prop}
\label{prop_a_eps}
\leavevmode
\begin{itemize}
\item[(h1)] Let $(\omega_\eps)_\eps\in\mM_{S^1_{\rm{hg}}}(\Om\times\R^n\setminus 0)$ such that $\nabla_x\omega_\eps\neq 0$ for all $\eps\in(0,1]$ and for all $K\Subset\Om$
\[
\biggl(\inf_{x\in K,\eta\in\R^n\setminus 0}\biggl|\nabla_{x}\omega_\eps\big(x,\frac{\eta}{|\eta|}\big)\biggr|\biggr)_\eps
\]
is strictly non-zero.
\item[(i)] If $(a_\eps)_\eps\in \mM_{S^m(\Om\times\R^n\setminus 0)}$ then $(a_\eps(x,\nabla_x\omega_\eps(x,\eta)))_\eps\in \mM_{S^m(\Om\times\R^n\setminus 0)}$;
\item[(ii)] if $(a_\eps)_\eps\in \Neg_{S^m(\Om\times\R^n\setminus 0)}$ then $(a_\eps(x,\nabla_x\omega_\eps(x,\eta)))_\eps\in \Neg_{S^m(\Om\times\R^n\setminus 0)}$.\\
\item[(h2)] Let $(\omega_\eps)_\eps\in\mM^{\ssc}_{S^1_{\rm{hg}}}(\Om\times\R^n\setminus 0)$ such that $\nabla_x\omega_\eps\neq 0$ for all $\eps\in(0,1]$ and for all $K\Subset\Om$
\[
\biggl(\inf_{x\in K,\eta\in\R^n\setminus 0}\biggl|\nabla_{x}\omega_\eps\big(x,\frac{\eta}{|\eta|}\big)\biggr|\biggr)_\eps
\]
is slow scale strictly non-zero.
\item[(iii)] If $(a_\eps)_\eps\in \mM^{\ssc}_{S^m(\Om\times\R^n\setminus 0)}$ then $(a_\eps(x,\nabla_x\omega_\eps(x,\eta)))_\eps\in \mM^{\ssc}_{S^m(\Om\times\R^n\setminus 0)}$.
\item[(h3)] Finally, let $(\omega_\eps-\omega'_\eps)_\eps\in\Neg_{S^1_{\rm{hg}}}(\Om\times\R^n\setminus 0)$ with $(\omega_\eps)_\eps$ and $(\omega'_\eps)_\eps$ satisfying the hypothesis $(h1)$ above. 
\item[(iv)] If $(a_\eps)_\eps\in \mM_{S^m(\Om\times\R^n\setminus 0)}$ then 
\[
(a_\eps(x,\nabla_x\omega_\eps(x,\eta))-a_\eps(x,\nabla_x\omega'_\eps(x,\eta)))_\eps\in \Neg_{S^m(\Om\times\R^n\setminus 0)}.
\]
\end{itemize}
\end{prop}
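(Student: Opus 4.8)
The plan is to obtain parts $(i)$–$(iii)$ from the combinatorial chain rule of Lemma~\ref{lemma_sandro_1} and part $(iv)$ from a first–order Taylor expansion, the whole point being to control the $\eps$–dependence of the constants. The one preliminary estimate I would establish first is the following: if $(\omega_\eps)_\eps$ satisfies $(h1)$ then, for each $K\Subset\Om$, the moderateness $(\omega_\eps)_\eps\in\mM_{S^1_{\rm{hg}}}(\Om\times\R^n\setminus 0)$, the homogeneity of degree $1$ in $\eta$, and the strict non-vanishing of $\big(\inf_{x\in K,\eta\neq 0}|\nabla_x\omega_\eps(x,\eta/|\eta|)|\big)_\eps$ together yield a strictly nonzero net $(\lambda_\eps)_\eps$ and a moderate net $(\Lambda_\eps)_\eps$ with
\[
\lambda_\eps\lara{\eta}\le\lara{\nabla_x\omega_\eps(x,\eta)}\le\Lambda_\eps\lara{\eta},\qquad x\in K,\ \eta\in\R^n\setminus 0,\ \eps\in(0,1],
\]
whence $\lara{\nabla_x\omega_\eps(x,\eta)}^s\le\mu_\eps\lara{\eta}^s$ for every fixed $s\in\R$ with $(\mu_\eps)_\eps$ moderate; likewise, by homogeneity, every derivative $\partial^{\delta'}_\eta\partial^{\delta''}_x\omega_\eps$ is bounded on $K$ by a moderate net times $\lara{\eta}^{1-|\delta'|}$. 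Under $(h2)$ the same holds verbatim with ``moderate'' replaced by ``slow scale'' and $(\lambda_\eps)_\eps$ slow scale strictly nonzero.

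For parts $(i)$ and $(iii)$ I would apply Lemma~\ref{lemma_sandro_1} to $\partial^\alpha_x\partial^\sigma_\eta\big(a_\eps(x,\nabla_x\omega_\eps(x,\eta))\big)$. Each summand is the product of a derivative $\partial^\beta_x\partial^{\gamma+\sigma''}_\zeta a_\eps$ evaluated at $(x,\nabla_x\omega_\eps(x,\eta))$, estimated by $(a_\eps)_\eps\in\mM_{S^m}$ (resp.\ $\mM^\ssc_{S^m}$) and the preliminary estimate by a moderate (resp.\ slow scale) net times $\lara{\eta}^{m-|\gamma|-|\sigma''|}$, together with the polynomials $P^{\sigma'}_{\eta,\sigma''}$ and $\partial^{\sigma-\sigma'}_\eta P^\alpha_{x\beta\gamma}$ in the derivatives of $\omega_\eps$, bounded via the homogeneity of $\omega_\eps$ by moderate (slow scale) nets times $\lara{\eta}^{|\sigma''|-|\sigma'|}$ and $\lara{\eta}^{|\gamma|-|\sigma|+|\sigma'|}$ respectively. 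Multiplying, the exponents of $\lara{\eta}$ telescope to $m-|\sigma|$; since for fixed $\alpha,\sigma$ the formula has finitely many summands, taking the worst $\eps$–power gives a bound of the shape $\eps^{-N}\lara{\eta}^{m-|\sigma|}$ (resp.\ $\varpi_\eps\lara{\eta}^{m-|\sigma|}$ with $(\varpi_\eps)_\eps$ slow scale), i.e.\ membership in $\mM_{S^m(\Om\times\R^n\setminus 0)}$ (resp.\ $\mM^\ssc_{S^m(\Om\times\R^n\setminus 0)}$); the complementary region, where $\nabla_x\omega_\eps(x,\eta)$ is near the origin of $\R^n$ and hence $|\eta|$ is of moderate size, is dealt with by a short separate remark, all the $\lara{\eta}$–weights there being moderate. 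For part $(ii)$ the same expansion is used: now every factor $\partial^\beta_x\partial^{\gamma+\sigma''}_\zeta a_\eps$ is negligible because $(a_\eps)_\eps\in\Neg_{S^m}$, the $\omega_\eps$–polynomials are still moderate, and a finite sum of products negligible$\,\times\,$moderate is negligible, so $\big(a_\eps(x,\nabla_x\omega_\eps(x,\eta))\big)_\eps\in\Neg_{S^m(\Om\times\R^n\setminus 0)}$.

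For part $(iv)$ I would avoid the chain rule and integrate along the segment joining $\nabla_x\omega'_\eps(x,\eta)$ to $\nabla_x\omega_\eps(x,\eta)$:
\[
a_\eps(x,\nabla_x\omega_\eps(x,\eta))-a_\eps(x,\nabla_x\omega'_\eps(x,\eta))=\sum_{k=1}^n\partial_{x_k}(\omega_\eps-\omega'_\eps)(x,\eta)\int_0^1(\partial_{\zeta_k}a_\eps)\big(x,\nabla_x\phi^t_\eps(x,\eta)\big)\,dt,
\]
with $\phi^t_\eps:=\omega'_\eps+t(\omega_\eps-\omega'_\eps)$. By $(h3)$, $(\omega_\eps-\omega'_\eps)_\eps\in\Neg_{S^1_{\rm{hg}}}(\Om\times\R^n\setminus 0)$, so $(\phi^t_\eps)_\eps$ satisfies $(h1)$ uniformly in $t\in[0,1]$: it lies in $\mM_{S^1_{\rm{hg}}}$ and its $x$–gradient on the unit sphere differs from that of $\omega'_\eps$ by a negligible net, hence stays strictly nonzero for small $\eps$. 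Since differentiation is continuous from $S^m$ to $S^{m-1}$ we have $(\partial_{\zeta_k}a_\eps)_\eps\in\mM_{S^{m-1}(\Om\times\R^n\setminus 0)}$, so part $(i)$ applied with $\phi^t_\eps$ in place of $\omega_\eps$ (uniformly in $t$) gives $\big((\partial_{\zeta_k}a_\eps)(x,\nabla_x\phi^t_\eps(x,\eta))\big)_\eps\in\mM_{S^{m-1}(\Om\times\R^n\setminus 0)}$, while $\big(\partial_{x_k}(\omega_\eps-\omega'_\eps)\big)_\eps\in\Neg_{S^1_{\rm{hg}}}$. The product of a moderate $S^{m-1}$–net with a negligible $S^1$–net is a negligible $S^m$–net, integration in $t$ and the finite sum over $k$ preserve this, so the difference lies in $\Neg_{S^m(\Om\times\R^n\setminus 0)}$, as claimed.

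The main obstacle is the $\eps$–power bookkeeping through Lemma~\ref{lemma_sandro_1}, in particular passing from powers of $\lara{\nabla_x\omega_\eps(x,\eta)}$ to powers of $\lara{\eta}$, which costs negative powers of $\eps$ coming from the lower bound $|\nabla_x\omega_\eps(x,\eta/|\eta|)|\ge\eps^r$; the slow scale statements $(iii)$ then require only that products and finite sums of slow scale nets stay slow scale. For $(iv)$ the additional point to check is that the interpolating phases $\phi^t_\eps$ inherit hypothesis $(h1)$, which follows at once from the negligibility of $\omega_\eps-\omega'_\eps$.
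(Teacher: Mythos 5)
Your treatment of $(i)$--$(iii)$ is essentially the paper's: Lemma~\ref{lemma_sandro_1} to expand $\partial^\alpha_x\partial^\sigma_\eta\big(a_\eps(x,\nabla_x\omega_\eps(x,\eta))\big)$ into finitely many products of derivatives of $a_\eps$ evaluated at $\nabla_x\omega_\eps$ with polynomials in derivatives of $\omega_\eps$, combined with the two-sided bound $c_1\eps^{r}\lara{\eta}\le|\nabla_x\omega_\eps(x,\eta)|\le c_2\eps^{-r}\lara{\eta}$ (slow scale under $(h2)$) to convert powers of $\lara{\nabla_x\omega_\eps}$ into powers of $\lara{\eta}$ at a moderate (resp.\ slow scale, resp.\ harmless) cost in $\eps$; the paper records exactly these two ingredients as \eqref{g} and \eqref{a}. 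Where you genuinely diverge is part $(iv)$. The paper differentiates first: it applies Lemma~\ref{lemma_sandro_1} to both composites, splits the difference into the sum \eqref{sum_compl} --- one term carrying the negligible difference of the $\omega$-polynomials, the other carrying $\partial^{\alpha'}_x\partial^{\sigma'}_\eta a_\eps(x,\nabla_x\omega_\eps)-\partial^{\alpha'}_x\partial^{\sigma'}_\eta a_\eps(x,\nabla_x\omega'_\eps)$ --- and handles the latter by Taylor's formula, which forces it to verify the two-sided bound on the convex combination $\nabla_x\omega'_\eps+\theta(\nabla_x\omega_\eps-\nabla_x\omega'_\eps)$. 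You instead integrate first, writing the undifferentiated difference along the segment of phases $\phi^t_\eps=\omega'_\eps+t(\omega_\eps-\omega'_\eps)$; the same geometric input (the segment of gradients stays in the region $|\cdot|\gtrsim\eps^r\lara{\eta}$, because the perturbation is negligible) appears in your observation that $\phi^t_\eps$ inherits $(h1)$ uniformly in $t$, but after that, $(iv)$ reduces to $(i)$ applied to the family $\phi^t_\eps$ plus the ideal property $\Neg_{S^1}\cdot\mM_{S^{m-1}}\subseteq\Neg_{S^{m}}$, with no second pass through the chain-rule combinatorics. This buys economy and makes the logical dependence of $(iv)$ on $(i)$ explicit; the only extra point you must (and do) flag is that the moderate bounds produced by $(i)$ are uniform over $t\in[0,1]$, which is clear since they depend only on the seminorms of $\phi^t_\eps$ and on the lower bound for its gradient, both uniform along the segment. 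One caveat you share with the paper: the passage from the seminorms of $(a_\eps)_\eps$ to the pointwise bound at $\zeta=\nabla_x\omega_\eps(x,\eta)$ is only immediate when those estimates are available down to $|\zeta|\sim\eps^{r}$ (as they are for the restrictions of symbols on $\Om\times\R^n$, which is the case of use in Theorem~\ref{theo_comp}); your ``short separate remark'' for the region where the gradient is small should be phrased with the homogeneous weights $|\eta|^{1-|\delta'|}$ rather than $\lara{\eta}^{1-|\delta'|}$ for the second and higher $\eta$-derivatives of $\omega_\eps$, exactly as in the remark following Proposition~\ref{prop_asym_Shubin}.
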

\begin{proof}
From Lemma \ref{lemma_sandro_1} it follows that $\partial^\alpha_x\partial^\sigma_\eta( a_\eps(x,\nabla_x\omega_\eps(x,\eta))$ is a finite sum of terms of the type
\[
\partial^{\alpha'}_x\partial^{\sigma'}_\eta a_\eps(x,\nabla_x\omega_\eps(x,\eta))g_{\alpha',\sigma',\eps}(x,\eta),
\]
where $(g_{\alpha',\sigma',\eps})_\eps$ is a net of symbols in $S^{|\sigma'|-|\sigma|}(\Om\times\R^n\setminus 0)$. Note that $(g_{\alpha',\sigma',\eps})_\eps$ depends on $(\omega_\eps)_\eps$ and is actually a finite sum of products of derivatives of $(\omega_\eps)_\eps$. One can easily prove that 
\begin{equation}
\label{g}
\begin{split}
(\omega_\eps)_\eps\in\mM_{S^1_{\rm{hg}}}(\Om\times\R^n\setminus 0)\quad &\Rightarrow \quad (g_{\alpha',\sigma',\eps})_\eps\in\mM_{S^{|\sigma'|-|\sigma|}(\Om\times\R^n\setminus 0)},\\
(\omega_\eps)_\eps\in\mM^\ssc_{S^1_{\rm{hg}}}(\Om\times\R^n\setminus 0)\quad &\Rightarrow \quad (g_{\alpha',\sigma',\eps})_\eps\in\mM^{\ssc}_{S^{|\sigma'|-|\sigma|}(\Om\times\R^n\setminus 0)}.
%(\omega_\eps)_\eps\in\Neg_{S^1_{\rm{hg}}}(\Om\times\R^n\setminus 0)\quad &\Rightarrow \quad %(g_{\alpha',\sigma',\eps})_\eps\in\Neg_{S^{|\sigma'|-|\sigma|}(\Om\times\R^n\setminus 0)}.
\end{split}
\end{equation}
and that the following 
\begin{multline}
\label{a}
\forall\alpha',\sigma'\in\N^n\, \forall K\Subset\Om\, \exists (\lambda_\eps)_\eps\in\R^{(0,1]}\, \forall x\in K\, \forall\eta\in\R^n\setminus 0\, \forall\eps\in(0,1]\\
|\partial^{\alpha'}_x\partial^{\sigma'}_\eta a_\eps(x,\nabla_x\omega_\eps(x,\eta))|\le\lambda_\eps\lara{\nabla_x\omega_\eps}^{m-|\sigma'|}
\end{multline}
%\begin{equation}
%\label{a}
%\begin{array}{cc}
%(a_\eps)_\eps\in\mM_{S^m(\Om\times\R^n\setminus 0)}\quad \Rightarrow\quad &\forall\alpha',\sigma'\in\N^n\, \forall K\Subset\Om\, \exists N\in\N\, \exists\eta\in(0,1]\\[0.2cm] 
%&\forall x\in K\, \forall\eta\in\R^n\setminus 0\, \forall\eps\in(0,\eta]\\[0.2cm]
%&|\partial^{\alpha'}_x\partial^{\sigma'}_\eta a_\eps(x,\nabla_x\omega_\eps(x,\eta))|\le\eps^{-N}\lara{\omega_\eps}^{m-|\sigma'|}\\[0.3cm]
%(a_\eps)_\eps\in\mM^{\ssc}_{S^m(\Om\times\R^n\setminus 0)}\quad \Rightarrow\quad &\forall\alpha',\sigma'\in\N^n\, \forall K\Subset\Om\, \exists (\lambda_\eps)_\eps\, {\rm{s.s.n.}}\, \exists\eta\in(0,1]\\[0.2cm] 
%&\forall x\in K\, \forall\eta\in\R^n\setminus 0\, \forall\eps\in(0,\eta]\\[0.2cm]
%&|\partial^{\alpha'}_x\partial^{\sigma'}_\eta a_\eps(x,\nabla_x\omega_\eps(x,\eta))|\le\lambda_\eps\lara{\omega_\eps}^{m-|\sigma'|}\\[0.3cm]
%(a_\eps)_\eps\in\Neg_{S^m(\Om\times\R^n\setminus 0)}\quad \Rightarrow\quad &\forall\alpha',\sigma'\in\N^n\, \forall K\Subset\Om\, \forall q\in\N\, \exists\eta\in(0,1]\\[0.2cm] 
%&\forall x\in K\, \forall\eta\in\R^n\setminus 0\, \forall\eps\in(0,\eta]\\[0.2cm]
%&|\partial^{\alpha'}_x\partial^{\sigma'}_\eta a_\eps(x,\nabla_x\omega_\eps(x,\eta))|\le\eps^{q}\lara{\omega_\eps}^{m-|\sigma'|}
%\end{array}
%\end{equation}
holds, with $(\lambda_\eps)_\eps\in\EM$ if $(a_\eps)_\eps\in\mM_{S^m(\Om\times\R^n\setminus 0)}$, $(\lambda_\eps)_\eps$ slow scale net if $(a_\eps)_\eps\in\mM^{\ssc}_{S^m(\Om\times\R^n\setminus 0)}$ and $(\lambda_\eps)_\eps\in\Neg$ if $(a_\eps)_\eps\in\Neg_{S^m(\Om\times\R^n\setminus 0)}$. 
Now, let us consider $(\nabla_x\omega_\eps(x,\eta))_\eps$. We have that
\begin{equation*}
\label{nabla}
\begin{array}{cc}
(h1)\quad \Rightarrow\quad &\forall K\Subset\Om\, \exists r>0\, \exists c_1,c_2>0\, \exists\eta\in(0,1]\, \forall x\in K\, \forall|\eta|\ge 1\, \forall\eps\in(0,\eta]\\[0.2cm]
&\lara{\eta}c_1\eps^r\le|\nabla_x\omega_\eps(x,\eta)|\le c_2\eps^{-r}\lara{\eta}, \\[0.3cm]
(h2)\quad \Rightarrow\quad &\forall K\Subset\Om\, \exists (\mu_\eps)_\eps\, {\rm{s.s.n}}\, \exists\eta\in(0,1]\, \forall x\in K\, \forall|\eta|\ge 1\, \forall\eps\in(0,\eta]\\[0.2cm]
&\lara{\eta}\mu_\eps^{-1}\le|\nabla_x\omega_\eps(x,\eta)|\le \mu_\eps\lara{\eta}.
\end{array}
\end{equation*}
Under the hypothesis $(h1)$, combining \eqref{g} with \eqref{a} we obtain the assertions $(i)$ and $(ii)$. Moreover, from the second implications of \eqref{g} and \eqref{a} we see that $(h2)$ yields $(iii)$. It remains to prove that if $(h3)$ holds and $(a_\eps)_\eps$ is a moderate net of symbols then $(a_\eps(x,\nabla_x\omega_\eps(x,\eta))-a_\eps(x,\nabla_x\omega'_\eps(x,\eta)))_\eps\in \Neg_{S^m(\Om\times\R^n\setminus 0)}$. If suffices to write $\partial^\alpha_x\partial^\sigma_\eta(a_\eps(x,\nabla_x\omega_\eps(x,\eta))-a_\eps(x,\nabla_x\omega'_\eps(x,\eta)))$ as the finite sum
\begin{multline}
\label{sum_compl}
\sum_{\alpha',\sigma'}\partial^{\alpha'}_x\partial^{\sigma'}_\eta a_\eps(x,\nabla_x\omega_\eps(x,\eta))(g_{\alpha',\sigma'}(\omega_\eps)-g_{\alpha',\sigma'}(\omega'_\eps))\\
+\sum_{\alpha',\sigma'}[\partial^{\alpha'}_x\partial^{\sigma'}_\eta a_\eps(x,\nabla_x\omega_\eps(x,\eta))-\partial^{\alpha'}_x\partial^{\sigma'}_\eta a_\eps(x,\nabla_x\omega'_\eps(x,\eta))]g_{\alpha',\sigma'}(\omega'_\eps).
\end{multline}
An inspection of Lemma \ref{lemma_sandro_1} shows that the net $(g_{\alpha',\sigma'}(\omega_\eps)-g_{\alpha',\sigma'}(\omega'_\eps))_\eps$ belongs to $\Neg_{S^{|\sigma'|-|\sigma|}(\Om\times\R^n\setminus 0)}$ and from the hypothesis $(h1)$ on $(\omega_\eps)_\eps$ it follows that the first summand in \eqref{sum_compl} is an element of $\Neg_{S^{m-|\sigma|}(\Om\times\R^n\setminus 0)}$. We use Taylor's formula on the second summand of \eqref{sum_compl}. Therefore, for $x$ varying in a compact set $K$ and for $\eps$ small enough we can estimate
\[
|\partial^{\alpha'}_x\partial^{\sigma'}_\eta a_\eps(x,\nabla_x\omega_\eps(x,\eta))-\partial^{\alpha'}_x\partial^{\sigma'}_\eta a_\eps(x,\nabla_x\omega'_\eps(x,\eta))|
\]
by means of 
\begin{multline*}
\sum_{j=1}^n\eps^{-N}\lara{\nabla_x\omega'_\eps(x,\eta)+\theta(\nabla_x\omega_\eps(x,\eta)-\nabla_x\omega'_\eps(x,\eta))}^{m-|\sigma'|-1}|\partial_{x_j}(\omega_\eps-\omega'_\eps)(x,\eta)|\\
\le \eps^q\lara{\nabla_x\omega'_\eps(x,\eta)+\theta(\nabla_x\omega_\eps(x,\eta)-\nabla_x\omega'_\eps(x,\eta))}^{m-|\sigma'|-1}\lara{\eta},
\end{multline*}
where $\theta\in[0,1]$. Since, taking $\eps$ small and $|\eta|\ge 1$ the following inequalities
\begin{multline*}
|{\nabla_x\omega'_\eps(x,\eta)+\theta(\nabla_x\omega_\eps(x,\eta)-\nabla_x\omega'_\eps(x,\eta))}|\ge \eps^r\lara{\eta}-\eps^{r+1}\lara{\eta}\ge\frac{\eps^{r}}{2}\lara{\eta},\\
|{\nabla_x\omega'_\eps(x,\eta)+\theta(\nabla_x\omega_\eps(x,\eta)-\nabla_x\omega'_\eps(x,\eta))}|\le \eps^{-r}\lara{\eta}
\end{multline*}
hold for some $r>0$, we conclude that 
\[
\big(\partial^{\alpha'}_x\partial^{\sigma'}_\eta a_\eps(x,\nabla_x\omega_\eps(x,\eta))-\partial^{\alpha'}_x\partial^{\sigma'}_\eta a_\eps(x,\nabla_x\omega'_\eps(x,\eta))\big)_\eps\in\Neg_{S^{m-|\sigma'|}(\Om\times\R^n\setminus 0)}.
\]
Thus, from $(g_{\alpha',\sigma'}(\omega'_\eps))_\eps\in\mM_{S^{|\sigma'|-|\sigma|}(\Om\times\R^n\setminus 0)}$ we have that the second summand of \eqref{sum_compl} belongs to $\Neg_{S^{m-|\sigma|}(\Om\times\R^n\setminus 0)}$ and the proof is complete.
\end{proof}
\begin{cor}
\label{coroll_a}
If $a\in\wt{\mathcal{S}}^m(\Om\times\R^n\setminus 0)$ and $\omega\in\wt{\mathcal{S}}^1_{\rm{hg}}(\Om\times\R^n\setminus 0)$ has a representative satisfying condition $(h1)$ of Proposition \ref{prop_a_eps} then $a(x,\nabla_x\omega(x,\eta))\in \wt{\mathcal{S}}^m(\Om\times\R^n\setminus 0)$.\\
If $a\in\wt{\mathcal{S}}^{m,\ssc}(\Om\times\R^n\setminus 0)$ and $\omega\in\wt{\mathcal{S}}^{1,\ssc}_{{\rm{hg}}}(\Om\times\R^n\setminus 0)$ has a representative satisfying condition $(h2)$ of Proposition \ref{prop_a_eps}, then $a(x,\nabla_x\omega(x,\eta))\in \wt{\mathcal{S}}^{m,\ssc}(\Om\times\R^n\setminus 0)$.
\end{cor}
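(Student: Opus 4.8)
The statement is an immediate consequence of Proposition~\ref{prop_a_eps}, read at the level of equivalence classes; the plan is therefore to fix representatives, apply the relevant parts of that proposition, and then check independence of all choices. First I would pick a representative $(a_\eps)_\eps\in\mM_{S^m(\Om\times\R^n\setminus 0)}$ of $a$ and a representative $(\omega_\eps)_\eps$ of $\omega$ satisfying condition $(h1)$ (such a representative exists by assumption). Condition $(h1)$ guarantees in particular that $\nabla_x\omega_\eps(x,\eta)\neq 0$, so that $a_\eps(x,\nabla_x\omega_\eps(x,\eta))$ is a well-defined smooth function on $\Om\times\R^n\setminus 0$; by Proposition~\ref{prop_a_eps}$(i)$ the net $(a_\eps(x,\nabla_x\omega_\eps(x,\eta)))_\eps$ lies in $\mM_{S^m(\Om\times\R^n\setminus 0)}$, and I would define $a(x,\nabla_x\omega(x,\eta))$ to be its class in $\wt{\mathcal{S}}^m(\Om\times\R^n\setminus 0)$.

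To see that this class does not depend on the representatives, I would argue in two steps. If $(a'_\eps)_\eps$ is a second representative of $a$, then $(a_\eps-a'_\eps)_\eps\in\Neg_{S^m(\Om\times\R^n\setminus 0)}$, and Proposition~\ref{prop_a_eps}$(ii)$ applied to this difference net gives $\big(a_\eps(x,\nabla_x\omega_\eps(x,\eta))-a'_\eps(x,\nabla_x\omega_\eps(x,\eta))\big)_\eps\in\Neg_{S^m(\Om\times\R^n\setminus 0)}$. If $(\omega'_\eps)_\eps$ is a second representative of $\omega$ also satisfying $(h1)$, then $(\omega_\eps-\omega'_\eps)_\eps\in\Neg_{S^1_{\rm{hg}}}(\Om\times\R^n\setminus 0)$, so the pair $(\omega_\eps)_\eps$, $(\omega'_\eps)_\eps$ fulfils hypothesis $(h3)$, and Proposition~\ref{prop_a_eps}$(iv)$ yields $\big(a_\eps(x,\nabla_x\omega_\eps(x,\eta))-a_\eps(x,\nabla_x\omega'_\eps(x,\eta))\big)_\eps\in\Neg_{S^m(\Om\times\R^n\setminus 0)}$. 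Combining the two estimates (first change $a$, then change $\omega$) and recalling that by hypothesis at least one representative of $\omega$ satisfying $(h1)$ exists, we conclude that $a(x,\nabla_x\omega(x,\eta))$ is a well-defined element of $\wt{\mathcal{S}}^m(\Om\times\R^n\setminus 0)$.

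For the slow scale part I would repeat the argument verbatim, using Proposition~\ref{prop_a_eps}$(iii)$ instead of $(i)$ so that $(a_\eps(x,\nabla_x\omega_\eps(x,\eta)))_\eps$ is of slow scale type, and noting that a slow scale-strictly nonzero net is in particular strictly nonzero, so that a representative of $\omega$ satisfying $(h2)$ also satisfies $(h1)$; since the conclusions of parts $(ii)$ and $(iv)$ involve only the ideal $\Neg_{S^m}$ --- which is common to $\wt{\mathcal{S}}^m$ and $\wt{\mathcal{S}}^{m,\ssc}$ --- the same two steps give independence of the representatives within $\wt{\mathcal{S}}^{m,\ssc}(\Om\times\R^n\setminus 0)$. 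There is no genuine obstacle here: the analytic content is entirely contained in Proposition~\ref{prop_a_eps}, and the only point to watch is that, when comparing two representatives of $\omega$, both are taken to satisfy $(h1)$ (resp.\ $(h2)$), which is exactly what makes hypothesis $(h3)$ --- and hence part $(iv)$ --- applicable.
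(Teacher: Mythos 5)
Your argument is correct and is exactly the one the paper intends: the corollary is stated without proof as an immediate consequence of Proposition~\ref{prop_a_eps}, with part $(i)$ (resp.\ $(iii)$) giving moderateness and parts $(ii)$ and $(iv)$ giving independence of the representatives of $a$ and $\omega$ respectively. Your observation that a representative satisfying $(h2)$ also satisfies $(h1)$, so that $(h3)$ and hence part $(iv)$ remain applicable in the slow scale case, correctly handles the only point that needs any care.
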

Let $\omega\in\wt{\mathcal{S}}^1_{\rm{hg}}(\Om\times\R^n\setminus 0)$ have a representative satisfying $(h1)$. We want to investigate the properties of 
\begin{equation}
\label{esp}
D^\beta_z\big(\esp^{i\overline{\omega}(z,x,\eta)}\big)|_{z=x},
\end{equation}
where $\overline{\omega}(z,x,\eta):=\omega(z,\eta)-\omega(x,\eta)-\lara{\nabla_x\omega(x,\eta),z-x}$. We make use of the following technical lemma, whose proof can be found in \cite[Proposition 15]{Coriasco:99}.
\begin{lem}
\label{lemma_sandro_2}
Let $\omega\in\Cinf(\Om\times\R^n\setminus 0)$ and $\overline{\omega}(z,x,\eta)$ as above. Then, for $|\beta|\neq 0$,we have
\begin{multline*}
D^\beta_z\esp^{i\overline{\omega}(z,x,\eta)}=\esp^{i\overline{\omega}(z,x,\eta)}\biggl[(\nabla_z\omega(z,\eta)-\nabla_x\omega(x,\eta))^\beta\\
+\sum_{j_1}c_{j_1}(\nabla_z\omega(z,\eta)-\nabla_x\omega(x,\eta))^{\theta_{j_1}}\prod_{j_2=1}^{n_{1,j_1}}\partial^{\gamma_{j_1,j_2}}_z\omega(z,\eta)\\
+\sum_{j_1}c'_{j_1}\prod_{j_2=1}^{n_{2,j_1}}\partial^{\delta_{j_1,j_2}}_z\omega(z,\eta)\biggr],
\end{multline*}
where $c_{j_1}$, $c'_{j_1}$ are suitable constants, $|\gamma_{j_1,j_2}|\ge 2$, $|\delta_{j_1,j_2}|\ge 2$ and 
\[
\theta_{j_1}+\sum_{j_2=1}^{n_{1,j_1}}\gamma_{j_1,j_2}=\sum_{j_2=1}^{n_{2,j_1}}\delta_{j_1,j_2}=\beta.
\]
\end{lem}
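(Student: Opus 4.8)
The plan is to prove Lemma~\ref{lemma_sandro_2} by induction on $|\beta|$, differentiating $\esp^{i\overline{\omega}}$ one step at a time. The first point to record is the effect of the linear correction in the definition $\overline{\omega}(z,x,\eta)=\omega(z,\eta)-\omega(x,\eta)-\lara{\nabla_x\omega(x,\eta),z-x}$: it only disturbs the first $z$-derivative. Indeed, for each $k$,
\[
\partial_{z_k}\overline{\omega}(z,x,\eta)=\partial_{z_k}\omega(z,\eta)-\partial_{x_k}\omega(x,\eta),
\]
which is the $k$-th component of $\nabla_z\omega(z,\eta)-\nabla_x\omega(x,\eta)$, while $\partial^\gamma_z\overline{\omega}(z,x,\eta)=\partial^\gamma_z\omega(z,\eta)$ for every $\gamma$ with $|\gamma|\ge 2$. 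Hence every iterated $z$-derivative of $\esp^{i\overline{\omega}}$ will automatically come out as $\esp^{i\overline{\omega}}$ times a finite polynomial expression in the two families $\{\partial_{z_k}\omega(z,\eta)-\partial_{x_k}\omega(x,\eta)\}_k$ and $\{\partial^\gamma_z\omega(z,\eta)\}_{|\gamma|\ge 2}$ — precisely the shape claimed.

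Next I would run the induction. The base case $|\beta|=1$ is immediate: $D_{z_k}\esp^{i\overline{\omega}}=\esp^{i\overline{\omega}}\,(\partial_{z_k}\omega(z,\eta)-\partial_{x_k}\omega(x,\eta))$, so only the leading term $(\nabla_z\omega-\nabla_x\omega)^{e_k}$ is present and both sums are empty. For the step I would write $D^{\beta+e_k}_z\esp^{i\overline{\omega}}=D_{z_k}\big(\esp^{i\overline{\omega}}P_\beta\big)=\esp^{i\overline{\omega}}\big[(\partial_{z_k}\omega(z,\eta)-\partial_{x_k}\omega(x,\eta))\,P_\beta+\tfrac1i\partial_{z_k}P_\beta\big]$, where $P_\beta$ denotes the bracketed expression obtained at order $\beta$. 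Multiplying $P_\beta$ by $\partial_{z_k}\overline{\omega}$ simply attaches one more first-order difference factor (raising the exponent $\theta$ of $(\nabla_z\omega-\nabla_x\omega)$ by $e_k$), whereas $\partial_{z_k}$ applied to $P_\beta$ either differentiates a first-order difference factor — turning it into a $\partial^\gamma_z\omega$ with $|\gamma|=2$ while lowering $\theta$ — or differentiates one of the existing $\partial^\gamma_z\omega$ factors, keeping $|\gamma|\ge 2$. In each of these cases the total multi-index carried by the monomial increases by exactly $e_k$, which is what propagates the homogeneity relations $\theta_{j_1}+\sum_{j_2}\gamma_{j_1,j_2}=\beta$ and $\sum_{j_2}\delta_{j_1,j_2}=\beta$. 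Sorting the resulting monomials into those in which all factors, some but not all factors, or no factor is a first-order difference yields exactly the three sums in the statement.

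The last thing to verify is that the leading term carries coefficient $1$. The monomial $(\nabla_z\omega(z,\eta)-\nabla_x\omega(x,\eta))^\beta$ can only be produced by hitting the exponential at every one of the $|\beta|$ differentiation steps, and with the convention $D_{z_k}=\tfrac1i\partial_{z_k}$ each such step contributes the factor $\tfrac1i\cdot i=1$; the remaining constants $c_{j_1},c'_{j_1}$ are just the powers of $\tfrac1i$ picked up by the steps that act on $P_\beta$ rather than on the exponential. Equivalently, one may read the whole decomposition off the multivariate Faà di Bruno formula for $\esp^{if}$ and identify the top-degree Bell term. I expect the main nuisance to be exactly this combinatorial bookkeeping in the induction step: organizing the iterated Leibniz rule so that the $\theta$–$\gamma$ and $\delta$ degree relations stay manifestly balanced at each stage and the unique all-first-order term is cleanly isolated with its normalization. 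None of it is deep; it is the kind of careful combinatorics carried out in \cite[Proposition 15]{Coriasco:99}.
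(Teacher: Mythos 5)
Your proof is correct: the key observations — that $\partial_{z_k}\overline{\omega}=\partial_{z_k}\omega(z,\eta)-\partial_{x_k}\omega(x,\eta)$ while $\partial^\gamma_z\overline{\omega}=\partial^\gamma_z\omega$ for $|\gamma|\ge 2$, that each differentiation step raises the total multi-index of every monomial by exactly $e_k$ (which propagates the relations $\theta_{j_1}+\sum_{j_2}\gamma_{j_1,j_2}=\beta$ and $\sum_{j_2}\delta_{j_1,j_2}=\beta$), and that the pure difference monomial $(\nabla_z\omega-\nabla_x\omega)^\beta$ arises only by hitting the exponential at every step and hence carries coefficient $1$ — are exactly what is needed. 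The paper gives no proof of this lemma, deferring to \cite[Proposition 15]{Coriasco:99}, and your induction is the standard iterated-Leibniz argument carried out there, so the two approaches coincide.
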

It follows that
\begin{equation}
\label{formula_esp}
D^\beta_z\big(\esp^{i\overline{\omega}(z,x,\eta)}\big)|_{z=x}= \sum_{j_1}c_{j_1}\prod_{j_2=1}^{n_{1,j_1}}\partial^{\gamma_{j_1,j_2}}_x\omega(x,\eta)
+\sum_{j_1}c'_{j_1}\prod_{j_2=1}^{n_{2,j_1}}\partial^{\delta_{j_1,j_2}}_x\omega(x,\eta),
\end{equation}
with
\[
\sum_{j_2=1}^{n_{1,j_1}}\gamma_{j_1,j_2}=\sum_{j_2=1}^{n_{2,j_1}}\delta_{j_1,j_2}=\beta.
\]
Moreover, from $|\gamma_{j_1,j_2}|\ge 2$, $|\delta_{j_1,j_2}|\ge 2$ we have $|\beta|\ge 2n_{1,j_1}$ and $|\beta|\ge 2n_{2,j_1}$. 

Since the constants $c_{j_1}$, $c'_{j_1}$ do not depend on $\omega$, we can use the formula \eqref{formula_esp} in estimating the net $(D^\beta_z\big(\esp^{i\overline{\omega_\eps}(z,x,\eta)}\big)|_{z=x})_\eps$. 
\begin{prop}
\label{prop_esp}
\leavevmode
\begin{trivlist}
\item[(i)] If $\omega\in\wt{\mathcal{S}}^1_{\rm{hg}}(\Om\times\R^n\setminus 0)$ then \eqref{esp} is a well-defined element of $\wt{\mathcal{S}}^{|\beta|/2}(\Om\times\R^n\setminus 0)$.
\item[(ii)] If $\omega\in\wt{\mathcal{S}}^{1,\ssc}_{{\rm{hg}}}(\Om\times\R^n\setminus 0)$ then \eqref{esp} is a well-defined element of $\wt{\mathcal{S}}^{|\beta|/2,\ssc}(\Om\times\R^n\setminus 0)$.
\end{trivlist}
\end{prop}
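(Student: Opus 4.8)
The plan is to reduce everything to the explicit formula \eqref{formula_esp}, which already expresses, at the level of representatives, the net $\bigl(D^\beta_z\esp^{i\overline{\omega_\eps}(z,x,\eta)}|_{z=x}\bigr)_\eps$ as a finite linear combination --- with constants $c_{j_1}$, $c'_{j_1}$ that do \emph{not} depend on $\omega$ --- of products $\prod_{j_2=1}^{n}\partial^{\gamma_{j_1,j_2}}_x\omega_\eps(x,\eta)$ with $|\gamma_{j_1,j_2}|\ge 2$, $\sum_{j_2}\gamma_{j_1,j_2}=\beta$ and $n\le|\beta|/2$. If $\beta=0$ then \eqref{esp} is simply the constant $1\in\wt{\mathcal{S}}^0(\Om\times\R^n\setminus 0)$, so I may assume $|\beta|\ge 1$, in which case the term $(\nabla_z\omega-\nabla_x\omega)^\beta$ of Lemma \ref{lemma_sandro_2} drops out at $z=x$ and only the two sums of \eqref{formula_esp} survive. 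The proof then splits into a homogeneity-bookkeeping step giving moderateness for (i), the same step with slow scale nets for (ii), and a telescoping step for well-definedness.

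First I would record the key observation: since each representative $(\omega_\eps)_\eps$ of $\omega$ is, for fixed $\eps$, smooth on $\Om\times\R^n\setminus 0$ and positively homogeneous of degree $1$ in $\eta$, every pure $x$-derivative $\partial^{\gamma}_x\omega_\eps$ is again smooth on $\Om\times\R^n\setminus 0$ and homogeneous of degree $1$ in $\eta$, hence lies in $S^1_{\rm{hg}}(\Om\times\R^n\setminus 0)\subseteq S^1(\Om\times\R^n\setminus 0)$; reading off the seminorms, $(\omega_\eps)_\eps\in\mM_{S^1_{\rm{hg}}}(\Om\times\R^n\setminus 0)$ forces $(\partial^{\gamma}_x\omega_\eps)_\eps\in\mM_{S^1(\Om\times\R^n\setminus 0)}$, and the same with $\mM^\ssc$ in place of $\mM$. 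A product of $n$ such factors is homogeneous of degree $n$ in $\eta$, hence sits in $S^{n}(\Om\times\R^n\setminus 0)\subseteq S^{|\beta|/2}(\Om\times\R^n\setminus 0)$ precisely because $n\le|\beta|/2$ --- this is the constraint $|\beta|\ge 2n_{1,j_1}$, $|\beta|\ge 2n_{2,j_1}$ coming from $|\gamma_{j_1,j_2}|\ge 2$ and $\sum_{j_2}\gamma_{j_1,j_2}=\beta$ in Lemma \ref{lemma_sandro_2} --- and by the continuity of the product of generalized symbols the associated net lies in $\mM_{S^{|\beta|/2}(\Om\times\R^n\setminus 0)}$, respectively in $\mM^\ssc_{S^{|\beta|/2}(\Om\times\R^n\setminus 0)}$. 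Summing the finitely many terms of \eqref{formula_esp} then produces a representative of \eqref{esp} in $\mM_{S^{|\beta|/2}(\Om\times\R^n\setminus 0)}$ in case (i) and in $\mM^\ssc_{S^{|\beta|/2}(\Om\times\R^n\setminus 0)}$ in case (ii).

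Next I would verify that the resulting class does not depend on the chosen representative of $\omega$. Given $(\omega_\eps-\omega'_\eps)_\eps\in\Neg_{S^1_{\rm{hg}}}(\Om\times\R^n\setminus 0)$, I would rewrite, in each product occurring in \eqref{formula_esp}, the difference $\prod_{j_2}\partial^{\gamma_{j_1,j_2}}_x\omega_\eps-\prod_{j_2}\partial^{\gamma_{j_1,j_2}}_x\omega'_\eps$ as a telescoping sum of products in which exactly one factor is $\partial^{\gamma}_x(\omega_\eps-\omega'_\eps)$ --- a net in $\Neg_{S^1(\Om\times\R^n\setminus 0)}$, since $\partial^\gamma_x$ preserves both homogeneity of degree $1$ and negligibility --- and all remaining factors are moderate symbols of order $1$. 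Because the product of a negligible and a moderate net of symbols is negligible (of the appropriate order), each such product lies in $\Neg_{S^{|\beta|/2}(\Om\times\R^n\setminus 0)}$, and the $\omega$-independence of the constants $c_{j_1},c'_{j_1}$ makes the whole difference negligible; the slow scale version of this telescoping argument gives the corresponding statement for (ii), which completes the proof.

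I do not expect a genuine obstacle here: the only point that requires care is the degree bookkeeping --- that $n$ factors homogeneous of degree $1$ produce an element of order exactly $n$, and that Lemma \ref{lemma_sandro_2} pins $n$ below $|\beta|/2$, so the order never exceeds $|\beta|/2$ --- together with the uniformity in $\eps$ of all the moderateness and negligibility estimates, which is guaranteed because the combinatorial coefficients in Lemma \ref{lemma_sandro_2} and in \eqref{formula_esp} do not depend on $\omega$.
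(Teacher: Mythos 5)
Your proposal is correct and follows essentially the same route as the paper: both proofs reduce the statement to formula \eqref{formula_esp} and then deduce moderateness (resp.\ slow scale moderateness) from the degree count $n_{1,j_1},n_{2,j_1}\le|\beta|/2$, and well-definedness from the negligibility of the differences of the products of $x$-derivatives. The paper merely asserts these two facts, whereas you spell out the homogeneity bookkeeping and the telescoping argument; this is exactly the content left implicit there.
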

\begin{proof}
From \eqref{formula_esp} we have that 
\[
\begin{split}
(\omega_\eps)_\eps\in\mM_{S^1_{\rm{hg}}}(\Om\times\R^n\setminus 0)\quad &\Rightarrow \quad (D^\beta_z\big(\esp^{i\overline{\omega_\eps}(z,x,\eta)}\big)|_{z=x})_\eps\in\mM_{S^{|\beta|/2}(\Om\times\R^n\setminus 0)},\\
(\omega_\eps)_\eps\in\mM^\ssc_{S^1_{\rm{hg}}}(\Om\times\R^n\setminus 0)\quad &\Rightarrow \quad (D^\beta_z\big(\esp^{i\overline{\omega_\eps}(z,x,\eta)}\big)|_{z=x})_\eps\in\mM^\ssc_{S^{|\beta|/2}(\Om\times\R^n\setminus 0)}.
\end{split}
\]
Noting that $(\omega_\eps-\omega'_\eps)_\eps\in\Neg_{S^1_{\rm{hg}}(\Om\times\R^n\setminus 0)}$ entails 
\[
\begin{split}
&\biggl(\prod_{j_2=1}^{n_{1,j_1}}\partial^{\gamma_{j_1,j_2}}_x\omega_\eps(x,\eta)-\prod_{j_2=1}^{n_{1,j_1}}\partial^{\gamma_{j_1,j_2}}_x\omega'_\eps(x,\eta)\biggr)_\eps\in\Neg_{S^{|\beta|/2}(\Om\times\R^n\setminus 0)},\\
&\biggl(\prod_{j_2=1}^{n_{2,j_1}}\partial^{\delta_{j_1,j_2}}_x\omega_\eps(x,\eta)-\prod_{j_2=1}^{n_{2,j_1}}\partial^{\delta_{j_1,j_2}}_x\omega'_\eps(x,\eta)\biggr)_\eps\in\Neg_{S^{|\beta|/2}(\Om\times\R^n\setminus 0)},
\end{split}
\]
we conclude that the net $\big(D^\beta_z\big(\esp^{i\overline{\omega_\eps}(z,x,\eta)}\big)|_{z=x}-D^\beta_z\big(\esp^{i\overline{\omega'_\eps}(z,x,\eta)}\big)|_{z=x}\big)_\eps$ belongs to $\Neg_{S^{|\beta|/2}(\Om\times\R^n\setminus 0)}$.
\end{proof}
By combining Corollary \ref{coroll_a} with Proposition \ref{prop_esp} we obtain the following statement.
\begin{prop}
\label{prop_h_alpha}
Let $\alpha\in\N^n$ and
\begin{equation}
\label{h_alpha}
h_\alpha(x,\eta)=\frac{\partial^\alpha_\xi a(x,\nabla_x\omega(x,\eta))}{\alpha!}D^\alpha_z\big(\esp^{i\overline{\omega}(z,x,\eta)}b(z,\eta)\big)|_{z=x}.
\end{equation}
\begin{trivlist}
\item[(i)] If $a\in\wt{\mathcal{S}}^m(\Om\times\R^n\setminus 0)$, $\omega\in\wt{\mathcal{S}}^1_{\rm{hg}}(\Om\times\R^n\setminus 0)$ has a representative satisfying condition $(h1)$ and $b\in\wt{\mathcal{S}}^l(\Om\times\R^n\setminus 0)$, then $h_\alpha\in\wt{\mathcal{S}}^{\,l+m-|\alpha|/2}(\Om\times\R^n\setminus 0)$ for all $\alpha$.
\item[(ii)] If $a\in\wt{\mathcal{S}}^{m,\ssc}(\Om\times\R^n\setminus 0)$, $\omega\in\wt{\mathcal{S}}^{\,1,\ssc}_{\rm{hg}}(\Om\times\R^n\setminus 0)$ has a representative satisfying condition $(h2)$ and $b\in\wt{\mathcal{S}}^{\,l,\ssc}(\Om\times\R^n\setminus 0)$, then $h_\alpha\in\wt{\mathcal{S}}^{\,l+m-|\alpha|/2,\ssc}(\Om\times\R^n\setminus 0)$ for all $\alpha$.
\end{trivlist}
\end{prop}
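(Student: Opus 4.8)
The plan is to read $h_\alpha$ as a product of two generalized symbols whose orders are already under control, and then simply to add the orders. Write $h_\alpha=\tfrac{1}{\alpha!}\,p_\alpha(x,\eta)\,q_\alpha(x,\eta)$ with $p_\alpha(x,\eta)=(\partial^\alpha_\xi a)(x,\nabla_x\omega(x,\eta))$ and $q_\alpha(x,\eta)=D^\alpha_z\big(\esp^{i\overline{\omega}(z,x,\eta)}b(z,\eta)\big)\big|_{z=x}$. For the first factor: since $a\in\wt{\mathcal{S}}^m(\Om\times\R^n\setminus 0)$, differentiation in $\xi$ (a continuous operation on generalized symbols, cf.\ the continuity results in Subsection \ref{subsec_gen_symb}) gives $\partial^\alpha_\xi a\in\wt{\mathcal{S}}^{m-|\alpha|}(\Om\times\R^n\setminus 0)$; applying Corollary \ref{coroll_a} to the symbol $\partial^\alpha_\xi a$, using that $\omega$ has a representative satisfying $(h1)$, yields $p_\alpha\in\wt{\mathcal{S}}^{m-|\alpha|}(\Om\times\R^n\setminus 0)$. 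In case (ii) one uses the slow scale part of Corollary \ref{coroll_a} together with $(h2)$ to obtain $p_\alpha\in\wt{\mathcal{S}}^{m-|\alpha|,\ssc}(\Om\times\R^n\setminus 0)$.

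For the second factor I would expand by the Leibniz rule,
\[
q_\alpha(x,\eta)=\sum_{\beta\le\alpha}\binom{\alpha}{\beta}\big(D^\beta_z\esp^{i\overline{\omega}(z,x,\eta)}\big)\big|_{z=x}\cdot\big(D^{\alpha-\beta}_z b(z,\eta)\big)\big|_{z=x}.
\]
Here $\big(D^{\alpha-\beta}_z b(z,\eta)\big)|_{z=x}$ equals, up to a constant factor, $\partial^{\alpha-\beta}_x b(x,\eta)\in\wt{\mathcal{S}}^l(\Om\times\R^n\setminus 0)$, since differentiation in the first variable does not raise the order; and $\big(D^\beta_z\esp^{i\overline{\omega}(z,x,\eta)}\big)|_{z=x}\in\wt{\mathcal{S}}^{|\beta|/2}(\Om\times\R^n\setminus 0)$ by Proposition \ref{prop_esp}, the case $\beta=0$ being trivial since $\overline{\omega}(x,x,\eta)=0$ forces this factor to equal $1\in\wt{\mathcal{S}}^0$. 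By the product properties of generalized symbols each summand therefore lies in $\wt{\mathcal{S}}^{l+|\beta|/2}(\Om\times\R^n\setminus 0)\subseteq\wt{\mathcal{S}}^{l+|\alpha|/2}(\Om\times\R^n\setminus 0)$, because $|\beta|\le|\alpha|$, whence $q_\alpha\in\wt{\mathcal{S}}^{l+|\alpha|/2}(\Om\times\R^n\setminus 0)$. Multiplying the two factors gives
\[
h_\alpha\in\wt{\mathcal{S}}^{(m-|\alpha|)+(l+|\alpha|/2)}(\Om\times\R^n\setminus 0)=\wt{\mathcal{S}}^{\,l+m-|\alpha|/2}(\Om\times\R^n\setminus 0),
\]
which is (i); (ii) follows by repeating the argument with the slow scale versions of Corollary \ref{coroll_a} and Proposition \ref{prop_esp} and the fact that products of slow scale symbols are again of slow scale type. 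Independence of the chosen representatives is inherited from the well-definedness assertions in Corollary \ref{coroll_a} and Proposition \ref{prop_esp} together with the fact that the symbol product descends to the factor spaces.

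The computations are entirely routine once those two cited results are in place; the only place that needs attention is the $\eps$-bookkeeping at the level of representatives, that is, distinguishing moderate, slow scale and negligible nets when forming the composition $\partial^\alpha_\xi a\circ\nabla_x\omega$ and the products, and this is exactly what Proposition \ref{prop_a_eps}, Corollary \ref{coroll_a} and Proposition \ref{prop_esp} already take care of. I expect the one genuinely non-automatic point to be the observation that $\big(D^\beta_z\esp^{i\overline{\omega}}\big)|_{z=x}$ has order $|\beta|/2$ and not $|\beta|$: this is forced by the structure formula \eqref{formula_esp}, where the constraints $|\gamma_{j_1,j_2}|\ge 2$ and $|\delta_{j_1,j_2}|\ge 2$ limit the number of factors of the degree-one symbols $\partial^\bullet_x\omega$ appearing in each term to at most $|\beta|/2$.
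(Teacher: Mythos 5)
Your proof is correct and is exactly the argument the paper intends: the paper's own ``proof'' is the single sentence ``By combining Corollary \ref{coroll_a} with Proposition \ref{prop_esp} we obtain the following statement,'' and your factorization $h_\alpha=\tfrac{1}{\alpha!}p_\alpha q_\alpha$ with the Leibniz expansion of $q_\alpha$ is the natural way to carry that out, with the order bookkeeping ($m-|\alpha|$ for the composed factor via Corollary \ref{coroll_a}, $l+|\beta|/2\le l+|\alpha|/2$ for each Leibniz term via Proposition \ref{prop_esp}) coming out right.
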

Our next task is to give a closer look to $\esp^{i\omega(x,\eta)}$.  
\begin{prop}
\label{prop_id_esp_1}
Let $\omega\in\wt{\mathcal{S}}^1_{\rm{hg}}(\Om\times\R^n\setminus 0)$ have a representative satisfying condition $(h1)$. Then for any positive integer $N$ there exists $p_N\in\wt{\mathcal{S}}^{-2N}(\Om\times\R^n\setminus 0)$ such that
\begin{equation}
\label{id_esp_1}
\esp^{i\omega(x,\eta)}=\biggl(p_N(x,\eta)\Delta_x^N+r(x,\eta)\biggr)\esp^{i\omega(x,\eta)},
\end{equation}
where $r\in\wt{\mathcal{S}}^{-\infty}(\Om\times\R^n\setminus 0)$.\\
If $\omega\in\wt{\mathcal{S}}^1_{\rm{hg}}(\Om\times\R^n\setminus 0)$ is of slow scale type and has a representative satisfying condition $(h2)$ then $p_N$ and $r$ are of slow scale type.
\end{prop}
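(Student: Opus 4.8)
The plan is to realise $\Delta_x^N\esp^{i\omega}$ as a generalized \emph{elliptic} symbol of order $2N$ multiplied by $\esp^{i\omega}$, and then to invert that symbol by means of the parametrix construction of Proposition~\ref{prop_ellip_2}. Throughout I would work at the level of representatives, fixing a representative $(\omega_\eps)_\eps$ of $\omega$ satisfying $(h1)$ (resp.\ $(h2)$).

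First I would compute, for each $\eps$,
\[
\Delta_x\esp^{i\omega_\eps(x,\eta)}=\bigl(i\Delta_x\omega_\eps(x,\eta)-|\nabla_x\omega_\eps(x,\eta)|^2\bigr)\,\esp^{i\omega_\eps(x,\eta)},
\]
and iterate (equivalently, apply Lemma~\ref{lemma_esp_classic} in the $x$-variables) to obtain $\Delta_x^N\esp^{i\omega_\eps}=Q_{N,\eps}\,\esp^{i\omega_\eps}$, where $Q_{N,\eps}$ is a fixed polynomial, with coefficients independent of $\eps$, in the derivatives $\partial_x^\delta\omega_\eps$ with $1\le|\delta|\le 2N$. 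Since $\omega_\eps$ is positively homogeneous of degree $1$ in $\eta$, each such derivative is homogeneous of degree $1$ and $(\partial_x^\delta\omega_\eps)_\eps\in\mM_{S^1_{\mathrm{hg}}(\Om\times\R^n\setminus 0)}$ (resp.\ $\mM^\ssc_{S^1_{\mathrm{hg}}}$); taking products and sums one gets $(Q_{N,\eps})_\eps\in\mM_{S^{2N}(\Om\times\R^n\setminus 0)}$ (resp.\ $\mM^\ssc_{S^{2N}}$), with top-order part $(-1)^N|\nabla_x\omega_\eps(x,\eta)|^{2N}$, of order $2N$, and remainder of order $\le 2N-1$.

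Next I would verify that $(Q_{N,\eps})_\eps$ satisfies condition $(e1)$ (resp.\ $(e2)$). Fix $K\Subset\Om$. By homogeneity $|\nabla_x\omega_\eps(x,\eta)|=|\eta|\,|\nabla_x\omega_\eps(x,\eta/|\eta|)|$, so $(h1)$ gives $|\nabla_x\omega_\eps(x,\eta)|^{2N}\ge c\,\eps^{s}\lara{\eta}^{2N}$ for $x\in K$, $|\eta|\ge 1$ and $\eps$ small, while moderateness of the remainder of $Q_{N,\eps}$ yields a bound $\le C\eps^{-M}\lara{\eta}^{2N-1}$ there. Choosing $R_\eps$ proportional to $\eps^{-M-s}$ — a moderate net, replaced by a slow scale net under $(h2)$ — one obtains $|Q_{N,\eps}(x,\eta)|\ge\tfrac{c}{2}\eps^{s}\lara{\eta}^{2N}$ for $x\in K$, $|\eta|\ge R_\eps$ and $\eps$ small, that is $(e1)$ (resp.\ $(e2)$). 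Proposition~\ref{prop_ellip_2} then provides $(p_{N,\eps})_\eps\in\mM_{S^{-2N}(\Om\times\R^n\setminus 0)}$ (resp.\ $\mM^\ssc_{S^{-2N}}$) with $p_{N,\eps}Q_{N,\eps}=1+r_\eps$ and $(r_\eps)_\eps\in\mM_{S^{-\infty}(\Om\times\R^n\setminus 0)}$ (resp.\ $\mM^\ssc_{S^{-\infty}}$).

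Combining the two steps, $p_{N,\eps}\Delta_x^N\esp^{i\omega_\eps}=p_{N,\eps}Q_{N,\eps}\esp^{i\omega_\eps}=(1+r_\eps)\esp^{i\omega_\eps}$, hence $\esp^{i\omega_\eps}=(p_{N,\eps}\Delta_x^N-r_\eps)\esp^{i\omega_\eps}$ for all $\eps$. Setting $p_N:=[(p_{N,\eps})_\eps]$ and $r:=[(-r_\eps)_\eps]$ and passing to classes — using that $\esp^{i\omega}\in\wt{\mathcal{S}}^1_{0,1}(\Om\times\R^n\setminus 0)$ is independent of the chosen representative (Lemma~\ref{lemma_esp_classic}, Proposition~\ref{prop_exp}) and that derivation and multiplication are well defined on the symbol spaces — yields \eqref{id_esp_1}; in the slow scale case $p_N$ and $r$ are of slow scale type by construction. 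The delicate point is the third paragraph: one must check that the error term in $Q_{N,\eps}$ is genuinely of order $\le 2N-1$ and that the ellipticity radius $R_\eps$ stays moderate (resp.\ slow scale), which is precisely where $(h1)$ (resp.\ $(h2)$) enters; once the ellipticity of $(Q_{N,\eps})_\eps$ is in place the rest is a routine application of Proposition~\ref{prop_ellip_2}.
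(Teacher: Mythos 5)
Your proposal is correct and follows essentially the same route as the paper: write $\Delta_x^N\esp^{i\omega_\eps}=Q_{N,\eps}\esp^{i\omega_\eps}$ with $(Q_{N,\eps})_\eps$ a moderate (resp.\ slow scale) net of order $2N$ and principal part $(-1)^N|\nabla_x\omega_\eps|^{2N}$, use $(h1)$ (resp.\ $(h2)$) to check the ellipticity condition $(e1)$ (resp.\ $(e2)$) — the paper invokes Proposition~\ref{prop_ellip}$(iii)$/$(vi)$ where you verify it by hand, which amounts to the same estimate — and then apply Proposition~\ref{prop_ellip_2} to produce $p_{N,\eps}$ and the $-\infty$-order remainder. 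No gaps; the sign convention $1+r_\eps$ versus $1-r_\eps$ is immaterial.
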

\begin{proof}
Let $(\omega_\eps)_\eps$ be a representative of $\omega$ satisfying $(h1)$. We leave to the reader to prove by induction that 
\[
\Delta_x^N\big(\esp^{i\omega_\eps(x,\eta)}\big)=a_\eps(x,\eta)\esp^{i\omega_\eps(x,\eta)},
\]
where $(a_\eps)_\eps\in\mM_{S^{2N}(\Om\times\R^n\setminus 0)}$ with principal part given by 
\[
a_{2N,\eps}=(-1)^N|\nabla_x\omega_\eps(x,\eta)|^{2N}.
\]
From $(h1)$ we have that $\nabla_x\omega_\eps\neq 0$ for all $\eps\in(0,1]$ and for all $K\Subset\Om$ there exist $r>0$ and $\eps_0\in(0,1]$ such that $|\nabla_x\omega_\eps(x,\eta)|\ge\eps^r|\eta|$ for all $x\in K$, $\eta\neq 0$ and $\eps\in(0,\eps_0]$. Hence,
\[
|\nabla_x\omega_\eps(x,\eta)|\ge\frac{\eps^r}{2}\lara{\eta},
\]
for $|\eta|\ge 1$, $x\in K$ and $\eps\in(0,\eps_0]$. It follows from Proposition \ref{prop_ellip}$(iii)$ that $(a_\eps)_\eps$ is a net of elliptic symbols of $S^{2N}(\Om\times\R^n\setminus 0)$ such that for all $K\Subset\Om$ there exist $s\in \R$, $(R_\eps)_\eps$ strictly nonzero and $\eps_0\in(0,1]$ such that 
\[
|a_\eps(x,\eta)|\ge \eps^s\lara{\eta}^{2N},
\]
for $x\in K$, $|\eta|\ge R_\eps$ and $\eps\in(0,\eps_0]$. By Proposition \ref{prop_ellip_2}$(i)$ we find $(p_{N,\eps})_\eps\in\mM_{S^{-2N}(\Om\times\R^n\setminus 0)}$ and $(r_\eps)_\eps\in\mM_{S^{-\infty}(\Om\times\R^n\setminus 0)}$ such that
\begin{equation}
\label{eq_repr}
p_{N,\eps}a_\eps=1-r_\eps
\end{equation}
for all $\eps$. Therefore, 
\[
\esp^{i\omega_\eps(x,\eta)}=\biggl(p_{N,\eps}(x,\eta)\Delta_x^N+r_\eps(x,\eta)\biggr)\esp^{i\omega_\eps(x,\eta)}.
\]
This equality at the representatives'level implies the equality \eqref{id_esp_1} between equivalence classes of $\wt{\mathcal{S}}^{1}_{0,1}(\Om\times\R^n\setminus 0)$.

Now, let $\omega$ be a slow scale symbol with a representative $(\omega_\eps)_\eps$ satisfying condition $(h2)$. From Proposition \ref{prop_ellip}$(vi)$ we have that $(a_\eps)_\eps\in\mM^\ssc_{S^{2N}(\Om\times\R^n\setminus 0)}$ is a net of elliptic symbols such that for some $(s_\eps)_\eps$ inverse of a slow scale net, $(R_\eps)_\eps$ slow scale net and $\eps_0\in(0,1]$ the inequality
\[
|a_\eps(x,\eta)|\ge s_\eps\lara{\eta}^{2N},
\]
holds for all $x\in K$, $|\eta|\ge R_\eps$ and $\eps\in(0,\eps_0]$. Proposition \ref{prop_ellip_2}$(ii)$ shows that \eqref{eq_repr} is true for some $(p_{N,\eps})_\eps\in\mM^\ssc_{S^{-2N}(\Om\times\R^n\setminus 0)}$ and $(r_\eps)_\eps\in\mM^\ssc_{S^{-\infty}(\Om\times\R^n\setminus 0)}$.
\end{proof}
\subsection*{Main theorems}
The make use of the previous propositions in proving the main theorems of this section: Theorems \ref{theo_comp} and \ref{theo_comp_ssc}.
\begin{thm}
\label{theo_comp}
Let $\omega\in\wt{\mathcal{S}}^1_{\rm{hg}}(\Om\times\R^n\setminus 0)$ have a representative satisfying condition $(h1)$. Let $a\in\wt{\mathcal{S}}^m(\Om\times\R^n)$ and $b\in\wt{\mathcal{S}}^l(\Om\times\R^n\setminus 0)$ with $\supp_x\, b\Subset\Om$. Then, the operator $a(x,D)F_\omega(b)$ has the following properties:
\begin{itemize}
\item[(i)] maps $\Gc(\Om)$ into $\G(\Om)$ and $\LL(\G(\Om),\wt{\C})$ into $\LL(\G_c(\Om),\wt{\C})$;
\item[(ii)] is of the form 
\[
\int_{\R^n}\esp^{i\omega(x,\eta)}h(x,\eta)\widehat{u}(\eta)\, \dslash\eta +r(x,D)u,
\]
where $h\in\wt{\mathcal{S}}^{\,l+m}(\Om\times\R^n\setminus 0)$ has asymptotic expansion given by the symbols $h_\alpha$ defined in \eqref{h_alpha} and $r\in\wt{\mathcal{S}}^{-\infty}(\Om\times\R^n\setminus 0)$.
\end{itemize} 
\end{thm}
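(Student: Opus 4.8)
The plan is to establish the formula in (ii) at the level of representatives, by a carefully $\eps$-tracked version of the classical composition theorem for Fourier integral operators of the form $F_\omega(b)$ (cf.\ \cite{Coriasco:98,Coriasco:99}), and then to lift it to an identity of Colombeau generalized operators by means of the moderateness and negligibility results of the preceding subsections. The mapping properties in (i) will then follow almost at once from what is already available.

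\textbf{Construction of the amplitude $h$.} First I would fix a representative $(\omega_\eps)_\eps$ of $\omega$ satisfying $(h1)$ and representatives $(a_\eps)_\eps$, $(b_\eps)_\eps$ of $a$, $b$. By Corollary \ref{coroll_a} the net $(a_\eps(x,\nabla_x\omega_\eps(x,\eta)))_\eps$ lies in $\mM_{S^m(\Om\times\R^n\setminus 0)}$, by Proposition \ref{prop_esp} each net $(D^\beta_z(\esp^{i\overline{\omega_\eps}(z,x,\eta)})|_{z=x})_\eps$ lies in $\mM_{S^{|\beta|/2}(\Om\times\R^n\setminus 0)}$, and hence, by Proposition \ref{prop_h_alpha}, the nets $(h_{\alpha,\eps})_\eps$ attached to the symbols \eqref{h_alpha} satisfy $(h_{\alpha,\eps})_\eps\in\mM_{S^{\,l+m-|\alpha|/2}(\Om\times\R^n\setminus 0)}$, with $l+m-|\alpha|/2\searrow-\infty$. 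Theorem \ref{theo_asymp_expan}(i) then produces a net $(h_\eps)_\eps\in\mM_{S^{\,l+m}(\Om\times\R^n\setminus 0)}$ with $(h_\eps)_\eps\sim\sum_\alpha(h_{\alpha,\eps})_\eps$, and I set $h:=[(h_\eps)_\eps]$. By Proposition \ref{prop_a_eps}(iv) and the negligibility part of Proposition \ref{prop_esp}, replacing the chosen representatives by equivalent ones alters each $(h_{\alpha,\eps})_\eps$ — hence, by the uniqueness clause of Theorem \ref{theo_asymp_expan}(i), also $(h_\eps)_\eps$ — by an element of $\Neg_{S^{-\infty}(\Om\times\R^n\setminus 0)}$, so $h\in\wt{\mathcal{S}}^{\,l+m}(\Om\times\R^n\setminus 0)$ is well defined and has the asymptotic expansion $\sum_\alpha h_\alpha$ in the sense of Definition \ref{def_asymp_gen}(i).

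\textbf{The composition computed on representatives.} For $u\in\Gc(\Om)$ with representative $(u_\eps)_\eps$ supported, for every $\eps$, in one fixed compact subset of $\Om$, I would start from
\[
a_\eps(x,D)F_{\omega_\eps}(b_\eps)u_\eps(x)=\iint \esp^{\,i((x-z)\xi+\omega_\eps(z,\eta))}a_\eps(x,\xi)b_\eps(z,\eta)\widehat{u_\eps}(\eta)\;\dslash\xi\,dz\,\dslash\eta,
\]
insert the identity $\esp^{i\omega_\eps(z,\eta)}=(p_{N,\eps}(z,\eta)\Delta_z^N+r_\eps(z,\eta))\esp^{i\omega_\eps(z,\eta)}$ of Proposition \ref{prop_id_esp_1} with $N$ large to render the $(z,\xi)$-integration absolutely convergent, translate $\xi\rightsquigarrow\xi+\nabla_x\omega_\eps(x,\eta)$, Taylor-expand $a_\eps(x,\,\cdot+\nabla_x\omega_\eps(x,\eta))$ about the origin up to order $N$, and integrate by parts in $z$ — Lemmas \ref{lemma_sandro_1} and \ref{lemma_sandro_2} being used to identify the coefficients that arise. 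Collecting terms gives
\[
a_\eps(x,D)F_{\omega_\eps}(b_\eps)u_\eps=\int \esp^{\,i\omega_\eps(x,\eta)}\tilde h_\eps(x,\eta)\widehat{u_\eps}(\eta)\;\dslash\eta+\Psi_{N,\eps}u_\eps,
\]
with $\tilde h_\eps=\sum_{|\alpha|<N}h_{\alpha,\eps}+\rho_{N,\eps}$ ($\rho_{N,\eps}$ the oscillatory-integral remainder of order $l+m-N/2$) and $\Psi_{N,\eps}$ a regularizing operator whose symbol, by Proposition \ref{prop_id_esp_1} and the symbol calculus together with $\supp_x b\Subset\Om$, lies in $\mM_{S^{-\infty}(\Om\times\R^n\setminus 0)}$. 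Propositions \ref{prop_a_eps}, \ref{prop_esp}, \ref{prop_id_esp_1} and the strictly nonzero lower bound on $|\nabla_x\omega_\eps(x,\eta/|\eta|)|$ from $(h1)$ ensure that all the bounds produced in this computation are $\eps$-moderate; in particular $(\tilde h_\eps)_\eps$ meets, for every $K\Subset\Om$ and all multi-indices, the hypotheses \eqref{est_1_Sh}, \eqref{est_2_Sh} of Proposition \ref{prop_asym_Shubin} with constants in $\EM$. Hence $(\tilde h_\eps)_\eps\sim\sum_\alpha(h_{\alpha,\eps})_\eps$, and by Theorem \ref{theo_asymp_expan}(i) again $(\tilde h_\eps-h_\eps)_\eps\in\mM_{S^{-\infty}(\Om\times\R^n\setminus 0)}$. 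Since $F_{\omega_\eps}(g_\eps)=s_\eps(x,D)$ with $s_\eps(x,\eta):=\esp^{i(\omega_\eps(x,\eta)-x\eta)}g_\eps(x,\eta)$, and $(s_\eps)_\eps\in\mM_{S^{-\infty}(\Om\times\R^n\setminus 0)}$ whenever $(g_\eps)_\eps$ is of order $-\infty$ with compact $x$-support, I can absorb $\Psi_{N,\eps}$ together with $F_{\omega_\eps}(\tilde h_\eps-h_\eps)$ into a single net $r_\eps(x,D)$ with $(r_\eps)_\eps\in\mM_{S^{-\infty}(\Om\times\R^n\setminus 0)}$; the class $r\in\wt{\mathcal{S}}^{-\infty}(\Om\times\R^n\setminus 0)$ is well defined by the negligibility analogues of the above. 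This establishes (ii) on $\Gc(\Om)$. For (i): $\supp_x b\Subset\Om$ and Proposition \ref{prop_F_map}(iv) give $F_\omega(b):\Gc(\Om)\to\Gc(\Om)$, and since $(h1)$ is precisely the hypothesis of Proposition \ref{prop_F_map}(ii), $F_\omega(b)$ extends continuously to $\LL(\G(\Om),\wt{\C})\to\LL(\G(\Om),\wt{\C})$; composition with $a(x,D):\Gc(\Om)\to\G(\Om)$, $\LL(\G(\Om),\wt{\C})\to\LL(\Gc(\Om),\wt{\C})$, yields the claimed mappings. The right-hand side of (ii) has the same mapping behaviour ($h$ and $r$ inheriting compact $x$-support from $b$), so the identity passes to $\LL(\G(\Om),\wt{\C})$ by transposition, all operators involved having continuous transposes.

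\textbf{Main obstacle.} The classical content poses no real difficulty; the delicate point is the quantitative control of the $\eps$-dependence. Everything rests on the bound $|\nabla_x\omega_\eps(x,\eta/|\eta|)|\ge c\,\eps^{r}$ supplied by $(h1)$: it is what makes Proposition \ref{prop_id_esp_1} available with a \emph{moderate} parametrix $p_{N,\eps}$, hence what permits the integrations by parts that render the double oscillatory integral convergent with $\eps$-moderate constants, and it is also what feeds Proposition \ref{prop_a_eps} so that $a_\eps(x,\nabla_x\omega_\eps)$ remains a moderate symbol. The second, more structural, subtlety is that Theorem \ref{theo_asymp_expan} only guarantees the existence of \emph{some} net realizing the series $\sum_\alpha(h_{\alpha,\eps})_\eps$, so one genuinely has to verify the hypotheses of Proposition \ref{prop_asym_Shubin} for the amplitude $\tilde h_\eps$ actually coming out of the composition in order to identify it with $h_\eps$ modulo $\mM_{S^{-\infty}}$ — this is the step that upgrades the ``$\sim$'' of the classical symbol calculus to an equality of generalized operators.
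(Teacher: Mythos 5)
Your overall strategy coincides with the paper's: Taylor expansion of $a_\eps$ at $\theta=\nabla_x\omega_\eps(x,\eta)$, the parametrix identity of Proposition \ref{prop_id_esp_1} to control convergence, identification of the amplitude actually produced by the composition with the asymptotic sum $\sum_\alpha h_{\alpha,\eps}$ via Proposition \ref{prop_asym_Shubin}, and Proposition \ref{prop_F_map} for the mapping properties. The construction and well-definedness of $h$, the role of Theorem \ref{theo_asymp_expan}, and the treatment of part (i) are exactly as in the paper.

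The gap is in the step ``Taylor-expand \dots and integrate by parts in $z$ \dots Collecting terms gives \dots''. As written, the Taylor remainder
\[
r_{\alpha,\eps}(x,\eta,\theta)=\int_0^1(1-t)^{k-1}\partial^\alpha_\xi a_\eps\big(x,\nabla_x\omega_\eps(x,\eta)-t\theta\big)\,dt
\]
is integrated over \emph{all} $\theta\in\R^n$ against $\esp^{-i(x-z)\theta}D^\alpha_z\big(\esp^{i\overline{\omega}_\eps(z,x,\eta)}b_\eps(z,\eta)\big)$; the symbol estimate for $a_\eps$ gives no decay in $\eta$ on the region where $t\theta$ is comparable to $\nabla_x\omega_\eps(x,\eta)$, and the $z$-derivatives of $\esp^{i\overline{\omega}_\eps(z,x,\eta)}$ grow like powers of $|\eta|\,|x-z|$ (Lemma \ref{lemma_sandro_2}), so no moderate bound of order $l+m-|\alpha|/2$ falls out without further localization. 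The paper's proof spends Steps 1, 2, 4 and 5 on precisely this point: an $\eps$-dependent cutoff $\varphi((x-z)/\sigma_\eps)$ splits $b_\eps$ into a near-diagonal part $b'_\eps$ and an off-diagonal part $b''_\eps$ (the latter handled by $|x-z|^{-2k}(-\Delta_\theta)^k$ together with Proposition \ref{prop_id_esp_1}), and a second $\eps$-dependent cutoff $\chi_\eps(\theta/|\eta|)$ splits the remainder into $R^1_{\alpha,\eps}$, supported where $|\theta|\le 2\tau_\eps|\eta|$ so that \eqref{est_tau} keeps $\nabla_x\omega_\eps(x,\eta)-t\theta$ comparable to $|\eta|$, and $R^2_{\alpha,\eps}$, supported where $|\theta|\ge\tau_\eps|\eta|$, where non-stationary phase in $z$ applies because $\sigma_\eps$ is chosen so small that $\eps^{-M}\sigma_\eps\le\tau_\eps/2$. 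The interlocking choice of the scales $\sigma_\eps$ and $\tau_\eps$ against the loss $\eps^{-M}$ from moderateness and the lower bound $\eps^{r}$ from $(h1)$ is the genuinely new, $\eps$-specific ingredient of the argument; your sketch names the right tools but omits this mechanism, without which the moderate estimates for the remainder do not close.
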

\begin{proof}
From Proposition \ref{prop_F_map}$(iv)$ is clear that $F_\om(b)$ maps $\Gc(\Om)$ and $\LL(\G(\Om),\wt{\C})$ into themselves respectively. We obtain $(i)$ combining this results with the usual mapping properties of a generalized pseudodifferential operator. We now have to investigate the composition 
\begin{multline*}
a(x,D)F_\om(b)u(x)=\int_{\Om\times\R^n}\esp^{i(x-z)\theta}a(x,\theta){F_\om(b)u}(z)\, dz\,\dslash\theta\\
= \int_{\Om\times\R^n}\esp^{i(x-z)\theta}a(x,\theta)\biggl(\int_{\R^n}\esp^{i\omega(z,\eta)}b(z,\eta)\widehat{u}(\eta)\, \dslash\eta\biggr)\, dz\,\dslash\theta\\
= \int_{\R^n}\int_{\Om\times\R^n}\esp^{i((x-z)\theta+\omega(z,\eta))}a(x,\theta)b(z,\eta)\, dz\, \dslash\theta\, \widehat{u}(\eta)\, \dslash\eta,
\end{multline*}
for $u\in\Gc(\Om)$. The last integral in $dz$ and $\dslash\theta$ is regarded as the oscillatory integral
\begin{equation}
\label{int_1_sum}
\int_{\Om\times\R^n}\esp^{i(x-z)\theta}a(x,\theta)b(z,\eta)\esp^{i\omega(z,\eta)}\, dz\, \dslash\theta,
\end{equation}
with $b(z,\eta)\esp^{i\omega(z,\eta)}\in\Gc(\Om_z)$.

In the sequel we will work at the level of representatives and we will follow the proof of Theorem 4.1.1 in \cite{MR:97}.\\
\bf{Step 1.}\rm\, Let$(\sigma_\eps)_\eps$ such that $\sigma_\eps\ge c\eps^s$ for some $c,s>0$ and for all $\eps\in(0,1]$. We take $\varphi\in\Cinf(\R^n)$ such that $\varphi(y)=1$ for $|y|\le 1/2$ and $\varphi(y)=0$ for $|y|\ge 1$ and we set
\[
b_\eps(z,\eta)= b'_\eps(z,x,\eta)+b''_\eps(z,x,\eta)= \varphi\big(\frac{x-z}{\sigma_\eps}\big)b_\eps(z,\eta)+\big(1-\varphi\big(\frac{x-z}{\sigma_\eps}\big)\big)b_\eps(z,\eta).
\]
We now write the integral in $dz$ and $\dslash\theta$ of \eqref{int_1_sum} as
\begin{multline*}
\int_{\Om\times\R^n}\esp^{i((x-z)\theta+\omega_\eps(z,\eta))}a_\eps(x,\theta)b'_\eps(z,x,\eta)\, dz\, \dslash\theta\\ +\int_{\Om\times\R^n}\esp^{i((x-z)\theta+\omega_\eps(z,\eta))}a_\eps(x,\theta)b''_\eps(z,x,\eta)\, dz\, \dslash\theta
:= I_{1,\eps}(x,\eta)+I_{2,\eps}(x,\eta)
\end{multline*}
and we begin to investigate the properties of $(I_{2,\eps})_\eps$. Proposition \ref{prop_id_esp_1} provides the identity 
\[
\esp^{i\omega_\eps(z,\eta)}=\biggl(p_{N,\eps}(z,\eta)\Delta_z^N+r_\eps(z,\eta)\biggr)\esp^{i\omega_\eps(z,\eta)},
\]
where $(p_{N,\eps})_\eps\in \mM_{S^{-2N}(\Om\times\R^n\setminus 0)}$ and $(r_\eps)_\eps\in\mM_{S^{-\infty}(\Om\times\R^n\setminus 0)}$, and allows us to write $(I_{2,\eps})_\eps$ as
\begin{multline*}
\int_{\Om\times\R^n}\esp^{i\omega_\eps(z,\eta)}\Delta_z^N\biggl(\esp^{i(x-z)\theta}p_{N,\eps}(z,\eta)b''_\eps(z,x,\eta)\biggr)a_\eps(x,\theta)\, dz\, \dslash\theta\\
+\int_{\Om\times\R^n}\esp^{i(x-z)\theta}a_\eps(x,\theta)b''_\eps(z,x,\eta)r_\eps(z,\eta)\esp^{i\omega_\eps(z,\eta)}\, dz\, \dslash\theta
:= I^1_{2,\eps}(x,\eta)+I^2_{2,\eps}(x,\eta)
\end{multline*}
The net $(I^2_{2,\eps})_\eps\in\mM_{S^{-\infty}}(\Om\times\R^n\setminus 0)$. Indeed, $I^2_{2,\eps}(x,\eta)=\int_{\R^n}\int_{\R^n}g_{\eps}(x,\eta,z,\theta)\, dz\, \dslash\theta$, where 
\[
g_\eps(x,\eta,z,\theta)=\esp^{i(x-z)\theta}a_\eps(x,\theta)b''_\eps(z,x,\eta)r_\eps(z,\eta)\esp^{i\omega_\eps(z,\eta)}
\]
and the following holds: for all $K\Subset\Om$, for all $\alpha\in\N^n$ and $d>0$ exist $N\in\N$ and $\eps_0\in(0,1]$ such that 
\[
\biggl|(i\theta)^\alpha\int_{\R^n}g_{\eps}(x,\eta,z,\theta)\, dz\biggr|\le \eps^{-N}\lara{\theta}^m\lara{\eta}^{-d}.
\]
This is due to the fact that $\supp_z b_\eps\subseteq K_b\Subset\Om$ for all $\eps$ and $(r_\eps)_\eps\in\mM_{S^{-\infty}(\Om\times\R^n\setminus 0)}$.\\
\bf{Step 2.}\rm\, By construction $b''_\eps(z,x,\eta)=0$ if $|x-z|\le\sigma_\eps/2$ for all $\eps\in(0,1]$. By making use of the identity
\[
\esp^{i(x-z)\theta}=|x-z|^{-2k}(-\Delta_\theta)^k\biggl(\esp^{i(x-z)\theta}\biggr)
\]
we have
\begin{multline*}
I^1_{2,\eps}(x,\eta)\\
= \int_{\Om\times\R^n}\hskip-15pt\esp^{i\omega_\eps(z,\eta)}\Delta_z^N\biggl(|x-z|^{-2k}(-\Delta_\theta)^k\biggl(\esp^{i(x-z)\theta}\biggr)p_{N,\eps}(z,\eta)b''_\eps(z,x,\eta)\biggr)a_\eps(x,\theta)\, dz\, \dslash\theta\\
=
\int_{\Om\times\R^n}\hskip-10pt\esp^{i\omega_\eps(z,\eta)}(-\Delta_\theta)^k a_\eps(x,\theta)\Delta_z^N\biggl(\esp^{i(x-z)\theta}|x-z|^{-2k}p_{N,\eps}(z,\eta)b''_\eps(z,x,\eta)\biggr)\, dz\, \dslash\theta.
\end{multline*}
It follows that for $x\in K\Subset\Om$ and $\eps$ small enough
\begin{multline*}
|I^1_{2,\eps}(x,\eta)|\le c\eps^{-N_a-N_p-N_b}(\sigma_\eps)^{-2k}\sum_{|\gamma|\le 2N}c_\gamma\sigma_\eps^{-|\gamma|}\int_{\R^n}\lara{\theta}^{m-2k+2N}\, d\theta\,\lara{\eta}^{-2N+l} \\
\le \eps^{-N'}\int_{\R^n}\lara{\theta}^{m-2k+2N}\, d\theta\,\lara{\eta}^{-2N+l}.
\end{multline*}
Hence, given $d>0$ and taking $N,k$ such that $-2N+l<-d$ and $m-2k+2N<-n$ we obtain that $(I^1_{2,\eps})_\eps$ is a moderate net of symbols of order $-\infty$ on $\Om\times\R^n\setminus 0$. Summarizing,
\[
\int_{\Om\times\R^n}\esp^{i((x-z)\theta+\omega_\eps(z,\eta))}a_\eps(x,\theta)b_\eps(z,\eta)\, dz\, \dslash\theta =I_{1,\eps}(x,\eta)+I^1_{2,\eps}(x,\eta)+I^2_{2,\eps}(x,\eta),
\]
where $(I^1_{2,\eps})_\eps$ and $(I^2_{2,\eps})_\eps$ belong to $\mM_{S^{-\infty}(\Om\times\R^n\setminus 0)}$.\\
\bf{Step 3.}\rm\, It remains to study 
\[
I_{1,\eps}(x,\eta)=\int_{\Om\times\R^n}\esp^{i((x-z)\theta+\omega_\eps(z,\eta))}a_\eps(x,\theta)b'_\eps(z,x,\eta)\, dz\, \dslash\theta.
\]
We expand $a_\eps(x,\theta)$ with respect to $\theta$ at $\theta=\nabla_x\omega_\eps(x,\eta)$ and we observe that 
\[
(\theta-\nabla_x\omega_\eps(x,\eta))^\alpha \esp^{i(x-z)(\theta-\nabla_x\omega_\eps(x,\eta))}=(-1)^{|\alpha|}D^\alpha_z\esp^{i(x-z)(\theta-\nabla_x\omega_\eps(x,\eta))}.
\]
By integrating by parts we obtain 
\[
\begin{split}
\esp^{-i\omega_\eps(x,\eta)}&I_{1,\eps}(x,\eta)\\
&=\int_{\Om\times\R^n}\hskip-10pt\esp^{i\overline{\omega}_\eps(z,x,\eta)}\esp^{i(x-z)(\theta-\nabla_x\omega_\eps(x,\eta))}b'_\eps(z,x,\eta)a_\eps(x,\theta)\, dz\, \dslash\theta\\
&=\sum_{|\alpha|< k}\frac{1}{\alpha!}\partial^\alpha_\xi a_\eps(x,\nabla_x\omega_\eps(x,\eta))\cdot\\
&\cdot\int_{\Om\times\R^n} D^\alpha_z\biggl(\esp^{i\overline{\omega}_\eps(z,x,\eta)}b'_\eps(z,x,\eta)\biggr)\esp^{i(x-z)(\theta-\nabla_x\omega_\eps(x,\eta))}\, dz\, \dslash\theta\\
&+\sum_{|\alpha|=k}\frac{k}{\alpha!}\int_{\Om\times\R^n}D^\alpha_z\biggl(\esp^{i\overline{\omega}_\eps(z,x,\eta)}b'_\eps(z,x,\eta)\biggr)\esp^{-i(x-z)\theta}r_{\alpha,\eps}(x,\eta,\theta)\, dz\, \dslash\theta,
\end{split}
\]
where $\overline{\omega}_\eps(z,x,\eta):=\omega_\eps(z,\eta)-\omega_\eps(x,\eta)-(\nabla_x\omega_\eps(x,\eta))(z-x)$ and
\[
r_{\alpha,\eps}(x,\eta,\theta)=\int_{0}^1(1-t)^{k-1}\partial^\alpha_\xi a_\eps(x,\nabla_x\omega_\eps(x,\eta)-t\theta)\, dt.
\]
Since, $b'_\eps(z,x,\eta)=b_\eps(x,\eta)$ if $|x-z|\le\frac{\sigma_\eps}{2}$ we have that 
\begin{multline*}
\int_{\Om\times\R^n}D^\alpha_z\biggl(\esp^{i\overline{\omega}_\eps(z,x,\eta)}b'_\eps(z,x,\eta)\biggr)\esp^{i(x-z)(\theta-\nabla_x\omega_\eps(x,\eta))}\, dz\, \dslash\theta\\ = \int_{\Om\times\R^n}D^\alpha_z\biggl(\esp^{i\overline{\omega}_\eps(z,x,\eta)}b'_\eps(z,x,\eta)\biggr)\esp^{i(x-z)\theta}\, dz\, \dslash\theta\\
=D^\alpha_z\biggl(\esp^{i\overline{\omega}_\eps(z,x,\eta)}b_\eps(z,\eta)\biggr)|_{x=z}
\end{multline*}
This means that 
\[
\esp^{-i\omega_\eps(x,\eta)}I_{1,\eps}(x,\eta)=\sum_{|\alpha|< k}h_{\alpha,\eps}(x,\eta)+\sum_{|\alpha|=k}\frac{k}{\alpha!}R_{\alpha,\eps}(x,\eta),
\]
where $(h_{\alpha,\eps})_\eps$ is defined in \eqref{h_alpha} and 
\[
R_{\alpha,\eps}(x,\eta):=\int_{\Om\times\R^n}D^\alpha_z\biggl(\esp^{i\overline{\omega}_\eps(z,x,\eta)}b'_\eps(z,x,\eta)\biggr)\esp^{-i(x-z)\theta}r_{\alpha,\eps}(x,\eta,\theta)\, dz\, \dslash\theta.
\]
\bf{Step 4.}\rm\, Our next task is to prove moderate symbol estimates for the net $(R_{\alpha,\eps})_\eps$. Let $\chi\in\Cinfc(\R^n)$ such that $\chi(\theta)=1$ for $|\theta|\le 1$ and $\chi(\theta)=0$ for $|\theta|\ge 3/2$. Let us take a positive net $(\tau_\eps)_\eps$ such that $\tau_\eps\ge c\eps^r$ for some $c>0$ and $r>0$. We define the sets
\[
W^1_{\tau_\eps,\eta}=\{\theta\in\R^n:\, |\theta|<\tau_\eps|\eta|\},\qquad W^2_{\tau_\eps,\eta}=\R^n\setminus W^1_{\tau_\eps,\eta}.
\]
Set now $\chi_\eps(\theta):=\chi(\theta/\tau_\eps)$. By construction we have that $\chi_\eps(\theta/|\eta|)=1$ on $W^1_{\tau_\eps,\eta}$, $\supp\, \chi_\eps(\cdot/|\eta|)\subseteq W^1_{2\tau_\eps,\eta}$ and $\supp(1-\chi_\eps(\cdot/|\eta|))\subseteq W^2_{\tau_\eps,\eta}$. We write $R_{\alpha,\eps}(x,\eta)$ as
\begin{multline*}
\int_{\Om\times\R^n}D^\alpha_z\biggl(\esp^{i\overline{\omega}_\eps(z,x,\eta)}b'_\eps(z,x,\eta)\biggr)\esp^{-i(x-z)\theta}r_{\alpha,\eps}(x,\eta,\theta)\chi_\eps(\theta/|\eta|)\, dz\, \dslash\theta\\
+ \int_{\Om\times\R^n}\hskip-10pt D^\alpha_z\biggl(\esp^{i\overline{\omega}_\eps(z,x,\eta)}b'_\eps(z,x,\eta)\biggr)\esp^{-i(x-z)\theta}r_{\alpha,\eps}(x,\eta,\theta)(1-\chi_\eps(\theta/|\eta|))\, dz\, \dslash\theta\\
:= R_{\alpha,\eps}^1(x,\eta)+R^2_{\alpha,\eps}(x,\eta).
\end{multline*}
We begin by estimating the net $R^1_{\alpha,\eps}$. We make use of the identity
\[
\esp^{-i(x-z)\theta}=(1+|\eta|^2|x-z|^2)^{-N}(1-|\eta|^2\Delta_\theta)^N\esp^{-i(x-z)\theta}
\]
which yields
\begin{multline*}
R_{\alpha,\eps}^1(x,\eta)=\int_{\Om\times\R^n}D^\alpha_z\biggl(\esp^{i\overline{\omega}_\eps(z,x,\eta)}b'_\eps(z,x,\eta)\biggr)\esp^{-i(x-z)\theta}\cdot\\
\cdot(1+|\eta|^2|x-z|^2)^{-N}(1-|\eta|^2\Delta_\theta)^N\big(r_{\alpha,\eps}(x,\eta,\theta)\chi_\eps(\theta/|\eta|)\big)\, dz\, \dslash\theta. 
\end{multline*}
By the moderateness of the net $(\omega_\eps)$ and Taylor's formula we have the inequality
\begin{equation}
\label{form_ruly}
|\nabla_x\omega_\eps(x,\eta)-\nabla_z\omega_\eps(z,\eta)|\le c\eps^{-M}|{\eta}|^1|x-z|,
\end{equation}
valid for $z\in K_b$, $|x-z|\le\sigma_\eps$ and $\sigma_\eps$ small enough such that $\cup_{\eps\in(0,1]}\{z+\lambda(x-z):\, z\in K_b,\, |x-z|\le\sigma_\eps,\, \lambda\in[0,1]\}\subseteq K'\Subset\Om$. Clearly $M$ depends on the compact set $K'$. By Lemma \ref{lemma_sandro_2} we have that for all $x$ and $z$ as above, $|\eta|\ge 1$ and $\eps\in(0,\eps_0]$, the estimate
\[
\biggl|D^\beta_z\esp^{i\overline{\omega}_\eps(z,x,\eta)}\biggr|
%\eps^{-M|\beta|}\lara{\eta}^{|\beta|}|x-z|^{|\beta|}+\eps^{-M_1}\lara{\eta}^{\frac{|\beta|}{2}}\sum_{j_1}\lara{\eta}^{	\frac{|\theta_{j_1}|}{2}}|x-z|^{|\theta_{j_1}|}\\
%+\eps^{-M_2}\lara{\eta}^{\frac{|\beta|}{2}}\\
%\le \eps^{-M'}\lara{\eta}^{\frac{|\beta|}{2}}\biggl(\lara{\eta}^{\frac{|\beta|}{2}}|x-z|^{|\beta|}+\sum_{j_1}\lara{\eta}^{\frac{|\theta_{j_1}|}{2}}|x-z|^{|\theta_{j_1}|}+1\biggr)\\
%\le c\,\eps^{-M'}\lara{\eta}^{\frac{|\beta|}{2}}\biggl(|{\eta}|^{\frac{|\beta|}{2}}|x-z|^{|\beta|}+\sum_{j_1}|{\eta}|^{\frac{|\theta_{j_1}|}{2}}|x-z|^{|\theta_{j_1}|}+1\biggr)\\
%\le c\,\eps^{-M'}\lara{\eta}^{\frac{|\beta|}{2}}\prod_{\theta_{j_1}}(1+|\eta|^{|\theta_{j_1}|}|x-z|^{|\theta_{j_1}|})(1+|\eta|^{|\beta|}|x-z|^{|\beta|})\\
%\le c\,\eps^{-M'}\lara{\eta}^{\frac{|\beta|}{2}}\prod_{\theta_{j_1}}(1+|\eta||x-z|)^{|\theta_{j_1}|}(1+|\eta||x-z|)^{|\beta|}\\
\le c'\eps^{-M'}\lara{\eta}^{\frac{|\beta|}{2}}(1+|\eta|^2|x-z|^2)^{L_\beta}
\]
holds for some $L_\beta\in\N$ and $M'\in\N$. Hence, recalling that $\sigma_\eps\ge c\eps^s$ for some $c,s>0$, we are led from the previous considerations to 
\begin{equation}
\label{ineq_D_alpha}
\biggl|D^\alpha_z\biggl(\esp^{i\overline{\omega}_\eps(z,x,\eta)}b'_\eps(z,x,\eta)\biggr)\biggr|\le C\eps^{-N'}\lara{\eta}^{l+\frac{|\alpha|}{2}}(1+|\eta|^2|x-z|^2)^{L_\alpha},
\end{equation}
valid for $|\eta|\ge 1$, $\eps$ small enough and $N'$ depending on $K_b$, $\alpha$ and the bound $c\eps^s$ of $\sigma_\eps$. Before considering $(1-|\eta|^2\Delta_\theta)^N\big(r_{\alpha,\eps}(x,\eta,\theta)\chi_\eps(\theta/|\eta|)$ it is useful to investigate the quantity $|\nabla_x\omega_\eps(x,\eta)-t\theta|$ for $x\in K\Subset\Om$ and $\theta\in W^1_{2\tau_\eps,\eta}$. We recall that there exists $r>0$, $c_0,c_1>0$ and $\eps_0\in(0,1]$ such that 
\[
c_0\eps^r|\eta|\le |\nabla_x\omega_\eps(x,\eta)|\le c_1\eps^{-r}|\eta|,
\]
for all $x\in K$, $\eta\neq 0$ and $\eps\in(0,\eps_0]$. Since, if $\theta\in W^1_{2\tau_\eps,\eta}$ then $|\theta|\le 2\tau_\eps|\eta|$, we obtain, for all $x\in K$, $\theta\in W^1_{2\tau_\eps,\eta}$ and $t\in[0,1]$, the following estimates:
\begin{multline*}
|\nabla_x\omega_\eps(x,\eta)-t\theta|\le |\nabla_x\omega_\eps(x,\eta)|+|\theta|\le (1+2\tau_\eps c_0^{-1}\eps^{-r})|\nabla_x\omega_\eps(x,\eta)|\\
|\nabla_x\omega_\eps(x,\eta)-t\theta|\ge (1-2\tau_\eps c_0^{-1}\eps^{-r})|\nabla_x\omega_\eps(x,\eta)|.
\end{multline*}
It follows that assuming $\tau_\eps\le\frac{\eps^r}{4c_0^{-1}}$ the inequality
\begin{equation}
\label{est_tau}
\frac{c_0}{2}\eps^r|\eta|\le \frac{1}{2}|\nabla_x\omega_\eps(x,\eta)|\le |\nabla_x\omega_\eps(x,\eta)-t\theta|\le \frac{3}{2}|\nabla_x\omega_\eps(x,\eta)|\le c_1\frac{3}{2}\eps^{-r}|\eta|
\end{equation}
holds for $x\in K$, $\eta\neq 0$, $\theta\in W^1_{2\tau_\eps,\eta}$, $t\in[0,1]$ and $\eps$ small enough. We make use of \eqref{est_tau} in estimating $(1-|\eta|^2\Delta_\theta)^N\big(r_{\alpha,\eps}(x,\eta,\theta)\chi_\eps(\theta/|\eta|)\big)$ and we conclude that for all $N\in\N$ there exists $N''$ such that 
\begin{equation}
\label{est_Delta_N}
|(1-|\eta|^2\Delta_\theta)^N\big(r_{\alpha,\eps}(x,\eta,\theta)\chi_\eps(\theta/|\eta|)\big)|\le \eps^{-N''}\lara{\eta}^{m-|\alpha|},
\end{equation}
for all $x\in K$, $\eta\neq 0$, $\theta\in W^1_{2\tau_\eps,\eta}$ and $\eps\in(0,\eps_0]$. A combination of \eqref{ineq_D_alpha} with \eqref{est_Delta_N} entails
\[
|R^1_{\alpha,\eps}(x,\eta)|\le \eps^{-N'-N''}\lara{\eta}^{m+l-\frac{|\alpha|}{2}}\int_{W^1_{2\tau_\eps,\eta}}d\theta \int_{\R^n}(1+|\eta|^2|y|^2)^{L_\alpha-N}\, dy.
\]
Therefore, choosing $N\ge L_\alpha+\frac{n+1}{2}$ we obtain
\begin{multline*}
|R^1_{\alpha,\eps}(x,\eta)|\le c\eps^{-N'-N''}(\tau_\eps)^n\lara{\eta}^{m+l-\frac{|\alpha|}{2}}|\eta|^n\int_{\R^n}\lara{z}^{-n-1}\, dz |\eta|^{-n}\\
\le c_1\eps^{-N_1}\lara{\eta}^{m+l-\frac{|\alpha|}{2}},
\end{multline*}
for $x\in K$ and $|\eta|\ge 1$.\\
The case $|\eta|\le 1$ requires less precise estimates. More precisely, it is enough to see that from Lemma \ref{lemma_sandro_2} we have that for all $\alpha$ there exists some $d\in\R$ such that 
\[
\biggl|D^\alpha_z\biggl(\esp^{i\overline{\omega}_\eps(z,x,\eta)}b'_\eps(z,x,\eta)\biggr)\biggr|\le C\eps^{-M'}\lara{\eta}^{d}
\]
for all $x\in K$, $z\in K_b$ and $|x-z|\le \sigma_\eps$. Thus, 
\[
|R^1_{\alpha,\eps}(x,\eta)|\le c\eps^{N_2}\lara{\eta}^{d-l-\frac{|\alpha|}{2}}\lara{\eta}^{m+l-\frac{|\alpha|}{2}}\le c_2\eps^{N_2}\lara{\eta}^{m+l-\frac{|\alpha|}{2}},
\]
when $|\eta|\le 1$. In conclusion, there exists $(C_{K,\eps})_\eps\in\EM$ such that
\[
|R^1_{\alpha,\eps}(x,\eta)|\le C_{K,\eps}\lara{\eta}^{m+l-\frac{|\alpha|}{2}}
\]
for all $x\in K$, $\eta\in\R^n\setminus 0$ and $\eps\in(0,1]$.\\
\bf{Step 5.}\rm\, Finally, we consider $R^2_{\alpha,\eps}(x,\eta)$. By Lemma \ref{lemma_sandro_2} we can write 
\[
D^\alpha_z\biggl(\esp^{i\overline{\omega}_\eps(z,x,\eta)}b'_\eps(z,x,\eta)\biggr)
\]
as the finite sum
\[
\esp^{i\overline{\omega}_\eps(z,x,\eta)}\sum_{\beta}b_{\alpha,\beta,\eps}(z,x,\eta),
\]
where, by making use of the hypotheses on $b'_{\eps}$ and $\sigma_\eps$, the following holds:
\begin{multline*}
\forall \beta\in\N^n\, \exists m_\beta\in\R\, \forall \gamma\in\N^n\, \exists (\mu_{\beta,\gamma,\eps})_\eps\in\EM\, \forall x\in\Om\, \forall z\in\Om\, \forall \eta\in\R^n\setminus 0\, \forall\eps\in(0,1]\\
|\partial^\gamma_z b_{\alpha,\beta,\eps}(z,x,\eta)|\le \mu_{\beta,\gamma,\eps}\lara{\eta}^{m_\beta},
\end{multline*}
with $b_{\alpha,\beta,\eps}(z,x,\eta)=0$ for $|x-z|\ge \sigma_\eps$.
Hence, we have
\begin{multline*}
R^2_{\alpha,\eps}(x,\eta)\\
=\sum_\beta \int_{\Om\times\R^n}\esp^{i\overline{\omega}_\eps(z,x,\eta)}b_{\alpha,\beta,\eps}(z,x,\eta)\esp^{-i(x-z)\theta}r_{\alpha,\eps}(x,\eta,\theta)(1-\chi_\eps(\theta/|\eta|))\, dz\, \dslash\theta\\
=\sum_\beta \int_{\R^n}\esp^{-ix\theta}r_{\alpha,\eps}(x,\eta,\theta)(1-\chi_\eps(\theta/|\eta|))\int_\Om \esp^{i\rho_\eps(z,x,\eta,\theta)}b_{\alpha,\beta,\eps}(z,x,\eta)\, dz\, \dslash\theta,
\end{multline*}
where $\rho_\eps(z,x,\eta,\theta)=\overline{\omega}_\eps(z,x,\eta)+z\theta$. Since $\chi_\eps(\theta/|\eta|)=1$ for $\theta\in W^1_{\tau_\eps,\eta}$, we may limit ourselves to consider $\theta\in W^2_{\tau_\eps,\eta}$, i.e., $|\theta|\ge\tau_\eps|\eta|$. We investigate now the properties of the net $(\rho_\eps)_\eps$. We have
\[
\nabla_z\rho_\eps(z,x,\eta,\theta)=\theta+\nabla_z\omega_\eps(z,\eta)-\nabla_x\omega_\eps(x,\eta),
\]
and therefore \eqref{form_ruly} yields
\[
|\theta+\nabla_z\omega_\eps(z,\eta)-\nabla_x\omega_\eps(x,\eta)|\le |\theta|+\eps^{-M}\sigma_\eps|\eta|\le |\theta|(1+\eps^{-M}\sigma_\eps\tau_\eps^{-1}) 
\]
for $\theta\in W^2_{\tau_\eps,\eta}$, $|x-z|<\sigma_\eps$, $z\in K_b$ and $\eps$ small enough. We now take $\sigma_\eps$ so small that $\eps^{-M}\sigma_\eps\le\frac{\tau_\eps}{2}$. From \eqref{form_ruly} and the previous assumptions we obtain
\[
|\theta+\nabla_z\omega_\eps(z,\eta)-\nabla_x\omega_\eps(x,\eta)|\ge |\theta|-\eps^{-M}\sigma_\eps|\eta|\ge |\theta|-\eps^{-M}\sigma_\eps\tau_\eps^{-1}|\theta|\ge \frac{1}{2}|\theta|.
\]
In other words, there exists $(\lambda_{1,\eps})_\eps\in\EM$ strictly nonzero and $(\lambda_{2,\eps})_\eps\in\EM$ such that 
\[
\lambda_{1,\eps}|\theta|\le|\theta+\nabla_z\omega_\eps(z,\eta)-\nabla_x\omega_\eps(x,\eta)|\le\lambda_{2,\eps}|\theta|,
\]
for $\theta\in W^2_{\tau_\eps,\eta}$, $|x-z|<\sigma_\eps$, $z\in K_b$ and $\eps\in(0,1]$.\\
Consider now
\[
p_{N,\eps}(z,x,\eta,\theta)=\esp^{-i\rho_\eps(z,x,\eta,\theta)}\Delta_z^N\esp^{i\rho_\eps(z,x,\eta,\theta)}.
\]
Noting that $\partial^\gamma_z \rho_\eps(z,x,\eta,\theta)=\partial^\gamma_z\omega_\eps(z,\eta)$ for $|\gamma|\ge 2$, and making use of the previous estimates on $|\nabla_z\rho_\eps(z,x,\eta,\theta)|$, one can prove by induction that 
\[
\Delta^N_z\esp^{i\rho_\eps(z,x,\eta,\theta)}=\esp^{i\rho_\eps(z,x,\eta,\theta)}\biggl((-1)^N|\nabla_z\rho_\eps(z,x,\eta,\theta)|^{2N}+s_{N,\eps}(z,x,\eta,\theta)\biggr),
\]
where $(s_{N,\eps})_\eps$ has the following property:
\begin{equation}
\label{prop_1_6}
\exists l\in [0,2N)\, \forall\gamma\in\N^n\, \exists (s'_{\gamma,N,\eps})_\eps\in\EM\qquad |\partial^\gamma_z s_{N,\eps}(z,x,\eta,\theta)|\le s'_{\gamma,N,\eps}|\theta|^l,
\end{equation}
for $|\eta|\ge 1$, $\theta\in W^2_{\tau_\eps,\eta}$, $|x-z|<\sigma_\eps$ and $z\in K_b$. It follows that
\begin{multline}
\label{prop_2_6}
|p_{N,\eps}(z,x,\eta,\theta)|\ge \frac{1}{2^{2N}}|\theta|^{2N}-s'_{0,N,\eps}|\theta|^l=|\theta|^{2N}\big(\frac{1}{2^{2N}}-s'_{0,N,\eps}|\theta|^{l-2N}\big)\\
\ge \frac{1}{2^{2N+1}}|\theta|^{2N},
\end{multline}
for $\theta\in W^2_{\tau_\eps,\eta}$, $|x-z|<\sigma_\eps$, $z\in K_b$ and $|\eta|\ge \lambda_{N,\eps}:= \tau_\eps^{-1}(2^{2N+1}s'_{0,N,\eps})^{\frac{1}{2N-l}}$. Moreover, we have that for all $\gamma\in\N^n$ there exists $(a_{\gamma,N,\eps})_\eps\in\EM$ such that
\begin{equation}
\label{prop_3_6}
|\partial^\gamma_z|\nabla_z\rho_\eps(z,x,\eta,\theta)|^{2N}|\le a_{\gamma,N,\eps}|\theta|^{2N},
\end{equation}
for $|\eta|\ge 1$, $\theta\in W^2_{\tau_\eps,\eta}$, $|x-z|<\sigma_\eps$ and $z\in K_b$. This allows us to prove by induction that 
\begin{equation}
\label{prop_4_6}
\forall\gamma\in\N^n\, \exists (b_{\gamma,N,\eps})_\eps\in\EM\, \exists (\lambda_{\gamma,N,\eps})_\eps\in\EM\,\qquad |\partial^\gamma_z p^{-1}_{N,\eps}(z,x,\eta,\theta)|\le b_{\gamma,N,\eps}|\theta|^{-2N},
\end{equation}
for $\theta\in W^2_{\tau_\eps,\eta}$, $|x-z|<\sigma_\eps$, $z\in K_b$ and $|\eta|\ge \lambda_{\gamma,N,\eps}$. The assertion \eqref{prop_4_6} is clear for $\gamma=0$ by \eqref{prop_2_6}. Assume now that \eqref{prop_4_6} holds for $|\gamma'|\le N$ and take $|\gamma|=N$. From $p_{N,\eps}^{-1}p_{N,\eps}=1$ we obtain 
\[
\partial^{\gamma}_zp^{-1}_{N,\eps}(z,x,\eta,\theta)p_{N,\eps}(z,x,\eta,\theta)=-\hskip-4pt\sum_{\gamma'<\gamma}\binom{\gamma}{\gamma'}\partial^{\gamma'}_z p^{-1}_{N,\eps}(z,x,\eta,\theta)\partial^{\gamma-\gamma'}_zp_{N,\eps}(z,x,\eta,\theta)
\] 
and therefore
\begin{multline*}
|\partial^{\gamma}_zp^{-1}_{N,\eps}(z,x,\eta,\theta)|\le \sum_{\gamma'<\gamma}b_{\gamma',N,\eps}|\theta|^{-2N}(a_{\gamma-\gamma',N,\eps}|\theta|^{2N}+s'_{\gamma-\gamma',N,\eps}|\theta|^l)|\theta|^{-2N}\\
\le b_{\gamma,N,\eps}|\theta|^{-2N},
\end{multline*}
for $\theta\in W^2_{\tau_\eps,\eta}$, $|x-z|<\sigma_\eps$, $z\in K_b$ and $|\eta|\ge\lambda_{\gamma,N,\eps}:= \max_{\gamma'<\gamma}\lambda_{\gamma',N,\eps}$. We make use of the identity
\[
\esp^{i\rho_\eps(z,x,\eta,\theta)}=\Delta^N_z\esp^{i\rho_\eps(z,x,\eta,\theta)}p^{-1}_{N,\eps}(z,x,\eta,\theta)
\]
in the integral
\[
\int_\Om \esp^{i\rho_\eps(z,x,\eta,\theta)}b_{\alpha,\beta,\eps}(z,x,\eta)\, dz.
\]
Since, $\supp_z b_{\alpha,\beta,\eps}(z,x,\eta)\subseteq K_b$, $b_{\alpha,\beta,\eps}(z,x,\eta)=0$ for $|x-z|\ge\sigma_\eps$ and $1-\chi_\eps(\theta/|\eta|)=0$ for $\theta\not\in W^2_{\tau_\eps,\eta}$, we can write
\begin{multline*}
\int_{\R^n}\esp^{-ix\theta}r_{\alpha,\eps}(x,\eta,\theta)(1-\chi_\eps(\theta/|\eta|))\int_\Om \esp^{i\rho_\eps(z,x,\eta,\theta)}b_{\alpha,\beta,\eps}(z,x,\eta)\, dz\, \dslash\theta\\
= \int_{\R^n}\esp^{-ix\theta}r_{\alpha,\eps}(x,\eta,\theta)(1-\chi_\eps(\theta/|\eta|))\int_\Om \esp^{i\rho_\eps(z,x,\eta,\theta)}	\cdot\\
\cdot\Delta^N_z\big(p_{N,\eps}^{-1}(z,x,\eta,\theta)b_{\alpha,\beta,\eps}(z,x,\eta)\big)\, dz\, \dslash\theta,
\end{multline*}
where
\begin{multline*}
\biggl|r_{\alpha,\eps}(x,\eta,\theta)(1-\chi_\eps(\theta/|\eta|))\int_\Om \esp^{i\rho_\eps(z,x,\eta,\theta)}\Delta^N_z\big(p_{N,\eps}^{-1}(z,x,\eta,\theta)b_{\alpha,\beta,\eps}(z,x,\eta)\big)\, dz\biggr|\\
\le c|r_{\alpha,\eps}(x,\eta,\theta)||1-\chi_\eps(\theta/|\eta|)|b_{N,\eps}\mu_{\beta,N,\eps}|\theta|^{-2N}\lara{\eta}^{m_\beta}, 
\end{multline*}
for $|\eta|\ge \lambda_{N,\eps}$ and $m_\beta$ independent of $N$. We take $2N=N_1+N_2$ such that $-N_2+m_\beta\le 0$. Hence, from $|\theta|\ge \tau_\eps|\eta|$ we have, for some $(c_{N,\eps})_\eps\in\EM$ and $|\eta|\ge\lambda_{N,\eps}$, the following estimate:
\begin{multline*}
\biggl|r_{\alpha,\eps}(x,\eta,\theta)(1-\chi_\eps(\theta/|\eta|))\int_\Om \esp^{i\rho_\eps(z,x,\eta,\theta)}\Delta^N_z\big(p_{N,\eps}^{-1}(z,x,\eta,\theta)b_{\alpha,\beta,\eps}(z,x,\eta)\big)\, dz\biggr|\\
\le c_{N,\eps}|r_{\alpha,\eps}(x,\eta,\theta)||1-\chi_\eps(\theta/|\eta|)||\theta|^{-N_1}.
\end{multline*}
By definition of $r_{\alpha,\eps}$ we easily see that for all $K\Subset\Om$ there exists $(d_\eps)_\eps, (d'_\eps)_\eps\in\EM$ such that 
\[
|r_{\alpha,\eps}(x,\eta,\theta)(1-\chi_\eps(\theta/|\eta|))|\le d_\eps\lara{\theta}^{m_+}\lara{\nabla_x\omega_\eps(x,\eta)}^{m_+}\le d'_\eps\lara{\theta}^{2m_+}.
\]
Hence for all $h\ge 0$ there exists $2N=N_1+N_2$ large enough such that, for $x\in K$ and $|\eta|\ge\lambda_{N,\eps}$
\begin{multline*}
\biggl|r_{\alpha,\eps}(x,\eta,\theta)(1-\chi_\eps(\theta/|\eta|))\int_\Om \esp^{i\rho_\eps(z,x,\eta,\theta)}\Delta^N_z\big(p_{N,\eps}^{-1}(z,x,\eta,\theta)b_{\alpha,\beta,\eps}(z,x,\eta)\big)\, dz\biggr|\\
\le \nu_{K,\eps} \lara{\theta}^{-h}\le \nu_{K,\eps}\tau_\eps^{-h}\lara{\eta}^{-h},
\end{multline*}
with $(\nu_{K,\eps})_\eps\in\EM$. This means that for all $h\ge 0$ there exists $(\lambda_\eps)_\eps\in\EM$ such that
\[
|R^2_{\alpha,\eps}(x,\eta)|\le \nu_{K,\eps}\lara{\eta}^{-h}
\]
when $x\in K$, $|\eta|\ge\lambda_{\eps}$ and $\eps\in(0,1]$. A simple investigation of the oscillatory integral which defines $R^2_{\alpha,\eps}(x,\eta)$ shows that there exists some $h'\ge 0$ and some $\nu'_{K,\eps}\in\EM$ such that the estimate
\[
|R^2_{\alpha,\eps}(x,\eta)|\le \nu'_{K,\eps}\lara{\eta}^{h'}
\]
holds for all $x\in K$, $\eta\in\R^n\setminus 0$ and $\eps\in(0,1]$. This yields for $|\eta|\le\lambda_\eps$
\[
|R^2_{\alpha,\eps}(x,\eta)|\le \nu'_{K,\eps}\lara{\eta}^{-h}\lara{\eta}^{h+h'}\le \nu'_{K,\eps}\lara{\lambda_\eps}^{h+h'}\lara{\eta}^{-h}.
\]
In conclusion, we have that for all $h\ge 0$ there exists $(C_{h,\eps}(K))_\eps\in\EM$ such that
\[
|R^2_{\alpha,\eps}(x,\eta)|\le C_{h,\eps}(K)\lara{\eta}^{-h}
\]
for all $x\in K$, $\eta\in\R^n\setminus 0$ and $\eps\in(0,1]$.\\
\bf{Step 6.}\rm\, Finally, we combine all the results of the previous steps. We have that
\begin{equation}
\label{form_finale}
a_\eps(x,D)F_{\om_\eps}(b_\eps)u_\eps(x)=\int_{\R^n}I_{1,\eps}(x,\eta)\widehat{u_\eps}(\eta)\, \dslash\eta + \int_{\R^n}I_{2,\eps}(x,\eta)\widehat{u_\eps}(\eta)\, \dslash\eta,
\end{equation}
where $(I_{2,\eps})_\eps\in\mM_{S^{-\infty}(\Om\times\R^n\setminus 0)}$. From Theorem \ref{theo_asymp_expan}$(i)$ and Proposition \ref{prop_h_alpha} there exists $(h_\eps)_\eps\in\mM_{S^{m+l}(\Om\times\R^n\setminus 0)}$ such that $h_\eps(x,\eta)\sim \sum_\alpha h_{\alpha,\eps}(x,\eta)$. We write the first integral in \eqref{form_finale} as
\[
\int_{\R^n}\esp^{i\omega_\eps(x,\eta)}h_\eps(x,\eta)\widehat{u_\eps}(\eta)\, \dslash\eta + \int_{\R^n}\esp^{i\omega_\eps(x,\eta)}\biggl(\esp^{-i\omega_\eps(x,\eta)}I_{1,\eps}(x,\eta)-h_\eps(x,\eta)\biggr)\widehat{u_\eps}(\eta)\, \dslash\eta
\]
and we concentrate on 
\[
\esp^{-i\omega_\eps(x,\eta)}I_{1,\eps}(x,\eta)-h_\eps(x,\eta).
\]
From the previous computations we have that for all $k\ge 1$ and $K\Subset\Om$ there exists $(C_{k,\eps}(K))_\eps\in\EM$ such that 
\[
\biggl|\esp^{-i\omega_\eps(x,\eta)}I_{1,\eps}(x,\eta)-\sum_{|\alpha|<k}h_{\alpha,\eps}(x,\eta)\biggr|\le C_{k,\eps}(K)\lara{\eta}^{m+l-\frac{k}{2}}
\]
for all $x\in K$, $\eta\in\R^n\setminus 0$ and $\eps\in(0,1]$. Moreover, regarding $I_{1,\eps}(x,\eta)$ as the oscillatory integral 
\[
\int_{\Om\times\R^n}\esp^{-iz\theta}\esp^{ix\theta+i\omega_\eps(z,\eta)}a_\eps(x,\theta)b'_\eps(z,x,\eta)\, dz\, \dslash\theta,
\]
from Theorem 3.1 in \cite{Garetto:04}, we obtain that for all $\alpha,\beta\in\N^n$ there exists $d\in\R$ and for all $K\Subset\Om$ there exists $(c_{\alpha,\beta,\eps}(K))_\eps\in\EM$ such that for all $\eta\in\R^n\setminus 0$ and $\eps\in(0,1]$,
\[
\sup_{x\in K}|\partial^\alpha_\eta\partial^\beta_x I_{1,\eps}(x,\eta)|\le c_{\alpha,\beta,\eps}(K)\lara{\eta}^{d}.
\]
Recalling that 
\[
\partial^\alpha_\eta\partial^\beta_x \esp^{-i\omega_\eps{(x,\eta)}}=\esp^{-i\omega_\eps(x,\eta)}a_{\alpha,\beta,\eps}(x,\eta),
\]
with $(a_{\alpha,\beta,\eps})_\eps\in \mM_{S^{|\beta|}(\Om\times\R^n\setminus 0)}$, we conclude that the net $(\esp^{-i\omega_\eps(x,\eta)}I_{1,\eps}(x,\eta))_\eps$ satisfies the hypothesis of Proposition \ref{prop_asym_Shubin}$(i)$. It follows that
\[
(\esp^{-i\omega_\eps(x,\eta)}I_{1,\eps}(x,\eta))_\eps\sim \sum_\alpha (h_{\alpha,\eps})_\eps.
\]
Hence, by Theorem \ref{theo_asymp_expan}$(i)$ we conclude 
\[
(\esp^{-i\omega_\eps(x,\eta)}I_{1,\eps}(x,\eta)-h_\eps(x,\eta))_\eps\in\mM_{S^{-\infty}(\Om\times\R^n\setminus 0)}.
\]
Going back to \eqref{form_finale} we have that there exists $(r_\eps)_\eps\in\mM_{S^{-\infty}(\Om\times\R^n\setminus 0)}$ such that
\[
a_\eps(x,D)F_{\om_\eps}(b_\eps)u_\eps(x)= \int_{\R^n}\esp^{i\omega_\eps(x,\eta)}h_\eps(x,\eta)\widehat{u_\eps}(\eta)\, \dslash\eta + r_\eps(x,D)(u_\eps)(x).
\]
\end{proof}
\begin{thm}
\label{theo_comp_ssc}
Let $\omega\in\wt{\mathcal{S}}^{\,1,\ssc}_{\rm{hg}}(\Om\times\R^n\setminus 0)$ have a representative satisfying condition $(h2)$. Let $a\in\wt{\mathcal{S}}^{\,m,\ssc}(\Om\times\R^n)$ and $b\in\wt{\mathcal{S}}^{\,l,\ssc}(\Om\times\R^n\setminus 0)$ with $\supp_x\, b\Subset\Om$. Then, the operator $a(x,D)F_\omega(b)$ has the following properties:
\begin{itemize}
\item[(i)] maps $\Gcinf(\Om)$ into $\Ginf(\Om)$;
\item[(ii)] is of the form 
\[
\int_{\R^n}\esp^{i\omega(x,\eta)}h(x,\eta)\widehat{u}(\eta)\, \dslash\eta +r(x,D)u,
\]
where $h\in\wt{\mathcal{S}}^{\,l+m,\ssc}(\Om\times\R^n\setminus 0)$ has asymptotic expansion given by the symbols $h_\alpha$ defined in \eqref{h_alpha} and $r\in\wt{\mathcal{S}}^{-\infty,\ssc}(\Om\times\R^n\setminus 0)$.
\end{itemize} 
\end{thm}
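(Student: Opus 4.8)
The plan is to retrace the proof of Theorem~\ref{theo_comp} line by line at the level of representatives, systematically replacing every moderate net by a slow scale one and invoking the slow scale versions of the auxiliary results proved above. All the structural ingredients are already available in slow scale form: Proposition~\ref{prop_F_map}(iii) and~(iv) show that, since $\supp_x b\Subset\Om$, $F_\omega(b)$ maps $\Gcinf(\Om)$ continuously into $\Gcinf(\Om)$, and composing with the slow scale pseudodifferential operator $a(x,D)$, which maps $\Gcinf(\Om)$ continuously into $\Ginf(\Om)$, already yields part~(i) (alternatively (i) is read off from the representation in~(ii) via Proposition~\ref{prop_F_map}(iii) and the regularizing property of slow scale pseudodifferential operators of order $-\infty$). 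For part~(ii) the relevant slow scale tools are Corollary~\ref{coroll_a}, Proposition~\ref{prop_esp}(ii), Proposition~\ref{prop_h_alpha}(ii), the second half of Proposition~\ref{prop_id_esp_1}, Theorem~\ref{theo_asymp_expan}(ii) and Proposition~\ref{prop_asym_Shubin}(ii).

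First I would run Steps~1--3 of the proof of Theorem~\ref{theo_comp} essentially unchanged. The cut-off scale $(\sigma_\eps)_\eps$ is again chosen with $\sigma_\eps\geq c\eps^s$, and the identity for $\esp^{i\omega_\eps(z,\eta)}$ now comes from the slow scale part of Proposition~\ref{prop_id_esp_1}, so that $(p_{N,\eps})_\eps\in\mM^\ssc_{S^{-2N}(\Om\times\R^n\setminus 0)}$ and $(r_\eps)_\eps\in\mM^\ssc_{S^{-\infty}(\Om\times\R^n\setminus 0)}$. Hence the nets $(I^2_{2,\eps})_\eps$ from Step~1 and $(I^1_{2,\eps})_\eps$ from Step~2 land in $\mM^\ssc_{S^{-\infty}(\Om\times\R^n\setminus 0)}$: in Step~2 the factor $c\eps^{-N_a-N_p-N_b}\sigma_\eps^{-2k-|\gamma|}$ is replaced by a finite product of slow scale nets, and such a product is again of slow scale type. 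The Taylor expansion of $a_\eps(x,\theta)$ at $\theta=\nabla_x\omega_\eps(x,\eta)$ in Step~3 is purely algebraic, and the symbols $(h_{\alpha,\eps})_\eps$ it produces are slow scale by Proposition~\ref{prop_h_alpha}(ii).

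The heart of the matter, exactly as in Theorem~\ref{theo_comp}, is the control of the remainder $(R_{\alpha,\eps})_\eps=(R^1_{\alpha,\eps})_\eps+(R^2_{\alpha,\eps})_\eps$ in Steps~4 and~5, and this is the step I expect to be the main obstacle --- not because any new phenomenon occurs, but because one must re-inspect each estimate \eqref{form_ruly}, \eqref{ineq_D_alpha}, \eqref{est_tau}, \eqref{prop_1_6}--\eqref{prop_4_6} and verify that every exponential factor ($\eps^{-M}$, $\eps^{-N'}$, $\eps^{-N''}$, $\dots$) and every auxiliary net ($\mu_{\beta,\gamma,\eps}$, $s'_{\gamma,N,\eps}$, $a_{\gamma,N,\eps}$, $b_{\gamma,N,\eps}$, $c_{N,\eps}$, $d_\eps$, $d'_\eps$, $\nu_{K,\eps}$, $\nu'_{K,\eps}$) is of slow scale type when $\omega,a,b$ are slow scale data. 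This is so because: (a) under $(h2)$ the two-sided bound for $|\nabla_x\omega_\eps(x,\eta)|$ involves a slow scale net, so the threshold radii $R_\eps$ and $\lambda_{N,\eps}$ become slow scale; (b) Lemma~\ref{lemma_sandro_2} and formula~\eqref{formula_esp} express the exponentials $\esp^{i\overline{\omega}_\eps}$ and their derivatives as finite sums of products of $\eta$-derivatives of $\omega_\eps$, which are governed by the slow scale seminorms of $\omega$; and (c) the class of slow scale nets is stable under finite sums, finite products, powers $q\geq 0$ and $q$-th roots, the last being used in $\lambda_{N,\eps}=\tau_\eps^{-1}(2^{2N+1}s'_{0,N,\eps})^{1/(2N-l)}$. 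With these observations the bounds of Steps~4--5 read $|R^1_{\alpha,\eps}(x,\eta)|\leq C_{K,\eps}\lara{\eta}^{m+l-|\alpha|/2}$ and $|R^2_{\alpha,\eps}(x,\eta)|\leq C_{h,\eps}(K)\lara{\eta}^{-h}$ with $(C_{K,\eps})_\eps$ and $(C_{h,\eps}(K))_\eps$ of slow scale type.

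Finally, in Step~6 I would assemble the pieces as before. By Theorem~\ref{theo_asymp_expan}(ii) and Proposition~\ref{prop_h_alpha}(ii) there is $(h_\eps)_\eps\in\mM^\ssc_{S^{m+l}(\Om\times\R^n\setminus 0)}$ with $(h_\eps)_\eps\sim_\ssc\sum_\alpha(h_{\alpha,\eps})_\eps$. Regarding $I_{1,\eps}$ as the oscillatory integral of Step~6 and using the slow scale version of the symbol estimates of \cite{Garetto:04}, together with $\partial^\alpha_\eta\partial^\beta_x\esp^{-i\omega_\eps(x,\eta)}=\esp^{-i\omega_\eps(x,\eta)}a_{\alpha,\beta,\eps}(x,\eta)$ with $(a_{\alpha,\beta,\eps})_\eps\in\mM^\ssc_{S^{|\beta|}(\Om\times\R^n\setminus 0)}$, one checks that $(\esp^{-i\omega_\eps(x,\eta)}I_{1,\eps}(x,\eta))_\eps$ satisfies the hypotheses of Proposition~\ref{prop_asym_Shubin}(ii), whence $(\esp^{-i\omega_\eps(x,\eta)}I_{1,\eps}(x,\eta)-h_\eps(x,\eta))_\eps\in\mM^\ssc_{S^{-\infty}(\Om\times\R^n\setminus 0)}$ by Theorem~\ref{theo_asymp_expan}(ii). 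Combined with $(I_{2,\eps})_\eps\in\mM^\ssc_{S^{-\infty}(\Om\times\R^n\setminus 0)}$ this gives, at the representatives' level,
\[
a_\eps(x,D)F_{\om_\eps}(b_\eps)u_\eps(x)=\int_{\R^n}\esp^{i\omega_\eps(x,\eta)}h_\eps(x,\eta)\widehat{u_\eps}(\eta)\,\dslash\eta+r_\eps(x,D)(u_\eps)(x),
\]
with $(r_\eps)_\eps\in\mM^\ssc_{S^{-\infty}(\Om\times\R^n\setminus 0)}$, i.e. the asserted representation with $h\in\wt{\mathcal{S}}^{\,l+m,\ssc}(\Om\times\R^n\setminus 0)$ and $r\in\wt{\mathcal{S}}^{-\infty,\ssc}(\Om\times\R^n\setminus 0)$.
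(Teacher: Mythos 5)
Your proposal is the paper's own proof: the paper disposes of (i) via Proposition \ref{prop_F_map}(iii) and the mapping properties of slow scale pseudodifferential operators, and for (ii) simply reruns the proof of Theorem \ref{theo_comp} with all moderate nets replaced by slow scale ones, exactly as you describe, with the single substantive modification that the auxiliary nets $(\sigma_\eps)_\eps$ and $(\tau_\eps)_\eps$ are now taken \emph{slow scale strictly nonzero}.

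That last point is the one place where your write-up slips: you state that the cut-off scale is ``again chosen with $\sigma_\eps\ge c\eps^s$,'' but this lower bound is not enough. In Step 2 the factor $\sigma_\eps^{-2k-|\gamma|}$ must itself be of slow scale type for $(I^1_{2,\eps})_\eps$ to land in $\mM^\ssc_{S^{-\infty}(\Om\times\R^n\setminus 0)}$, and in Steps 4--5 the same is needed for $\tau_\eps^{-h}$, for $\lambda_{N,\eps}=\tau_\eps^{-1}(2^{2N+1}s'_{0,N,\eps})^{1/(2N-l)}$, and for the final factor $\lara{\lambda_\eps}^{h+h'}$; with only $\sigma_\eps\ge c\eps^s$ these are merely moderate. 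Your subsequent claim that ``the factor $c\eps^{-N_a-N_p-N_b}\sigma_\eps^{-2k-|\gamma|}$ is replaced by a finite product of slow scale nets'' tacitly assumes the correct, stronger choice. The fix is immediate and consistent with the rest of your argument: under $(h2)$ the constraints $\tau_\eps\lesssim 1/s_\eps$ (from the lower bound on $|\nabla_x\omega_\eps|$) and $\mu_\eps\sigma_\eps\le\tau_\eps/2$ (the slow scale analogue of $\eps^{-M}\sigma_\eps\le\tau_\eps/2$) can be met with $(\sigma_\eps^{-1})_\eps$ and $(\tau_\eps^{-1})_\eps$ slow scale, and then every estimate you list goes through as claimed.
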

\begin{proof}
Combining Proposition \ref{prop_F_map}$(iii)$ with the usual mapping properties of generalized pseudodifferential operators we have that $(i)$ holds. Concerning assertion $(ii)$, we argue as in the proof of Theorem \ref{theo_comp} by taking the nets $(\sigma_\eps)_\eps$ and $(\tau_\eps)_\eps$ slow scale strictly nonzero. From the assumptions of slows scale type on $\omega$, $a$ and $b$ we have that all the moderate nets involved are of slow scale type. This leads to the desired conclusion.
\end{proof}

\section{Generalized Fourier integral operators and microlocal analysis}
Concluding, we present some first results of microlocal analysis for generalized Fourier integral operators provided in \cite[Section 4]{GHO:06}. A deeper investigation of the microlocal properties of
\[
A:\Gc(\Om)\to\Gc(\Om'):u\to\int_{\Om\times\R^p}\esp^{i\phi(x,y,\xi)}a(x,y,\xi)u(y)\, dy\, \dslash\xi 
\]
is current topic of research.
\subsection*{Generalized singular supports of the functional $I_\phi(a)$}
We begin with the functional 
\[
I_{\phi}(a):\Gc(\Om)\to\wt{\C}:u\to\int_{\Om\times\R^p}\esp^{i\phi(y,\xi)}a(y,\xi)u(y)\, dy\, \dslash\xi 
\]
Before defining specific regions depending on the generalized phase function $\phi$, we observe that any $\phi\in\wt{\Phi}(\Om\times\R^p)$ can be regarded as an element of ${\widetilde{\mathcal{S}}}^1_{\rm{hg}}(\Om\times\R^p\setminus 0)$ and consequently $|\nabla_\xi\phi|^2\in{\widetilde{\mathcal{S}}}^0_{\rm{hg}}(\Om\times\R^p\setminus 0)$.

Let $\Om_1$ be an open subset of $\Om$ and $\Gamma\subseteq\R^p\setminus 0$. We say that $b\in\wt{\mathcal{S}}^0(\Om\times\R^p\setminus 0)$ is \emph{invertible on $\Om_1\times\Gamma$} if for all relatively compact subsets $U$ of $\Om_1$ there exists a representative $(b_\eps)_\eps$ of $b$, a constant $r\in\R$ and $\eta\in(0,1]$ such that 
\begin{equation}
\label{est_inv_sym}
\inf_{y\in U,\xi\in\Gamma}|b_\eps(y,\xi)|\ge \eps^r
\end{equation}
for all $\eps\in(0,\eta]$. In an analogous way we say that $b\in\wt{\mathcal{S}}^0(\Om\times\R^p\setminus 0)$ is \emph{slow scale-invertible} on $\Om_1\times\Gamma$ if \eqref{est_inv_sym} holds with the inverse of some slow scale net $(s_\eps)_\eps$ in place of $\eps^r$. This kind of bounds from below hold for all representatives of the symbol $b$ once they are known to hold for one.

In the sequel $\pi_\Om$ denotes the projection of $\Om\times\R^p$ on $\Om$.
\begin{defn}
\label{def_C_phi}
Let $\phi\in\wt{\Phi}(\Om\times\R^p)$. We define $C_\phi\subseteq\Om\times\R^p\setminus 0$ as the complement of the set of all $(x_0,\xi_0)\in\Om\times\R^p\setminus 0$ with the property that there exist a relatively compact open neighborhood $U(x_0)$ of $x_0$ and a conic open neighborhood $\Gamma(\xi_0)\subseteq\R^p\setminus 0$ of $\xi_0$ such that $|\nabla_\xi\phi|^2$ is invertible on $U(x_0)\times\Gamma(\xi_0)$.
We set $\pi_\Om(C_{\phi})=S_{\phi}$ and $R_{\phi}=(S_{\phi})^{{\rm{c}}}$. 
\end{defn}
By construction $C_{\phi}$ is a closed conic subset of $\Om\times\R^p\setminus 0$ and $R_{\phi}\subseteq\Om$ is open. It is routine to check that the region $C_\phi$ coincides with the classical one when $\phi$ is classical.
\begin{prop}
\label{prop_R_phi}
The generalized symbol $|\nabla_\xi\phi|^2$ is invertible on $R_\phi\times\R^p\setminus 0$. 
\end{prop}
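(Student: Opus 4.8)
The plan is to reduce the claim to a compactness argument on $K\times S^{p-1}$, where $K$ is the closure of an arbitrary relatively compact subset of $R_\phi$. First I would fix a representative $(\phi_\eps)_\eps\in\MPhi(\Om\times\R^p)$ of $\phi$ and set $b_\eps:=|\nabla_\xi\phi_\eps|^2$, so that $(b_\eps)_\eps$ represents $|\nabla_\xi\phi|^2\in\wt{\mathcal{S}}^0_{\rm{hg}}(\Om\times\R^p\setminus 0)$; each $b_\eps$ is positively homogeneous of degree $0$ in $\xi$, so a lower bound on a conic set is equivalent to the same lower bound on its intersection with the unit sphere. By the definition of invertibility, it then suffices to produce, for each $V\Subset R_\phi$, constants $r\in\R$ and $\eta\in(0,1]$ with $\inf_{y\in V,\,\xi\in\R^p\setminus 0}|b_\eps(y,\xi)|\ge\eps^{r}$ for all $\eps\in(0,\eta]$.

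So fix such a $V$ and put $K:=\overline{V}$, a compact subset of $R_\phi$. For every $(x_0,\xi_0)\in K\times S^{p-1}$ we have $x_0\notin S_\phi=\pi_\Om(C_\phi)$, hence $(x_0,\xi_0)\notin C_\phi$; by Definition \ref{def_C_phi} there are a relatively compact open neighborhood $U_{x_0,\xi_0}$ of $x_0$ and a conic open neighborhood $\Gamma_{x_0,\xi_0}$ of $\xi_0$ on which $|\nabla_\xi\phi|^2$ is invertible. Choose a smaller open neighborhood $U'_{x_0,\xi_0}$ of $x_0$ with $\overline{U'_{x_0,\xi_0}}\subseteq U_{x_0,\xi_0}$. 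The sets $U'_{x_0,\xi_0}\times(\Gamma_{x_0,\xi_0}\cap S^{p-1})$ form an open cover of the compact set $K\times S^{p-1}$; extract a finite subcover indexed by $j=1,\dots,M$, with corresponding sets $U_j,U'_j,\Gamma_j$. Since $\overline{U'_j}$ is relatively compact in $U_j$, invertibility of $|\nabla_\xi\phi|^2$ on $U_j\times\Gamma_j$, combined with the remark following \eqref{est_inv_sym} that such a lower bound then holds for \emph{every} representative, applies to our fixed $(b_\eps)_\eps$ and yields $r_j\in\R$ and $\eta_j\in(0,1]$ with $\inf_{y\in\overline{U'_j},\,\xi\in\Gamma_j}|b_\eps(y,\xi)|\ge\eps^{r_j}$ for all $\eps\in(0,\eta_j]$.

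Finally, set $r:=\max_{1\le j\le M}r_j$ and $\eta:=\min_{1\le j\le M}\eta_j$. Given $y\in K$ and $\xi\in\R^p\setminus 0$, homogeneity of degree $0$ lets us assume $|\xi|=1$; then $(y,\xi)\in U'_j\times(\Gamma_j\cap S^{p-1})$ for some $j$, whence $|b_\eps(y,\xi)|\ge\eps^{r_j}\ge\eps^{r}$ for every $\eps\in(0,\eta]$ (using $\eps\le\eta\le 1$). This gives the desired estimate on $V$, and therefore the invertibility of $|\nabla_\xi\phi|^2$ on $R_\phi\times(\R^p\setminus 0)$. I expect the only genuinely delicate point to be the passage from the ``there exists a representative'' clause in the definition of invertibility to a single fixed representative working simultaneously on the finitely many patches $U_j\times\Gamma_j$ — this is exactly where the quoted remark after \eqref{est_inv_sym} is needed — together with the elementary shrinking to $U'_j$ that makes each local estimate available on a relatively compact subset of the set where invertibility is known.
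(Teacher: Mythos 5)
Your proof is correct, and it is the standard compactness argument one expects here (the paper itself omits the proof, deferring to \cite{GHO:06}): degree-$0$ homogeneity reduces the claim to $K\times S^{p-1}$, the definition of $C_\phi$ supplies the local invertibility neighborhoods at every point of $K\times S^{p-1}$ because $K\subseteq R_\phi=(\pi_\Om(C_\phi))^{\mathrm{c}}$, and a finite subcover plus the representative-independence remark after \eqref{est_inv_sym} lets you take $r=\max_j r_j$ and $\eta=\min_j\eta_j$. No gaps.
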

The more specific assumption of slow scale-invertibility concerning the generalized symbol $|\nabla_\xi\phi|^2$ is employed in the definition of the following sets.
\begin{defn}
\label{def_C_phi_ssc}
Let $\phi\in\wt{\Phi}(\Om\times\R^p)$. We define $C^\ssc_{\phi}\subseteq\Om\times\R^p\setminus 0$ as the complement of the set of all $(x_0,\xi_0)\in\Om\times\R^p\setminus 0$ with the property that there exist a relatively compact open neighborhood $U(x_0)$ of $x_0$ and a conic open neighborhood $\Gamma(\xi_0)\subseteq\R^p\setminus 0$ of $\xi_0$ such that $|\nabla_\xi\phi|^2$ is on $U(x_0)\times\Gamma(\xi_0)$. We set $\pi_\Om(C^\ssc_{\phi})=S^\ssc_{\phi}$ and $R^\ssc_{\phi}=(S^\ssc_{\phi})^{{\rm{c}}}$. 
\end{defn}
By construction $C^\ssc_{\phi}$ is a conic closed subset of $\Om\times\R^p\setminus 0$ and $R^\ssc_{\phi}\subseteq R_{\phi}\subseteq\Om$ is open. In analogy with Proposition \ref{prop_R_phi} we can prove that $|\nabla_\xi\phi|^2$ is slow scale-invertible on $R^\ssc_\phi\times\R^p\setminus 0$.  
\begin{thm}
\label{theorem_R_phi}
Let $\phi\in\wt{\Phi}(\Om\times\R^p)$ and $a\in\wt{\mathcal{S}}^m_{\rho,\delta}(\Om\times\R^p)$. 
\begin{itemize}
\item[(i)] The restriction $I_{\phi}(a)|_{R_\phi}$ of the functional $I_\phi(a)$ to the region $R_\phi$ belongs to $\G(R_\phi)$.
\item[(ii)] If $\phi\in\wt{\Phi}^\ssc(\Om\times\R^p)$ and $a\in\wt{\mathcal{S}}^{m,\ssc}_{\rho,\delta}(\Om\times\R^p)$ then $I_{\phi}(a)|_{{R^\ssc_\phi}}\in\Ginf(R^\ssc_\phi)$.
\end{itemize}
\end{thm}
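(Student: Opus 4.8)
The plan is to adapt the classical argument showing that an oscillatory integral whose phase has non-vanishing $\xi$-gradient on the relevant region produces a smooth kernel, keeping all estimates uniform in the regularization parameter $\eps$. The key structural input is Proposition~\ref{prop_R_phi}, which upgrades the local invertibility in Definition~\ref{def_C_phi} to invertibility of $|\nabla_\xi\phi|^2$ on all of $R_\phi\times(\R^p\setminus 0)$ (resp. slow scale-invertibility on $R^\ssc_\phi\times(\R^p\setminus 0)$). Since $\G(R_\phi)\hookrightarrow\LL(\Gc(R_\phi),\wt{\C})$ is injective by \eqref{chain_1}, it suffices to exhibit $w\in\G(R_\phi)$ with $I_\phi(a)(u)=\int_{R_\phi}w(y)u(y)\,dy$ for all $u\in\Gc(R_\phi)$, and correspondingly $w\in\Ginf(R^\ssc_\phi)$ in part~(ii). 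Throughout I would work with representatives $(\phi_\eps)_\eps\in\MPhi(\Om\times\R^p)$, $(a_\eps)_\eps\in\mM_{S^m_{\rho,\delta}(\Om\times\R^p)}$, and a representative $(u_\eps)_\eps$ of $u$ with $\supp u_\eps\subseteq K'\Subset R_\phi$ for all $\eps$.

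First I would fix $\chi\in\Cinfc(\R^p)$ with $\chi\equiv 1$ near $0$ and split $a_\eps=\chi a_\eps+(1-\chi)a_\eps$, so that $I_{\phi_\eps}(a_\eps)(u_\eps)=\langle c^1_\eps,u_\eps\rangle+\langle c^2_\eps,u_\eps\rangle$. For the first piece, $\chi a_\eps$ has compact support in $\xi$, so $c^1_\eps(y):=\int_{\R^p}\esp^{i\phi_\eps(y,\xi)}\chi(\xi)a_\eps(y,\xi)\,\dslash\xi$ is an absolutely convergent integral defining a $\Cinf$ function of $y\in\Om$; differentiating under the integral sign and using Lemma~\ref{lemma_esp_classic} with $\alpha=0$ (so $\partial^\beta_y\esp^{i\phi_\eps}=\esp^{i\phi_\eps}a_{0,\beta,\eps}$ with $(a_{0,\beta,\eps})_\eps$ moderate of order $|\beta|$) yields $(c^1_\eps)_\eps\in\EM(\Om)$, and the Taylor/mean value estimate from the proof of Proposition~\ref{prop_exp} gives that the corresponding difference net lies in $\Neg(\Om)$ whenever $(\phi_\eps)_\eps\sim(\phi'_\eps)_\eps$ and $(a_\eps-a'_\eps)_\eps\in\Neg$. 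Hence $[(c^1_\eps)_\eps]\in\G(\Om)$.

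The second piece is where $R_\phi$ enters. On $\supp(1-\chi)$ one has $|\xi|\ge\delta_0>0$, so each $\phi_\eps$ is smooth there, and by Proposition~\ref{prop_R_phi} there are $r>0$, $\eta\in(0,1]$ with $|\nabla_\xi\phi_\eps(y,\xi)|^2\ge\eps^r$ on a neighbourhood of $K'$ for $\eps\le\eta$; together with the moderateness of $(\partial_{\xi_j}\phi_\eps)_\eps$ and the symbol calculus for reciprocals --- entirely parallel to Lemma~\ref{lemma_1}, Proposition~\ref{prop_operator} and the generalized elliptic parametrix of Propositions~\ref{prop_ellip}--\ref{prop_ellip_2} --- this gives that $\tilde L_{\phi_\eps}:=(i|\nabla_\xi\phi_\eps|^2)^{-1}\sum_{j=1}^p(\partial_{\xi_j}\phi_\eps)\,\partial_{\xi_j}$ has coefficients forming a moderate net in $S^0$, satisfies $\tilde L_{\phi_\eps}\esp^{i\phi_\eps}=\esp^{i\phi_\eps}$, and whose transpose maps $\mM_{S^\mu_{\rho,\delta}}$ into $\mM_{S^{\mu-\rho}_{\rho,\delta}}$. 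Since ${\,}^t\tilde L_{\phi_\eps}$ differentiates only in $\xi$, the factor $u_\eps(y)$ pulls out; integrating by parts $k$ times (legitimate in the sense of the generalized oscillatory integral of Definition~\ref{def_gen_osc}, producing an absolutely convergent $\xi$-integral once $k\rho>m+p$, with the Leibniz terms falling on $1-\chi$ being compactly supported in $\xi$ and thus harmless) gives $\langle c^2_\eps,u_\eps\rangle=\int_\Om c^2_\eps(y)u_\eps(y)\,dy$ with
\[
c^2_\eps(y)=\int_{\R^p}\esp^{i\phi_\eps(y,\xi)}\,({\,}^t\tilde L_{\phi_\eps})^k\big((1-\chi(\xi))a_\eps(y,\xi)\big)\,\dslash\xi .
\]
Combining Lemma~\ref{lemma_esp_classic} with the mapping property of ${\,}^t\tilde L_{\phi_\eps}$ then yields, for every $\beta$ and $K\Subset R_\phi$, a bound $\sup_{y\in K}|\partial^\beta_y c^2_\eps(y)|=O(\eps^{-N})$ after choosing $k$ large enough, so $(c^2_\eps)_\eps\in\EM(R_\phi)$; negligibility of the difference net under a change of representative follows from Lemma~\ref{lemma_1}(ii) (differences of the coefficients are negligible) and the estimate \eqref{neg_esp} for $(\esp^{i\phi_\eps}-\esp^{i\phi'_\eps})_\eps$. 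Thus $w:=[(c^1_\eps+c^2_\eps)_\eps]\in\G(R_\phi)$ represents $I_\phi(a)|_{R_\phi}$, which is (i). For (ii) the argument is verbatim with $(\phi_\eps)_\eps\in\mM^\ssc_{S^1_{\rm{hg}}(\Om\times\R^p\setminus 0)}$, $(a_\eps)_\eps$ of slow scale type and $|\nabla_\xi\phi|^2$ slow scale-invertible on $R^\ssc_\phi\times(\R^p\setminus 0)$: every constant occurring above is then a finite product of slow scale nets, hence slow scale and in particular $O(\eps^{-1})$, so the exponent $N=1$ works simultaneously for all $\beta$ and $K$, which is exactly the $\mM^\infty$ condition, giving $w\in\Ginf(R^\ssc_\phi)$.

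The main obstacle I anticipate is not conceptual but lies in the uniform symbol bookkeeping: one must verify that $|\nabla_\xi\phi_\eps|^{-2}$ and all the iterated coefficients of $({\,}^t\tilde L_{\phi_\eps})^k$ obey symbol estimates whose $\eps$-powers (resp. slow scale bounds) do not deteriorate as $k\to\infty$ --- this is where the hypotheses on $\phi$ are used in full and where the computation mirrors the elliptic parametrix construction. One should also observe that $w$ does not depend on the auxiliary choices of $\chi$ and $k$: different choices define the same functional on $\Gc(R_\phi)$, hence the same element of $\G(R_\phi)$ by the injectivity of $\G(R_\phi)\hookrightarrow\LL(\Gc(R_\phi),\wt{\C})$.
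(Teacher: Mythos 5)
Your argument is correct: it is the natural Colombeau adaptation of the classical non-stationary-phase argument (splitting off a compact $\xi$-region and integrating by parts with the $\xi$-only operator $\tilde L_{\phi_\eps}$, whose coefficients are controlled precisely by the invertibility, resp.\ slow scale-invertibility, of $|\nabla_\xi\phi|^2$ on $R_\phi$, resp.\ $R^\ssc_\phi$, from Proposition \ref{prop_R_phi}), and this is the approach of the source \cite[Section 4]{GHO:06} to which the paper defers the proof. The only points deserving an explicit word are that for each fixed $\eps$ your regularization agrees with the one in Definition \ref{def_gen_osc} by the uniqueness of the classical oscillatory integral, and that the compactly supported piece near $\xi=0$ is handled as you indicate since $\partial_y^\beta\phi_\eps$ is homogeneous of degree $1$ and hence bounded there.
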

Theorem \ref{theorem_R_phi} means that
\[
\singsupp_\G\, I_\phi(a)\subseteq S_\phi
\]
if $\phi\in\wt{\Phi}(\Om\times\R^p)$ and $a\in\wt{\mathcal{S}}^m_{\rho,\delta}(\Om\times\R^p)$ and that
\[
\singsupp_{\Ginf} I_\phi(a)\subseteq S^\ssc_\phi
\]
if $\phi\in\wt{\Phi}^\ssc(\Om\times\R^p)$ and $a\in\wt{\mathcal{S}}^{m,\ssc}_{\rho,\delta}(\Om\times\R^p)$. 
\begin{ex}
Returning to the first example in Section \ref{gen_sec} we are now in the position to analyze the regularity
properties of the generalized kernel functional $I_\phi(a)$ of the solution operator
$A$ corresponding to the hyperbolic Cauchy-problem. For any $v\in \Gc(\R^3)$ we have
\begin{equation}\label{hypsol}
  I_\phi(a) (v) = \int \esp^{i\phi(x,t,y,\xi)}\; a(x,t,y,\xi)\, v(x,t,y)\, dx\, dt\, dy\, \dslash\xi,
\end{equation}
where $a$ and $\phi$ are as in Section \ref{gen_sec}. Note that in the case of partial differential operators with smooth coefficients and distributional initial values the wave front set of the distributional kernel of $A$
determines the propagation of singularities from the initial data. When the
coefficients are non-differentiable functions, or even distributions or generalized
functions, matters are not yet understood in sufficient generality. Nevertheless, the
above results allow us to identify regions where the generalized kernel functional
agrees with a generalized function or is even guaranteed to be a $\Ginf$-regular
generalized function. To identify the set $C_\phi$ in this situation one simply has to study invertibility of $\d_\xi \phi (x,t,y,\xi) =
\gamma(x,t;0)- y$ as a generalized function in a neighborhood of any given point
$(x_0,t_0,y_0)$.

Under the assumptions on $c$ of the example in Section \ref{gen_sec}, the representing nets
$(\gamma_\eps(.,.;0))_{\eps\in(0,1]}$ of $\gamma$ are uniformly bounded on compact sets (e.g., when $c$ is
a bounded generalized constant). For given $(x_0,t_0)$ define the generalized domain of
dependence $D(x_0,t_0)\subseteq\R$ to be the set of accumulation points of the net
$(\gamma_\eps(x_0,t_0;0))_{\eps\in(0,1]}$. Then we have that
$$
   \{(x_0,t_0,y_0) \in\R^3 : y_0 \not\in D(x_0,t_0) \} \subseteq R_\phi.
$$

When $c\in\wt{\R}$ this may be proved by showing that if $(x_0,t_0,y_0)\in C_\phi$ then there exists an accumulation point $c'$ of a representative $(c_\eps)_\eps$ of $c$ such that $y_0=x_0-c't_0$. 
\end{ex}
\begin{ex}
As an illustrative example concerning the regions involving the regularity of the functional $I_\phi(a)$ we consider the generalized phase function on $\R^2\times\R^2$ given by $\phi_\eps(y_1,y_2,\xi_1,\xi_2)=-\eps y_1\xi_1-s_\eps y_2\xi_2$ where $(s_\eps)_\eps$ is bounded and $(s_\eps^{-1})_\eps$ is a slow scale net. Clearly $\phi:=[(\phi_\eps)_\eps]\in\wt{\Phi}^\ssc(\R^2\times\R^2)$. Simple computations show that $R_\phi=\R^2\setminus(0,0)$ and $R^\ssc_\phi=\R^2\setminus\{y_2=0\}$. We leave it to the reader to check that the oscillatory integral
\[
\int_{\R^2}\esp^{i\phi(y,\xi)}(1+\xi_1^2+\xi_2^2)^{\frac{1}{2}}\, \dslash\xi =\biggl[\biggl(\int_{\R^2}\esp^{-i\eps y_1\xi_1-is_\eps y_2\xi_2}(1+\xi_1^2+\xi_2^2)^{\frac{1}{2}}\, \dslash\xi_1\, \dslash\xi_2\biggr)_\eps\biggr]
\]
defines a generalized function in $\R^2\setminus(0,0)$ whose restriction to $\R^2\setminus\{y_2=0\}$ is regular.
\end{ex}

The Colombeau-regularity of the functional $I_\phi(a)$ is easily proved in the case of generalized symbols of order $-\infty$.  
\begin{prop}
\label{prop_smooth_sing}
\begin{itemize}
\item[{\, }]
\item[(i)] If $\phi\in\wt{\Phi}(\Om\times\R^p)$ and $a\in\wt{\mathcal{S}}^{-\infty}(\Om\times\R^p)$ then $\singsupp_{\G}I_\phi(a)=\emptyset$.
\item[(ii)] If $\phi\in\wt{\Phi}^\ssc(\Om\times\R^p)$ and $a\in\wt{\mathcal{S}}^{-\infty,\ssc}(\Om\times\R^p)$ then $\singsupp_{\Ginf}I_\phi(a)=\emptyset$.
\end{itemize}
\end{prop}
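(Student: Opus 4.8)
The plan is to exhibit $I_\phi(a)$ globally on $\Om$ as the image, under the integration embedding \eqref{chain_1}, of a genuine Colombeau generalized function $v\in\G(\Om)$; since $\Om'\mapsto\G(\Om')$ is a sheaf this at once forces $\singsupp_\G I_\phi(a)=\emptyset$, and running the same construction with slow scale nets produces $v\in\Ginf(\Om)$ and hence (ii).

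First I would note that for $a\in\wt{\mathcal{S}}^{-\infty}(\Om\times\R^p)$ one may take $k=0$ in Definition \ref{def_gen_osc}, because the integrability condition $m-ks+1<-p$ holds with $k=0$ once $a$ is regarded as a symbol of sufficiently negative order $m<-p-1$. Thus $I_\phi(a)(u)=[(\int_{\Om\times\R^p}\esp^{i\phi_\eps(y,\xi)}a_\eps(y,\xi)u_\eps(y)\,dy\,\dslash\xi)_\eps]$, each representing integral being absolutely convergent since $a_\eps$ decays in $\xi$ faster than any power of $\lara{\xi}$ while $u_\eps$ has compact support. Setting
\[
v_\eps(y):=\int_{\R^p}\esp^{i\phi_\eps(y,\xi)}a_\eps(y,\xi)\,\dslash\xi,
\]
I would differentiate under the integral sign, using Lemma \ref{lemma_esp_classic} to write $\partial^\beta_y\esp^{i\phi_\eps}=\esp^{i\phi_\eps}a_{0,\beta,\eps}$ with $|a_{0,\beta,\eps}(y,\xi)|\le C_{K,\eps}\lara{\xi}^{|\beta|}$ on $K\times\R^p$, the constant $C_{K,\eps}$ being moderate (resp. of slow scale type) according to the scale of $(\phi_\eps)_\eps$. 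Leibniz' rule combined with the order-$-\infty$ estimates on $a_\eps$ then furnishes, for each $\beta$ and $K\Subset\Om$, an integrable dominating function of the form (moderate, resp. slow scale factor)$\times\lara{\xi}^{-p-1}$, which legitimizes the differentiation and gives $\sup_{y\in K}|\partial^\beta_y v_\eps(y)|=O(\eps^{-N})$; in the slow scale case the bound is $O(\eps^{-1})$ with a single $N$ valid for all $\beta$, exploiting that $\wt{\mathcal{S}}^{-\infty,\ssc}$-symbols are $\Ginf$-regular. Hence $(v_\eps)_\eps\in\EM(\Om)$ in (i) and $(v_\eps)_\eps\in\EM^\infty(\Om)$ in (ii).

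Next I would verify that $v:=[(v_\eps)_\eps]$ does not depend on the chosen representatives: replacing $(\phi_\eps)_\eps$ by an equivalent net $(\omega_\eps)_\eps$ and $(a_\eps)_\eps$ by a net that differs from it by an element of $\Neg_{S^{-\infty}(\Om\times\R^p)}$ changes $(v_\eps)_\eps$ only by a negligible net. Writing $v_\eps-v'_\eps=\int(\esp^{i\phi_\eps}-\esp^{i\omega_\eps})a_\eps\,\dslash\xi+\int\esp^{i\omega_\eps}(a_\eps-a'_\eps)\,\dslash\xi$, the second integral is negligible because the $S^{-\infty}$-negligibility of $(a_\eps-a'_\eps)_\eps$, after being multiplied by the $\lara{\xi}^{|\beta'|}$-bounds for $\partial^{\beta'}_y\esp^{i\omega_\eps}$ and integrated in $\xi$, yields an $O(\eps^q)$ estimate for every $q$; the first integral is negligible because, by the proof of Proposition \ref{prop_exp} (see \eqref{neg_esp}), $(\esp^{i\phi_\eps}-\esp^{i\omega_\eps})_\eps$ belongs to $\Neg_{S^1_{0,1}(\Om\times\R^p\setminus 0)}$, so that $|\partial^{\beta'}_y(\esp^{i\phi_\eps}-\esp^{i\omega_\eps})|\le O(\eps^q)\lara{\xi}^{1+|\beta'|}$ and the single extra power of $\lara{\xi}$ is comfortably absorbed by the rapid decay of $a_\eps$. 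So $v\in\G(\Om)$ (resp. $v\in\Ginf(\Om)$) is well defined.

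Finally, for $u\in\Gc(\Om)$ the absolute convergence of the representing double integrals allows Fubini's theorem, giving $I_\phi(a)(u)=[(\int_\Om v_\eps(y)u_\eps(y)\,dy)_\eps]=\int_\Om v(y)u(y)\,dy$; that is, $I_\phi(a)$ is the image of $v$ under \eqref{chain_1}. Restricting to any open $V\subseteq\Om$ shows $I_\phi(a)|_V$ is integration against $v|_V\in\G(V)$ (resp. $\in\Ginf(V)$), whence $\singsupp_\G I_\phi(a)=\emptyset$ (resp. $\singsupp_{\Ginf}I_\phi(a)=\emptyset$). I do not expect a genuine obstacle here: the only delicate point is the bookkeeping of the $\eps$-dependence when pairing the order-$-\infty$ decay of $a_\eps$ against the $\lara{\xi}$-growth of the $y$-derivatives of $\esp^{i\phi_\eps}$, and checking that all the moderateness and negligibility estimates survive the $\xi$-integration uniformly on compact sets.
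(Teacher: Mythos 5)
Your proof is correct and is essentially the intended argument (the paper itself defers the proof to \cite[Section 4]{GHO:06}): for a symbol of order $-\infty$ one integrates out $\xi$ first, checks that $v_\eps(y)=\int e^{i\phi_\eps(y,\xi)}a_\eps(y,\xi)\,\dslash\xi$ gives a moderate (resp.\ $\EM^\infty$) net independent of representatives, and identifies $I_\phi(a)$ with integration against $v\in\G(\Om)$ (resp.\ $\Ginf(\Om)$). The key bookkeeping — pairing the $\lara{\xi}^{|\beta|}$ growth of $a_{0,\beta,\eps}$ from Lemma \ref{lemma_esp_classic} against the rapid decay of $a_\eps$, and using the $\Ginf$-regularity of $\wt{\mathcal{S}}^{-\infty,\ssc}$-symbols together with the slow-scale bounds on $(\phi_\eps)_\eps$ to get a single exponent for all $\beta$ — is exactly right.
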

Proposition \ref{prop_smooth_sing} leads to the following result.
\begin{prop}
\label{prop_smooth_sing_cone}
\begin{itemize}
\item[{\, }]
\item[(i)] If $\phi\in\wt{\Phi}(\Om\times\R^p)$ and $a\in\wt{\mathcal{S}}^m_{\rho,\delta}(\Om\times\R^p)$ then $$\singsupp_{\G}\,I_\phi(a)\subseteq\pi_\Om(C_\phi\cap {\rm{cone\, supp}}\, a).$$
\item[(ii)] If $\phi\in\wt{\Phi}^\ssc(\Om\times\R^p)$ and $a\in\wt{\mathcal{S}}^{m,\ssc}_{\rho,\delta}(\Om\times\R^p)$ then $$\singsupp_{\Ginf}I_\phi(a)\subseteq\pi_\Om(C^\ssc_\phi\cap {\rm{cone\, supp}}\, a).$$
\end{itemize}
\end{prop}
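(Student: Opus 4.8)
The plan is to localize in the frequency variable $\xi$ and to write the amplitude as a finite sum of pieces of three kinds: one supported near $\xi=0$, those whose cone support sits over the complement of $\mathrm{cone\,supp}\,a$, and those whose cone support sits over the complement of $C_\phi$. The first two kinds are regularized by Proposition~\ref{prop_smooth_sing}, the third by (a localized form of) Theorem~\ref{theorem_R_phi}. I will carry out the argument for $(i)$; statement $(ii)$ follows by the same reasoning with $\G$ and $C_\phi$ replaced by $\Ginf$ and $C^\ssc_\phi$, with Theorem~\ref{theorem_R_phi}(i) and Proposition~\ref{prop_smooth_sing}(i) replaced by their slow scale counterparts, and with all cut-off nets and the lower bound on $|\nabla_\xi\phi|^2$ taken of slow scale type.

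Fix $x_0\in\Om\setminus\pi_\Om(C_\phi\cap\mathrm{cone\,supp}\,a)$ and a representative $(a_\eps)_\eps$ of $a$. For every $\xi_0$ in the unit sphere $S^{p-1}$ the pair $(x_0,\xi_0)$ lies outside $C_\phi\cap\mathrm{cone\,supp}\,a$, so either $(x_0,\xi_0)\notin C_\phi$, in which case Definition~\ref{def_C_phi} furnishes a relatively compact open $U_{\xi_0}\ni x_0$ and a conic open $\Gamma_{\xi_0}\ni\xi_0$ on which $|\nabla_\xi\phi|^2$ is invertible; or $(x_0,\xi_0)\notin\mathrm{cone\,supp}\,a$, in which case \eqref{cond_conic_supp} furnishes such $U_{\xi_0},\Gamma_{\xi_0}$ on which $(a_\eps)_\eps$ (hence, comparing two representatives modulo $\Neg$, every representative) satisfies the negligibility estimate \eqref{cond_conic_supp}. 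By compactness finitely many of these cones, $\Gamma_1,\dots,\Gamma_k$, already cover $\R^p\setminus 0$; put $V:=\bigcap_{j=1}^k U_j\ni x_0$. Choose $\psi\in\Cinf(\R^p)$ vanishing near $0$ and equal to $1$ for $|\xi|\ge 1$, and a partition of unity $\chi_1,\dots,\chi_k$ with $\chi_j$ homogeneous of degree $0$ for large $|\xi|$, $\supp\chi_j\subseteq\Gamma_j$ and $\sum_j\chi_j=1$ on the support of $\psi$, arranged so that each $\chi_j\psi$ belongs to $S^0(\R^p)$. By \eqref{bil_product} and the continuity of multiplication of generalized symbols, $a=(1-\psi)a+\sum_{j=1}^k\chi_j\psi\,a$ in $\wt{\mathcal{S}}^m_{\rho,\delta}(\Om\times\R^p)$, and hence $I_\phi(a)=I_\phi((1-\psi)a)+\sum_{j=1}^k I_\phi(\chi_j\psi a)$ in $\LL(\Gc(\Om),\wt{\C})$.

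It now suffices to show that each of these finitely many functionals is $\G$-regular over $V$. The symbol $(1-\psi)a$ has compact support in $\xi$, hence belongs to $\wt{\mathcal{S}}^{-\infty}(\Om\times\R^p)$, and Proposition~\ref{prop_smooth_sing}(i) gives $I_\phi((1-\psi)a)\in\G(\Om)$. If $\Gamma_j$ is a cone on which $(a_\eps)_\eps$ satisfies \eqref{cond_conic_supp}, then $\chi_j\psi a_\eps$ is negligible in all symbol seminorms over $V\times\R^p$ — negligible where $\chi_j\neq0$, since there $\xi\in\Gamma_j$ and $y\in V\subseteq U_j$, and identically zero elsewhere — so $(\chi_j\psi a)|_{V\times\R^p}=0$ in $\wt{\mathcal{S}}^m_{\rho,\delta}(V\times\R^p)$ and therefore $I_\phi(\chi_j\psi a)$ acts as the zero functional on $\Gc(V)$, i.e.\ it is represented by $0\in\G(V)$; here one uses that $I_\phi(b)$ on $\Gc(V)$ depends only on $b|_{V\times\R^p}$, the oscillatory integral being local in $y$ through $L_\phi^N$. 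If instead $\Gamma_j$ is a cone on which $|\nabla_\xi\phi|^2$ is invertible, then $\supp_\xi(\chi_j\psi)\subseteq\overline{\Gamma_j}$ and $|\nabla_\xi\phi|^2$ is invertible on $V\times\Gamma_j$, which is exactly the situation treated in Theorem~\ref{theorem_R_phi}(i): repeated integration by parts against the transpose of $L=\tfrac{1}{i}\,|\nabla_\xi\phi|^{-2}\sum_{\ell}(\partial_{\xi_\ell}\phi)\,\partial_{\xi_\ell}$ lowers the order of the amplitude by $\rho>0$ at each step, while the factor $|\nabla_\xi\phi|^{-2}$ remains moderate on $V\times\Gamma_j$ by the invertibility bound of Definition~\ref{def_C_phi}; after finitely many steps the $\xi$-integral converges absolutely and is smooth in $y$, exhibiting $I_\phi(\chi_j\psi a)|_V$ as an element of $\G(V)$. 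Summing, $I_\phi(a)|_V\in\G(V)$, so $x_0\notin\singsupp_\G I_\phi(a)$, which proves $(i)$.

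The one step that is not a mechanical transcription of the classical proof is the localized version of Theorem~\ref{theorem_R_phi} invoked for the cones on which $|\nabla_\xi\phi|^2$ is invertible: one must verify that inserting the cut-off $\chi_j\psi$ and restricting to $V$ preserves the symbol estimates, and that the only available invertibility — a lower bound of the form $|\nabla_\xi\phi_\eps|^2\ge\eps^r$ on $V\times\Gamma_j$ (respectively, of slow scale type, by Definition~\ref{def_C_phi_ssc}) — still keeps every net produced by the integration by parts moderate (respectively, of slow scale type). This works precisely because that bound is of moderate (respectively slow scale) order, so the procedure never leaves the moderate (respectively slow scale) symbol classes. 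Everything else is routine.
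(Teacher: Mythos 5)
Your argument is correct and is essentially the proof the paper intends (the paper states the result without proof as a consequence of Proposition~\ref{prop_smooth_sing} and the method of Theorem~\ref{theorem_R_phi}): a conic partition of unity in $\xi$ over a neighborhood of $x_0$, with the low-frequency piece handled by Proposition~\ref{prop_smooth_sing}, the pieces off ${\rm{cone\, supp}}\,a$ killed by \eqref{cond_conic_supp}, and the pieces off $C_\phi$ treated by the non-stationary-phase integration by parts of Theorem~\ref{theorem_R_phi}, whose moderate (resp.\ slow scale) lower bound on $|\nabla_\xi\phi|^2$ is exactly what keeps the iterated transposes in the moderate (resp.\ slow scale) symbol classes. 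You correctly isolate the only point needing verification, namely that the localized, $\xi$-cut-off version of Theorem~\ref{theorem_R_phi} goes through on $V\times\Gamma_j$.
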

\subsection*{Generalized wave front sets of the functional $I_\phi(a)$}
The next theorem investigates the $\G$-wave front set and the $\Ginf$-wave front set of the functional $I_\phi(a)$ under suitable assumptions on the generalized symbol $a$ and the phase function $\phi$. 
\begin{thm} 
\label{theorem_WF}
\leavevmode
\begin{itemize}
\item[(i)] Let $\phi\in\wt{\Phi}(\Om\times\R^p)$ and $a\in\wt{\mathcal{S}}^m_{\rho,\delta}(\Om\times\R^p)$. The generalized wave front set $\WF_\G I_\phi(a)$ is contained in the set $W_{\phi,a}$ of all points $(x_0,\xi_0)\in\CO{\Om}$ with the property that for all relatively compact open neighborhoods $U(x_0)$ of $x_0$, for all open conic neighborhoods $\Gamma(\xi_0)\subseteq \R^n\setminus 0$ of $\xi_0$, for all open conic neighborhoods $V$ of {\rm{cone\,supp}}\,$a\cap C_\phi$ such that $V\cap (U(x_0)\times\R^p\setminus 0)\neq\emptyset$ the generalized number 
\begin{equation}
\label{gen_num_inv}
\mathop{{\rm{Inf}}}\limits_{\substack{y\in U(x_0), \xi\in \Gamma(\xi_0)\\ (y,\theta)\in V\cap( U(x_0)\times\R^p\setminus 0)}}\frac{|\xi-\nabla_y\phi(y,\theta)|}{|\xi|+|\theta|}
\end{equation}
is not invertible.
\item[(ii)] If $\phi\in\wt{\Phi}^\ssc(\Om\times\R^p)$ and $a\in\wt{\mathcal{S}}^{m,\ssc}_{\rho,\delta}(\Om\times\R^p)$ then $\WF_{\Ginf}I_\phi(a)$ is contained in the set $W^\ssc_{\phi,a}$ of all points $(x_0,\xi_0)\in\CO{\Om}$ with the property that for all relatively compact open neighborhoods $U(x_0)$ of $x_0$, for all open conic neighborhoods $\Gamma(\xi_0)\subseteq \R^n\setminus 0$ of $\xi_0$, for all open conic neighborhoods $V$ of {\rm{cone\,supp}}\,$a\cap C^\ssc_\phi$ such that $V\cap (U(x_0)\times\R^p\setminus 0)\neq\emptyset$ the generalized number \eqref {gen_num_inv}
is not slow scale-invertible.
\end{itemize}
\end{thm}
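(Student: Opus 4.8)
The plan is to establish the two inclusions in contrapositive form: if $(x_0,\xi_0)\notin W_{\phi,a}$ (resp.\ $\notin W^\ssc_{\phi,a}$) then $(x_0,\xi_0)\notin\WF_\G I_\phi(a)$ (resp.\ $\notin\WF_{\Ginf}I_\phi(a)$). Since $I_\phi(a)$, and hence $\varphi I_\phi(a)$ for $\varphi\in\Cinfc(\Om)$, is a basic functional, by the characterization recalled from \cite[Theorem~3.13]{Garetto:06a} it is enough to produce $\varphi$ with $\varphi(x_0)=1$ and a conic neighbourhood $\Gamma'$ of $\xi_0$ such that $\mF(\varphi I_\phi(a))\in\G_{\S,0}(\Gamma')$, resp.\ $\Ginf_{\S\hskip-2pt,0}(\Gamma')$. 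Negating membership in $W_{\phi,a}$ gives a relatively compact $U(x_0)$, a conic neighbourhood $\Gamma(\xi_0)$, a conic open neighbourhood $V$ of $\mathrm{cone\,supp}\,a\cap C_\phi$, a representative $(\phi_\eps)_\eps$ of $\phi$ and $c>0$, $r\in\R$, $\eta_0\in(0,1]$ with $|\xi-\nabla_y\phi_\eps(y,\theta)|\ge c\,\eps^{r}(|\xi|+|\theta|)$ for $y\in U(x_0)$, $\xi\in\Gamma(\xi_0)$, $(y,\theta)\in V\cap(U(x_0)\times(\R^p\setminus 0))$ and $\eps\le\eta_0$; in case (ii) the factor $\eps^{r}$ is replaced by the reciprocal of a slow scale net and $C_\phi$ by $C^\ssc_\phi$. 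I fix $\varphi\in\Cinfc(U(x_0))$ with $\varphi(x_0)=1$ and $\Gamma'$ with $\overline{\Gamma'}\setminus 0\subseteq\Gamma(\xi_0)$. Working with representatives and using that $\varphi I_\phi(a)$ is basic, one has $\mF(\varphi I_\phi(a))_\eps(\xi)=I_{\phi_\eps}(a_\eps)(\varphi\,\esp^{-i\cdot\xi})$, which by the very definition of the generalized oscillatory integral equals the oscillatory integral $\iint\esp^{i\psi_\eps(y,\theta)}\varphi(y)a_\eps(y,\theta)\,dy\,d\theta$ with phase $\psi_\eps(y,\theta):=\phi_\eps(y,\theta)-y\xi$ and amplitude of order $m$ in $\theta$, compactly supported in $y$.

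Two reductions come first. Writing $a=\chi a+(1-\chi)a$ with $\chi$ a $0$-homogeneous classical conic cut-off equal to $1$ near $\mathrm{cone\,supp}\,a$ and supported in an arbitrarily small conic neighbourhood $W$ of it, condition \eqref{cond_conic_supp} and a compactness argument on the cosphere bundle over $\overline{\supp\varphi}$ show that $\varphi(1-\chi)a_\eps$ is a negligible net of symbols of order $m$; the associated oscillatory integral is then negligible and may be dropped, so we may assume $a_\eps$ is conically supported in $W$ for $|\theta|\ge1$. Next I choose a partition of unity $1=(1-\kappa(\theta))+\kappa(\theta)\psi_A(y,\theta)+\kappa(\theta)\psi_C(y,\theta)$, valid for $y\in\supp\varphi$, with $\kappa$ vanishing for $|\theta|\le1$ and $\psi_A,\psi_C$ of order $0$, where $\psi_A$ is supported where $|\nabla_\theta\phi_\eps(y,\theta/|\theta|)|^2$ is bounded below by a fixed power of $\eps$ uniformly in $\eps$ — which, by the definition of $C_\phi$ and compactness, can be arranged so that $\psi_A$ covers everything off a conic neighbourhood of $C_\phi$ — while $\psi_C$ is supported in such a neighbourhood; shrinking $U(x_0)$, $W$ and the neighbourhoods if necessary, the closed set $\supp(\kappa\psi_C a_\eps)$ lies inside $V\cap(U(x_0)\times(\R^p\setminus0))$, since $C_\phi$ and $\mathrm{cone\,supp}\,a$ are closed and $V$ is open. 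This splits $\mF(\varphi I_\phi(a))_\eps(\xi)$ into three oscillatory integrals $I^{\le1}_\eps(\xi)+I^A_\eps(\xi)+I^C_\eps(\xi)$.

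For $I^{\le1}_\eps$ the variable $\theta$ is confined to a bounded set, and since all $y$-derivatives of $\esp^{i\phi_\eps}$ are $O(|\theta|)$ near $\theta=0$ by Lemma~\ref{lemma_esp_classic} and homogeneity, differentiation under the integral sign exhibits it as the Fourier transform of a compactly supported moderate (resp.\ slow scale) smooth function, hence an element of $\G_{\S,0}(\R^n)$ (resp.\ $\Ginf_{\S\hskip-2pt,0}(\R^n)$). For $I^A_\eps$ I integrate by parts $M$ times with $\mathcal{L}^{(A)}_\eps:=(i|\nabla_\theta\phi_\eps|^2)^{-1}\sum_j(\partial_{\theta_j}\phi_\eps)\partial_{\theta_j}$, which satisfies $\mathcal{L}^{(A)}_\eps\esp^{i\psi_\eps}=\esp^{i\psi_\eps}$ and differentiates in $\theta$ only; each step lowers the amplitude order by $\rho>0$ at the cost of moderate (resp.\ slow scale, using the definition of $C^\ssc_\phi$) powers of $\eps$ coming from $|\nabla_\theta\phi_\eps|^{-2}$ on $\supp\psi_A$ and the moderateness of $\phi$, and since $\mathcal{L}^{(A)}_\eps$ leaves $\esp^{-iy\xi}$ untouched, $I^A_\eps(\xi)=\widehat{G^A_\eps}(\xi)$ with $G^A_\eps$ a compactly supported moderate (resp.\ slow scale) $\Cinf$-function of $y$; again this term lies in $\G_{\S,0}(\R^n)$ (resp.\ $\Ginf_{\S\hskip-2pt,0}(\R^n)$). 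The decisive term is $I^C_\eps$: on $\supp(\kappa\psi_C)$ and for $\xi\in\Gamma'$ the lower bound $|\nabla_y\phi_\eps-\xi|\ge c\eps^{r}(|\xi|+|\theta|)$ holds, so I integrate by parts $M$ times with $\mathcal{L}^{(C)}_\eps:=(i|\nabla_y\phi_\eps-\xi|^2)^{-1}\sum_k(\partial_{y_k}\phi_\eps-\xi_k)\partial_{y_k}$, for which $\mathcal{L}^{(C)}_\eps\esp^{i\psi_\eps}=\esp^{i\psi_\eps}$. The core computation is an induction showing that the coefficients of $\mathcal{L}^{(C)}_\eps$ and each of their $y$-derivatives are $O(\eps^{-N}(|\xi|+|\theta|)^{-1})$ — this uses the lower bound together with the order-$1$ moderate bounds on $\nabla_y\phi_\eps$ — so that $|(\mathcal{L}^{(C)}_\eps)^M(\psi_C\varphi a_\eps)|\le c\,\eps^{-N(M)}\langle\theta\rangle^{m+M\delta}(|\xi|+|\theta|)^{-M}$ on $\supp\psi_C$; the elementary bound $\int\langle\theta\rangle^{m+M\delta}(|\xi|+|\theta|)^{-M}\,d\theta\lesssim\langle\xi\rangle^{m+p-M(1-\delta)}$, valid for $M(1-\delta)>m+p$, then gives, for every $l$ and $M=M(l)$ large enough (using $\delta<1$), $|I^C_\eps(\xi)|\le C_{l,\eps}\langle\xi\rangle^{-l}$ uniformly on $\Gamma'$ with $(C_{l,\eps})_\eps\in\EM$. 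In case (ii) all nets arising are of slow scale type, hence $O(\eps^{-1})$, so a single exponent $N=1$ serves for all $l$. Adding the three contributions shows $\mF(\varphi I_\phi(a))\in\G_{\S,0}(\Gamma')$, resp.\ $\Ginf_{\S\hskip-2pt,0}(\Gamma')$, and the characterization yields $(x_0,\xi_0)\notin\WF_\G I_\phi(a)$, resp.\ $\notin\WF_{\Ginf}I_\phi(a)$.

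The main obstacle is the region-$C$ estimate — the inductive bookkeeping for $(\mathcal{L}^{(C)}_\eps)^M$, ensuring that every $y$-differentiation preserves the weight $(|\xi|+|\theta|)^{-1}$ while accumulating only moderate (resp.\ slow scale) $\eps$-powers, followed by the $\theta$-integration that extracts the $\langle\xi\rangle$-decay; this is the place where the quantitative lower bound hypothesis is used and where the two cases of the theorem genuinely differ. A second, more combinatorial difficulty is the geometric set-up of the second paragraph: choosing $W$, $\supp\varphi$ and the conic neighbourhoods so that $\supp(\kappa\psi_C a_\eps)$ really falls inside the prescribed set $V$ — which is precisely where one exploits that $C_\phi$ is the complement of the \emph{open} set of points at which $|\nabla_\xi\phi|^2$ is locally invertible and that $\mathrm{cone\,supp}\,a$ is closed — together with checking that the oscillatory-integral decomposition into the three pieces is independent of the choice of admissible integration-by-parts operators.
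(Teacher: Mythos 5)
The paper states Theorem \ref{theorem_WF} without proof, deferring to \cite[Section 4]{GHO:06}, so there is no in-text argument to compare against; judged on its own, your proposal reconstructs the expected proof correctly. It is the classical non-stationary-phase argument with the $\eps$-dependence tracked: localize by $\varphi$, invoke the Fourier-transform characterization of $\WF_\G$ and $\WF_{\Ginf}$ for basic functionals, discard the part of $a$ off $\mathrm{cone\,supp}\,a$ as negligible, and split the $\theta$-integral into a bounded piece, a piece away from $C_\phi$ (integration by parts in $\theta$ via $|\nabla_\theta\phi_\eps|^{-2}$, which is controlled there by definition of $C_\phi$), and a piece supported in $V\cap(U(x_0)\times\R^p\setminus 0)$ (integration by parts in $y$ using the assumed lower bound on $|\xi-\nabla_y\phi_\eps|/(|\xi|+|\theta|)$), with the quantifier patterns $\forall l\,\exists N$ versus $\exists N\,\forall l$ correctly matched to $\G_{\S,0}(\Gamma')$ versus $\Ginf_{\S,0}(\Gamma')$ in the two cases. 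I see no gap; the two points you flag as delicate (the inductive weight-preserving estimate for the powers of the $y$-operator, and the separation argument producing conic neighbourhoods $W$ of $\mathrm{cone\,supp}\,a$ and of $C_\phi$ whose intersection over $U(x_0)$ lies in $V$) are exactly the ones requiring care, and both go through as you describe.
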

Note that when $\phi$ is a classical phase function the set $W_{\phi,a}$ as well as the set $W^\ssc_{\phi,a}$ coincide with 
\begin{equation}
\label{set_class}
\{(x,\nabla_x\phi(x,\theta)):\, (x,\theta)\in{\rm{cone\, supp}}\,a\cap C_\phi\}.
\end{equation}
For more details see \cite[Remark 4.13]{GHO:06}.
\begin{ex}
\label{C_example3}
Theorem \ref{theorem_WF} can be employed for investigating the generalized wave front sets of the kernel $K_A:=I_\phi(a)$ of the Fourier integral operator introduced in the first example of Section \ref{gen_sec}. For simplicity we assume that $c$ is a bounded generalized constant in $\wt{\R}$ and that $a=1$. Let $((x_0,t_0,y_0),\xi_0)\in\WF_\G K_A$. From the first assertion of Theorem \ref{theorem_WF} we know that the generalized number given by 
\begin{equation}
\label{non_inv_ex}
\inf_{\substack{(x,t,y)\in U, \xi\in \Gamma\\ ((x,t,y),\theta)\in V\cap(U\times\R\setminus 0)}}\frac{|\xi-(\theta,-c_\eps\theta,-\theta)|}{|\xi|+|\theta|}
\end{equation}
is not invertible, for every choice of neighborhoods $U$ of $(x_0,t_0,y_0)$, $\Gamma$ of $\xi_0$ and $V$ of $C_\phi$. Note that it is not restrictive to assume that $|\theta|=1$. We fix some sequences $(U_n)_n$, $(\Gamma_n)_n$ and $(V_n)_n$ of neighborhoods shrinking to $(x_0,t_0,y_0)$, $\{\xi_0\lambda:\lambda>0\}$ and $C_\phi$ respectively. By \eqref{non_inv_ex} we find a sequence $\eps_n$ tending to $0$ such that for all $n\in\N$ there exists $\xi_n\in\Gamma_n$, $(x_n,t_n,y_n,\theta_n)\in V_n$ with $|\theta_n|=1$ and $(x_n,t_n,y_n)\in U_n$ such that
\[
|\xi_n-(\theta_n,-c_{\eps_n}\theta_n,-\theta_n)|\le \eps_n(|\xi_n|+1).
\]
In particular, $\xi_n$ remains bounded. Passing to suitable subsequences we obtain that there exist $\theta$ such that $(x_0,t_0,y_0,\theta)\in C_\phi$, an accumulation point $c'$ of $(c_\eps)_\eps$ and a multiple $\xi'$ of $\xi_0$ such that $\xi'=(\theta,-c'\theta,-\theta)$. It follows that
\[
\frac{\xi_0}{|\xi_0|}=
\frac{\xi'}{|\xi'|}=\frac{1}{\sqrt{2+(c')^2}|\theta|}\,(\theta,-c'\theta,-\theta).
\]

In other words the $\G$-wave front set of the kernel $K_A$ is contained in the set of points of the form $((x_0,t_0,y_0),(\theta_0,-c'\theta_0,-\theta_0))$ where $(x_0,t_0,y_0,\theta_0)\in C_\phi$ and $c'$ is an accumulation point of a net representing $c$. Since in the classical case (when $c\in\R$) the distributional wave front set of the corresponding kernel is the set $\{((x_0,t_0,y_0),(\theta_0,-c\theta_0,-\theta_0)):\, (x_0,t_0,y_0,\theta_0)\in C_\phi\}$, the result obtained above for $\WF_\G K_A$ is a generalization in line with what we deduced about the regions $R_\phi$ and $C_\phi$.
\end{ex}
\subsection*{Particular case: generalized pseudodifferential operators}
Finally, we consider a generalized pseudodifferential operator $a(x,D)$ on $\Om$ and its kernel $K_{a(x,D)}\in\LL(\Gc(\Om\times\Om),\wt{\C})$. By Remark 4.15 in \cite{GHO:06}, we have that $\WF_\G(K_{a(x,D)})$ is contained in the normal bundle of the diagonal in $\Om\times\Om$ when $a\in\wt{\mathcal{S}}^{m}_{\rho,\delta}(\Om\times\R^n)$ and that $\WF_{\Ginf}(K_{a(x,D)})$ is a subset of the normal bundle of the diagonal in $\Om\times\Om$ when $a$ is of slow scale type. We define the sets
\[
\WF_\G(a(x,D))=\{(x,\xi)\in\CO{\Om}:\, (x,x,\xi,-\xi)\in\WF_\G(K_{a(x,D)})\}
\]
and
\[
\WF_{\Ginf}(a(x,D))=\{(x,\xi)\in\CO{\Om}:\, (x,x,\xi,-\xi)\in\WF_{\Ginf}(K_{a(x,D)})\}.
\]
From Theorem \ref{theorem_WF} one deduces the following.
\begin{prop}
\label{prop_pseudo}
Let $a(x,D)$ be a generalized pseudodifferential operator.
\begin{itemize}
\item[(i)] If $a\in\wt{\mathcal{S}}^{m}_{\rho,\delta}(\Om\times\R^n)$ then $\WF_\G(a(x,D))\subseteq\mu\, \supp_\G(a)$.
\item[(ii)]If $a\in\wt{\mathcal{S}}^{m,\ssc}_{\rho,\delta}(\Om\times\R^n)$ then $\WF_{\Ginf}(a(x,D))\subseteq\mu\, \supp_{\Ginf}(a)$.
\end{itemize}
\end{prop}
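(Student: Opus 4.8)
The plan is to realise the kernel $K_{a(x,D)}$ as a generalized oscillatory integral of the form treated in Theorem~\ref{theorem_WF} and to apply that theorem after a microlocal decomposition of $a$. On $(\Om\times\Om)\times\R^n$ put $\wt\phi((x,y),\xi):=(x-y)\cdot\xi$, a classical (hence generalized) phase function, and for a symbol $c$ on $\Om\times\R^n$ put $\wt c((x,y),\xi):=c(x,\xi)$, which lies in the corresponding symbol space over $(\Om\times\Om)\times\R^n$ and satisfies $\mathrm{cone\,supp}\,\wt c=\{((x,y),\xi):(x,\xi)\in\mathrm{cone\,supp}\,c\}$. Then $K_{a(x,D)}=I_{\wt\phi}(\wt a)$, and the same identity holds with $a$ replaced by any other symbol on $\Om\times\R^n$. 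Since $\nabla_\xi\wt\phi=x-y$, the region $C_{\wt\phi}$ is the diagonal $\{((x,x),\xi):x\in\Om,\ \xi\in\R^n\setminus 0\}$. Fix a point $(x_0,\xi_0)\in\CO{\Om}$ with $(x_0,\xi_0)\notin\mu\,\supp_\G(a)$; it is enough to prove $((x_0,x_0),(\xi_0,-\xi_0))\notin\WF_\G K_{a(x,D)}$.

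First a microlocal splitting. Since $a$ is $\G$-smoothing at $(x_0,\xi_0)$, choose a classical conic cut-off $\chi\in S^0(\Om\times\R^n)$ equal to $1$ on a conic neighbourhood of $(x_0,\xi_0)$ and supported in the region $U\times\Gamma$ where \eqref{est_micro_G} holds; by a Leibniz estimate $\chi a\in\wt{\mathcal{S}}^{-\infty}(\Om\times\R^n)$, while $b:=(1-\chi)a$ vanishes conically near $(x_0,\xi_0)$, so $(x_0,\xi_0)\notin\mathrm{cone\,supp}\,b$. Thus $K_{a(x,D)}=I_{\wt\phi}(\wt{\chi a})+I_{\wt\phi}(\wt b)$. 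By Proposition~\ref{prop_smooth_sing}(i) the first summand has empty $\G$-singular support, hence lies in $\G(\Om\times\Om)$; and adding an element of $\G(\Om\times\Om)$ leaves $\WF_\G$ unchanged (immediate from the definition through properly supported operators, which preserve $\G$). So $\WF_\G K_{a(x,D)}=\WF_\G I_{\wt\phi}(\wt b)\subseteq W_{\wt\phi,\wt b}$ by Theorem~\ref{theorem_WF}(i).

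The crux is to check $((x_0,x_0),(\xi_0,-\xi_0))\notin W_{\wt\phi,\wt b}$, i.e. to exhibit admissible neighbourhoods for which the generalized number \eqref{gen_num_inv} is invertible. Here $\nabla_{(x,y)}\wt\phi=(\xi,-\xi)$, and $\mathrm{cone\,supp}\,\wt b\cap C_{\wt\phi}=\{((x,x),\theta):(x,\theta)\in\mathrm{cone\,supp}\,b\}$ misses $((x_0,x_0),\xi_0)$. Using that $\mathrm{cone\,supp}\,b$ is closed and conic in $\xi$, one selects a relatively compact neighbourhood $U_0$ of $(x_0,x_0)$ (inside $U\times U$), an open conic neighbourhood $\Gamma_0$ of $(\xi_0,-\xi_0)$ of small angular radius, and an open conic neighbourhood $V$ of $\mathrm{cone\,supp}\,\wt b\cap C_{\wt\phi}$ so narrow that every frequency direction $\theta/|\theta|$ occurring in $V$ over a base point of $U_0$ keeps a fixed positive angular distance from $\xi_0/|\xi_0|$; enlarging $V$ by a small cone around $-\xi_0$ over $U_0$ — which does no harm — ensures $V\cap(U_0\times\R^n\setminus 0)\neq\emptyset$. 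Then, whenever $(\xi_x,\xi_y)\in\Gamma_0$ and $((x,y),\theta)\in V\cap(U_0\times\R^n\setminus 0)$, the vectors $(\xi_x,\xi_y)$ and $(\theta,-\theta)$ make an angle bounded below by a positive constant, so $|(\xi_x,\xi_y)-(\theta,-\theta)|\ge c_0\bigl(|(\xi_x,\xi_y)|+|\theta|\bigr)$ with $c_0>0$ absolute; hence the infimum in \eqref{gen_num_inv} is $\ge c_0$, a strictly positive real constant, which is invertible in $\wt\C$. Therefore $((x_0,x_0),(\xi_0,-\xi_0))\notin W_{\wt\phi,\wt b}$, and as $(x_0,\xi_0)\notin\mu\,\supp_\G(a)$ was arbitrary this proves (i).

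Part (ii) follows the same pattern with slow scale data: $\wt\phi\in\wt\Phi^\ssc$ trivially, $b=(1-\chi)a\in\wt{\mathcal{S}}^{m,\ssc}(\Om\times\R^n)$ still has $(x_0,\xi_0)\notin\mathrm{cone\,supp}\,b$, and $\chi a$ satisfies the $\Ginf$-smoothing bounds \eqref{est_micro_Ginf} with one fixed exponent $\eps^{-N}$ and compact support in $x$; a direct estimate of $\int\esp^{i(x-y)\xi}(\chi a)_\eps(x,\xi)\,\dslash\xi$ at the representative level (integrating by parts in $\xi$ to gain decay in $x-y$, the polynomial weights in $\xi$ being absorbed by the rapid decay of $(\chi a)_\eps(x,\cdot)$) shows $K_{(\chi a)(x,D)}\in\Ginf(\Om\times\Om)$, so it contributes nothing to $\WF_{\Ginf}$. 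Applying Theorem~\ref{theorem_WF}(ii) to $I_{\wt\phi}(\wt b)$ and running the neighbourhood construction verbatim, the infimum in \eqref{gen_num_inv} is again $\ge c_0>0$, in particular slow scale-invertible, which gives $\WF_{\Ginf}(a(x,D))\subseteq\mu\,\supp_{\Ginf}(a)$. The main difficulty is the geometric step of the third paragraph — converting the $\eps$-independent fact $(x_0,\xi_0)\notin\mathrm{cone\,supp}\,b$ into invertibility of \eqref{gen_num_inv} by choosing a $V$ that is simultaneously a conic neighbourhood of $\mathrm{cone\,supp}\,\wt b\cap C_{\wt\phi}$ and directionally controlled over a neighbourhood of $(x_0,x_0)$; the microlocal splitting is indispensable here, as a direct application of Theorem~\ref{theorem_WF} to $\wt a$ would be useless when $\mathrm{cone\,supp}\,a$ is large (for instance $a\equiv 1$).
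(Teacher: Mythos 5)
Your proposal is correct and follows exactly the route the paper indicates: realise $K_{a(x,D)}$ as $I_{\wt\phi}(\wt a)$ with $\wt\phi=(x-y)\xi$, split off a microlocal cut-off $\chi a$ of order $-\infty$ near a point outside the microsupport (which is indeed indispensable, since applying Theorem \ref{theorem_WF} to $\wt a$ directly only reproduces the normal bundle of the diagonal), and then run the angular-separation argument on \eqref{gen_num_inv} for the remainder $b=(1-\chi)a$, whose cone support misses $(x_0,\xi_0)$. The one genuinely delicate point — that in part (ii) the cut-off piece $\chi a$ is only $\Ginf$-regular of order $-\infty$ with a single fixed power $\eps^{-N}$ rather than an element of $\wt{\mathcal{S}}^{-\infty,\ssc}$, so that Proposition \ref{prop_smooth_sing}(ii) cannot be quoted and a direct representative-level estimate of its kernel is needed to land in $\Ginf(\Om\times\Om)$ — is correctly identified and correctly resolved.
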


% ------------------------------------------------------------------------

\subsection*{Acknowledgment}
The author would like to express her gratitude to Professor L. Rodino and Professor M. W. Wong for the kind invitation to the session on Pseudodifferential Operators of the 2007 ISAAC Congress at METU in Ankara.
% ------------------------------------------------------------------------

\begin{thebibliography}{1}

\bibitem{Biagioni:90}
H.~A. Biagioni.
\newblock {\em A Nonlinear Theory of Generalized Functions}.
\newblock Number 1421 in Lecture Notes in Math. Springer-Verlag, Berlin, 1990.

\bibitem{BO:92}
H.~Biagioni and M.~Oberguggenberger.
\newblock Generalized solutions to the {K}orteweg - de {V}ries and the
  regularized long-wave equations.
\newblock {\em SIAM J. Math Anal.}, 23(4):923--940, 1992.

\bibitem{BO:92b}
H.~Biagioni and M.~Oberguggenberger.
\newblock Generalized solutions to {B}urgers' equation.
\newblock {\em J. Diff. Eqs.}, 97(2):263--287, 1992.



\bibitem{Colombeau:85}
J.~F. Colombeau.
\newblock {\em Elementary Introduction to New Generalized Functions}.
\newblock North-Holland Mathematics Studies 113. Elsevier Science Publishers,
  1985.

\bibitem{CO:90}
J.~F. Colombeau and M.~Oberguggenberger.
\newblock On a hyperbolic system with a compatible quadratic term: Generalized
  solutions, delta waves, and multiplication of distributions.
\newblock {\em Comm. Part. Diff. Eqs.}, 15(7):905--938, 1990.


\bibitem{Coriasco:98}
S.~Coriasco.
\newblock Fourier integral operators in SG classes (I), composition theorems
  and action on SG Sobolev spaces.
\newblock 1998.
\newblock Quad. Dip. di Matematica, Univ. Torino, No 11.

\bibitem{Coriasco:99}
S.~Coriasco.
\newblock Fourier integral operators in SG classes {(I)}, composition theorems
  and action on SG Sobolev spaces.
\newblock {\em Rend. Sem. Mat. Univ. Pol. Torino}, 57(4):249--302, 1999.


\bibitem{dHSto:02}
M.~V. de~Hoop and C.~C. Stolk.
\newblock Microlocal analysis of seismic inverse scattering in anisotropic,
  elastic media.
\newblock {\em Comm. Pure Appl. Math}, 55:261--301, 2002.

\bibitem{Delcroix:05}
A.~Delcroix.
\newblock Generalized integral operators and {S}chwartz kernel type theorem.
\newblock {\em J. Math. Anal. Appl.}, 306(2):481--501, 2005.

\bibitem{DelSca:00}
A.~Delcroix and D.~Scarpalezos.
\newblock Topology on asymptotic algebras of generalized functions and
  applications.
\newblock {\em Monatsh. Math}, 129(1):1--14, 2000.

\bibitem{Garetto:04}
C.~Garetto.
\newblock Pseudo-differential operators in algebras of generalized functions
  and global hypoellipticity.
\newblock {\em Acta Appl. Math.}, 80(2):123--174, 2004.

\bibitem{Garetto:05b}
C.~Garetto.
\newblock Topological structures in {C}olombeau algebras: investigation of the
  duals of ${\Gc(\Om)}$, ${\G(\Om)}$ and ${\GS(\R^n)}$.
\newblock {\em Monatsh. Math.}, 146(3):203--226, 2005.

\bibitem{Garetto:05a}
C.~Garetto.
\newblock Topological structures in {C}olombeau algebras: topological
  $\wt{\C}$-modules and duality theory.
\newblock {\em Acta. Appl. Math.}, 88(1):81--123, 2005.

\bibitem{Garetto:06a}
C.~Garetto.
\newblock Microlocal analysis in the dual of a {C}olombeau algebra: generalized
  wave front sets and noncharacteristic regularity.
\newblock {\em New York J. Math.}, 12:275--318, 2006.

\bibitem{GGO:03}
C.~Garetto, T.~Gramchev, and M.~Oberguggenberger.
\newblock Pseudodifferential operators with generalized symbols and regularity
  theory.
\newblock {\em Electron. J. Diff. Eqns.}, 2005(2005)(116):1--43, 2003.

\bibitem{GH:05}
C.~Garetto and G.~H\"{o}rmann.
\newblock Microlocal analysis of generalized functions: pseudodifferential
  techniques and propagation of singularities.
\newblock {\em Proc. Edinb. Math. Soc.}, 48(3):603--629, 2005.

\bibitem{GH:05b}
C.~Garetto and G.~H\"ormann.
\newblock Duality theory and pseudodifferential techniques for Colombeau
  algebras: generalized kernels and microlocal analysis.
\newblock {\em Proceedings of the Conference ``Generalized Functions 2004'',
  University of Novi Sad}, 2005.
\newblock to appear in Bull. Cl. Sci. Math. Nat. Sci. Math.

\bibitem{GHO:06}
C.~Garetto, G.~H\"ormann, and M.~Oberguggenberger.
\newblock Generalized oscillatory integrals and {F}ourier integral operators.
\newblock {\em arXiv:math. AP/0607706, to appear in Proc. Edinb. Math. Soc.}, 2008.


\bibitem{GKOS:01}
M.~Grosser, M.~Kunzinger, M.~Oberguggenberger, and R.~Steinbauer.
\newblock {\em Geometric {T}heory of {G}eneralized {F}unctions}, volume 537 of
  {\em Mathematics and its Applications}.
\newblock Kluwer, Dordrecht, 2001.

\bibitem{Hoermander:71}
L.~H{\"o}rmander.
\newblock {F}ourier integral operators {I}.
\newblock {\em Acta Math.}, 127:79--183, 1971.

\bibitem{Hoermander:V1-4}
L.~H{\"o}rmander.
\newblock {\em The Analysis of Linear Partial Differential Operators}, volume
  I--IV.
\newblock Springer-Verlag, Berlin,  1983--85, 2nd ed. vol. I
  1990.

\bibitem{Hoermann:99}
G.~H{\"o}rmann.
\newblock Integration and microlocal analysis in {C}olombeau algebras of
  generalized functions.
\newblock {\em J. Math. Anal. Appl.}, 239:332--348, 1999.

\bibitem{GH:03}
G.~H\"{o}rmann.
\newblock First-order hyperbolic pseudodifferential equations with generalized
  symbols.
\newblock {\em J. Math. Anal. Appl.}, 293(1):40--56, 2004.

\bibitem{GH:04}
G.~H{\"o}rmann.
\newblock H\"{o}lder-{Z}ygmund regularity in algebras of generalized functions.
\newblock {\em Z. Anal. Anwendungen}, 23:139--165, 2004.

\bibitem{HdH:01}
G.~H{\"o}rmann and M.~V. de~Hoop.
\newblock Microlocal analysis and global solutions of some hyperbolic equations
  with discontinuous coefficients.
\newblock {\em Acta Appl. Math.}, 67:173--224, 2001.

\bibitem{HK:01}
G.~H{\"o}rmann and M.~Kunzinger.
\newblock Microlocal analysis of basic operations in {C}olombeau algebras.
\newblock {\em J. Math. Anal. Appl.}, 261:254--270, 2001.

\bibitem{HO:03}
G.~H{\"o}rmann and M.~Oberguggenberger.
\newblock Elliptic regularity and solvability for partial differential
  equations with {C}olombeau coefficients.
\newblock {\em Electron. J. Diff. Eqns.}, 2004(14):1--30, 2004.

\bibitem{HOP:05}
G.~H\"{o}rmann, M.~Oberguggenberger, and S.~Pilipovic.
\newblock Microlocal hypoellipticity of linear partial differential operators
  with generalized functions as coefficients.
\newblock {\em Trans. Amer. Math. Soc.}, 358:3363--3383, 2006.

\bibitem{HS:68}
A.~E. Hurd and D.~H. Sattinger.
\newblock Questions of existence and uniqueness for hyperbolic equations with
  discontinuous coefficients.
\newblock {\em Trans. Amer. Math. Soc.}, 132:159--174, 1968.

\bibitem{LO:91}
F.~Lafon and M.~Oberguggenberger.
\newblock Generalized solutions to symmetric hyperbolic systems with
  discontinuous coefficients: the multidimensional case.
\newblock {\em J. Math. Anal. Appl.}, 160:93--106, 1991.

\bibitem{MR:97}
M.~Mascarello and L.~Rodino.
\newblock {\em Partial differential equations with multiple characteristics}.
\newblock Mathematical Topics 13. Akademie Verlag, Berlin, 1997.


\bibitem{NPS:98}
M.~Nedeljkov, S.~Pilipovi{\'{c}}, and D.~Scarpal{\'{e}}zos.
\newblock {\em The Linear Theory of Colombeau Generalized Functions}.
\newblock Pitman Research Notes in Mathematics 385. Longman, Harlow, 1998.

\bibitem{O:88}
M.~Oberguggenberger.
\newblock Hyperbolic systems with discontinuous coefficients: examples.
\newblock In B.~Stankovi{\'{c}}, E.~Pap, S.~Pilipovi{\'{c}}, and V.~S.
  Vladimirov, editors, {\em Generalized Functions, Convergence Structures, and
  Their Applications}, pages 257--266, New York, 1988. Plenum Press.

\bibitem{O:89}
M.~Oberguggenberger.
\newblock Hyperbolic systems with discontinuous coefficients: generalized
  solutions and a transmission problem in acoustics.
\newblock {\em J. Math. Anal. Appl.}, 142:452--467, 1989.

\bibitem{O:92}
M.~Oberguggenberger.
\newblock {\em Multiplication of Distributions and Applications to Partial
  Differential Equations}.
\newblock Pitman Research Notes in Mathematics 259. Longman, Harlow, 1992.

\bibitem{OR:98a}
M.~Oberguggenberger and F.~Russo.
\newblock Nonlinear {S}{P}{D}{E}s: Colombeau solutions and pathwise limits.
\newblock In L.~Decreusefonds, J.~Gjerde, B.~{\O}ksendal, and A.~S.
  {\"U}st{\"u}nel, editors, {\em Stochastic Analysis and Related Topics {VI}},
  pages 319--332. Birkh{\"a}user, Boston, 1998.

\bibitem{OR:98b}
M.~Oberguggenberger and F.~Russo.
\newblock Nonlinear stochastic wave equations.
\newblock {\em Int. Trans. Spec. Funct.}, 6:71--83, 1998.

\bibitem{Pilipovic:94}
S.~Pilipovi\'{c}.
\newblock {\em Colombeau's Generalized Functions and Pseudo-Differential
  Operators}.
\newblock Lect. Math. Sci., Univ. Tokyo, Tokyo, 1994.

\bibitem{Ste:98}
R.~Steinbauer.
\newblock Geodesic and geodesic deviation for impulsive gravitational waves.
\newblock {\em J. Math. Phys.}, 39(4):2201--2212, 1998.

\end{thebibliography}
\end{document}